\theoremstyle{plain}
\newtheorem{theorem}{Theorem}[section]
\newtheorem{lemma}[theorem]{Lemma}
\newtheorem{lemma-definition}[theorem]{Lemma-Definition}
\newtheorem{definition-lemma}[theorem]{Definition-Lemma}
\newtheorem{proposition}[theorem]{Proposition}
\newtheorem{corollary}[theorem]{Corollary}
\newtheorem*{theorem-A}{Theorem A}
\newtheorem*{theorem-B}{Theorem B}
\newtheorem*{theorem-C}{Theorem C}
\newtheorem*{theorem-D}{Theorem D}
\newtheorem*{conjecture-A}{Conjecture A}
\newtheorem*{conjecture-B}{Conjecture B}
\newtheorem*{conjecture-C}{Conjecture C}
\newtheorem*{conjecture-D}{Conjecture D}
\theoremstyle{definition}
\theoremstyle{remark}
\newtheorem{remark}[theorem]{Remark}
\numberwithin{equation}{section}
\def\K{\mathrm{K}}
\def\R{\mathrm{R}}
\def\T{\mathrm{T}}
\def\X{\mathrm{X}}
\def\Y{\mathrm{Y}}
\def\bbC{\mathbb{C}}
\def\bbD{\mathbb{D}}
\def\bbG{\mathbb{G}}
\def\bbN{\mathbb{N}}
\def\bbQ{\mathbb{Q}}
\def\bbZ{\mathbb{Z}}
\def\frakg{\mathfrak{G}}
\def\frakL{\mathfrak{L}}
\def\frakM{\mathfrak{M}}
\def\frakP{\mathfrak{P}}
\def\frakS{\mathfrak{S}}
\def\frakX{\mathfrak{X}}
\def\frakY{\mathfrak{Y}}
\def\frakZ{\mathfrak{Z}}
\def\calA{\mathcal{A}}
\def\calC{\mathcal{C}}
\def\calE{\mathcal{E}}
\def\calL{\mathcal{L}}
\def\calO{\mathcal{O}}
\def\calS{\mathcal{S}}
\def\calU{\mathcal{U}}
\def\calV{\mathcal{V}}
\def\calW{\mathcal{W}}
\def\calX{\mathcal{X}}
\def\frakg{\mathfrak{g}}
\def\frakh{\mathfrak{h}}
\def\bfa{\mathbf{a}}
\def\bfb{\mathbf{b}}
\def\bfc{\mathbf{c}}
\def\bfd{\mathbf{d}}
\def\bfi{\mathbf{i}}
\def\bfj{\mathbf{j}}
\def\bfk{\mathbf{k}}
\def\bfl{\mathbf{l}}
\def\bfm{\mathbf{m}}
\def\bfn{\mathbf{n}}
\def\bfp{\mathbf{p}}
\def\bfq{\mathbf{q}}
\def\bfr{\mathbf{r}}
\def\bfs{\mathbf{s}}
\def\bft{\mathbf{t}}
\def\bfv{\mathbf{v}}
\def\bfw{\mathbf{w}}
\def\bfY{\mathbf{Y}}
\def\bfI{\mathbf{I}}
\def\bfN{\mathbf{N}}
\def\bfU{\mathbf{U}}
\def\bfY{\mathbf{Y}}
\def\mod{\operatorname{mod}\nolimits}
\def\top{{\operatorname{top}\nolimits}}
\def\top{{\operatorname{top}\nolimits}}
\def\top{{\operatorname{top}\nolimits}}
\def\rk{{\operatorname{rk}\nolimits}}
\def\gr{{\operatorname{gr}\nolimits}}
\def\op{{{\operatorname{op}\nolimits}}}
\def\-{{\operatorname{-}\!}}
\def\Ker{\operatorname{Ker}\nolimits}
\def\Hom{\operatorname{Hom}\nolimits}
\def\RHom{\operatorname{RHom}\nolimits}
\def\End{\operatorname{End}\nolimits}
\def\Res{\operatorname{Res}\nolimits}
\def\id{\operatorname{id}\nolimits}
\def\Spec{\operatorname{Spec}\nolimits}
\def\Tr{\operatorname{Tr}\nolimits}
\def\Sym{{{\operatorname{Sym}\nolimits}}}
\def\codim{{\operatorname{codim}}}
\def\ev{\operatorname{ev}\nolimits}
\def\leqsl{{\operatorname\leqslant}}
\def\geqsl{{\operatorname\geqslant}}
\def\preceq{{\operatorname\preccurlyeq\nolimits}}
\def\stab{{\bfs\bft\bfa\bfb}}
\def\Exp{\operatorname{Exp}}
\def\cc{{\operatorname{c}}}
\def\Tr{\operatorname{Tr}\nolimits}
\def\wt{\operatorname{wt}\nolimits}
\def\ad{\operatorname{ad}\nolimits}
\def\eu{\operatorname{eu}\nolimits}
\def\ch{\operatorname{ch}\nolimits}
\def\la{{\langle}}
\def\ra{{\rangle}}
\def\res{\mathrm{res}}
\numberwithin{itemcounter}{subsection}
\numberwithin{equation}{section}
\appto\appendix{\addtocontents{toc}{\protect\setcounter{tocdepth}{1}}}
\title[COHA's and Yangians]
{Cohomological Hall algebras of quivers and Yangians}
\author{O. Schiffmann$^1$} 
\address{{\scriptsize{$^1$~Laboratoire de Math\'ematiques d'Orsay, Universit\'e Paris Saclay, 91405 Orsay, France, UMR8628 (CNRS) and Simion Stoilow Institute of Mathematics,Bucharest, Romania, olivier.schiffmann@universite-paris-saclay.fr}}}
\author{E. Vasserot$^2$} 
\address{\scriptsize{$^2$~Universit\'e Paris Cit\'e, 75013 Paris, France, UMR7586 (CNRS), 
		Institut Universitaire de France (IUF), vasserot@imj-prg.fr
}}
\begin{document}
	\maketitle

	\begin{abstract} We construct an isomorphism between the preprojective cohomological Hall algebra of an 
arbitrary quiver and a positive half of the corresponding Maulik-Okounkov Yangian, 
which intertwines the respective actions on the cohomology of the Nakajima quiver varieties. 
We use this to prove a conjecture of Okounkov relating the character of the Maulik-Okounkov 
Lie algebra to Kac polynomials.
		
	\end{abstract}


	\section{Introduction and notation}
	
	\medskip
	
	\subsection{Introduction}
	
According to Nakajima, to each quiver $Q$ and pair of dimension vectors $(v,w)$ of $Q$ 
one associate a quasi-projective symplectic resolution
	$$\frakM(v,w) \to \Spec  H^0(\frakM(v,w),\calO_\frakM(v,w)).$$
The variety $\frakM(v,w)$ which is a moduli space of framed representations of the double of $Q$, enjoys many remarkable geometric properties and plays an important role in 
geometric representation theory as well as in mathematical physics and algebraic geometry, see, e.g., \cite{N98}, \cite{N00}, \cite{HLRV}. 
In particular, when the quiver $Q$ carries no edge loops and thus can be regarded as an orientation of the 
generalized Dynkin 
diagram of a Kac-Moody algebra $\frakg$, Nakajima 
constructed an action of $\frakg$ on the space 
	$$\bigoplus_vH_{\top}(\frakL(v,w),\bbQ),$$
where $\frakL(v,w)=\pi^{-1}(0)$ is the central fiber, called the nilpotent quiver variety, which is 
Lagrangian.
This module is identified with the irreducible highest weight module of highest weight 
$\sum_i w_i \omega_i$ where $\omega_i$ is the $i$th fundamental weight of $\mathfrak{g}$. In the same spirit, 
when the quiver $Q$ is of finite type, Nakajima constructed an action of the quantum affine algebra of $\mathfrak{g}$ on the equivariant K-theory space 
	$$\bigoplus_{v} K^{G_w\times\bbC^\times}(\frakL(v,w))$$
see \S\ref{sec:quiver variety} for the notation. 
This module is called a universal standard module.
It is a geometric analog of the global Weyl modules. 
A cohomological version of this construction, 
due to Varagnolo \cite{V}, yields an action of the Yangian of $\mathfrak{g}$ on the equivariant Borel-Moore homology space
	$$F_w^\vee=\bigoplus_{v} H^{G_w\times\bbC^\times}_\bullet(\frakL(v,w),\bbQ).$$
There are variants of these representations in which 
one replaces $\frakL(v,w)$ with $\frakM(v,w)$, producing dual modules. Nakajima's and Varagnolo's actions do 
extend to the case of arbitrary quivers but the precise nature 
of the algebra which acts or the structure of the resulting modules are not well understood. 
	
	\medskip
	
There are two main approaches to the problem of constructing and understanding symmetry algebras acting on 
the cohomology of Nakajima quiver varieties for general quivers.
One approach is due to Maulik-Okounkov \cite{MO19} who construct, using the notion of stable envelope, a 
quantum $R$-matrix acting on tensor products of spaces 
	$$F_w=\bigoplus_{v} H^{G_w\times T}_\bullet(\frakM(v,w),\bbQ).$$
Here the quiver $Q$ is arbitrary, $T$ is a torus acting by rescaling the edges, and $w$ is any dimension vector of 
$Q$. Let $R=R_T$ denote the equivariant cohomology ring $H^\bullet_T$, and $K$ its fraction field.
Through the RTT formalism, Maulik and Okounkov define an associative $\bbZ I\times \bbZ$-graded 
$R$-algebra $\bfY_R$ acting on the same spaces $F_w$, called the Maulik-Okounkov Yangian. 
Taking a quasi-classical limit, they also define a classical R-matrix and a 
	$\bbZ I\times \bbZ$-graded Lie algebra $\frakg_R$, called the Maulik-Okounkov Lie algebra, with a triangular decomposition $\frakg_R=\frakg_R^-\oplus\frakg_R^0\oplus\frakg_R^+$.
If the quiver $Q$ is of finite type, 
then $\frakg_R$ is the semisimple Lie algebra associated with $Q$, 
	and $\bfY_R$ is the Yangian of the same type. 
	In general, the $R$-algebra $\bfY_R$ is a deformation of the enveloping algebra of the current algebra 
	$\frakg_R[u]$.
As demonstrated by Maulik and Okounkov in \cite{MO19} and in many subsequent works, see, e.g., 
\cite{OkounkovLectures}, \cite{OkounkovICM} and the references therein, operators coming from this Yangian 
action provide a very powerful tool in the study of the enumerative geometry of $\frakM(v,w)$, such as the 
computation of the quantum cohomology ring, quantum differential equation, the formulation of 3d mirror 
symmetry \ldots On the other hand, from the construction it is hard to determine the precise algebraic nature of 
the Maulik-Okounkov Yangian, which is the algebra generated by all these operators.
	
	\medskip
	
In a different approach, we define $R$-algebras 
$\Y_R$ and $ \Y^\vee_R$ in \cite{SV18}, 
called the preprojective cohomological Hall algebras (COHA) of the quiver $Q$\footnote{One should not confuse these algebras with the cohomological Hall algebras defined  by Kontsevich and Soibelman in the context of 3CY categories. See however, the appendix of \cite{DavisonAppendix} for a relation between the two constructions.}, which act on the same spaces $F_w$ and $ F^\vee_w$, see also \cite {YangZhao}.
This construction is based on the geometry of the cotangent to the moduli stacks of unframed representations of 
the quiver $Q$, satisfying some nilpotency condition in the case of $\Y^\vee_R$, 
and the action on the cohomology of quiver varieties is through Hecke correspondences. There has been much 
progress in understanding the structure of the $R$-algebras $\Y_R, \Y^\vee_R$ in the past few years, see, e.g., 
\cite{SV18}, \cite{DM20}, \cite{DHSM2} and the references therein. In particular, their characters 
are given by Kac polynomials and both algebras are quantum deformations of the 
enveloping algebra of a Lie algebra of polynomial loops into a Borcherds algebra.
	
	\medskip
	
The aim of our paper is to prove that the two very different approaches mentioned above yield
 the same algebras of operators. Our main result provides a commutative diagram
	\begin{equation}\label{E:intro1}
		\xymatrix{{\Y}_K \ar[r]^-{\Phi} \ar[dr]_-{\rho} & \bfY^-_K \ar[d]^-{\rho} \\ & \prod_w \text{End}_K(F_w)}
	\end{equation}
where $\Phi$ is an algebra isomorphism onto one half $\bfY^-_K$ of $\bfY_K$ and $\rho$ denotes the 
product over all $w\in\bbN I$ of the representations on $F_w$. 
We also establish a version of the diagram \eqref{E:intro1} for some 
$R$-forms. Using this isomorphism at the integral level we deduce an equality of characters between $\bfY_R$ 
and $U(\frakg_R^-[u])$, thereby proving a conjecture made by Okounkov soon after the introduction of Yangians, 
see Corollary~\ref{Cor:intro} below.
	
	\medskip
	
In order to state our results more precisely, we need some notation.
Let $v$ be a dimension vector. 
Let $\Pi$ be the preprojective 
algebra of $Q$, and  $\calX_\Pi(v)$ be the moduli stack of complex representations of 
$\Pi$ of dimension $v$. 
Let $\calX_\Pi^\vee(v)$ be the closed substack consisting of nilpotent representations in Lusztig's sense.
There is a large torus $T$ acting on 
$\calX_\Pi^\vee(v)$ and $\calX_\Pi(v)$ by rescaling the arrows. 
Let $R$ be the $T$-equivariant cohomology ring of $\Spec(\bbC)$. Set
$$\Y_R^\vee=\bigoplus_v H_\bullet^{T}(\calX_\Pi^\vee(v),\bbQ)
,\quad
\Y_R=\bigoplus_v H_\bullet^{T}(\calX_\Pi(v),\bbQ).$$
As proved in \cite{SV18}, the $R$-modules $\Y_R^\vee$ and $\Y_R$ are equipped with graded 
$R$-algebra structures and they act on $F^\vee_w$ and $F_w$ respectively, for any $w \in \bbN I$. Besides the 
obvious $\bbZ I$-grading, there is an extra $\bbZ$-grading given by the cohomological degree.
All the graded pieces of $\Y^\vee_{R}$ and $\Y_{R}$ are free $R$-modules of finite rank. The graded 
character of $\Y_R$ and $\Y^\vee_R$ are respectively given by the following formulas involving the Kac 
polynomial $A_v(t)$ counting absolutely indecomposable representations of dimension vector $v$ of the 
quiver $Q$. Set
	$$P(X,q)=\sum_{l\in\bbZ}  \dim H^{T}_{-(v,v)-2l}(X,\bbQ)\,q^l$$ 
	for $X=\calX_\Pi^\vee(v)$ or $\calX_\Pi(v)$ where $(v,v)$ is as in \eqref{pairing1}.
	We have
	\begin{equation}\label{Okounkov}
		\begin{split}
			\sum_v P(\calX_\Pi^\vee(v),q)\,z^v&= 
			(1-q^{-1})^{-\rk(T)}\Exp\Big(\sum_v \frac{A_v(q^{-1})}{1-q^{-1}}z^v\Big)\\
			\sum_v P(\calX_\Pi(v),q)\,z^v&= 
			(1-q^{-1})^{-\rk(T)}\Exp\Big(\sum_v \frac{A_v(q)}{1-q^{-1}}z^v\Big)\\
		\end{split}
	\end{equation}
where $\Exp$ stands for the plethystic exponential. Motivated by the analogy with Yangians, 
we consider extensions $\overline{\Y}_R^\vee$ and $\overline{\Y}_R$ of $\Y_R^\vee$ and $\Y_R$ by 
adding a central commutative algebra isomorphic to the Macdonald ring of symmetric functions of the quiver. 
We extend the action of ${\Y}_R$ and ${\Y}^{\vee}_R$ on $F_w$ and $F_w^\vee$ to $\overline{\Y}_R$ and 
$\overline{\Y}^{\vee}_R$ by making this central algebra act by multiplication by tautological classes with respect 
to $w$.
	
	\medskip 
		
To relate COHAs and Yangians, we first construct in $\S 3$ a nilpotent analog $\bfY^\vee_R$ of the
Yangian which acts on $F_w^\vee$ for any $w$, by restricting the Lagrangian cycle defining the stable 
enveloppe to the nilpotent quiver varieties. Like $\bfY_R$, the $R$-algebra
$\bfY^\vee_R$ is equipped with a filtration 
whose associated graded is isomorphic to $U(\frakg_R^\vee[u])$ for some 
$\bbZ I\times\bbZ$ graded $R$-Lie algebra
$\frakg^\vee_R$. 
Moreover, identifying the dual of $F_w$ and $F_w^\vee$ through the perfect 
intersection pairing, we have an isomorphism
$(-)^\T: \bfY_R^\vee \stackrel{\sim}{\longrightarrow} \bfY^\op_R$ fitting in the following diagram,
see \S\ref{sec:duality} and Lemma~\ref{lem:FF},	
$$\xymatrix{\bfY^\vee_R \ar[rr]^-{(-)^\T} \ar[d]_-{\rho^\vee} && \bfY^\op_R \ar[d]^-{\rho}\\
\prod_w \text{End}_R(F_w^\vee) \ar[rr]^-{x \mapsto x^\T} && \prod_w \text{End}_R(F_w)^\op}$$
The duality descends to the Lie algebras, yielding $\bbZ I\times \bbZ$-graded $R$-module 
isomorphisms
	\begin{equation}\label{E:dualityintro}
		\frakg_R^{+,\vee} \simeq (\frakg_R^{-,\vee})^* \simeq (\frakg_R^+)^*\simeq \frakg_R^-
		\end{equation}
see Proposition~\ref{prop:grading}.
Our main theorem reads as follows. Let 
$$\ev^\vee : \bfY_R^\vee \to \prod_w F_w^\vee$$ be the product of evaluation maps on the vacuum vectors. 
	
\begin{theorem}[thm~\ref{thm:2}, \ref{thm:last}, prop.~\ref{prop:Phi}]\label{thm:mainintro}
\hfill
\begin{enumerate}[label=$\mathrm{(\alph*)}$,leftmargin=8mm,itemsep=1.2mm]
\item The Yangian $\bfY_R^\vee$ admits a triangular decomposition
$$\bfY_{R}^{-,\vee}\otimes_{R}\bfY_{R}^{0,\vee}\otimes_{R}\bfY_{R}^{+,\vee}=\bfY_{R}^\vee.$$
\item There is an $R$-algebra  isomorphism $\Phi: \Y^\vee_R \to \bfY_R^{-,\vee}$
which intertwines the representations on $F^\vee_w$ for any $w \in \bbN I$.		
\item The obvious map $\Psi:\bfY_R^{-,\vee} \to \bfY_R^{\vee}/ 
\Ker\ev^\vee$ is an isomorphism of graded $R$-modules.
\end{enumerate}
\end{theorem}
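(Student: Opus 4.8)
The plan is to establish the three parts in the order (a), (b), (c), with (b) carrying essentially all of the geometric content.

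\textbf{Part (a).} I would follow Maulik--Okounkov's derivation of the triangular decomposition of $\bfY_R$ in \cite{MO19}, carried out on the nilpotent quiver varieties $\frakL(v,w)$ in place of $\frakM(v,w)$. Let $\bfY_R^{\pm,\vee}$ be the $R$-subalgebras of $\prod_w\End_R(F_w^\vee)$ generated by the matrix coefficients of the stable envelopes, restricted to the $\frakL(v,w)$, attached to a fixed chamber and to its opposite, and let $\bfY_R^{0,\vee}$ be generated by the Cartan currents. Surjectivity of the multiplication map is immediate from the RTT presentation of $\bfY_R^\vee$: the generators are the entries of the geometric $R$-matrix, whose Gauss (LDU) decomposition writes each of them as a product of a lower-triangular, a diagonal, and an upper-triangular entry. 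For injectivity I would pass to the filtration of \S3 whose associated graded is $U(\frakg_R^\vee[u])$: since $\frakg_R^\vee$ carries a triangular decomposition $\frakg_R^{-,\vee}\oplus\frakg_R^{0,\vee}\oplus\frakg_R^{+,\vee}$ by its construction, the enveloping algebra $U(\frakg_R^\vee[u])$ has a PBW triangular decomposition; one checks that the filtration restricts compatibly to the three subalgebras $\bfY_R^{\pm,\vee},\bfY_R^{0,\vee}$ and concludes by the usual associated-graded argument. The one point that is not formal is that restricting a stable envelope to $\frakL(v,w)$ preserves enough of its support and triangularity, which holds because $\frakL(v,w)$ is a union of attracting cells for the relevant cocharacter.

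\textbf{Part (b).} This is the heart of the matter. I would first construct $\Phi$ and then prove it bijective. Recall that $\Y_R^\vee$ acts on each $F_w^\vee$ through the preprojective Hecke correspondences on $\calX_\Pi^\vee$; the claim is that all these operators already lie in $\rho^\vee(\bfY_R^{-,\vee})$. I would prove this by localization: over the fraction field $K$, each $F_w^\vee$ has a basis of torus-fixed points, and after restriction both the COHA action and the $\bfY_R^{-,\vee}$-action on $\prod_w F_w^\vee$ become multiplication-and-shuffle operators, yielding embeddings of $\Y_R^\vee\otimes_RK$ and of $\bfY_R^{-,\vee}\otimes_RK$ into one and the same shuffle-type algebra $\mathbf{Sh}$. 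On the COHA side the image is the shuffle algebra of the preprojective COHA, cut out by explicit pole (wheel) conditions as in \cite{SV18}; on the Yangian side the localized stable-envelope coefficients have poles and leading terms controlled by the Maulik--Okounkov normalization and triangularity axioms, and one computes that they generate exactly the same subalgebra of $\mathbf{Sh}$ --- equivalently, that the preprojective Hecke Lagrangians and the stable-envelope Lagrangians, restricted to the $\frakL(v,w)$, induce the same operators. Granting this, $\Phi$ exists, is an $R$-algebra morphism, and by construction intertwines the representations on every $F_w^\vee$; it is surjective because the generating stable-envelope coefficients are hit, and injective because $\Phi$ intertwines the actions and, for each dimension vector $v$ and all $w\gg0$, the map $\gamma\mapsto\gamma\cdot\mathrm{vac}_w$ is already injective on the degree-$v$ part of $\Y_R^\vee$ (stable range). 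One then descends from $K$ to the relevant $R$-forms by controlling the denominators of the matrix coefficients.

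\textbf{Part (c).} I would deduce this from (a), (b) and the stable-range injectivity in (b). By (a), every element of $\bfY_R^\vee$ is of the form $y^-y^0y^+$ with $y^\pm\in\bfY_R^{\pm,\vee}$, $y^0\in\bfY_R^{0,\vee}$. The vacuum vectors $\mathrm{vac}_w$ are highest-weight vectors, so $(\bfY_R^{+,\vee})_+$ annihilates them; since $\Ker\ev^\vee$ is a left ideal, $\bfY_R^\vee\cdot(\bfY_R^{+,\vee})_+\subseteq\Ker\ev^\vee$, and the Cartan part acts on each $\mathrm{vac}_w$ by a scalar, so modulo $\Ker\ev^\vee$ every element is congruent to one coming from $\bfY_R^{-,\vee}$; this gives surjectivity of $\Psi$. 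Injectivity of $\Psi$ is exactly the injectivity of $\ev^\vee$ on $\bfY_R^{-,\vee}$; transporting along the isomorphism $\Phi$ of (b), it says that $\gamma\mapsto(\gamma\cdot\mathrm{vac}_w)_w$ is injective on $\Y_R^\vee$, which reduces degree by degree to the stable-range statement that $H^T_\bullet(\calX_\Pi^\vee(v))\to H^T_\bullet(\frakL(v,w))$ is injective for $w\gg0$.

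\textbf{The main obstacle.} The crux is the identification in (b): that the algebra of preprojective Hecke operators coincides, inside $\prod_w\End_K(F_w^\vee)$, with the negative half of the Maulik--Okounkov Yangian. The difficulty is structural: the Maulik--Okounkov Yangian is defined through the RTT/stable-envelope formalism and comes with neither a presentation nor a small generating set, so it cannot be matched with the explicitly-generated COHA term by term; instead one must extract from the defining axioms of stable envelopes (support, normalization, triangularity) precisely enough information to pin down their localized image and recognize it as the shuffle presentation of the preprojective COHA. A secondary, more technical obstacle is the descent from the isomorphism over $K$ to the stated isomorphism over the $R$-forms.
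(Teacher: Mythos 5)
Your proposal is broadly sound in outline but diverges from the paper's route in several substantial ways, and in one place it underestimates the real difficulty.

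\textbf{On the order and the definition of $\bfY_R^{-,\vee}$.} You prove (a) first, defining $\bfY_R^{\pm,\vee}$ independently as the subalgebras generated by lower/upper matrix coefficients of the restricted stable envelopes and invoking a Gauss decomposition of the $R$-matrix. The paper does something logically different: it \emph{defines} $\bfY_R^{-,\vee}=\Phi(\Y_R^\vee)$ as the image of the COHA (and $\bfY_R^+$ as its transpose, cf.\ \eqref{TDA}--\eqref{TDB}), so that (b) has to be established first and (a) is proved last (Proposition~\ref{prop:TYMO} and Theorem~\ref{thm:last}). The Maulik--Okounkov construction does not come with a ready-made integral Gauss decomposition of $\bfY_R^\vee$; over $K$ your argument is fine, but over $R$ the nontrivial content is precisely that the $R$-forms of the three factors multiply to $\bfY_R^\vee$, and the paper obtains this by first identifying $\gr(\bfY_R^{-,\vee})=\bfU_R^{-,\vee}$ via (b) and (c) and then propagating to the other halves by the transpose and a sandwich argument. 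Your associated-graded injectivity idea is the right one, but without the prior identification of $\bfY_R^{-,\vee}$ with the COHA image it does not get off the ground over $R$.

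\textbf{On Part (b).} You go through localization to torus fixed points and a shuffle algebra. That approach is genuinely viable — the paper itself notes it, and it is the route of \cite{SV18b} — but the paper gives a more direct and simpler argument: it compares explicitly the Hecke correspondence $\frakP(v+\delta_i,v,w)$ with the Lagrangian cycle underlying the stable envelope for the minimal and subminimal components, using Negut's closed formulas (\cite{N23a}), and then invokes the spherical generation of the COHA from \cite{N23b,SV18} (Lemmas~\ref{lem:c1P}, \ref{lem:generators}). The two approaches buy different things: the shuffle-algebra approach gives an a priori presentation of both sides, whereas the paper's approach matches generators directly and so avoids having to analyze pole/wheel conditions and avoids the denominators that come with localization.

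\textbf{The actual gap.} You describe the descent from $K$ to $R$ as ``a secondary, more technical obstacle.'' In the paper this is the heart of the matter: the surjectivity of $\Psi$ is not formal from a triangular decomposition, because surjectivity over $R$ needs the cyclicity of the COHA action (Proposition~\ref{prop:COHA}(a)--(b), from \cite{SV18}) together with the stable-range identification $\overline\Y^\vee_{-v,l,R}/\ldots\simeq F^\vee_{v,w,l,R_w}$ for $w\gg 0$, and the injectivity needs the integral form criterion of Lemma~\ref{lem:lastlast}, which is proved by a delicate PBW argument applying successive $\ad(x_n^-u^l)$ operators and tracking coefficients in $R$ versus $K$. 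Your sketch of (c) essentially assumes these ingredients without proving them, and they do not follow from (a) and (b) as stated at the $K$-level. The stable-range injectivity $H^T_\bullet(\calX_\Pi^\vee(v))\to H^{G_w\times T}_\bullet(\frakL(v,w))$ for $w\gg 0$ that you cite is indeed the right input (it is the content of Proposition~\ref{prop:COHA}(b)), but getting from it to the integrality of $\Phi$ and to the statement that $\gr(\bfY_R^\vee/\Ker\ev^\vee)\simeq\bfU_R^{-,\vee}\bfU_R^{0,w,\vee}$ requires the full machinery of the $u$-filtration (\S\ref{sec:filt}, Lemmas~\ref{lem:kerU}, \ref{lem:evstrict}, \ref{lem:39}), none of which is addressed in your proposal.
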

The proof proceeds is several steps. One first establishes a version of (a) over the field 
$K$, using 
arguments similar to \cite{SV18b} and a recent spherical generation result of Negut \cite{N23b}. We then use 
some characterization of the integral form in terms of the action on the modules $F_w^\vee$. The isomorphism 
of Part (b), in particular, the fact that $\Psi$ is surjective, 
comes from the cyclicity of the action of $\Y_R^\vee$ on the modules $F^\vee_w$ established in \cite{SV18}.
It is the main reason why we use $\bfY^\vee_R$ instead of $\bfY_R$.  
	
	\medskip
	
Using the representation theory of Maulik-Okounkov Lie algebras and passing to the classical limit,
i.e.,  to the associated graded with respect to the so-called $u$-filtration, one proves that
$$ \gr\left( \bfY^\vee_R/ \Ker \ev^\vee\right) \simeq U(\frakg^{-,\vee}_R[u]).$$
Combining this with Theorem~\ref{thm:mainintro},  \eqref{Okounkov} and \eqref{E:dualityintro} we obtain the following.
	
	\begin{corollary}[Okounkov's conjecture]\label{Cor:intro} For any $v \in \bbN I$ we have
		$$\rk \left( \frakg_{v,R}\right)=A_{Q,v}(t), \quad \rk \left( \frakg^\vee_{v,R}\right)=A_{Q,v}(t^{-1}).$$
	\end{corollary}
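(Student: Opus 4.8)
The plan is to deduce Corollary~\ref{Cor:intro} by combining the two halves of the argument summarized in the excerpt: Theorem~\ref{thm:mainintro} identifying $\Y^\vee_R$ with the negative half of the nilpotent Yangian, and the classical-limit computation $\gr(\bfY^\vee_R/\Ker\ev^\vee)\simeq U(\frakg^{-,\vee}_R[u])$. First I would fix $v\in\bbN I$ and read off the graded dimensions. On the one hand, Theorem~\ref{thm:mainintro}(b) gives an isomorphism of graded $R$-modules $\Y^\vee_{v,R}\simeq\bfY^{-,\vee}_{v,R}$, and part (c) identifies $\bfY^{-,\vee}_R$ with $\bfY^\vee_R/\Ker\ev^\vee$ as graded $R$-modules. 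Hence the Poincaré series of $\Y^\vee_{v,R}$ equals that of $(\bfY^\vee_R/\Ker\ev^\vee)_v$, which by the $u$-filtration computation equals the Poincaré series of $U(\frakg^{-,\vee}_R[u])_v$. On the other hand, the character of $\Y^\vee_R$ is computed in \eqref{Okounkov} in terms of the Kac polynomials $A_v$.

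The next step is to match these two expressions via the PBW-type formula for $U(\frakg^{-,\vee}_R[u])$ and extract the rank of $\frakg^{\vee}_{v,R}$. Writing $\frakg^{-,\vee}_R[u]=\bigoplus_{n\geq0}\frakg^{-,\vee}_R\cdot u^n$, the enveloping algebra has a PBW basis indexed by multisets of homogeneous elements, so its graded character is a plethystic exponential of the character of $\frakg^{-,\vee}_R[u]$, and the extra polynomial variable $u$ contributes the factor accounting for the loop direction — this is exactly the shape of the right-hand side of \eqref{Okounkov}, where the $(1-q^{-1})^{-1}$ in $\frac{A_v(q^{-1})}{1-q^{-1}}$ plays the role of the $u$-loop. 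Comparing the plethystic exponentials term by term (using that $\Exp$ is injective on power series with no constant term, after taking logarithms) isolates $\sum_v \rk(\frakg^{-,\vee}_{v,R})\,z^v$ against $\sum_v A_v(q^{-1})\,z^v$, the discrepancy in the remaining central/Cartan factors being absorbed by the $(1-q^{-1})^{-\rk(T)}$ prefactor and the definition of $\overline{\Y}^\vee_R$. This yields $\rk(\frakg^\vee_{v,R})=A_{Q,v}(t^{-1})$.

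To obtain the companion statement $\rk(\frakg_{v,R})=A_{Q,v}(t)$, I would invoke the duality isomorphisms \eqref{E:dualityintro}, namely $\frakg^+_R\simeq(\frakg^{-,\vee}_R)^*$ together with the triangular decomposition $\frakg_R=\frakg^-_R\oplus\frakg^0_R\oplus\frakg^+_R$ and the analogous one for $\frakg^\vee_R$; since ranks of graded pieces are preserved under $R$-linear duality (all graded pieces being free $R$-modules of finite rank), $\rk(\frakg^+_{v,R})=\rk(\frakg^{-,\vee}_{v,R})=A_{Q,v}(t^{-1})$. The relation between $A_{Q,v}(t)$ and $A_{Q,v}(t^{-1})$ — the functional equation for Kac polynomials, together with the fact that passing from $\calX^\vee_\Pi$ to $\calX_\Pi$ swaps $q\leftrightarrow q^{-1}$ in \eqref{Okounkov} — then gives the unstarred identity; alternatively one runs the entire first part of the argument with $\bfY_R$ in place of $\bfY^\vee_R$, using the second line of \eqref{Okounkov}.

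The main obstacle is the bookkeeping in the second step: one must carefully account for the ``extra'' factors — the rank-$\rk(T)$ torus contribution, the central Macdonald-ring algebra adjoined in $\overline{\Y}^\vee_R$, and the Cartan part $\frakg^{0,\vee}_R$ — so that the plethystic-exponential comparison really isolates the indecomposable (Borcherds-generator) part of $\frakg^{-,\vee}_R$ and nothing else. Everything upstream (the triangular decomposition, the module identifications, the classical limit) is quoted from Theorem~\ref{thm:mainintro} and the surrounding discussion; the work here is purely the character arithmetic, matching $\Exp$ of one series with $\Exp$ of another and reading off coefficients, which is routine once the normalizations are pinned down.
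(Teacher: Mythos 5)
Your proposal follows the same route as the paper: the corollary is deduced from the character formula \eqref{Okounkov} for $\Y^\vee_R$, the graded-rank identity $\gr(\Y^\vee_R)\cong\bfU^{-,\vee}_R=U(\frakg^{-,\vee}_R[u])$ (this is precisely the content of the Remark following Theorem~\ref{thm:2}, which packages Theorem~\ref{thm:mainintro} together with the $u$-filtration computation of Lemma~\ref{lem:YU}), a PBW/plethystic-exponential comparison to read off the graded rank of $\frakg^{-,\vee}_R$, and Proposition~\ref{prop:grading} to pass from $\frakg^\vee_R$ to $\frakg_R$. One small correction: there is no nontrivial ``functional equation'' for Kac polynomials of a general quiver — they are not palindromic — and none is needed; the only fact used to move between the two displayed identities is that the graded duality $\frakg_{v,R}\simeq(\frakg^\vee_{v,R})^*$ of Proposition~\ref{prop:grading} reverses the cohomological $\bbZ$-grading, hence replaces $t$ by $t^{-1}$ in the graded rank, which is exactly what turns $A_{Q,v}(t^{-1})$ into $A_{Q,v}(t)$. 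Your flagging of the $R_\infty$/Macdonald central factor is also the right instinct: $\bfY^\vee_R/\Ker\ev^\vee_2$ is $\overline\Y^\vee_R$, not $\Y^\vee_R$, so the factor $\bfU^{0,w,\vee}_R\cong R_{\infty,S}$ must be cancelled on both sides before the comparison isolates $\bfU^{-,\vee}_R$, as the Remark after Theorem~\ref{thm:2} makes explicit.
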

The formula for the graded character of $\frakg_R$ 
was conjectured by Okounkov, see \cite{O13}. 
It is a generalization of the Kac conjecture proved by Hausel in \cite{HauselKac}, which provides a 
Lie theoretic interpretation of the constant term $A_{Q,v}(0)$ of Kac polynomials.
	
\medskip
	
As we were finishing this paper, a different proof of Okounkov's conjecture appeared, by T. M. Botta and B. Davison \cite{MBD}. Their method relies on the theory of BPS Lie algebras and nonabelian stable enveloppes.

\bigskip

\subsection{Notation}
For each module $M_R$ over an integral domain $R$ and for any commutative $R$-algebra $K$
we abbreviate $M_K=M_R\otimes_RK$.
Let $K_R$ is the fraction field of $R$. We'll say that an $R$-linear morphism 
$f:M_R\to N_R$ is generically invertible over $R$ if the induced map $M_K\to N_K$ is invertible.
For any vector bundle $\calE$ and any integer $l\in\bbN$,
let $\ch_l(\calE)$ denote the degree $2l$ component of the Chern character
$\ch(\calE)$.
All varieties and schemes are defined over $\bbC$.
Let $H_G^\bullet(X,\bbQ)$ and $H^G_\bullet(X,\bbQ)$
denote the $G$-equivariant cohomology and Borel-Moore homology of the $G$-variety $X$.
We'll abbreviate
$$R_G=H^\bullet_G=H_G^\bullet(\Spec(\bbC),\bbQ)
,\quad
K_G=K_{R_G}.$$
We equip $R_G$ with the $\bbZ$-grading such that for each integer $l$ we have
$$R_{l,G}=H_G^{2l}(\Spec(\bbC),\bbQ).$$

\bigskip

\section{Cohomological Hall algebras}

\subsection{The Nakajima quiver varieties}\label{sec:quiver variety}

\subsubsection{Basic on quivers}\label{sec:quivers}
Let $Q$ be a finite quiver with sets of vertices and of arrows $Q_0$ and $Q_1$.
Let $s,t:Q_1\to Q_0$ be the source and target.
Let $\alpha^*$ be the arrow opposite to the arrow $\alpha\in Q_1$.
We'll use the auxiliary sets 
$$Q_1^*=\{\alpha^*\,;\,\alpha\in Q_1\}
,\quad
Q'_0=\{i'\,;\,i\in Q_0\}
,\quad
Q'_1=\{a_i:i\to i'\,;\,i\in Q_0\}
.$$
From $Q$ we construct new quivers as follows :
\begin{itemize}[leftmargin=3mm]
\item[-]
$Q^*$ is the opposite quiver : $(Q^*)_0=Q_0$, $(Q^*)_1=Q_1^*$,
\item[-]
$\overline Q$ is the double quiver :
$\overline Q_0=Q_0$, 
$\overline Q_1=Q_1\cup Q_1^*$,
\item[-] 
$Q_f$ is the framed quiver :
$Q_{f,0}=Q_0\sqcup Q'_0$,
$Q_{f,1}=Q_1\sqcup Q'_1$,
\item[-]
$\overline Q_f=\overline{(Q_f)}$ is the framed double quiver.
\end{itemize}
We abbreviate $I=Q_0$.
For any finite dimensional $\bbZ I$-graded vector space $V$ we write
$V=\bigoplus_{i\in I}V_i$.
Let $\delta_i\in\bbN I$ be the Dirac functions at $i$.
The dimension vector is
$v=\sum_{i\in I}v_i\delta_i$ where $v_i = \dim(V_i)$.
Given two $\bbZ I$-graded vector spaces $V,W$, the representation varieties of $Q$ and $Q_f$ are
$$\X_Q(v)=\prod_{x\in Q_1}\Hom(V_{s(x)},V_{t(x)})
 ,\quad
\X_{Q_f}(v,w)=\prod_{x\in Q_1}\Hom(V_{s(x)},V_{t(x)})\times
\prod_{i\in Q_0}\Hom(V_i,W_i).$$
A representation of $\overline Q_f$  is a tuple 
$x=(x_\alpha,x_{\alpha^*},x_a,x_{a^*})$ with
$\alpha, a$ running in $Q_1, Q'_1$ respectively.
We abbreviate $h=x_h$ for each arrow $h$, and we write $x=(\alpha\,,\,\alpha^*\,,\,a\,,\,a^*)$.
We also abbreviate
$$\overline\X=\X_{\overline Q_f}
,\quad
\X=\X_{Q_f}
,\quad
\X^*=\X_{Q^*_f}.$$
Let $G_v=\prod_{i\in I}GL(V_i)$ and
$T=(\bbG_m)^{Q_1}\times\bbG_m.$
Let $\frakg_{v,R}$ be the Lie algebra of $G_v$.
Let $S\subset T$ be any subtorus.
We'll write
$$R=R_T=\bbQ[t_\alpha,\hbar\,;\,\alpha\in Q_1]
,\quad
K=K_R
,\quad
R_{v,S}=R_{G_v\times S}.$$

\medskip

\subsubsection{The Nakajima quiver varieties}\label{sec:quiver variety}

The group $G_v\times G_w\times T$ acts on the variety $\overline\X(v,w)$ as follows :
the groups $G_v$, $G_w$ act by conjugation, and the torus $T$ acts as follows 
\begin{align}\label{action1}
(z_\alpha,z)\cdot x=(z_\alpha\alpha,zz_{\alpha^*}\alpha^*,a,za^*)
 ,\quad
z_{\alpha^*}=z_\alpha^{-1}.
\end{align}
The representation variety $\overline\X(v,w)$ is holomorphic symplectic with a Hamiltonian action of
the groups $G_v$ and $G_w$.
The moment map of the $G_v$-action is given by the formula
$$\mu_{v,w}:\overline\X(v,w)\to\text{Lie}(G_v)^*
, \quad
(x_\alpha,x_{\alpha^*},x_a,x_{a^*})\mapsto \sum_\alpha [x_\alpha,x_{\alpha^*}] + x_{a^*}x_a$$
in which we identify $\text{Lie}(G_v)$ with its dual via the trace.
We'll say that a representation in $\overline\X(v,w)$ is stable if it has no proper 
subrepresentation containing $W$.
Set
$$\overline\X(v,w)_s=\{x\in\overline\X(v,w)\,;\,x\ \text{is\ stable}\}$$
We also set
$$
\X_\Pi(v,w)_s=\overline\X(v,w)_s\cap\X_\Pi(v,w)
,\quad
\X_\Pi(v,w)=\mu_{v,w}^{-1}(0).$$
The Nakajima quiver varieties are the categorical quotients 
$$\frakM(v,w)=\X_\Pi(v,w)_s/\hspace{-0.05in}/G_v
 ,\quad
\frakM_0(v,w)=\X_\Pi(v,w)/\hspace{-0.05in}/G_v.$$
Let $\pi$ be the obvious morphism
$\frakM(v,w)\to\frakM_0(v,w).$
It is a projective map.
We define
$$\frakM'_0(v,w)=\Spec H^0(\frakM(v,w),\calO_{\frakM(v,w)}).$$
The map $\pi$ factorizes as follows, with $\pi'$ being a symplectic resolution of singularities
$$\xymatrix{\frakM(v,w)\ar[r]^-{\pi'}&\frakM'_0(v,w)\ar[r]&\frakM_0(v,w).}$$
The $G_w\times T$-variety $\frakM(v,w)$ is smooth, quasi-projective,
connected and holomorphic symplectic with Hamiltonian $G_w$-action.
Let $\frakL(v,w)$ be the nilpotent quiver variety.
It is the projective subvariety of $\frakM(v,w)$ whose set of points is 
the attracting set for the 
$\bbG_m$-action on $\frakM(v,w)$ associated with a
cocharacter $\gamma$ of the form
\begin{align*}
\gamma(z)=z^a\id_{\X(v,w)}\oplus z^{a^*}\id_{\X^*(v,w)}
,\quad
a,a^*\in\bbZ_{<0}
,\quad
z\in\bbC^\times
\end{align*}
Let $\bfv=(v_1,\dots, v_s)$ and  $\bfw=(w_1,\dots, w_s)$ be $s$-tuples
of elements of $\bbN I$. We write
$$|\bfw|=\sum_{r=1}^sw_r
,\quad
G_\bfw=\prod_{r=1}^sG_{w_r}
,\quad
T_\bfw=\prod_{r=1}^sT_{w_r}$$
where $T_w\subset G_w$ is a maximal torus for each $w$. We set
$$\frakM(w)=\bigsqcup_{v\in\bbN I}\frakM(v,w)
,\quad
\frakL(w)=\bigsqcup_{v\in\bbN I}\frakL(v,w),$$
$$\frakM(\bfv,\bfw)=\prod_{r=1}^s\frakM(v_r,w_r)
,\quad
\frakL(\bfv,\bfw)=\prod_{r=1}^s\frakL(v_r,w_r),$$
$$\frakM(\bfw)=\bigsqcup_{\bfv\in(\bbN I)^s}\frakM(\bfv,\bfw)
,\quad
\frakL(\bfw)=\bigsqcup_{\bfv\in(\bbN I)^s}\frakL(\bfv,\bfw).$$
We'll also need the tautological vector bundles on $\frakM(w)$
$$\calV=\bigoplus_{i\in I}\calV_i
,\quad
\calW=\bigoplus_{i\in I}\calW_i.$$

\subsubsection{The cohomology of quiver varieties}\label{sec:cohquiv}
The $G_w$-varieties $\frakM(v,w)$, $\frakL(v,w)$ are equivariantly formal with
even Borel-Moore homology.
Set $d_{v,w}=\frac{1}{2}\dim \frakM(v,w)$ and $d_v=d_{v,v}$.
We have
\begin{align}\label{pairing1}d_{v,w}=v\cdot w-\frac{1}{2}(v,v)_Q
,\quad
v\cdot w=\sum_{i\in I}v_iw_i
,\quad
(v,w)_Q=\sum_{i,j\in I}\bfc_{i,j}v_iw_j
\end{align}
where 
$\bfc_{i,j}=2\delta_{i,j}-\sharp\{\alpha:i\to j\,;\,\alpha\in\overline Q_1\}.$
Set
$$F_{v,w,R_{w,S}}=H_\bullet^{G_w\times S}(\frakM(v,w),\bbQ)
,\quad
F_{v,w,R_{w,S}}^\vee=H_\bullet^{G_w\times S}(\frakL(v,w),\bbQ).$$
We equip $F_{v,w,R_{w,S}}$ and $F_{v,w,R_{w,S}}^\vee$ with the $\bbZ$-gradings such that
\begin{align}\label{cohdeg}
F_{v,w,l,R_{w,S}}=H^{G_w\times S}_{2d_{v,w}-2l}(\frakM(v,w),\bbQ)
,\quad
F_{v,w,l,R_{w,S}}^\vee=H^{G_w\times S}_{2d_{v,w}-2l}(\frakL(v,w),\bbQ).
\end{align}
Taking the sum over all $v$'s, we get the following $\bbZ I\times\bbZ$-graded $R_{w,S}$-modules
$$F_{w,R_{w,S}}=\bigoplus_{v\in\bbN I}F_{v,w,R_{w,S}}
,\quad
F_{w,R_{w,S}}^\vee=\bigoplus_{v\in\bbN I}F_{v,w,R_{w,S}}^\vee
.$$
We'll call $v$ the weight and $l$ the cohomological degree.
We'll abbreviate 
\begin{itemize}[label=$\mathrm{(\alph*)}$,leftmargin=8mm,itemsep=1.2mm]
\item[-]
$\wt(x)=$ weight of $x$,
\item[-]
$\deg(x)=$ cohomological degree of $x$.
\end{itemize}
The vacuum vectors are the fundamental classes
$$\phi_w=[\frakM(0,w)]
,\quad
\phi_w^\vee=[\frakL(0,w)].$$
For any $s$-tuples $\bfv=(v_1,\dots, v_s)$  and $\bfw=(w_1,\dots, w_s)$
of elements of $\bbN I$ we write
$$R_{\bfw,S}=\bigotimes_{r=1}^sR_{w_r,S}
,\quad
K_{\bfw,S}=K_{R_{\bfw,S}},$$
$$
F_{\bfv,\bfw,R_{\bfw,S}}=\bigotimes_{r=1}^sF_{v_r,w_r,R_{w_r,S}}
,\quad
F_{\bfv,\bfw,R_{\bfw,S}}^\vee=\bigotimes_{r=1}^sF_{v_r,w_r,R_{w_r,S}}^\vee,
$$
$$
F_{\bfw,R_{\bfw,S}}=\bigoplus_{\bfv\in(\bbN I)^s}F_{\bfv,\bfw,R_{\bfw,S}}
,\quad
F_{\bfw,R_{\bfw,S}}^\vee=\bigoplus_{\bfv\in(\bbN I)^s}F_{\bfv,\bfw,R_{\bfw,S}}^\vee,
$$
$$\phi_\bfw=\bigotimes_{r=1}^s\phi_{w_r}
,\quad
\phi_\bfw^\vee=\bigotimes_{r=1}^s\phi_{w_r}^\vee.$$
We equip  $F_{\bfw,R_{\bfw,S}}$ and $F_{\bfw,R_{\bfw,S}}^\vee$
with the product $\bbZ I\times\bbZ$-grading.
We define the dual $R_{\bfw,S}$-module of $F_{\bfw,R_{\bfw,S}}$ to be
$$(F_{\bfw,R_{\bfw,S}})^*=\bigoplus_{\bfv\in(\bbN I)^s}(F_{\bfv,\bfw,R_{\bfw,S}})^*
=\bigoplus_{\bfv\in(\bbN I)^s}\Hom_{R_{\bfw,S}}(F_{\bfv,\bfw,R_{\bfw,S}}, R_{\bfw,S})$$ 
We define $(F_{\bfw,R_{\bfw,S}}^\vee)^*$ similarly.
Let $\hbar:S\to\bbG_m$ be the weight of the symplectic form on $\frakM(\bfw)$.
Unless specified otherwise we'll assume that the restriction of 
$\hbar$ to the subtorus $S\subset T$ is non trivial.

\begin{lemma}\label{lem:F}
\hfill
\begin{enumerate}[label=$\mathrm{(\alph*)}$,leftmargin=8mm,itemsep=1.2mm]
\item
$F_{\bfw,R_{\bfw,S}}$ and $F_{\bfw,R_{\bfw,S}}^\vee$ are free $\bbZ I\times\bbZ$-graded
$R_{\bfw,S}$-modules of finite rank such that
$$F_{\bfv,\bfw,R_{\bfw,S}}=F_{\bfv,\bfw,R_\bfw}\otimes_RR_S
,\quad
F_{\bfv,\bfw,R_{\bfw,S}}^\vee=F_{\bfv,\bfw,R_\bfw}^\vee\otimes_RR_S.$$
\item
The cup-product on $\frakM(\bfv,\bfw)$ yields a perfect pairing of 
 $\bbZ$-graded $R_{\bfw,S}$-modules
\begin{align}\label{PP1}
F_{\bfv,\bfw,R_{\bfw,S}}\times F_{\bfv,\bfw,R_{\bfw,S}}^\vee\to R_{\bfw,S}.
\end{align}
\item 
The K\"unneth formula holds, i.e., the 
obvious maps yield the isomorphisms
$$H_\bullet^{G_\bfw\times S}(\frakM(\bfw),\bbQ)=F_{\bfw,R_{\bfw,S}}
,\quad
H_\bullet^{G_\bfw\times S}(\frakL(\bfw),\bbQ)=F_{\bfw,R_{\bfw,S}}^\vee.$$
\item
The pushforward by the closed embedding
$\frakL(\bfw)\subset\frakM(\bfw)$ yields an $R_{\bfw,S}$-linear map
\begin{align}\label{iota1}
\iota:F_{\bfw,R_{\bfw,S}}^\vee\to F_{\bfw,R_{\bfw,S}}
\end{align} 
such that $\phi_\bfw^\vee\mapsto\phi_\bfw$.
This map is injective and generically invertible over $R_{\bfw,S}$.
\end{enumerate}
\end{lemma}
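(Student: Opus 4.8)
The plan is to derive all four assertions from three inputs: the equivariant formality of $\frakM(v,w)$ and $\frakL(v,w)$ recorded above, Poincar\'e duality on the smooth varieties $\frakM(\bfv,\bfw)$ together with Nakajima's deformation retraction of $\frakM(\bfv,\bfw)$ onto the compact core $\frakL(\bfv,\bfw)$, and the localization theorem for the $\bbG_m$-action generated by the cocharacter $\gamma$. For part (a): since $\frakM(v,w)$ is a smooth variety of finite type and $\frakL(v,w)$ is projective, their rational Borel--Moore homology is finite dimensional, so equivariant formality with even homology makes $H_\bullet^{G_w\times S}(\frakM(v,w),\bbQ)$ and $H_\bullet^{G_w\times S}(\frakL(v,w),\bbQ)$ free $R_{w,S}$-modules of finite rank, on any lift of a $\bbQ$-basis of the non-equivariant homology. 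For fixed $v$ the cohomological grading is bounded below, since the equivariant Borel--Moore homology of the smooth variety $\frakM(v,w)$, of real dimension $4d_{v,w}$, vanishes in Borel--Moore degrees exceeding $4d_{v,w}$, which forces the cohomological degree to be $\geqslant -d_{v,w}$, and $\frakL(v,w)$ sits inside $\frakM(v,w)$. The base-change formula $F_{\bfv,\bfw,R_{\bfw,S}}=F_{\bfv,\bfw,R_\bfw}\otimes_RR_S$ is then automatic for free modules, equivariant formality ensuring that restriction of scalars along $R_\bfw\to R_{\bfw,S}$ computes $S$-equivariant homology; summing over $v$ or over $\bfv$ yields the stated $\bbZ I\times\bbZ$-graded modules. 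Part (c) is immediate given (a): the equivariant K\"unneth formula (over the common factor $R_S$) holds because each factor is, by (a), a free hence flat $R_S$-module, so the relevant Eilenberg--Moore spectral sequence over $BS$ degenerates; the same applies to $\frakL$, and one then sums over $\bfv$.

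For part (b) I would first pass to the non-equivariant setting. The cup-product pairing is $R_{\bfw,S}$-bilinear and preserves total degree, so it induces a degree-preserving $R_{\bfw,S}$-linear map $F^\vee_{\bfv,\bfw,R_{\bfw,S}}\to\Hom_{R_{\bfw,S}}(F_{\bfv,\bfw,R_{\bfw,S}},R_{\bfw,S})$ between free, finite-rank, bounded-below graded modules over the non-negatively graded ring $R_{\bfw,S}$ whose degree-zero part is $\bbQ$; by the graded Nakayama lemma it is an isomorphism once it is so after $\otimes_{R_{\bfw,S}}\bbQ$, i.e. once the non-equivariant cup-product pairing is perfect. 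For the latter, $\frakM(\bfv,\bfw)$ being smooth, Poincar\'e duality identifies $F_{\bfv,\bfw,R_{\bfw,S}}\otimes_{R_{\bfw,S}}\bbQ$ with $H^\bullet(\frakM(\bfv,\bfw),\bbQ)$; the restriction $H^\bullet(\frakM(\bfv,\bfw),\bbQ)\to H^\bullet(\frakL(\bfv,\bfw),\bbQ)$ is an isomorphism because $\frakM(\bfv,\bfw)$ deformation retracts onto $\frakL(\bfv,\bfw)$; and under these identifications, using that $\frakL(\bfv,\bfw)$ is compact so its Borel--Moore homology is ordinary homology, the cup-product pairing becomes the Kronecker pairing $H^\bullet(\frakL(\bfv,\bfw),\bbQ)\times H_\bullet(\frakL(\bfv,\bfw),\bbQ)\to\bbQ$, which is perfect over a field.

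For part (d): $R_{\bfw,S}$-linearity of $\iota$ is the projection formula for the proper pushforward along the closed embedding $\frakL(\bfw)\subset\frakM(\bfw)$, and $\phi_\bfw^\vee\mapsto\phi_\bfw$ holds because $\frakL(0,w)=\frakM(0,w)$ is a point, so $\phi_w^\vee\mapsto\phi_w$ on each factor, while $\iota$ respects the K\"unneth decompositions of (c). For generic invertibility I would invoke the localization theorem for the $\bbG_m$-action generated by $\gamma$: it commutes with $G_\bfw\times S$, its fixed locus $\frakF$ is smooth and proper (being closed in the projective variety $\frakL(\bfw)$), and $\frakL(\bfw)$ is its attracting set, so the attracting-set projection $\frakL(\bfw)\to\frakF$ is an affine bundle inducing an isomorphism on equivariant Borel--Moore homology, while restriction to $\frakF$ from the smooth $\frakM(\bfw)$ becomes an isomorphism after inverting finitely many Euler classes of normal bundles; chasing these identifications exhibits $\iota$ as an isomorphism after inverting, i.e. generically invertible over $R_{\bfw,S}$. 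Injectivity is then formal: $F^\vee_{\bfw,R_{\bfw,S}}$ is free by (a), hence torsion-free, so $\ker\iota$ is a torsion submodule of it and therefore zero.

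The only genuinely geometric inputs are Nakajima's retraction of $\frakM(\bfv,\bfw)$ onto $\frakL(\bfv,\bfw)$ and the localization picture for $\gamma$; the rest (graded Nakayama, the projection formula, flat base change, K\"unneth degeneration) is formal. I expect part (b) to require the most care, specifically in matching the cup-product pairing defined through the pair $(\frakM(\bfv,\bfw),\frakL(\bfv,\bfw))$ with the classical Poincar\'e--Lefschetz pairing on the core; the Euler-class bookkeeping in the localization argument of part (d) is the other delicate point.
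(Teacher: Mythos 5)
Your treatment of parts (a)--(c) is essentially the argument the paper attributes to Nakajima \cite{N00}: equivariant formality gives freeness and base change; the non-equivariant perfectness of the cup-product pairing follows from Poincar\'e duality on $\frakM(\bfv,\bfw)$ together with the retraction onto the compact core $\frakL(\bfv,\bfw)$, and the graded Nakayama lemma then lifts it to $R_{\bfw,S}$; K\"unneth is flatness. These are correct.

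Part (d) has a genuine gap, precisely at the localization step. You localize with respect to the one-parameter group generated by the cocharacter $\gamma$ that defines $\frakL(\bfw)$ as an attracting set. But $\gamma$ is not a cocharacter of $G_\bfw\times S$: it rescales the framing arrows $a\in Q_1'$ with nonzero weight, whereas $T$ (hence $S$) fixes them. One can write $\gamma$ as a product of a $T$-cocharacter and a central $G_w$-cocharacter, but the $T$-factor need not lie in $S$. Consequently the $\gamma$-equivariant Euler class of $N_{\frakF}\frakM(\bfw)$ lives in $H^\bullet_{G_\bfw\times S\times \bbG_m^\gamma}(\frakF)$, and the localization theorem only makes the restriction to $\frakF$ invertible after tensoring with $\bbQ(c)$, where $c$ is the $\gamma$-parameter. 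This does \emph{not} descend to generic invertibility over $R_{\bfw,S}$. Note that your argument never uses the standing hypothesis that $\hbar|_S$ is nontrivial, which is the telltale sign: without it, the statement is simply false (for $S$ trivial, $\iota$ is the non-equivariant pushforward $H_\bullet(\frakL(\bfw))\to H_\bullet(\frakM(\bfw))$, which has no reason to be injective, e.g.\ $\{0\}\hookrightarrow\bbC$).

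The paper's proof instead localizes with respect to $S$ itself (or $T_\bfw\times S$). The hypothesis $\hbar|_S\neq 1$ gives $\frakM_0(\bfw)^S=\{0\}$, because all nonconstant invariant functions on the affine cone have positive $\hbar$-weight; hence $\frakM(\bfw)^S\subset\pi^{-1}(0)=\frakL(\bfw)$, so $\frakM(\bfw)^S=\frakL(\bfw)^S$. Restriction to this common fixed locus is then an isomorphism after inverting finitely many $T_\bfw\times S$-weights that restrict nontrivially to $S$, and these inversions \emph{do} land in $K_{\bfw,S}$. That is the missing ingredient. To repair your proof, replace the $\gamma$-action by a cocharacter of $S$ with nontrivial $\hbar$-component (which exists by the standing assumption), or localize directly at the full $S$-fixed locus as the paper does; the rest of your argument (affine-bundle comparison plus freeness implies injectivity) then goes through.
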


\begin{proof}
The proof of of the lemma is given in \cite{N00}. We give some details on Part (d) for completeness.
Since the restriction of $\hbar$ to $S$ is non trivial, we have
$\frakM_0(\bfw)^S=\{0\}$, hence $\frakM(\bfw)^S=\frakL(\bfw)^S$.
Thus the map $F_{w,R_S}^\vee\to F_{w,R_S}$ becomes invertible after inverting finitely many weights of 
$T_\bfw\times S$ which restrict non trivially to $S$.
We deduce that the map $\iota$ is injective and generically invertible over $R_{\bfw,S}$.
\end{proof}

From \eqref{PP1}, we deduce that as a graded $R_{\bfw,S}$-module we have
\begin{align}\label{*v1}
F_{\bfv,\bfw,R_{\bfw,S}}^\vee=(F_{\bfv,\bfw,R_{\bfw,S}})^*.
\end{align}
We define
$$A_{\bfw,R_{\bfw,S}}=(F_{\bfw,R_{\bfw,S}})^*\otimes_{R_{\bfw,S}}F_{\bfw,R_{\bfw,S}}
,\quad
A_{\bfw,R_{\bfw,S}}^\vee=(F_{\bfw,R_{\bfw,S}}^\vee)^*\otimes_{R_{\bfw,S}}F_{\bfw,R_{\bfw,S}}^\vee.$$
From \eqref{*v1}, we deduce that
$$A_{\bfw,R_{\bfw,S}}=F_{\bfw,R_{\bfw,S}}^\vee\otimes_{R_{\bfw,S}}F_{\bfw,R_{\bfw,S}}
,\quad
A_{\bfw,R_{\bfw,S}}^\vee=F_{\bfw,R_{\bfw,S}}\otimes_{R_{\bfw,S}}F_{\bfw,R_{\bfw,S}}^\vee,$$
hence, the transpose relatively to the pairing \eqref{PP1} yields the $R_{\bfw,S}$-module isomorphism
\begin{align}\label{T1}
(-)^\T:A_{\bfw,R_{\bfw,S}}\to A_{\bfw,R_{\bfw,S}}^\vee.
\end{align}
Let $\bfw\bfw'$ denote the tuple
given by glueing the tuples $\bfw$ and $\bfw'$.
We have
$$F_{\bfw,R_{\bfw,S}}\otimes_RF_{\bfw',R_{\bfw',S}}=F_{\bfw\bfw',R_{\bfw\bfw',S}}
,\quad
A_{\bfw,R_{\bfw,S}}\otimes_RA_{\bfw',R_{\bfw',S}}=A_{\bfw\bfw',R_{\bfw\bfw',S}}.$$

\begin{remark}\label{rem:RS}
We'll also need the following $R_S$-modules
$$F_{v,w,R_S}=H_\bullet^S(\frakM(v,w),\bbQ)
,\quad
F_{\bfv,\bfw,R_S}=\bigotimes_{r=1}^sF_{v_r,w_r,R_S}
,\quad
F_{\bfw,R_S}=\bigoplus_{\bfv\in(\bbN I)^s}F_{\bfv,\bfw,R_S}.
$$
We define
$F_{v,w,R_S}^\vee$, 
$F_{\bfv,\bfw,R_S}^\vee$ and $F_{\bfw,R_S}^\vee$ similarly.
They are free $R_S$-modules.
The pushforward by the closed embedding
$\frakL(\bfw)\subset\frakM(\bfw)$ yields an injective $R_S$-linear map
\begin{align}\label{iota2}
\iota:F_{\bfw,R_S}^\vee\to F_{\bfw,R_S}
\end{align}
which is generically invertible over $R_S$.
We also set
$$(F_{\bfw,R_S})^*=\bigoplus_{\bfv\in(\bbN I)^s}(F_{\bfv,\bfw,R_S})^*
=\bigoplus_{\bfv\in(\bbN I)^s}\Hom_{R_S}(F_{\bfv,\bfw,R_S}, R_S),$$ 
The cup-product on $\frakM(\bfv,\bfw)$ yields a perfect pairing of 
 $\bbZ I\times\bbZ$-graded $R_S$-modules
\begin{align}\label{PP2}
F_{\bfv,\bfw,R_S}\times F_{\bfv,\bfw,R_S}^\vee\to R_S
\end{align}
which identifies the $\bbZ I\times\bbZ$-graded $R_S$-module $F_{\bfv,\bfw,R_S}^\vee$ with the dual of $F_{\bfv,\bfw,R_S}$.
We define
\begin{align*}
A_{\bfw,R_S}&=(F_{\bfw,R_S})^*\otimes_{R_S}F_{\bfw,R_S}=F_{\bfw,R_S}^\vee\otimes_{R_S}F_{\bfw,R_S}
,\\
A_{\bfw,R_S}^\vee&=(F_{\bfw,R_S}^\vee)^*\otimes_{R_S}F_{\bfw,R_S}^\vee
=F_{\bfw,R_S}\otimes_{R_S}F_{\bfw,R_S}^\vee.
\end{align*}
The transpose relatively to the pairing \eqref{PP2} yields the $R_S$-module isomorphism
\begin{align}\label{T2}(-)^\T:A_{\bfw,R_S}\to A_{\bfw,R_S}^\vee.\end{align}

\end{remark}

\medskip

\subsection{The cohomological Hall algebras}\label{sec:Hall}
This section is a reminder from \cite{SV18}.
Let 
$$\X_\Pi^\vee(v)\subset\X_\Pi(v)$$ 
be the set of $v$-dimensional 
representations of the preprojective algebra of $Q$ and
the Lusztig nilpotent subvariety.
Let $\calX_\Pi(v)$ and $\calX_\Pi^\vee(v)$ be the quotient stacks 
$$\calX_\Pi(v)=\X_\Pi(v)/G_v
,\quad
\calX_\Pi^\vee(v)=\X_\Pi^\vee(v)/G_v.$$
By \cite[\S 4.1]{SV18}, we have
\begin{align}\label{frakL}
\frakL(v,w)=(\overline\X(v,w)_s\cap(\X_\Pi^\vee(v)\times\Hom_I(W,V)))\,/\,G_v.
\end{align}
We define
$$\Y_{R_S}=\bigoplus_{v\in\bbN I}\bigoplus_{l\in\bbN}\Y_{-v,l,R_S},
\quad
\Y_{-v,l,R_S}=H^{G_v\times S}_{2d_v-2l}(\X_\Pi(v),\bbQ)
.$$
We also consider the nilpotent version $\Y_{R_S}^\vee$ of $\Y_{R_S}$
such that
$$\Y_{R_S}^\vee=\bigoplus_{v\in\bbN I}\bigoplus_{l\in\bbN}\Y^\vee_{-v,l,R_S},
\quad
\Y^\vee_{-v,l,R_S}=H^{G_v\times S}_{2d_v-2l}(\X_\Pi^\vee(v),\bbQ).$$
We'll call $v$ the weight and $l$ the cohomological grading.
The $\bbZ I\times\bbZ$-graded $R_S$-modules $\Y_{R_S}$ and
$\Y_{R_S}^\vee$ are equipped with an associative multiplication
given by the convolution relative to the diagram of stacks
$$\xymatrix{ \calX_\Pi(v_1) \times \calX_\Pi(v_2) & 
\widetilde\calX_\Pi(v_1,v_2) \ar[l]_-{q} \ar[r]^-{p} & \calX_\Pi(v_1+v_2)}$$
and its nilpotent version. Here 
$\widetilde\calX_\Pi(v_1,v_2)$ is the stack parametrizing nested pairs
 of representations of $\Pi$ of  dimensions $v_1$ and $v_1+v_2$.
See \cite[\S 5]{SV18} for details. 
Here we consider the multiplication opposite to the one used in loc. cit., because
the stability condition for quiver varieties used in \cite{SV18} is opposite to the one used here.

\medskip

Let us give another interpretation of the multiplication.
To do that, following \cite{VV22}, we consider the convolution diagram of derived Artin stacks 
\begin{align}\label{indiag}
	\xymatrix{\frakX_\Pi(v_1) \times \frakX_\Pi(v_2) & \widetilde\frakX_\Pi(v_1,v_2) \ar[l]_-{q} \ar[r]^-{p} & \frakX_\Pi(v_1+v_2)}
\end{align}
Here $\frakX_\Pi(v_a)$ is the derived Hamiltonian reduction of the moment map
$\X_{\overline Q}(v_a)\to\text{Lie}(G_{v_a})^*$ as in \cite{VV22} for $a=1,2$, 
see also \cite{P12},
and $\widetilde\frakX_\Pi(v_1,v_2)$
is the derived Artin stack of nested pairs which can be defined as follows.
Let $\calE_a$ be the tautological sheaf over $\frakX_\Pi(v_a)$.
It is an $I$-graded vector bundle whose fibers carries a representation of $\Pi$.
The complex over $\frakX_\Pi(v_1) \times \frakX_\Pi(v_2)$
\begin{align}\label{C}\calC=\RHom_\Pi(\calE_1,\calE_2)[1]
\end{align}
has perfect amplitude $[-1,1]$.
Its total space is the derived Artin stack over $\frakX_\Pi(v_1) \times \frakX_\Pi(v_2)$
$$q:\widetilde\frakX_\Pi(v_1,v_2)=\Spec(\Sym(\calC^*))\to\frakX_\Pi(v_1) \times \frakX_\Pi(v_2)$$
Its classical truncation is the stack $\widetilde\calX_\Pi(v_1,v_2)$ above, i.e., we have
$$\widetilde\calX_\Pi(v_1,v_2)=\Spec(\Sym(\tau_{\leqsl 0}\calC^*))|_{\calX_\Pi(v_1) \times \calX_\Pi(v_2)},$$
where $\tau_{\leqsl 0}\calC$ is the truncation of the complex $\calC$ of perfect amplitude $[-1,0]$.
This Artin stack classifies the short exact sequences of $\Pi$-modules 
$0\to x_1\to x\to x_2\to 0$ with $x_a$ of dimension $v_a$.
A morphism of derived Artin stacks is quasi-smooth if its relative cotangent complex has perfect amplitude
$[-1,1]$. Hence $q$ is quasi-smooth.
Thus we can apply the formalism of \cite{K19}, see also \cite{PY22}.

Recall that for any derived Artin stack $\frakX$ there is a stable $\infty$-category $\mathrm{Sh}_{\bbQ}(\frakX)$ of 
constructible $\bbQ$-sheaves on $\frakX$ which satisfies a six-functor formalism. 
The dualizing complex is defined as $\mathbb{D}_\frakX=a^!\bbQ$ where 
$a: \frakX \to \Spec(\mathbb{C})$ is the structural map. The sheaf of Borel-Moore chains on $\frakX$ is 
$a_*\mathbb{D}_\frakX$ in $\mathrm{Sh}_\bbQ(\Spec(\bbC)) = D(\bbQ\text{-mod}).$
The Borel-Moore homology is obtained by taking derived global sections 
$H_l(\frakX,\bbQ)=H^{-l}(a_*\bbD_\frakX)$. 
It satisfies the usual properties, see \cite[\S 2]{K19}. 
Borel-Moore homology is insensitive to the derived structure, in the sense that the direct image map 
$H_\bullet(\frakX^{cl},\bbQ) \to H_\bullet(\frakX,\bbQ)$ is invertible. 
For any quasi-smooth morphism $f: \frakX \to \frakY$ of dimension $d$
there is a Gysin map 
$f^!: H_l(\frakY,\bbQ) \to H_{l+2d}(\frakX,\bbQ)$
called the virtual pullback.

Going back to \eqref{indiag}, the map $q$ is quasi-smooth
and we define the multiplication in $\Y_{R_S}$ to be the composed map
$p_*\circ q^!$. By definition we have
$$\Y_{-v,l,R_S}=H^{S}_{-(v,v)_Q-2l}(\calX_\Pi(v),\bbQ)$$
where the pairing $(-,-)_Q$ is as in \eqref{pairing1}.
The multiplication of  $\Y_{R_S}^\vee$ is built as above from the induction diagram
$$\xymatrix{\frakX_\Pi^\vee(v_1) \times \frakX_\Pi^\vee(v_2) & 
	\widetilde\frakX_\Pi^\vee(v_1,v_2) \ar[l]_-{q^\vee} \ar[r]^-{p^\vee} & \frakX_\Pi^\vee(v_1+v_2)}$$
where $\frakX_\Pi^\vee(v_a)\subset\frakX_\Pi(v_a)$ and 
$\widetilde\frakX_\Pi^\vee(v_1,v_2)\subset \widetilde\frakX_\Pi(v_1,v_2)$ are the closed derived Artin substacks
of nilpotent representations. Since the map $q^\vee$ is obtained from $q$ by base change, it is also quasi-smooth, and of 
the same virtual dimension.

\medskip

For a future use, let us quote the following result.

\begin{lemma} The multiplication on $\Y_{R_S}$ and $\Y_{R_S}^\vee$
is homogeneous of degree 0 for the $\bbZ I\times\bbZ$-grading.
\end{lemma}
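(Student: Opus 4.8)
The plan is to check that both factors in the convolution product $p_*\circ q^!$ preserve the $\bbZ I\times\bbZ$-grading, the $\bbZ I$-part being clear from additivity of dimension vectors, so that the real content is the cohomological degree. First I would recall the grading conventions: by definition $\Y_{-v,l,R_S}=H^{S}_{-(v,v)_Q-2l}(\calX_\Pi(v),\bbQ)$, so an element of weight $-v$ and degree $l$ sits in Borel-Moore homological degree $-(v,v)_Q-2l$. Thus for the multiplication map
\[
H^{S}_{-(v_1,v_1)_Q-2l_1}(\calX_\Pi(v_1))\otimes H^{S}_{-(v_2,v_2)_Q-2l_2}(\calX_\Pi(v_2))\to H^{S}_{-(v,v)_Q-2l}(\calX_\Pi(v))
\]
with $v=v_1+v_2$, homogeneity of degree $0$ amounts to the identity $l=l_1+l_2$, i.e.\ the homological degree must shift by exactly $-(v,v)_Q+(v_1,v_1)_Q+(v_2,v_2)_Q = -2(v_1,v_2)_Q$ where the pairing is symmetrized.

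\textbf{Main computation.} The Künneth/external product $H^{S}_a(\frakX_\Pi(v_1))\otimes H^{S}_b(\frakX_\Pi(v_2))\to H^{S}_{a+b}(\frakX_\Pi(v_1)\times\frakX_\Pi(v_2))$ adds homological degrees. The map $p$ is (representable, proper on the relevant loci and) of relative dimension $0$: it is a stacky modification coming from forgetting the sub-object, so $p_*$ preserves Borel-Moore homological degree. The one map that shifts degree is the virtual pullback $q^!$: since $q$ is quasi-smooth of virtual relative dimension $d$, one has $q^!:H_l\to H_{l+2d}$. It therefore remains to compute $d=\dim(q)=\dim(\widetilde\frakX_\Pi(v_1,v_2))-\dim(\frakX_\Pi(v_1)\times\frakX_\Pi(v_2))$, which by the description $\widetilde\frakX_\Pi(v_1,v_2)=\Spec(\Sym(\calC^*))$ equals the (virtual) rank of the complex $\calC=\RHom_\Pi(\calE_1,\calE_2)[1]$. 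The Euler form on $\Pi$-modules (equivalently, the expression for the virtual dimension of $\frakX_\Pi$ as a derived Hamiltonian reduction, as in \cite{VV22}, \cite{P12}) gives $\chi_\Pi(v_1,v_2) = -(v_1,v_2)_Q$ up to the convention, hence $\operatorname{rk}\calC = (v_1,v_2)_Q$ after the shift by $[1]$, so $2d = 2(v_1,v_2)_Q$, matching exactly the required shift computed above.

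\textbf{Writing it out.} Concretely I would: (i) cite \cite[\S 5]{SV18} or recompute that $\dim\frakX_\Pi(v)$ equals $-(v,v)_Q$ as a virtual dimension (this is where the minus sign and the factor enter — the cotangent-to-stack picture contributes $-(v,v)_Q$); (ii) using \eqref{C}, read off $\dim(q)=\operatorname{rk}\tau_{\leqsl 0}\calC^{*}$ — but since virtual pullback only depends on the virtual rank, use the full $\calC$, getting $\dim(q) = -\chi(\RHom_\Pi(\calE_1,\calE_2)) + 0$ accounting for the $[1]$-shift, which unwinds to $(v_1,v_2)_Q$; (iii) assemble: external product adds degrees, $q^!$ shifts by $+2(v_1,v_2)_Q$, $p_*$ is degree-preserving, and then re-express the target homological degree as $-(v,v)_Q-2l$ to read off $l=l_1+l_2$. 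The nilpotent case $\Y_{R_S}^\vee$ is identical: $q^\vee$ is obtained from $q$ by base change along a closed substack, hence has the same virtual dimension, and $p^\vee$ is again of relative dimension $0$. The $\bbZ I$-grading is preserved because all stacks in sight are indexed by dimension vectors that add under the convolution.

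\textbf{Main obstacle.} The only delicate point is bookkeeping of signs and the precise virtual-dimension formula for the derived stack $\frakX_\Pi(v)$ and for $q$: one must make sure the conventions for $(-,-)_Q$, the shift $[1]$ in \eqref{C}, and the definition of homological degree in \eqref{cohdeg} all line up so that the shift produced by $q^!$ is exactly $2(v_1,v_2)_Q$ and not, say, $2(v_1,v_2)_Q$ plus an error term from the framing or from the difference between $\operatorname{rk}\tau_{\leqsl 0}\calC$ and $\operatorname{rk}\calC$ (these differ by $\operatorname{rk}\calH^{1}(\calC)$, but virtual pullback is insensitive to this). Since all of these are already fixed in \cite{SV18} and \cite{VV22}, the verification is mechanical, and I would simply invoke those references for the dimension count and record the resulting degree identity.
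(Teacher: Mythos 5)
Your approach matches the paper's own proof: the paper dismisses the combinatorial check from \cite[\S 5.1]{SV18} in one sentence and then reduces to the observation that the virtual rank of the relative cotangent complex of $q$, i.e.\ the Euler characteristic of $\calC=\RHom_\Pi(\calE_1,\calE_2)[1]$, equals $(v_1,v_2)_Q$, citing \cite[prop.~3.1]{SV18}. That is exactly your step (ii)–(iii), so the strategy is the same.

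Two small points in your write-up deserve care. First, the reason $p_*$ preserves Borel--Moore degree is simply that $p$ is proper — proper pushforward always preserves the BM degree, regardless of relative dimension — so the parenthetical claim that $p$ has relative dimension~$0$ is both unnecessary and doubtful (the fibre of $p$ over a point of $\frakX_\Pi(v)$ is a moduli of $\Pi$-submodules, which need not be $0$-dimensional). Second, there is an internal sign slip: you correctly derive that the required shift in stack homological degree is $-2(v_1,v_2)_Q$, but then conclude with $2d=+2(v_1,v_2)_Q$ and declare it ``matching exactly.'' The identity you actually need is $2\operatorname{rk}\mathbb{L}_q = -2(v_1,v_2)_Q$, i.e.\ $\operatorname{rk}(\calC)=-(v_1,v_2)_Q$ in the symmetric-pairing convention of \eqref{pairing1}; the $[1]$-shift in \eqref{C} is precisely what produces this minus sign from the Euler form of $\RHom_\Pi$. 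You flagged the sign bookkeeping as the delicate point — and indeed it is — but a final proof should pin it down rather than present two contradictory values.
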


\begin{proof}
This can be shown using the combinatorics in \cite[\S 5.1]{SV18}.
Alternatively, to prove the lemma for $Y_{R_S}$ we may simply check that the virtual rank of the relative cotangent complex, i.e., the Euler characteristic of the complex $\calC$ in \eqref{C}, 
is $(v_1,v_2)_Q$.
This follows from \cite[prop~3.1]{SV18}. The same argument applies for $Y_{R_S}^\vee$.
\end{proof}

We now consider larger algebras than $\Y_{R_S}$ and $\Y_{R_S}^\vee$.
We define the $\bbZ I\times \bbZ$-graded $R_S$-algebra 
\begin{align}\label{Rinfty}
R_{\infty,S}=R_S[p_{i,l}\,;\,i\in I\,,\,l\in\bbN]
\end{align}
where $p_{i,l}$ is of degree $(i,l)$. Note that we allow the degree $l$ to be 0.
Identifying the tautological bundle $\calW_i$ with the $i$th standard representation of the affine 
group $G_w$, we may
view the characteristic class $\ch_l(\calW_i)$ as an element in $R_{w,S}$  for each
$i\in I$, $l\in\bbN$ and $w\in\bbN I$.
There is a unique $R_S$-algebra homomorphism 
\begin{align}\label{Rinftyw}
R_{\infty,S}\to R_{w,S}
,\quad
p_{i,l}\mapsto\ch_l(\calW_i).
\end{align}
We define $\overline\Y_{R_S}$ and $\overline\Y_{R_S}^\vee$ to be the $R_S$-algebras
\begin{align}\label{oYY}
\overline\Y_{R_S}=\Y_{R_S}\otimes_{R_S} R_{\infty,S}
,\quad
\overline\Y_{R_S}^\vee=\Y_{R_S}^\vee\otimes_{R_S} R_{\infty,S}
\end{align}
Here the subalgebras $\Y_{R_S}$ and $\Y_{R_S}^\vee$ commute with $R_{\infty,S}$.
For each $w\in\bbN I$, the map \eqref{Rinftyw} yields an $R_S$-algebra homomorphism
\begin{align}\label{YY}\overline\Y_{R_S}^\vee\to\Y_{R_S}^\vee\otimes_{R_S} R_{w,S}.\end{align}
It is proved in \cite[\S 5.6]{SV18} that there are $R_S$-algebra homomorphisms
\begin{align}\label{rhoCOHA}
\begin{split}
\rho_w:\overline\Y_{R_S}\to A_{w,R_{w,S}}
,\quad
\rho_w^\vee:\overline\Y_{R_S}^\vee\to A_{w,R_{w,S}}^\vee
\end{split}
\end{align}
yielding representations on $F_{w,R_{w,S}}$ and $F_{w,R_{w,S}}^\vee$ respectively,
with a
$\overline\Y_{R_S}^\vee$-module homomorphism 
$$\iota:F_{w,R_{w,S}}^\vee\to F_{w,R_{w,S}}$$
such that $\phi_w^\vee\mapsto\phi_w$. 
The element $p_{i,l}\in R_{\infty,S}$ 
acts on $F_{w,R_{w,S}}^\vee$ and $F_{w,R_{w,S}}$ through the multiplication with
the Chern class $\ch_l(\calW_i)$ of the tautological bundle $\calW_i$.
Let us briefly recall the definition of the action of $\Y_{R_S}$ and $\Y_{R_S}^\vee$ on $F_{w,R_{w,S}}$ and $F^\vee_{w,R_{w,S}}$.
There is an induction diagram
$$\xymatrix{{\frakX}_\Pi(v_1) \times \frakX_\Pi(v_2,w) & \widetilde{\frakX}_\Pi(v_1,v_2,w) \ar[l]_-{q'} \ar[r]^-{p'} & \frakX_\Pi(v_1+v_2,w)}$$
where $\frakX_\Pi(v,w)=R\mu_{v,w}^{-1}(0)$ is the derived Artin stack parametrizing framed representations of the preprojective algebra of $Q$ of dimension $(v,w)$, and where $\widetilde{\frakX}_\Pi(v_1,v_2,w)$ is the derived Artin stack parametrizing nested framed representations $x_1 \subset x$ with $x_1$ of dimension $(v_1,0)$ and $x_2:=x/x_1$ of dimension $(v_2,w)$. As before, the morphism $q'$ is quasi-smooth and $p'$ is proper. Since the stability is an open condition, there are open embeddings $\frakM(v,w) \to \frakX_\Pi(v,w)$. 
The derived fiber product
$$\widetilde{\frakM}(v_1,v_2,w)= \widetilde{\frakX}_\Pi(v_1,v_2,w) \underset{\frakX_\Pi(v_1+v_2,w)}{\times} \frakM(v_1+v_2,w)$$
parametrizes nested representations $x_1 \subset x$ for which $x$ is stable. Note that this automatically implies that $x_2$ is stable as well, because a quotient of a cyclic module remains cyclic. This results in a diagram
$$\xymatrix{\frakX_\Pi(v_2) \times \frakM(v_2,w) \ar[d] & \widetilde{\frakM}(v_1,v_2,w) \ar[l]_-{q} \ar[r]^-{p} \ar[d]& \frakM(v_1+v_2,w)\ar[d]\\
{\frakX}_\Pi(v_2) \times \frakX_\Pi(v_1,w) & \widetilde{\frakX}_\Pi(v_1,v_2,w) \ar[l]_-{q'} \ar[r]^-{p'} & \frakX_\Pi(v_1+v_2,w)}$$
in which all the vertical arrows are open embeddings. The right square is cartesian by construction. The left 
square isn't, but there is a factorization $$ \widetilde{\frakM}(v_1,v_2,w) \to (\frakX_\Pi(v_2) \times \frakM(v_2,w)) \underset{\frakX_\Pi(v_1) \times \frakX_\Pi(v_2,w)}{\times} \widetilde{\frakX}_\Pi(v_1,v_2,w) \to \widetilde{\frakX}_\Pi(v_1,v_2,w)$$
in which the first map is an open embedding. It follows by base change that $q$ is quasi-smooth as well. We now define an action of $\Y_{R_S}$ on $F_{w,R_{w,s}}$ by the formula $p_*q^!$.
The commutativity of the above diagram ensures that this indeed defines an action. 
The same argument works for the nilpotent versions.
 
\medskip

The representation of the $R_S$-algebra $\overline\Y_{R_S}^\vee$ on $F_{w,R_{w,S}}^\vee$ yields an 
$R_{w,S}$-linear map called the evaluation map
$$\ev_w^\vee:\overline\Y_{R_S}^\vee\to F_{w,R_{w,S}}^\vee
,\quad
1\mapsto \phi_w^\vee$$ 
It factorizes through 
the map \eqref{YY}. 
We'll consider all of these representations at once. We abbreviate
$$
F_{2,R_S}^\vee=\prod_{w\in \bbN I}F_{w,R_{w,S}}^\vee
,\quad
\ev_2^\vee=\prod_{w\in \bbN I} \ev_w^\vee : \overline\Y_{R_S}^\vee\to F_{2,R_S}^\vee.$$
When $S=T$ we'll omit the subscript $T$ everywhere. For instance, we set
$$R_\bfw=R_{\bfw,T}
,\quad
F_{\bfw,R_\bfw}=F_{\bfw,R_{\bfw,T}}
,\quad
R=R_T
,\quad
R_\infty=R_{\infty,T}$$
$$\overline\Y_R^\vee=\overline\Y_{R_T}^\vee
,\quad
\Y_R^\vee=\Y_{R_T}^\vee
,\quad
\Y^\vee_{v,k,R}=\Y^\vee_{v,k,R_T}
,\quad
\text{etc.}$$ 
We equip the set $\bbN I$ with the partial order given by
$$w'\,\geqsl\, w\iff w'_i\,\geqsl\, w_i,\,\forall i\in I.$$
We write 
\begin {align}\label{large}w\gg 0
\iff  w_i>n,\,\forall i\in I\end{align}
for some large enough integer $n$.
Then, we'll say that $w$ is large enough. 
Let 
$$I_w=(p_{i,0}-w_i\,;\,i\in I)$$
be the ideal in $R_{\infty,S}$ generated by the elements $p_{i,0}-w_i$ for all $i\in I$. 
Recall that
$$\overline\Y_{R_S}^\vee\otimes_{R_S} K_S=\overline\Y_{K_S}^\vee
,\quad 
\overline\Y_{R_S}\otimes_{R_S} K_S=\overline\Y_{K_S}.$$

\begin{proposition}\label{prop:COHA}
Let $v,w\in\bbN I$ and $l\in\bbN$.
\hfill
\begin{enumerate}[label=$\mathrm{(\alph*)}$,leftmargin=8mm,itemsep=1.2mm]
\item
The map $\ev_w^\vee$ is surjective.
\item
If $w$ is large enough
the map $\ev_w^\vee$ yields an isomorphism
\begin{align}\label{YF}
\overline\Y^\vee_{-v,l,R_S}\,/\,\overline\Y_{R_S}^\vee I_w\cap\overline\Y^\vee_{-v, l,R_S}
= F_{v,w, l, R_{w,S}}^\vee.
\end{align}
\item 
The map $\ev_2^\vee$ is injective.
\item
The base change  yields $R_S$-algebra isomorphisms
$$\overline\Y_R^\vee\otimes_R R_S=\overline\Y_{R_S}^\vee
,\quad 
\Y_R^\vee\otimes_R R_S=\Y_{R_S}^\vee.$$
\item
The pushforward by the inclusion $\X_\Pi^\vee(v)\subset \X_\Pi(v)$ is an
algebra isomorphism $\overline\Y^\vee_{K_T}=\overline\Y_{K_T}$.
\item
$\overline\Y_{R_S}$ and $\overline\Y_{R_S}^\vee$ are free $R_S$-modules.
\end{enumerate}
\end{proposition}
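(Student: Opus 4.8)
My plan is to read (a)--(f) off the foundational results of \cite{SV18}, promoting everything from $R=R_T$ to an arbitrary subtorus $S\subseteq T$ by base change; only (e) needs a separate localisation input. Parts (d) and (f) come first. The varieties $\X_\Pi(v)$, $\X_\Pi^\vee(v)$ are equivariantly formal with even equivariant Borel--Moore homology, free over their equivariant cohomology rings (\cite{SV18}, built on \cite{N00}); hence for any $S$ the homology base-changes, $H^{G_v\times T}_\bullet(X)\otimes_RR_S\simeq H^{G_v\times S}_\bullet(X)$. Since the convolution products on $\Y,\Y^\vee$ are assembled from virtual pullbacks and proper pushforwards along \eqref{indiag} and its nilpotent base change --- operations independent of $S$ and compatible with base change --- these become $R_S$-algebra isomorphisms $\Y_R\otimes_RR_S=\Y_{R_S}$, $\Y^\vee_R\otimes_RR_S=\Y^\vee_{R_S}$; tensoring with $R_\infty$ and using $R_{\infty,S}=R_\infty\otimes_RR_S$, $R_{w,S}=R_w\otimes_RR_S$ gives (d). Then (f) is formal: $\Y^\vee_R$ is $R$-free, so $\Y^\vee_{R_S}=\Y^\vee_R\otimes_RR_S$ is $R_S$-free, and $\overline\Y^\vee_{R_S}=\Y^\vee_{R_S}\otimes_{R_S}R_{\infty,S}$ is $R_S$-free since $R_{\infty,S}$ is a polynomial $R_S$-algebra; similarly for $\overline\Y_{R_S}$.

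For (a), factor $\ev_w^\vee$ through the surjection \eqref{YY}, $\overline\Y^\vee_{R_S}\twoheadrightarrow\Y^\vee_{R_S}\otimes_{R_S}R_{w,S}$ --- onto because $R_w$ is generated over $R$ by the $\ch_l(\calW_i)$ (Newton's identities) --- followed by $y\otimes r\mapsto r\cdot(y\cdot\phi_w^\vee)$, which is onto by the cyclicity of $F^\vee_{w,R_{w,S}}$ over $\Y^\vee_{R_S}$ on $\phi_w^\vee$ (\cite{SV18}, base-changed by (d)). For (b): $p_{i,0}$ acts on $F^\vee_{w,R_{w,S}}$ by $\ch_0(\calW_i)=\rk(\calW_i)=w_i$, so $\ev_w^\vee$ kills $\overline\Y^\vee_{R_S}I_w$ and descends to a surjection from the quotient in the statement; since $R_{\infty,S}$ is central and $\overline\Y^\vee_{R_S}$ free over it, that quotient in weight $-v$ is $(\Y^\vee_{R_S}\otimes_{R_S}R^+_{\infty,S})_{-v}$ with $R^+_{\infty,S}=R_S[p_{i,l}:l\geq1]$, and the map factors as $(\Y^\vee_{R_S}\otimes_{R_S}R^+_{\infty,S})_{-v}\to(\Y^\vee_{R_S}\otimes_{R_S}R_{w,S})_{-v}\to F^\vee_{v,w,R_{w,S}}$. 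When $\min_iw_i\geq l$, the kernel of $R^+_{\infty,S}\to R_{w,S}$ is generated in cohomological degrees $>\min_iw_i$, so, $\Y^\vee_{-v,R_S}$ being $R_S$-flat, the first arrow is an isomorphism in cohomological degree $l$; the second is a degree-preserving surjection of graded-free $R_{w,S}$-modules, hence an isomorphism once the graded ranks of source and target agree, i.e. once the Poincaré polynomial of $\frakL(v,w)$ (the graded $R_{w,S}$-rank of $F^\vee_{v,w,R_{w,S}}$, by equivariant formality) equals the graded $R$-rank of $\Y^\vee_{-v,R}$. This holds for $w\gg0$ by the character formula \eqref{Okounkov} and the stabilisation theorem of \cite{SV18} --- the faithfulness of the COHA action in the large-$w$ limit --- and this is the step I expect to be the main obstacle; everything around it is bookkeeping.

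Granting (b), (c) follows: for all $v,l$ and $w\gg0$ one has $\Ker(\ev_w^\vee)\cap\overline\Y^\vee_{-v,l,R_S}=\overline\Y^\vee_{R_S}I_w\cap\overline\Y^\vee_{-v,l,R_S}$, so $\Ker(\ev_2^\vee)\cap\overline\Y^\vee_{-v,l,R_S}\subseteq\bigcap_{w\gg0}\overline\Y^\vee_{R_S}I_w$, which vanishes because, in a free $R_S$-basis of $\Y^\vee_{R_S}$, it is a sum of copies of $\bigcap_{w\gg0}I_wR_{\infty,S}$, and a polynomial lying in $(p_{i,0}-w_i:i\in I)$ for all $w\gg0$ vanishes on the Zariski-dense set of integral points $w\gg0$ of the $p_{\bullet,0}$-affine space over $R_S$, hence is $0$. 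Finally, for (e), choose a cocharacter $\sigma:\bbG_m\to T$ with $\langle\sigma,\wt(h)\rangle>0$ for every $h\in\overline Q_1$ (e.g. $z_\alpha(t)=t$, $z(t)=t^2$ in \eqref{action1}, giving $\alpha,\alpha^*$ weight $t$). A $\sigma$-fixed point of $\calX_\Pi(v)$ is a representation on a $\bbZ$-graded space with every arrow strictly raising the grading, hence nilpotent, so $\X_\Pi(v)^\sigma=\X_\Pi^\vee(v)^\sigma$. By the localisation theorem, pushforward from this common fixed locus identifies $H^{G_v\times T}_\bullet(\X_\Pi^\vee(v))\otimes_RK_T$ with $H^{G_v\times T}_\bullet(\X_\Pi(v))\otimes_RK_T$, compatibly with the pushforward along $\X_\Pi^\vee(v)\subset\X_\Pi(v)$ and with the convolution diagrams (the nilpotent correspondences being base changes of the full ones); hence this pushforward is an algebra isomorphism $\Y^\vee_{K_T}=\Y_{K_T}$, and tensoring with $R_\infty$ over $R_T$ yields $\overline\Y^\vee_{K_T}=\overline\Y_{K_T}$.
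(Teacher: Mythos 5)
Your treatment of (a), (c) and (f) runs parallel to the paper's: (a) is the cyclicity from \cite{SV18}, (c) follows formally from (b) by noting $\bigcap_{w\gg 0}I_w=\{0\}$, and (f) is base change plus polynomiality of $R_{\infty,S}$. Your (e) is a genuinely different argument: you choose a cocharacter of $T$ positive on every arrow of $\overline Q$ and invoke localization, whereas the paper simply cites \cite[prop.~5.2]{SV18}. This is the right idea, but stated as-is it skips two points that need care: fixed points of a $\bbG_m$-action on a quotient stack $\X_\Pi(v)/G_v$ are pairs consisting of a point and a cocharacter of $G_v$, not merely $\sigma$-fixed points of $\X_\Pi(v)$; and the subvariety you land in is the set of representations nilpotent for \emph{both} orientations, which you still need to compare with $\X_\Pi^\vee(v)$ (the Lusztig nilpotent variety) before applying localization. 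Your (d) also departs from the paper: you go directly through equivariant formality and the Eilenberg--Moore base-change $H^{G_v\times T}_\bullet\otimes_{R_T}R_S\simeq H^{G_v\times S}_\bullet$, while the paper first proves surjectivity of $\Y^\vee_R\otimes_R R_S\to\Y^\vee_{R_S}$ via spherical generation \cite[thm.~5.18]{SV18} and then matches ranks via the character formula. Your route is valid provided the $R_T$-freeness of $\Y^\vee_{R_T}$ from \cite[thm.~3.2]{SV18} is accepted as the formality input, but it is worth noting that it is not cheaper than the paper's argument, which uses the same two inputs (\cite[thm.~3.2,~5.18]{SV18}) in a more elementary way.

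The real gap is in (b), and you flagged it yourself. You reduce to showing that the surjection $(\Y^\vee_{R_S}\otimes_{R_S}R_{w,S})_{-v,l}\to F^\vee_{v,w,l,R_{w,S}}$ is an isomorphism for $w\gg 0$ by comparing graded ranks, and you attribute the rank equality to \eqref{Okounkov} together with a ``stabilisation theorem'' in \cite{SV18}. But \eqref{Okounkov} computes the character of $\Y^\vee_R$, not the Poincar\'e polynomial of $\frakL(v,w)$, and matching the two for $w\gg 0$ is precisely the content of part (b); there is no independent stabilisation result to invoke. The paper closes this circle geometrically: it introduces the stack $M(v,w)=(\X_\Pi^\vee(v)\times\Hom_I(W,V))/G_v$, whose $G_w\times S$-equivariant Borel--Moore homology is identified with $\Y^\vee_{-v,R_S}\otimes_{R_S}R_{w,S}$ by the Thom isomorphism, observes via \eqref{frakL} that $\frakL(v,w)\subset M(v,w)$ is open with complement of dimension $e_{v,w}$, and shows $d_{v,w}-e_{v,w}\to\infty$ as $w\to\infty$ because the complement maps into the locus where $W\to V$ fails to be surjective. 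The long exact sequence of the open/closed pair then gives the isomorphism in cohomological degree $l$ once $l<d_{v,w}-e_{v,w}$, with no need to know the Poincar\'e polynomial of $\frakL(v,w)$ in advance. You should replace your rank comparison with this argument (or prove the required stabilisation of $P(\frakL(v,w))$ independently, which amounts to the same thing).
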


\begin{proof}
Part (a) follows from \cite[prop.~5.19]{SV18}.
Let us concentrate on (b). 
To do that, we consider the following quotient stack
$$M(v,w)=(\X_\Pi^\vee(v)\times\Hom_I(W,V))\,/\,G_v.$$
By \eqref{frakL} the Lagrangian $\frakL(v,w)$ is an open substack of $M(v,w)$.
We abbreviate 
$$\partial\frakL(v,w)=M(v,w)\setminus\frakL(v,w)$$
Let $e_{v,w}$ be the dimension of $\partial\frakL(v,w)$.
We have
$$H_l^S(M(v,w),\bbQ)=H_l^S(\frakL(v,w),\bbQ)
,\quad
l>2e_{v,w}$$
The Thom isomorphism yields a graded $R_{w,S}$-module isomorphism
$$H_\bullet^{G_w\times S}(M(v,w),\bbQ)=H_\bullet^S(M(v,w),\bbQ)\otimes_{R_S}R_{w,S}$$
Since the $G_w\times S$-variety $\frakL(v,w)$ is formal, we also have
a graded $R_{w,S}$-module isomorphism
$$F_{v,w,R_{w,S}}^\vee=H^S_\bullet(\frakL(v,w),\bbQ)\otimes_{R_S}R_{w,S}.$$
We deduce that if $l<d_{v,w}-e_{v,w}$ the restriction from $M(v,w)$ to the open subset $\calL(v,w)$ 
yields an isomorphism
$$H_{2d_{v,w}-2l}^{G_w\times S}(M(v,w),\bbQ)=F_{v,w,l,R_{w,S}}^\vee.
$$
On the other hand, we have
\begin{align*}
\Y^\vee_{-v,l,R_S}&=H^S_{-(v,v)-2l}(\X_\Pi^\vee(v)/G_v,\bbQ)
=H^S_{2d_{v,w}-2l}(M(v,w),\bbQ)
\end{align*}
Thus the restriction yields also an isomorphism
$$(\Y^\vee_{-v,R_S}\otimes_{R_S}R_{w,S})_l=F_{v,w,l,R_{w,S}}^\vee
,\quad
l<d_{v,w}-e_{v,w}$$
Further, we have
$$d_{v,w}-e_{v,w}\geqsl\dim M(v,w)-\dim\big(\{(x,f)\in\X_\Pi^\vee(v)\times\Hom_I(W,V))\,;\,f\ \text{not\ surjective}\}\,/\,G_v\big).$$
Hence, as $w\to\infty$ we have $d_{v,w}-e_{v,w}\to\infty$.
If $w$ is large enough then we also have
$$\overline\Y^\vee_{-v,l,R_S}\,/\,\overline\Y_{R_S}^\vee I_w\cap\overline\Y^\vee_{-v,l,R_S}=
(\Y^\vee_{-v,R_S}\otimes_{R_S}R_{w,S})_l.$$ 
We deduce that the evaluation maps yield the following isomorphisms for $w$ large enough
$$\ev_w^\vee:\overline\Y^\vee_{-v,l,R_S}\,/\,\overline\Y_{R_S}^\vee I_w\cap\overline\Y^\vee_{-v,l,R_S}\to
F_{v,w,l,R_{w,S}}^\vee$$ 
Part (c) is a consequence of (b).
Let us prove Part (d).
The obvious $R_S$-linear map 
$$\Y_R^\vee\otimes_R R_S\to\Y_{R_S}^\vee$$
is surjective by \cite[thm 5.18]{SV18}. This gives
a system of generators of the right hand side. Next, observe that $\Y_{R_S}^\vee$ is a free $R_S$-module by 
\cite[thm.~3.2]{SV18}. For each $v\in \bbN I$ we set
$$M_{v,R_S}=\Y^\vee_{v,R_S}\,/\,R_{S,+}\Y^\vee_{v,R_S}$$
where $R_{S,+}$ is the augmentation ideal of $R_S$.
This is an $\bbZ$-graded $\bbQ$-vector space whose graded dimension is given by 
\cite[thm 5.4]{SV18}.
In particular, the dimension of the subspace
$M_{v, l,R_S}$ is finite and independent of the choice of the subtorus $S$ of $T$.
We have the following commutative square
$$\xymatrix{
\Y^\vee_{v,R}\otimes_RR_S\ar@{->>}[r]\ar@{->>}[d]_-\pi&\Y^\vee_{v,R_S}\ar@{->>}[d]^-{\pi_S}\\
M_{v,R}\ar[r]&M_{v,R_S}
}$$
We deduce that the lower map is surjective. Hence it is invertible because
$$\dim M_{v, l,R}=\dim M_{v, l,R_S}.$$
Since the $R_S$ modules
$\pi^{-1}(M_{v, l,R})$ and $\pi_S^{-1}(M_{v, l,R_S})$ are free of the same rank with a surjective map
$\pi^{-1}(M_{v, l,R})\to\pi_S^{-1}(\dim M_{v, l,R_S})$ they are isomorphic.
Hence the upper map in the square above is also injective.
Part (e) is \cite[prop.~5.2]{SV18}.
Part (f) is \cite[thm.~3.2]{SV18}.
\end{proof}

\bigskip

\section{Yangians}

This section is a reminder from \cite{MO19}.
We first recall some basic facts on stable envelopes and R-matrices.
Then we define the Yangian $\bfY_R$ and the Lie algebra $\frakg_R$.
We also introduce some nilpotent analogues $\bfY_R^\vee$ and $\frakg_R^\vee$.

\medskip

\subsection{Stable envelopes and $R$-matrices}\label{sec:stable+R}
\subsubsection{Stable envelopes}\label{sec:stable}
Let $G$, $S$ be affine groups.
Let 
\begin{align}\label{pi}
\pi:X\to X_0
\end{align}
 be a $G\times S$-equivariant symplectic resolution of singularities.
The map $\pi$ is projective.
The variety $X$ is smooth and quasi-projective.
We'll assume that $X$ is $G\times S$-equivariantly formal.
The variety $X_0$ is affine.
We'll assume that $G$ fixes the symplectic form and that $S$ is a torus.
Let $\hbar:S\to\bbG_m$ be the weight of the symplectic form on $X$.
Unless specified otherwise we'll assume that the restriction of 
$\hbar$ to the torus $S$ is non trivial.
Let $A\subset G\times S$ be a central torus which fixes the symplectic form.
Let $Fix$ be the set of all connected components of the fixed points locus $X^A$.
Let $\Delta\subset X^*(A)$ be the set of all weights of the normal bundles to $X^A$ in $X$.
Fix a cocharacter $\sigma$ of $A$.
Assume that $\sigma\notin\Delta$.
We say that a weight $\lambda\in X^*(A)$ is positive if $\lambda\circ\sigma$ is positive.
For each $F\in Fix$ the normal bundle $N_FX$ to $F$ in $X$ splits as a direct sum
$$N_FX=N_F^{\sigma,+}X\oplus N_F^{\sigma,-}X$$ according to the sign of the weights.
We have $N_F^{\sigma,+}X=\hbar^{-1}(N_F^{\sigma,-}X)^*$.
Hence, the bundles $N_F^{\sigma,+}X$, $N_F^{\sigma,-}X$ have the same ranks and the 
subsets of weights in $X^*(A)$ are related by the following formula
$$wt(N_F^{\sigma,+}X)=-\hbar-wt(N_F^{\sigma,-}X).$$
The leaf of a closed subset $F\subset X^A$ is the locally closed subset
of $X$ given by
$$X^{\sigma,+}_F=\{x\in X\,;\,\lim_{t\to 0}\sigma(t)\cdot x\in F\}.$$
Let $\preceq$ be the minimal partial order on the set $Fix$ such that
$F'\preceq F$ if the closure of $X^{\sigma,+}_F$ intersects $F'$.
We define the following closed subsets of $X$
\begin{align}\label{leaf}
X^{\sigma,+}_{\preceq F}=\bigsqcup_{F'\preceq F}X^{\sigma,+}_{F'}
,\quad
X^{\sigma,+}=\bigsqcup_{F\in Fix}X^{\sigma,+}_F.
\end{align}
The set $X^{\sigma,+}$ is called the attracting subvariety of $X$.
We have
$$X^{\sigma,+}=\pi^{-1}(X^{\sigma,+}_0).$$
Since $X_0$ is affine, there is a well-defined morphism $X_0^{\sigma,+}\to X_0^A$. 
Composing it with the map $\pi$ in \eqref{pi}, we get a morphism $X^{\sigma,+}\to X_0^A$.
We consider the fiber product
\begin{align}\label{A}
\calA^\sigma=X^{\sigma,+}\times_{X_0^A}X^A.
\end{align}
The set $\calA^\sigma$ is known to be a closed Lagrangian subvariety of $X\times X^A$,
where $X\times X^A$ is equipped with the antidiagonal symplectic form \cite[\S 3.5]{MO19}.
Set also
$$
\calA^\sigma_F=\calA^\sigma\cap(X\times F)
,\quad
\calA^{\sigma,+}_F=\{(x,x_0)\in X^{\sigma,+}_F\times F\,;\,\lim_{t\to 0}\sigma(t)\cdot x=x_0\}.
$$
Then, we have
$$\calA^{\sigma,+}_F\subset\calA^\sigma_F\subset X^{\sigma,+}_{\preceq F}\times F.$$ 
In particular,   if the connected component $F$ is minimal 
in the poset $Fix$ then
$$\calA^{\sigma,+}_F=\calA^\sigma_F.$$

By a polarization $\varepsilon$ we mean a choice
of a square root $\varepsilon_F\in H_A^\bullet$ of the monomial
$$(-1)^{\codim_XF/2}\eu(N_FX)|_{H_A^\bullet}$$
for each connected component $F\in Fix$.
Since $\varepsilon_F$ differs from $\eu(N_F^{\sigma,-}X)$ by some sign, we may view
the polarization as a collection of signs. We may write
$\varepsilon_F=\pm\eu(N_F^{\sigma,-}X)$.
Assume that we have fixed a splitting of $S$-equivariant vector bundles
\begin{align}\label{splitting}
TX=T^{\frac{1}{2}}X\oplus\hbar^{-1}(T^{\frac{1}{2}}X)^*.
\end{align}
Then, there is a polarization such that $\varepsilon_F$ is the product of
all $A$-weights of the vector bundle 
$$N^{\frac{1}{2}}_FX=T^{\frac{1}{2}}X|_F\cap N_FX.$$
Equip $H_\bullet^{G\times S}(X^A,\bbQ)$ 
with the increasing filtration by the $A$-degree given by the decomposition
$$H_\bullet^{G\times S}(X^A,\bbQ)=H_\bullet^{G\times S/A}(X^A,\bbQ)\otimes H^\bullet_A$$
According to \cite[prop.~3.5.1-2]{MO19}, there is a unique Lagrangian cycle 
$$\calS^\sigma\in H_\bullet^{G\times S}(X\times X^A,\bbQ)$$ 
such that $\calS^\sigma$ is supported on $\calA^\sigma$ and
\hfill
\begin{enumerate}[label=$\mathrm{(\alph*)}$,leftmargin=8mm,itemsep=1mm]
\item 
the Gysin restriction of $\calS^\sigma$ to $X\times F$ is supported on $\calA^\sigma_F$,
\item
the Gysin restriction of $\calS^\sigma$ to $F\times F$ is $\pm\eu(N_F^{\sigma,-}X)\cap[\Delta_F]$,
\item
the Gysin restriction of $\calS^\sigma$ to $F'\times F$ has $A$-degree $<\codim_XF'$ for each $F'\prec F$.
\end{enumerate}
Let $\calS^\sigma_F$ be the Gysin restriction of the cycle $\calS^\sigma$ to $X\times F$.
The cycle $\calS^\sigma$ is the sum of all $\calS^\sigma_F$'s.
The cycle $\calS^\sigma$ is proper over $X$. 
The convolution by $\calS^\sigma$ defines an $R_{G\times S}$-linear map 
$$\stab(\sigma):H^{G\times S}_\bullet(X^A,\bbQ)\to H^{G\times S}_\bullet(X,\bbQ)$$
From (a) and (b), we deduce that $\stab(\sigma)$ is invertible if 
the equivariant Euler class $\eu(N_FX)$ is invertible for each $F\in Fix$.

\subsubsection{Stable envelopes of quiver varieties}\label{sec:stableQV}
We abbreviate 
$\bfN_0=(\bbN I)^{(\bbN)}$.
Fix $\bfw\in\bfN_0$. Set $w=|\bfw|$.
We write
\begin{align*}
F_{w,R_{\bfw,S}}&=F_{w,R_{w,S}}\otimes_{R_{w,S}}R_{\bfw,S},\\
F_{\bfw,K_{\bfw,S}}&=F_{\bfw,R_{\bfw,S}}\otimes_{R_{\bfw,S}}K_{\bfw,S},\\
A_{\bfw,K_{\bfw,S}}&=A_{\bfw,R_{\bfw,S}}\otimes_{R_{\bfw,S}}K_{\bfw,S}
,\quad
\text{etc}.
\end{align*}
We apply the previous constructions to the torus $S$ in \S\ref{sec:quivers} and the varieties 
$$X=\frakM(w)
,\quad
X_0=\frakM'_0(w).$$
The weight of the symplectic form on $\frakM(w)$ is
$$\hbar(z_\alpha,z)=z
,\quad
(z_\alpha,z)\in T.$$
Recall that that the restriction of 
$\hbar$ to the subtorus $S$ is assumed to be non trivial.
Set $\bfw=(w_1,w_2,\dots,w_s)$.
We consider the cocharacter of $G_\bfw$ given by
\begin{align}\label{sigma}
\sigma(z)=\bigoplus_{r=1}^sz^{s-r+1}\id_{W_r}
,\quad
z\in\bbC^\times.
\end{align}
Set $A=\sigma(\bbG_m)$. The fixed points locus  is 
$$\frakM(w)^\sigma=\frakM(w)^A=\frakM(\bfw).$$
By \cite[(3.4)]{MO19}, the partial order on the set $Fix$ of connected components of the fixed point locus
is such that, for each
$\bfv=(v_1,v_2)$, $\bfv'=(v'_1,v'_2)$ and
$\bfw=(w_1,w_2)$,
\begin{align}\label{order}
\frakM(\bfv',\bfw)\;\prec\;\frakM(\bfv,\bfw)
\Rightarrow
v_1+v_2=v'_1+v'_2\,,\,v'_1< v_1
\end{align}
Let $\frakS_s$ be the permutation group of  $\{1,2,\dots,s\}$.
Given $\omega\in\frakS_s$,
let $\omega\sigma$ be the cocharacter 
$$\omega\sigma(z)=\bigoplus_{r=1}^sz^{s-r+1}\id_{W_{\omega(r)}}
,\quad
z\in\bbC^\times.$$
There is a canonical splitting of the tangent bundle of $\frakM(w)$ as in \eqref{splitting},
which is given in \cite[\S 2.2.7, ex.~3.3.3]{MO19}. 
We choose the polarization associated with this splitting. 
We abbreviate
$$\stab(\omega)=\stab(\omega\sigma)
,\quad
\calA^\omega=\calA^{\omega\sigma}
,\quad
\calS^\omega=\calS^{\omega\sigma}
,\quad
\omega\bfw=(w_{\omega(1)},w_{\omega(2)},\dots,w_{\omega(s)})
,\quad
\text{etc.}$$
The convolution with $\calS^{\omega}$ is the stable envelope
\begin{align}\label{defstable}
\begin{split}
\stab(\omega) :F_{\bfw,R_{\bfw,S}}\to F_{w,R_{\bfw,S}}.
\end{split}
\end{align}
Since the stable envelope is the convolution by a Lagrangian cycle,
it is homogeneous of degree 0 for the cohomological grading of
$F_{\bfw,R_{\bfw,S}}$ and $F_{w,R_{\bfw,S}}$ introduced in \eqref{cohdeg}.
We consider the cycle $(\calS^\omega)^\op$ in $\frakM(\bfw)\times\frakM(w)$ given by
permuting the factors of $\calS^{\omega}$.
The cycle $(\calS^\omega)^\op$ 
is not proper over $\frakM(\bfw)$, but the restriction to the $A$-fixed locus is the cycle
$$(\calS^\omega)^\op|_{\frakM(\bfw)\times\frakM(\bfw)}$$
which is proper over $\frakM(\bfw)$, because it is supported on the fiber product
$$\calA^\omega\cap(\frakM(\bfw)\times\frakM(\bfw))=\frakM(\bfw)\times_{\frakM'_0(w)}\frakM(\bfw).$$
Thus $(\calS^\omega)^\op$ acts by convolution on a localized version of equivariant homology.
This yields an operator
$$\stab(\omega)^\op:F_{w,K_{\bfw,S}}\to F_{\bfw,K_{\bfw,S}}.$$
For a similar reason $\stab(\omega)$ is generically invertible, yieldding an operator
$$\stab(\omega)^{-1}:F_{w,K_{\bfw,S}}\to F_{\bfw,K_{\bfw,S}}.$$
Maulik and Okounkov define
the R-matrix to be the operator in $A_{\bfw,K_{\bfw,S}}$ given by
$$\R_{F_\bfw}(\omega_1,\omega_2)=\stab(\omega_1)^{-1}\circ\stab(\omega_2).$$
Let $\omega_0$ be the longest permutation of $\{1,2,\dots,s\}$.
For any permutation $\omega$ we abbreviate $$\overline\omega=\omega_0\omega.$$
By \cite[thm~4.4.1]{MO19} we have 
\begin{align}\label{stabstab}\stab(\overline\omega)^\op=\stab(\omega)^{-1}.
\end{align}
We set
\begin{align}\label{R1}
\R_{F_\bfw}=\R_{F_\bfw}(\omega_0,1)=\stab(1)^\op\circ\stab(1).
\end{align}
The R-matrix $\R_{F_\bfw}$
is the convolution with the following cycle in localized equivariant Borel Moore homology
$$(\calS^1)^\op\star\calS^1\in 
H_\bullet^{G_\bfw\times S}(\frakM(\bfw)\times_{\frakM'_0(w)}\frakM(\bfw),\bbQ)_{K_{\bfw,S}}.$$
We write
\begin{align}\label{R2}
\begin{split}
\R_{F_w,F_\bfw}=
\R_{F_{w,w_1}}
\R_{F_{w,w_2}}
\cdots\R_{F_{w,w_s}}
,\quad
w\in\bbN I
,\,
\bfw\in\bfN_0.
\end{split}
\end{align}
By \cite[(4.6)]{MO19}, we have $\R_{F_\bfw}(\omega_s,1)=\R_{F_{w_1},F_{\bfw'}}$ where 
$\bfw'=(w_2,w_3,\dots,w_s)$ and $\omega_s$ is the $s$-cycle
\begin{align*}
\omega_s=(s,\dots,2,1)
\end{align*}

\subsubsection{Stable envelopes of nilpotent quiver varieties}
The cycle $\calS^{\omega}$ is supported in
$$\calA^{\omega}=\frakM(w)^{\omega,+}\times_{\frakM'_0(w)}\frakM(\bfw).$$
Thus, the cycle $(\calS^{\overline\omega})^{\op}$ is supported in 
$$(\calA^{\overline\omega})^\op=\frakM(\bfw)\times_{\frakM'_0(w)}\frakM(w)^{\overline\omega,+}$$
The closed subset
$$(\calA^{\overline\omega})^\op\cap(\frakM(\bfw)\times\frakL(w))\subset
\frakM(\bfw)\times\frakM(w)$$ is proper over $\frakM(\bfw)$, and the first projection maps into $\frakL(\bfw)$.
Hence, the convolution with $(\calS^{\overline\omega})^\op$
yields a morphism of $\bbZ I\times\bbZ$-graded $R_{\bfw,S}$-modules
\begin{align}\label{defstablevee}
\begin{split}
\stab^\vee(\omega) :F_{w,R_{\bfw,S}}^\vee\to F_{\bfw,R_{\bfw,S}}^\vee
\end{split}
\end{align}


\begin{lemma}\label{lem:ss}
\hfill
\begin{enumerate}[label=$\mathrm{(\alph*)}$,leftmargin=8mm,itemsep=1.2mm]
\item 
$\stab^\vee(\omega)$ and $\stab(\omega)$ are injective and generically invertible over $R_{\bfw,S}$.
\item
$\stab^\vee(\overline\omega)$ is the transpose of $\stab(\omega)$ relatively to the pairing \eqref{PP1}.
\item
$\stab^\vee(\omega)$ and $\stab(\omega)^{-1}$ are intertwined by the map
$\iota : F_{\bfw,K_{\bfw,S}}^\vee\to F_{\bfw,K_{\bfw,S}}$ in \eqref{iota1}.
\end{enumerate}
\end{lemma}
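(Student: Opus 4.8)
The three statements should all follow from unwinding definitions together with the basic properties of stable envelopes recalled in \S\ref{sec:stable} and the geometry of nilpotent quiver varieties (Lemma~\ref{lem:F}).

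For Part~(a), the map $\stab(\omega)$ is generically invertible over $R_{\bfw,S}$ by the general criterion at the end of \S\ref{sec:stable}: after inverting the finitely many $A$-weights of the normal bundles to $\frakM(\bfw)$ in $\frakM(w)$ (which restrict nontrivially to $S$ since $\hbar|_S\neq 0$), the Euler classes $\eu(N_F\frakM(w))$ become invertible, so $\stab(\omega)$ becomes invertible. Injectivity over $R_{\bfw,S}$ then follows because $F_{\bfw,R_{\bfw,S}}$ is a free, hence torsion-free, $R_{\bfw,S}$-module by Lemma~\ref{lem:F}(a). For $\stab^\vee(\omega)$, the plan is to argue the same way on the closed subvariety $\frakL(w)$: the key point is that $\frakM(\bfw)^S=\frakL(\bfw)^S$ and $\frakL(w)^S=\frakL(\bfw)^S$ as well (the $S$-fixed locus of everything is contained in $\frakL$ because $\hbar|_S\neq 0$ forces $\frakM_0(w)^S=\{0\}$, exactly as in the proof of Lemma~\ref{lem:F}(d)), so after inverting the same weights the convolution with $(\calS^{\overline\omega})^\op$ restricted to the $S$-fixed loci becomes an isomorphism; freeness of $F_{\bfw,R_{\bfw,S}}^\vee$ gives injectivity.

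For Part~(b), the plan is a direct computation with the defining cycles. The transpose of $\stab(\omega)$ with respect to the pairing \eqref{PP1} is the convolution with the cycle $\calS^\omega$ read in the opposite direction, i.e.\ with $(\calS^\omega)^\op$, composed with the pushforward $\iota$. By \eqref{stabstab} one has $\stab(\overline\omega)^\op=\stab(\omega)^{-1}$, and the relation $\stab^\vee(\overline\omega)$ versus $\stab(\omega)$ is the ``dual'' incarnation of this identity: $(\calS^{\overline\omega})^\op$ is precisely the cycle whose convolution on $F^\vee$ computes the transpose, once one uses that the pairing on $F\times F^\vee$ is the cup product on $\frakM(\bfw)$ pushed to a point. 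The only thing to check carefully is that restricting the support of $(\calS^{\overline\omega})^\op$ to $\frakM(\bfw)\times\frakL(w)$ is compatible with transposing, which is exactly the properness statement already established just before \eqref{defstablevee}.

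Part~(c) should then be formal given (b): by Lemma~\ref{lem:F}(d), $\iota:F^\vee_{\bfw,K_{\bfw,S}}\to F_{\bfw,K_{\bfw,S}}$ identifies $F^\vee$ with $F$ after localization, and under this identification the pairing \eqref{PP1} becomes the (localized) intersection form, so the transpose of an operator on $F$ coincides with its $\iota$-conjugate acting on $F^\vee$; combining this with (b) and \eqref{stabstab} gives $\iota\circ\stab^\vee(\omega)=\stab(\omega)^{-1}\circ\iota$ on the localization. I expect the main obstacle to be bookkeeping in Part~(b): one must track which polarization sign conventions and which ordering of the factors $\frakM(w)\times\frakM(\bfw)$ versus $\frakM(\bfw)\times\frakM(w)$ are in force, so that the transpose of the stable envelope for $\sigma$ is literally the stable envelope cycle for $\overline\sigma=\omega_0\sigma$ and not merely equal to it up to a unit; the identity \eqref{stabstab} from \cite[thm~4.4.1]{MO19} is what makes this work, but matching it to the nilpotent version \eqref{defstablevee} requires care with the supports on $\frakL$.
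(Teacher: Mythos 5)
Your overall route matches the paper's: Part~(a) from invertibility of equivariant Euler classes of normal bundles after localization plus freeness of the modules, Part~(b) from the behavior of transposes of convolution operators, and Part~(c) from \eqref{stabstab} together with $\iota$. Parts~(a) and~(c) are fine as you have them (your derivation of (c) via (b) and the observation that transpose turns into the ``$\op$'' under $\iota$ is equivalent to the paper's shorter route, which just notes that pushforward by $\frakL(\bfw)\subset\frakM(\bfw)$ commutes with convolution, and then applies \eqref{stabstab} directly).

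Part~(b), however, is garbled in a way worth flagging. You write that the transpose of $\stab(\omega)$ is ``the convolution with $(\calS^\omega)^\op$, composed with the pushforward $\iota$'' --- the composition with $\iota$ is spurious. The transpose of a map $F_{\bfw,R_{\bfw,S}}\to F_{w,R_{\bfw,S}}$ under the perfect pairing \eqref{PP1} already lands in $(F_{\bfw,R_{\bfw,S}})^*=F^\vee_{\bfw,R_{\bfw,S}}$ by \eqref{*v1}, with no need to pass through $F$; inserting $\iota$ would change the target. Moreover you then invoke \eqref{stabstab} for Part~(b), but it plays no role there: $\stab^\vee(\overline\omega)$ is \emph{by definition} the convolution with $(\calS^{\overline{\overline\omega}})^\op=(\calS^\omega)^\op$, and the transpose of the convolution with $\calS^\omega$ is convolution with $(\calS^\omega)^\op$ --- so (b) is an immediate unwinding of definitions, with \eqref{stabstab} only needed in Part~(c). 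The slip looks like an index confusion between $\omega$ and $\overline\omega$ in the definition \eqref{defstablevee}; once you track that $\stab^\vee(\omega)$ uses $(\calS^{\overline\omega})^\op$ (so $\stab^\vee(\overline\omega)$ uses $(\calS^\omega)^\op$), Part~(b) falls out directly.
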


\begin{proof}
Part (a) follows from the fact that $F_{w,R_{\bfw,S}}^\vee$ and $F_{\bfw,R_{\bfw,S}}$ are free modules over 
$R_{\bfw,S}$, and the $R_{\bfw,S}$-linear maps $\stab^\vee(\omega)$ and $\stab(\omega)$ become
invertible after a localization $R'_{\bfw,S}$
of $R_{\bfw,S}$ such that the Euler class of the normal bundle $N_{\frakM(\bfw)}\frakM(w)$ is invertible
in $$H^\bullet_{G_\bfw\times S}(\frakM(\bfw),\bbQ)\otimes_{R_{\bfw,S}}R'_{\bfw,S}.$$
Part (b) follows from the fact that the transpose under the pairing \eqref{PP1} of the map 
$F_{\bfw,R_{\bfw,S}}\to F_{w,R_{\bfw,S}}$ given by the convolution product with a cycle 
$$\calS\in H_\bullet^{G_\bfw\times S}(\frakM(w)\times_{\frakM'_0(w)}\frakM(\bfw),\bbQ)$$ is the map 
$F_{w,R_{\bfw,S}}^\vee\to F_{\bfw,R_{\bfw,S}}^\vee$ given by the convolution product with the cycle
$$\calS^\op\in H_\bullet^{G_\bfw\times S}(\frakM(\bfw)\times_{\frakM'_0(w)}\frakM(w),\bbQ)$$ 
given by permuting the factors of $\calS$.
Part (c) follows from \eqref{stabstab}, because the pushforward by the closed embedding
$\frakL(\bfw)\subset\frakM(\bfw)$ commutes with the convolution product.
\end{proof}

The R-matrix of the nilpotent quiver varieties is the operator in $A_{\bfw,K_{\bfw,S}}^\vee$ given by
$$\R_{F_\bfw^\vee}(\omega_1,\omega_2)=\stab^\vee(\omega_1)\circ\stab^\vee(\omega_2)^{-1}.$$
The conjugation by the isomorphism $\iota : F_{\bfw,K_{\bfw,S}}^\vee\to F_{\bfw,K_{\bfw,S}}$
 yields a $K_{\bfw,S}$-algebra isomorphism 
$$A_{\bfw,K_{\bfw,S}}=A_{\bfw,K_{\bfw,S}}^\vee.$$ 
The transpose \eqref{T1} yields
a $K_{\bfw,S}$-algebra anti-isomorphism 
$$(-)^\T:A_{\bfw,K_{\bfw,S}}\to A_{\bfw,K_{\bfw,S}}^\vee.$$

\begin{lemma}\label{lem:RR}
The following hold.
\hfill
\begin{enumerate}[label=$\mathrm{(\alph*)}$,leftmargin=8mm,itemsep=1.2mm]
\item 
$\iota\circ\R_{F_\bfw^\vee}(\omega_1,\omega_2)=\R_{F_\bfw}(\omega_1,\omega_2)\circ\iota$.
\item 
$\R_{F_\bfw^\vee}(\omega_1,\omega_2)
=\R_{F_\bfw}(\overline\omega_2,\overline\omega_1)^\T
=(\R_{F_\bfw}(\overline\omega_1,\overline\omega_2)^\T)^{-1}$.
\end{enumerate}
\end{lemma}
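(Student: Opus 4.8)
The plan is to deduce both parts directly from Lemma~\ref{lem:ss} together with the definitions of the two R-matrices and the compatibility of transpose with composition. For Part (a), recall that $\R_{F_\bfw^\vee}(\omega_1,\omega_2)=\stab^\vee(\omega_1)\circ\stab^\vee(\omega_2)^{-1}$ and $\R_{F_\bfw}(\omega_1,\omega_2)=\stab(\omega_1)^{-1}\circ\stab(\omega_2)$. By Lemma~\ref{lem:ss}(c), the maps $\stab^\vee(\omega)$ and $\stab(\omega)^{-1}$ are intertwined by $\iota$, i.e.\ $\iota\circ\stab^\vee(\omega)=\stab(\omega)^{-1}\circ\iota$ after inverting the relevant Euler classes, hence also $\iota\circ\stab^\vee(\omega)^{-1}=\stab(\omega)\circ\iota$. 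Composing these two relations for $\omega=\omega_1$ and $\omega=\omega_2$ respectively gives
$$\iota\circ\R_{F_\bfw^\vee}(\omega_1,\omega_2)=\iota\circ\stab^\vee(\omega_1)\circ\stab^\vee(\omega_2)^{-1}=\stab(\omega_1)^{-1}\circ\iota\circ\stab^\vee(\omega_2)^{-1}=\stab(\omega_1)^{-1}\circ\stab(\omega_2)\circ\iota=\R_{F_\bfw}(\omega_1,\omega_2)\circ\iota,$$
which is (a); all identities are valid over $K_{\bfw,S}$, where $\iota$ is invertible by Lemma~\ref{lem:F}(d).

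For Part (b), I would use Lemma~\ref{lem:ss}(b): $\stab^\vee(\overline\omega)$ is the transpose of $\stab(\omega)$ under the pairing \eqref{PP1}, so $\stab^\vee(\omega)=\stab(\overline\omega)^\T$ (replacing $\omega$ by $\overline\omega$ and using $\overline{\overline\omega}=\omega$). Since $(-)^\T$ is an anti-isomorphism of algebras and sends inverses to inverses, $\stab^\vee(\omega)^{-1}=(\stab(\overline\omega)^\T)^{-1}=(\stab(\overline\omega)^{-1})^\T$. Substituting into the definition of $\R_{F_\bfw^\vee}$ and using that transpose reverses composition,
$$\R_{F_\bfw^\vee}(\omega_1,\omega_2)=\stab^\vee(\omega_1)\circ\stab^\vee(\omega_2)^{-1}=\stab(\overline\omega_1)^\T\circ(\stab(\overline\omega_2)^{-1})^\T=\bigl(\stab(\overline\omega_2)^{-1}\circ\stab(\overline\omega_1)\bigr)^\T=\R_{F_\bfw}(\overline\omega_2,\overline\omega_1)^\T,$$
which is the first equality of (b). For the second equality, observe from the definition that $\R_{F_\bfw}(\omega_2,\omega_1)=\stab(\omega_2)^{-1}\circ\stab(\omega_1)$ is the two-sided inverse of $\R_{F_\bfw}(\omega_1,\omega_2)=\stab(\omega_1)^{-1}\circ\stab(\omega_2)$; applying the anti-automorphism $(-)^\T$ gives $\R_{F_\bfw}(\overline\omega_2,\overline\omega_1)^\T=(\R_{F_\bfw}(\overline\omega_1,\overline\omega_2)^\T)^{-1}$, completing the chain.

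The argument is essentially formal once Lemma~\ref{lem:ss} is in hand, so there is no serious obstacle; the only point requiring a little care is bookkeeping of which transpose identity ($\stab^\vee(\omega)=\stab(\overline\omega)^\T$ versus $\stab^\vee(\overline\omega)=\stab(\omega)^\T$) and keeping track that $(-)^\T$ reverses the order of composition, so that the permutation arguments of $\R_{F_\bfw}$ get swapped and barred correctly. One should also note that all the manipulations with inverses take place over the fraction field $K_{\bfw,S}$, where generic invertibility from Lemma~\ref{lem:ss}(a) makes everything literally invertible, and that the transpose isomorphism \eqref{T1} is precisely what carries the identity between $A_{\bfw,K_{\bfw,S}}$ and $A_{\bfw,K_{\bfw,S}}^\vee$.
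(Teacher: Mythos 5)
Your proposal is correct and takes the same approach as the paper: the paper's proof is a one-liner citing Lemma~\ref{lem:ss} together with the identity $\R_{F_\bfw}(\overline\omega_2,\overline\omega_1)=\R_{F_\bfw}(\overline\omega_1,\overline\omega_2)^{-1}$, and you have simply spelled out the straightforward algebraic manipulations (composing the intertwining relation of $\iota$ from part~(c) of Lemma~\ref{lem:ss} for part~(a), and using the transpose relation of part~(b) of Lemma~\ref{lem:ss} together with order-reversal of $(-)^\T$ for part~(b)). Nothing is missing, and your bookkeeping of bars and the localization to $K_{\bfw,S}$ is exactly what the paper implicitly relies on.
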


\begin{proof} The lemma follows from Lemma \ref{lem:ss} and the obvious identification of
operators in $A_{\bfw,K_{\bfw,S}}$
$$\R_{F_\bfw}(\overline\omega_2,\overline\omega_1)
=\R_{F_\bfw}(\overline\omega_1,\overline\omega_2)^{-1}.$$
\end{proof}

Following \eqref{R1} and \eqref{R2}, we write
\begin{align}\label{Rvee}
\begin{split}
\R_{F_\bfw^\vee}&=\R_{F_\bfw^\vee}(\omega_0,1),\quad 
\bfw\in\bfN_0\\
\R_{F_w^\vee,F_\bfw^\vee}&=
\R_{F_{w,w_1}^\vee}
\R_{F_{w,w_2}^\vee}
\cdots\R_{F_{w,w_s}^\vee}
,\quad
w\in \bbN I
.
\end{split}
\end{align}
We have 
$\R_{F_\bfw^\vee}=(\R_{F_\bfw})^\T$
and
$\R_{F_w^\vee,F_\bfw^\vee}=(\R_{F_w,F_{\overline\bfw}})^\T$
where $\overline\bfw=\omega_0\bfw.$

\medskip

\subsection{The Yangian}
\subsubsection{Definition of the Yangian}\label{sec:defY}
We define
\begin{align}\label{M}
M_{R_S}=\bigoplus_{i\in I}A_{\delta_i,R_S}.
\end{align}
Assigning to $u$ the 1st Chern class
of the linear character of the group $G_{\delta_i}$
yields an isomorphism $R_{\delta_i,S}=R_S[u]$
for any vertex $i\in I$.
Since $F_{\delta_i,R_{\delta_i,S}}=H_\bullet^{G_{\delta_i}\times S}(\frakM(\delta_i),\bbQ)$
and $G_{\delta_i}$ acts trivially on $\frakM(\delta_i)$, the Kunneth formula yields a canonical
$R_{\delta_i,S}$-linear isomorphism 
\begin{align}\label{isom1}F_{\delta_i,R_{\delta_i,S}}=F_{\delta_i,R_S}[u].
\end{align}
Hence, we have
$$M_{R_S}[u]=\bigoplus_{i\in I}A_{\delta_i,R_{\delta_i,S}}.$$
Let $U(M_{R_S}[u])$ be the tensor $R_S$-algebra of the $R_S$-module $M_{R_S}[u]$
$$U(M_{R_S}[u])=\bigoplus_{n\in\bbN}M_{R_S}[u]^{\otimes n}$$
The $R_S$-algebra $M_{R_S}$ is $\bbZ I\times\bbZ$-graded
by the weight and the cohomological grading.
Hence $U(M_{R_S}[u])$ is also $\bbZ I\times\bbZ$-graded, with
$u$ of weight 0 and cohomological degree 1.
Further, the $\bbZ$-grading on $M_{R_S}[u]$ given by the 
$u$-degree yields a $u$-grading on the tensor algebra $U(M_{R_S}[u])$.
For each $\bfw\in\bfN_0$, $m(u)\in M_{R_S}[u]$,
we consider the operator on $F_{\bfw,K_{\bfw,S}}$ given by
\begin{align}\label{E}
E_{\bfw,R_S}(m(u))=-\frac{1}{\hbar}\sum_{i\in I}\Res_u(\Tr\otimes 1)(m(u)\R_{F_{\delta_i},F_\bfw}),
\end{align}
where $\Res_u$ is the residue at $u=0$.
The trace is well-defined because the weight subspaces of $F_{\delta_i,R_S}$ are finitely generated free 
$R_S$-modules.
The following is proved in \cite[\S\S 4-5]{MO19}.

\begin{lemma}\label{lem:E}
The map $E_{\bfw,R_S}$ yields a $\bbZ I\times\bbZ$-graded $R_S$-algebra homomorphism
$$E_{\bfw,R_S}:U(M_{R_S}[u])\to A_{\bfw,R_{\bfw,S}}$$
\qed
\end{lemma}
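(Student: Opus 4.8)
The plan is to isolate the geometric content of the lemma from what is purely formal. Since $U(M_{R_S}[u])$ is, by definition, the free associative $R_S$-algebra on the $R_S$-module $M_{R_S}[u]$, the universal property of the tensor algebra says that an $R_S$-algebra homomorphism out of it is the same datum as an $R_S$-linear map $M_{R_S}[u]\to A_{\bfw,R_{\bfw,S}}$, and such a homomorphism is automatically $\bbZ I\times\bbZ$-graded (and homogeneous for the $u$-grading) once the underlying map on $M_{R_S}[u]$ is, because $A_{\bfw,R_{\bfw,S}}$ is a graded algebra. The assignment $m(u)\mapsto E_{\bfw,R_S}(m(u))$ of \eqref{E} is visibly $R_S$-linear in $m(u)$: the residue $\Res_u$, the partial trace $\Tr\otimes 1$ and the sum over $i\in I$ are all $R_S$-linear. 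So the lemma reduces to two checks on generators: (i) for every $m(u)\in M_{R_S}[u]$ the right-hand side of \eqref{E} lies in the non-localized algebra $A_{\bfw,R_{\bfw,S}}$, not merely in $A_{\bfw,K_{\bfw,S}}$; and (ii) $E_{\bfw,R_S}(m(u))$ has weight $\wt(m(u))$ and cohomological degree $\deg(m(u))$.

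Point (ii) is a degree count. The stable envelopes $\stab(\omega)$ are homogeneous of degree $0$ for the cohomological grading (noted just after \eqref{defstable}), hence so are $\stab(\omega)^{-1}$, $\stab(\omega)^\op$ and the R-matrix $\R_{F_{\delta_i},F_\bfw}$ of \eqref{R2}. The partial trace over the finite-rank free $R_S$-module $F_{\delta_i,R_S}$, the extraction of the $u^{-1}$-coefficient by $\Res_u$ (with $u$ of cohomological degree $1$ and weight $0$), and the prefactor $\hbar^{-1}$ each produce fixed shifts in weight and cohomological degree; with the conventions fixed in \S\ref{sec:cohquiv} and \S\ref{sec:stable} one checks these combine to give exactly (ii). I would not dwell on this.

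The substantial point is (i). A priori $\R_{F_{\delta_i},F_\bfw}$ is built from $\stab(1)$, $\stab(1)^\op$ and $\stab(1)^{-1}$ (cf.\ \eqref{R1}--\eqref{R2}), and $\stab(1)^{-1}$ exists only after inverting the equivariant Euler classes of the normal bundles to the fixed loci, so $\R_{F_{\delta_i},F_\bfw}\in A_{\bfw,K_{\bfw,S}}$ only. Here I would follow Maulik--Okounkov \cite[\S\S4--5]{MO19}: regarded as a function of the equivariant parameter $u$ attached to the one-dimensional framing $\delta_i$, the R-matrix $\R_{F_{\delta_i},F_\bfw}$ is rational, equal to $1$ at $u=\infty$, and has a simple pole at $u=0$; moreover the $A$-degree estimate in property (c) of the stable envelope (\S\ref{sec:stable}), which bounds the off-diagonal blocks, forces the residue at $u=0$ of $(\Tr\otimes 1)(m(u)\,\R_{F_{\delta_i},F_\bfw})$ to be $\hbar$ times an operator already defined over $R_{\bfw,S}$ --- the geometric counterpart of the classical $r$-matrix having residue $\hbar\cdot(\text{Casimir})$ at the origin. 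Therefore $-\hbar^{-1}\Res_u(\Tr\otimes 1)(m(u)\,\R_{F_{\delta_i},F_\bfw})\in A_{\bfw,R_{\bfw,S}}$, and summing over $i\in I$ preserves integrality, giving (i).

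The main obstacle is exactly (i): the regularity of the geometric R-matrix for a fundamental framing together with the fact that its residue at $u=0$, after the trace, is $\hbar$ times an integral operator. Everything else is soft --- multiplicativity is the universal property of the tensor algebra, and (ii) is bookkeeping --- so the proof amounts to importing the pole estimates of \cite{MO19}, which I would quote rather than reprove.
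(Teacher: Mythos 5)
Your approach matches the paper's, which gives no proof at all and simply cites \cite[\S\S 4--5]{MO19}: your formal reduction via the universal property of the tensor algebra and the degree bookkeeping are correct, and you rightly isolate the one substantive point --- integrality of $\Res_u(\Tr\otimes 1)(m(u)\,\R_{F_{\delta_i},F_\bfw})$ over $R_{\bfw,S}$ rather than only over $K_{\bfw,S}$ --- and refer it to the same source. One small imprecision that does not affect the argument: for a general tuple $\bfw$ the product $\R_{F_{\delta_i},F_\bfw}$ of R-matrices is not a rational function with a single simple pole at $u=0$; what MO19 establish (cf.\ \eqref{Rii} and its product over components of $\bfw$) is that every negative-degree coefficient in the Laurent expansion of $\R_{F_{\delta_i},F_\bfw}$ around $u=\infty$ lies in $\hbar\cdot A_{\bfw,R_{\bfw,S}}$, and it is this integrality of the full expansion, combined with $m(u)$ being a polynomial, that makes the residue land in $\hbar\cdot A_{\bfw,R_{\bfw,S}}$.
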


We consider the subsets of $\bfN_0$ given by
$$\bfN_1=I^{(\bbN)}
,\quad
\bfN_2=\bbN I
.$$ 
For each $a=0,1,2$  we define 
$$A_{a,R_S}=\prod_{\bfw\in \bfN_a}A_{\bfw,R_{\bfw,S}}
,\quad
F_{a,R_S}=\prod_{\bfw\in\bfN_a}F_{\bfw,R_{\bfw,S}}
,\quad
E_{a,R_S}=\prod_{\bfw\in \bfN_a}E_{\bfw,R_S}
.$$
The map $E_{a,R_S}$
is an $R_S$-algebra homomorphism $U(M_{R_S}[u])\to A_{a,R_S}$.
We define the Yangian $\bfY_{R_S}$ to be the image of the map $E_{0,R_S}$.
We'll use the following results from \cite{MO19}.

\begin{lemma}[\!{\cite[\S 4.1, \S 5.2]{MO19}}]\label{lem:rho} 
Let $\bfw\in \bfN_a$ and $a=0,1,2$.
\hfill
\begin{enumerate}[label=$\mathrm{(\alph*)}$,leftmargin=8mm,itemsep=1.2mm]
\item 
The map $E_{\bfw,R_S}$ factors through a $\bbZ I\times\bbZ$-graded
$R_S$-algebra homomorphism
$\bfY_{R_S}\to A_{\bfw,R_{\bfw,S}}$.
\item
The map $E_{a,R_S}$ factors through a $\bbZ I\times\bbZ$-graded
$R_S$-algebra embedding
$\bfY_{R_S}\to A_{a,R_S}$.
\item
The R-matrix
has a formal series expansion 
\begin{align}\label{Rii}
\R_{F_{\delta_i},F_{\delta_j}}=1+\sum_{l\geqsl 0}\frac{\hbar}{(u_i-u_j)^{l+1}}R_{F_{\delta_i},F_{\delta_j},l}
,\quad
R_{F_{\delta_i},F_{\delta_j},l}\in A_{(\delta_i,\delta_j),R_S}.
\end{align}
\qed
\end{enumerate}
\end{lemma}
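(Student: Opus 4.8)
All three statements are due to Maulik--Okounkov \cite[\S 4.1, \S 5.2]{MO19}; let me indicate the route I would take, starting with (c), which is the geometric input behind the other two. Take the $2$-tuple $\bfw=(\delta_i,\delta_j)$, the cocharacter $\sigma$ of \eqref{sigma} with $s=2$, and $R_{(\delta_i,\delta_j),S}=R_S[u_i,u_j]$, with $u_i,u_j$ the linear characters of the two framing $GL_1$'s; among the framing data the only nontrivial weight for $A=\sigma(\bbG_m)$ is $u:=u_i-u_j$. By \eqref{R1} and \eqref{stabstab}, $\R_{F_{\delta_i},F_{\delta_j}}=\stab(1)^\op\circ\stab(1)$. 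The key structural fact is the block-triangularity of $\stab(1)$ for the order $\preceq$ on $Fix$: its diagonal blocks are $\pm\eu(N_F^{\sigma,-}X)$ and the off-diagonal ones have strictly smaller $A$-degree, which is precisely properties (a)--(c) of $\calS^\sigma$ recalled in \S\ref{sec:stable}. Since each $\eu(N_F^{\sigma,-}X)$ is a product of weights, each affine-linear in $u$, the entries of $\stab(1)^{-1}$, hence of $\R_{F_{\delta_i},F_{\delta_j}}$, are rational in $u$ with poles confined to the finitely many hyperplanes $u\in\bbZ\hbar$ coming from those normal weights; in particular $\R_{F_{\delta_i},F_{\delta_j}}$ is regular at $u=\infty$ and expands as a Laurent series in $u^{-1}$. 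Sending $u\to\infty$ kills the off-diagonal blocks of $\stab(1)$ relative to the diagonal ones, so $\stab(1)^\op\circ\stab(1)$ degenerates to the identity and the constant term of the expansion is $1$. That the remainder is divisible by $\hbar$ — equivalently, that $\R-1$ has no $\hbar^0$ term and its $\hbar$-linear part is the Maulik--Okounkov classical $r$-matrix — I would get by abelianization, reducing to the rank-one case where the R-matrix is Yang's rational R-matrix $1+\hbar u^{-1}(\cdots)$ of \cite[\S 4.1]{MO19}, and using that a product of operators $\equiv 1\pmod\hbar$ is again $\equiv 1\pmod\hbar$. Finally, that the coefficients $R_{F_{\delta_i},F_{\delta_j},l}$ lie in $A_{(\delta_i,\delta_j),R_S}$ with no residual $u_i,u_j$-dependence is the difference property of the R-matrix, reflecting that for $\bfw=(\delta_i,\delta_j)$ the stable envelopes depend on the framing parameters only through $u$. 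This gives \eqref{Rii}.

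Part (a) is then formal. By Lemma~\ref{lem:E} each $E_{\bfw,R_S}:U(M_{R_S}[u])\to A_{\bfw,R_{\bfw,S}}$ is a $\bbZ I\times\bbZ$-graded $R_S$-algebra homomorphism, and $\bfY_{R_S}=\Im E_{0,R_S}=U(M_{R_S}[u])/\Ker E_{0,R_S}$ with $\Ker E_{0,R_S}=\bigcap_{\bfw\in\bfN_0}\Ker E_{\bfw,R_S}$. Since $\bfN_1$ and $\bfN_2$ sit inside $\bfN_0$ via $(i_1,\dots,i_s)\mapsto(\delta_{i_1},\dots,\delta_{i_s})$ and $w\mapsto(w)$, we get $\Ker E_{0,R_S}\subseteq\Ker E_{\bfw,R_S}$ for every $\bfw$ relevant to (a), so $E_{\bfw,R_S}$ descends to the asserted graded algebra homomorphism on $\bfY_{R_S}$.

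For part (b) it suffices to show $\Ker E_{a,R_S}=\Ker E_{0,R_S}$ for $a=1,2$, which upgrades the factorization of (a) to an embedding $\bfY_{R_S}\hookrightarrow A_{a,R_S}$. The containments $\bfN_1,\bfN_2\subseteq\bfN_0$ give one inclusion of kernels. For the reverse I would use the coproduct of $\bfY_{R_S}$ — realized by the factorization $\R_{F_{\delta_i},F_\bfw}=\R_{F_{\delta_i},F_{w_1}}\cdots\R_{F_{\delta_i},F_{w_s}}$ of \eqref{R2}, whose rationality is supplied by (c), so that $E_{\bfw,R_S}$ is the iterated-coproduct action on $F_{\bfw,R_{\bfw,S}}=\bigotimes_r F_{w_r,R_{\bfw,S}}$ — together with $\stab(1):F_{\bfw,R_{\bfw,S}}\to F_{|\bfw|,R_{\bfw,S}}$, which by Lemma~\ref{lem:ss} is injective and generically invertible and which intertwines the $\bfY_{R_S}$-action on $F_{|\bfw|,R_{\bfw,S}}$ with this coproduct action. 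Hence an element annihilating $F_{|\bfw|,R_{\bfw,S}}$ annihilates $F_{\bfw,R_{\bfw,S}}$ since $\stab(1)$ is injective, and an element annihilating $F_{\bfw,R_{\bfw,S}}$ annihilates $\Im\stab(1)$, hence all of $F_{|\bfw|,R_{\bfw,S}}$ because $\stab(1)$ is generically surjective and $F_{|\bfw|,R_{\bfw,S}}$ is torsion-free over $R_{\bfw,S}$. As each $w\in\bbN I$ equals $|\bfw_0|$ for the Dirac tuple $\bfw_0$ made of $w_i$ copies of $\delta_i$, and every $\bfw\in\bfN_0$ has $|\bfw|\in\bbN I$, this yields $\Ker E_{0,R_S}=\Ker E_{2,R_S}=\Ker E_{1,R_S}$. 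Gradedness in (a) and (b) is inherited from the $E_{\bfw,R_S}$ together with the degree-$0$ homogeneity of the stable envelopes and of the convolution products.

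The main obstacle lies entirely in part (c): triangularity of stable envelopes makes rationality and the pole locus nearly automatic, but identifying the constant term at $u=\infty$ as exactly $1$ — the degeneration estimate — and forcing the correction to be divisible by $\hbar$ — the rank-one reduction — are the technical heart of \cite[\S 4]{MO19}.
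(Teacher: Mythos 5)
The paper gives no proof of Lemma~\ref{lem:rho}: all three parts are attributed to \cite[\S 4.1, \S 5.2]{MO19} and the statement is closed with \verb|\qed| immediately. So there is strictly speaking no ``paper's own proof'' to compare against; what you have written is a reconstruction of the MO19 arguments, and as such it is broadly sound. Two observations. First, your argument for (b) --- intertwine $E_{\bfw,R_S}$ and $E_{|\bfw|,R_S}$ through $\stab(1)$, which is injective and generically invertible by Lemma~\ref{lem:ss}, so annihilating one of $F_{\bfw}$, $F_{|\bfw|}$ is the same as annihilating the other --- is exactly the argument the paper does write down, not here but in the subsequent unnumbered lemma showing $\bfY_{R_S}=U(M_{R_S}[u])/\Ker(E_{a,R_S})$ for $a=0,1,2$; so on this point you have matched the paper, and incidentally highlighted that the paper's own kernel lemma logically reuses \eqref{Delta} and the square \eqref{stab}, which appear later in the text. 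Second, part (c): the overall shape (block triangularity of $\stab$ for the order $\preceq$, rationality in $u=u_i-u_j$, constant term $1$ at $u=\infty$, remainder divisible by $\hbar$) is the right outline. Two of your side remarks are, however, not quite right as stated: the pole locus is not in general $u\in\bbZ\hbar$, but is governed by the hyperplanes where the equivariant Euler classes $\eu(N_F X)$ vanish, which involve the other parameters $t_\alpha,\hbar$ of $R_S$ as well; and the $\hbar$-divisibility of $\R-1$ does not come from ``abelianization'' (a computational device in MO19), but from the fact that the polarization normalizes the diagonal blocks of $\stab(1)$ and $\stab(1)^\op$ so that at $\hbar=0$ the two stable envelopes are transpose to each other, forcing $\stab(1)^{\op}\circ\stab(1)=1$ at $\hbar=0$. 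These are imprecisions in the commentary rather than errors affecting the structure of the argument, and since they concern a cited result they do not affect the paper.
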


The Yangian commutes with base change. More precisely, we have the following.

\begin{lemma}\label{lem:bcY} There is a $\bbZ I\times\bbZ$-graded $R_S$-algebra isomorphism
$\bfY_R\otimes_R R_S=\bfY_{R_S}.$
\end{lemma}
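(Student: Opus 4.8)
The plan is to trace the construction of the Maulik--Okounkov Yangian through the base change $R=R_T\to R_S$, checking that at every stage one only applies $-\otimes_RR_S$; the single geometric input is the compatibility of stable envelopes with this base change, and the single structural input is the freeness of $\bfY_R$ over $R$ from \cite{MO19}. First I would handle the module $M$: by Lemma~\ref{lem:F}(a) each $F_{\delta_i,R_{\delta_i,S}}$ and $F_{\delta_i,R_{\delta_i,S}}^\vee$ is obtained from its $R$-version by $-\otimes_RR_S$, and since tensor product commutes with $\otimes$ and $\oplus$ the module $M_{R_S}$ of \eqref{M}, hence $M_{R_S}[u]$, equals $M_R[u]\otimes_RR_S$; as the tensor algebra of a module is compatible with arbitrary base change, $U(M_{R_S}[u])=U(M_R[u])\otimes_RR_S$, and $U(M_R[u])$ is a free $R$-module because $M_R[u]$ is. Likewise $A_{\bfw,R_{\bfw,S}}=A_{\bfw,R_\bfw}\otimes_RR_S$ for every $\bfw$, which yields a natural $R_S$-linear map $\lambda\colon A_{0,R}\otimes_RR_S\to A_{0,R_S}$ on the ambient products.

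The geometric heart is the base change of stable envelopes. Fix $\bfw\in\bfN_0$ and a permutation $\omega$. The subtorus $A=\omega\sigma(\bbG_m)\subset G_\bfw$ that cuts out $\frakM(\bfw)\subset\frakM(w)$, together with the attracting cells, the order $\preceq$, the chosen polarization and the Euler classes $\eu(N^{\omega\sigma,-})$, depends only on $Q$, $\bfw$ and $\omega$, never on the auxiliary torus $S$. Pulling the $T$-equivariant stable-envelope cycle $\calS^\omega$ back along $H^{G_\bfw\times T}_\bullet\to H^{G_\bfw\times S}_\bullet$ gives a Lagrangian cycle over $S$ that still satisfies conditions (a)--(c) of \S\ref{sec:stable}: its support on $\calA^\omega$, the normalization $\pm\eu(N^{\omega\sigma,-})$ and the $A$-degree bound are all untouched. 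By the \emph{uniqueness} half of \cite[prop.~3.5.1--2]{MO19} --- and this is the point: only uniqueness, not injectivity of the pullback on equivariant homology, is needed --- this pulled-back cycle is the $S$-equivariant stable envelope. Hence $\stab(\omega)$, its inverse and transpose, the R-matrices \eqref{Rii}, and --- after the residue at $u=0$ and the fibrewise trace, which are compatible with base change because the weight spaces of $F_{\delta_i,R_S}$ are finite free and equal those over $R$ tensored with $R_S$ --- the homomorphism $E_{\bfw,R_S}$ of \eqref{E} are all obtained from their $T$-versions by $-\otimes_RR_S$. In formulas, $E_{0,R_S}=\lambda\circ(E_{0,R}\otimes_R\id_{R_S})$ on $U(M_{R_S}[u])=U(M_R[u])\otimes_RR_S$.

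Finally I would conclude, and this is where the only non-formal step sits, since $R_S$ is not flat over $R$. By the previous paragraph the image $\bfY_{R_S}$ of $E_{0,R_S}$ is the $R_S$-span inside $A_{0,R_S}$ of the image of $\bfY_R$, so one gets a surjection $\bfY_R\otimes_RR_S\twoheadrightarrow\bfY_{R_S}$ of $\bbZ I\times\bbZ$-graded $R_S$-algebras. To see it is injective I would invoke the integral structure theory of the Maulik--Okounkov Yangian \cite{MO19}: $\bfY_{R'}$ is a free $R'$-module whose graded rank in each bidegree is independent of the base ring $R'$ (via the triangular decomposition and the classical-limit identification $\gr^u\bfY_{R'}\cong U(\frakg_{R'}[u])$, which are themselves base-change compatible by the argument of the previous paragraph). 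Then $\bfY_R\otimes_RR_S$ and $\bfY_{R_S}$ are free $R_S$-modules of the same finite rank in each bidegree, so the surjection above is an isomorphism, necessarily of algebras, which is the assertion. The main obstacle is thus to extract precisely the required freeness and $S$-independence statements from \cite{MO19}; granting these, the base change of stable envelopes is routine and the rest is formal.
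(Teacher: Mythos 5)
Your strategy --- trace base-change compatibility through the stable-envelope construction via its uniqueness, then compare $\bfY_R\otimes_R R_S$ with $\bfY_{R_S}$ --- is the same as the paper's, and you are in fact more careful than the paper on the essential point: since $R_S$ is a \emph{quotient} of $R$ and hence not flat, base change a priori yields only a surjection $\bfY_R\otimes_R R_S\twoheadrightarrow\bfY_{R_S}$, and the paper's displayed identity $A_{0,R}\otimes_R R_S=A_{0,R_S}$ cannot be taken literally because $A_0$ is an infinite product (tensoring does not commute with products over a non-flat ring map). Your idea to close the gap by a rank count over free $R_S$-modules is the right one. But as written the justification is circular: you invoke the ``base-change compatible'' isomorphism $\gr\bfY_{R'}\cong U(\frakg_{R'}[u])$ to deduce that the graded ranks of $\bfY_{R'}$ are $S$-independent, yet the input $\frakg_R\otimes_R R_S\cong\frakg_{R_S}$ (Lemma~\ref{lem:gg1}(b)) is a statement of exactly the same flavour, and the ``argument of the previous paragraph'' again only yields a surjection $\frakg_{v,R}\otimes_R R_S\twoheadrightarrow\frakg_{v,R_S}$, not an isomorphism.

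The non-circular way to obtain the missing rank equality is Lemma~\ref{lem:pairingg}. The perfect pairing $\frakg_{v,R_S}\times\frakg_{-v,R_S}\to R_S$ is built from the classical r-matrix, hence is the base change of the pairing over $R$. Because $\frakg_{\pm v,R}$ are finite free, the base-changed pairing on $(\frakg_{v,R}\otimes_R R_S)\times(\frakg_{-v,R}\otimes_R R_S)$ remains perfect; the surjections $p_{\pm}\colon\frakg_{\pm v,R}\otimes_R R_S\to\frakg_{\pm v,R_S}$ intertwine the two pairings; and since $(p_-)^{*}$ is injective (dual of a surjection), the fact that $\frakg_{v,R}\otimes_R R_S\to(\frakg_{-v,R}\otimes_R R_S)^{*}$ is an isomorphism forces $p_+$ to be injective as well, hence an isomorphism. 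This gives $\frakg_R\otimes_R R_S\cong\frakg_{R_S}$ as graded modules; PBW then gives $U(\frakg_R[u])\otimes_R R_S\cong U(\frakg_{R_S}[u])$, the $u$-filtration gives equal graded ranks for the two Yangians, and your surjection-of-free-$R_S$-modules argument closes the proof. You correctly identified ``the main obstacle''; this is the lemma that clears it.
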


\begin{proof}
The equivariant formality of the Nakajima quiver varieties implies that
$$F_{\bfw,R_\bfw}\otimes_RR_S=F_{\bfw,R_{\bfw,S}}
,\quad
A_{\bfw,R_\bfw}\otimes_RR_S=A_{\bfw,R_{\bfw,S}}
,\quad
A_{0,R}\otimes_RR_S=A_{0,R_S}.$$
It also implies that
$$M\otimes_RR_{S}=M_{R_S}
,\quad
U(M_R[u])\otimes_RR_S=U(M_{R_S}[u]).$$
Further, the uniqueness of stable envelopes implies that
the map $E_{0,R_S}$ is obtained by base change from the map
$E_{0,R}$. 
We deduce that
$$\bfY_R\otimes_R R_S=\bfY_{R_S}.$$
\end{proof}

Because of Lemma~\ref{lem:bcY} we may concentrate on the particular case where $S=T$. 
Then, we omit the subscript $T$ everywhere. 
For instance, we abbreviate 
$$\bfY_R=\bfY_{R_T}
,\quad
M_R=M_{R_T}
,\quad
\text{etc.}$$
We'll be mostly interested in the particular case where 
$\bfw=(\delta_{i_1},\delta_{i_2}, ...,\delta_{i_s})$
is a tuple in $\bfN_1$.
Then, we abbreviate 
$\bfw=(i_1,i_2,\dots,i_s)$.
In this case, the isomorphisms \eqref{isom1} yield
\begin{align}\label{Fw}
R_{\bfw,S}=R_S[u_1,\dots,u_s]
,\quad
F_{\bfw,R_{\bfw,S}}=F_{\bfw,R_S}[u_1,\dots,u_s].
\end{align}

\subsubsection{Definition of the nilpotent Yangian}\label{sec:defYvee}
Set
\begin{align}\label{Mvee}
M_{R_S}^\vee=\bigoplus_{i\in I}A_{\delta_i,R_S}^\vee.
\end{align}
Since $F_{\delta_i,R_{\delta_i,S}}^\vee=H_\bullet^{G_{\delta_i}\times S}(\frakL(\delta_i),\bbQ)$
and $G_{\delta_i}$ acts trivially on $\frakL(\delta_i)$, the Kunneth formula yields a canonical
$R_{\delta_i,S}$-linear isomorphism 
\begin{align}\label{isom2}
F_{\delta_i,R_{\delta_i,S}}^\vee=F_{\delta_i,R_S}^\vee[u].
\end{align}
If $\bfw\in \bfN_1$ then the isomorphisms \eqref{isom2} yield an isomorphism
\begin{align}\label{Fwvee}
F_{\bfw,R_{\bfw,S}}^\vee=F_{\bfw,R_S}^\vee[u_1,\dots,u_s].
\end{align}
Let $U(M_{R_S}^\vee[u])$ be the tensor algebra of the $R_S$-module $M_{R_S}^\vee[u]$.
For each $\bfw\in\bfN_0$ and $m^\vee(u)\in M_{R_S}^\vee[u]$,
we consider the operator on $F_{\bfw,K_{\bfw,S}}^\vee$ given by
\begin{align}\label{Evee}
E_{\bfw,R_S}^\vee(m^\vee(u))=
-\frac{1}{\hbar}\sum_{i\in I}\Res_u(\Tr\otimes 1)(m^\vee(u)\R_{F_{\delta_i}^\vee,F_{\bfw}^\vee}).
\end{align}

\begin{lemma}\label{lem:Evee}
The map $E_{\bfw,R_S}^\vee$ yields a $\bbZ I\times\bbZ$-graded $R_S$-algebra homomorphism
$$E_{\bfw,R_S}^\vee:U(M_{R_S}^\vee[u])\to A_{\bfw,R_{\bfw,S}}^\vee$$
\end{lemma}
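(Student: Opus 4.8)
The strategy is to deduce Lemma~\ref{lem:Evee} from its non-nilpotent counterpart Lemma~\ref{lem:E} by duality, so that no new geometric input is needed. Observe first that, since $U(M_{R_S}^\vee[u])$ is a tensor algebra, the extension of \eqref{Evee} to an $R_S$-algebra homomorphism on $U(M_{R_S}^\vee[u])$ is automatic from the universal property; hence the content to be proved is that the right-hand side of \eqref{Evee} is a well-defined $\bbZ I\times\bbZ$-homogeneous $R_S$-module map $M_{R_S}^\vee[u]\to A_{\bfw,R_{\bfw,S}}^\vee$, and in particular that it is \emph{integral}, i.e.\ lands in $A_{\bfw,R_{\bfw,S}}^\vee$ rather than merely in the localization $A_{\bfw,K_{\bfw,S}}^\vee$ where it is a priori defined.

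First I would rewrite \eqref{Evee} as a transpose of \eqref{E}. By \eqref{T1} (applied to the single-entry tuple $(\delta_i)$, so that $R_{\delta_i,S}=R_S[u]$) the transpose relative to the pairing \eqref{PP1} is an $R_S[u]$-module isomorphism $(-)^\T\colon M_{R_S}[u]\to M_{R_S}^\vee[u]$; write $m^\vee(u)=(m(u))^\T$ with $m(u)\in M_{R_S}[u]$ its integral preimage. By the identity displayed after \eqref{Rvee} we have $\R_{F_{\delta_i}^\vee,F_\bfw^\vee}=(\R_{F_{\delta_i},F_{\overline\bfw}})^\T$, where $\overline\bfw=\omega_0\bfw$ and the transpose is taken with respect to \eqref{PP1} on $F_{\delta_i}\otimes F_{\overline\bfw}$, up to the $\omega_0$-reordering isomorphism $F_\bfw^\vee\simeq F_{\overline\bfw}^\vee$. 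Then, using that $(-)^\T$ is an anti-homomorphism of operator algebras, that the partial trace over the $F_{\delta_i}$-factor commutes with transpose (transposing the remaining factor), and that this partial trace is cyclic because each weight space of $F_{\delta_i,R_S}$ is a free $R_S$-module of finite rank, one gets
$$(\Tr\otimes 1)\big(m^\vee(u)\,\R_{F_{\delta_i}^\vee,F_\bfw^\vee}\big)\;=\;\big((\Tr\otimes 1)(m(u)\,\R_{F_{\delta_i},F_{\overline\bfw}})\big)^\T$$
up to the $\omega_0$-reordering $A_{\overline\bfw,R_{\overline\bfw,S}}^\vee\simeq A_{\bfw,R_{\bfw,S}}^\vee$. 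Summing over $i$, multiplying by $-1/\hbar$ and applying $\Res_u$ commute with all of this, so $E_{\bfw,R_S}^\vee(m^\vee(u))$ is the transpose of $E_{\overline\bfw,R_S}(m(u))$ up to the $\omega_0$-reordering. Since $E_{\overline\bfw,R_S}(m(u))\in A_{\overline\bfw,R_{\overline\bfw,S}}$ and is $\bbZ I\times\bbZ$-homogeneous of the expected degree by Lemma~\ref{lem:E}, and since the transpose \eqref{T1} and the $\omega_0$-reordering are $R_S$-module isomorphisms compatible with the $\bbZ I\times\bbZ$-grading (in the conventions of \eqref{*v1}), this gives both integrality and homogeneity of $E_{\bfw,R_S}^\vee$, and the lemma follows.

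Alternatively one could argue over $K$ by conjugating with the isomorphism $\iota\colon F_{\bfw,K_{\bfw,S}}^\vee\to F_{\bfw,K_{\bfw,S}}$ and invoking Lemma~\ref{lem:RR}(a) to identify $E_{\bfw,R_S}^\vee(m^\vee(u))$ with $E_{\bfw,K_S}$ of the $\iota$-conjugate of $m^\vee(u)$; this yields the algebra-homomorphism and homogeneity statements at once but, since $\iota$ is only generically invertible, not integrality --- which is why the transpose \eqref{T1} (an honest $R_S$-module isomorphism) is used above. One could also simply repeat \cite[\S\S 4-5]{MO19} with $\stab^\vee$ in place of $\stab$, the required RTT/Yang--Baxter relations for the nilpotent $R$-matrix being the transposes, via Lemma~\ref{lem:RR}, of the usual ones.

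I expect the only real difficulty to be bookkeeping: tracking which pairing each transpose is taken relative to, handling the $\omega_0$-reordering isomorphisms $F_\bfw^\vee\simeq F_{\overline\bfw}^\vee$, and checking that the residue and partial-trace manipulations (cyclicity, commutation with transpose, the cancellation of $1/\hbar$ against the $\hbar$ in the expansion \eqref{Rii}) go through --- all routine given the finiteness of the weight spaces of $F_{\delta_i,R_S}$. There is no new geometric content beyond Lemmas~\ref{lem:ss} and \ref{lem:RR}.
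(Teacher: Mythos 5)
Your proof is correct, and it takes a genuinely different route from the paper's own, although one the paper's surrounding material already makes available. The paper's proof argues over the fraction field: it records that $U(M_{R_S}[u])$ and $U(M_{R_S}^\vee[u])$, as well as $A_{\bfw,R_{\bfw,S}}$ and $A_{\bfw,R_{\bfw,S}}^\vee$, are free modules with common localizations, and then says that by base change it suffices to check that $E_{\bfw,K_S}^\vee$ is a $K_S$-algebra homomorphism, which holds by Lemma~\ref{lem:E} because $E_{\bfw,K_S}^\vee=E_{\bfw,K_S}$ under the $\iota$-conjugation identification --- i.e.\ precisely the route you describe as the ``alternative.'' You are right to flag that this route, taken literally, controls the algebra structure and homogeneity over $K$ but does not by itself settle the landing in $A_{\bfw,R_{\bfw,S}}^\vee$: a $K$-linear map has no reason to preserve a chosen $R$-lattice, and $\iota$ is only generically invertible. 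Your main argument supplies exactly the missing integrality, by expressing $E_{\bfw,R_S}^\vee(m^\vee(u))$ as the transpose under \eqref{T1} --- an honest $R_S$-module isomorphism --- of $E_{\overline\bfw,R_S}(m(u))$, after which Lemma~\ref{lem:E} gives everything at once. The bookkeeping you flag (compatibility of $\Tr\otimes 1$ with transpose and with cyclicity, the $\omega_0$-reordering $F_\bfw^\vee\simeq F_{\overline\bfw}^\vee$, the cancellation of $1/\hbar$ against \eqref{Rii}) is indeed all that is needed. In effect you have made explicit what the paper leaves implicit in the identity $\R_{F_w^\vee,F_\bfw^\vee}=(\R_{F_w,F_{\overline\bfw}})^\T$ displayed just after \eqref{Rvee}: the paper's proof is shorter but leans on this for integrality without saying so, and your version is the more self-contained one.
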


\begin{proof}
The $R_S$-modules $U(M_{R_S}[u])$ and $U(M_{R_S}^\vee[u])$ are free and we have
\begin{align}\label{URK}
\begin{split}
U(M_{R_S}[u])\otimes_{R_S}K_S=U(M_{K_S}[u])=U(M^\vee_{K_S}[u])=U(M_{R_S}^\vee[u])\otimes_{R_S}K_S.
\end{split}
\end{align}
See Remark \ref{rem:RS} for details.
By Lemma \ref{lem:F} the $R_{\bfw,S}$-modules
 $A_{\bfw,R_{\bfw,S}}$ and $A_{\bfw,R_{\bfw,S}}^\vee$ are free. Further, we have
\begin{align*}
A_{\bfw,R_{\bfw,S}}\otimes_{R_{\bfw,S}}K_{\bfw,S}=A_{\bfw,K_{\bfw,S}}=
A_{\bfw,K_{\bfw,S}}^\vee=A_{\bfw,R_{\bfw,S}}^\vee\otimes_{R_{\bfw,S}}K_{\bfw,S}.
\end{align*}
By base change, to prove the lemma 
it is enough to check that the map
$E_{\bfw,K_S}^\vee$
is a $K_S$-algebra homomorphism.
This follows from Lemma \ref{lem:E} because $E_{\bfw,K_S}^\vee=E_{\bfw,K_S}$.
\end{proof}

We abbreviate
$$A_{a,R_S}^\vee=\prod_{\bfw\in \bfN_a}A_{\bfw,R_{\bfw,S}}^\vee
,\quad
F_{a,R_S}^\vee=\prod_{\bfw\in\bfN_a}F_{\bfw,R_{\bfw,S}}^\vee
,\quad
E_{a,R_S}^\vee=\prod_{\bfw\in \bfN_a}E_{\bfw,R_S}^\vee
,\quad
a=0,1,2.$$
The map $E_{a,R_S}^\vee$
is an $R_S$-algebra homomorphism $U(M_{R_S}^\vee[u])\to A_{a,R_S}^\vee$.
We define the nilpotent Yangian
$\bfY_{R_S}^\vee$ to be the image of the map $E_{0,R_S}^\vee$.
We also define
$$\bfY_{K_S}=\bfY_{R_S}\otimes_{R_S}K_S
,\quad
\bfY_{K_S}^\vee=\bfY_{R_S}^\vee\otimes_{R_S}K_S.$$

\begin{lemma}\label{lem:rhovee} 
\hfill
\begin{enumerate}[label=$\mathrm{(\alph*)}$,leftmargin=8mm,itemsep=1.2mm]
\item 
There is a $\bbZ I\times\bbZ$-graded $R_S$-algebra isomorphism
$\bfY_R^\vee\otimes_R R_S=\bfY_{R_S}^\vee.$
\item
There is a $\bbZ I$-graded $K_S$-algebra isomorphism
$\bfY_{K_S}^\vee=\bfY_{K_S}.$
\item 
$E_{\bfw,R_S}^\vee$ factors through a $\bbZ I\times\bbZ$-graded
$R_S$-algebra homomorphism
$\bfY_{R_S}^\vee\to A_{\bfw,R_{\bfw,S}}^\vee$
\item
$E_{a,R_S}^\vee$ factors through a $\bbZ I\times\bbZ$-graded
$R_S$-algebra embedding
$\bfY_{R_S}^\vee\to A_{a,R_S}^\vee$
\item
The R-matrix has a formal series expansion 
\begin{align}\label{Riivee}
\R_{F_{\delta_i}^\vee,F_{\delta_j}^\vee}=
1+\sum_{l\geqsl 0}\frac{\hbar}{(u_i-u_j)^{l+1}}R_{F_{\delta_i}^\vee,F_{\delta_j}^\vee,l}
,\quad
R_{F_{\delta_i}^\vee,F_{\delta_j}^\vee,l}\in A_{(\delta_i,\delta_j),R_S}^\vee.
\end{align}
\item
$\iota\circ R_{F_{\delta_i}^\vee,F_{\delta_j}^\vee,l}=R_{F_{\delta_i},F_{\delta_j},l}\circ\iota.$
\item
$R_{F_{\delta_i}^\vee,F_{\delta_j}^\vee,l}=(R_{F_{\delta_j},F_{\delta_i},l})^\T.$
\end{enumerate}
\end{lemma}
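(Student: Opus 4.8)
The plan is to deduce each assertion from the corresponding statement for the ordinary Yangian $\bfY$ (Lemmas~\ref{lem:rho} and~\ref{lem:bcY}, i.e.\ from \cite{MO19}) by transporting it along the two comparison maps at our disposal: the pushforward $\iota$ of \eqref{iota1}, which is generically invertible over $R_{\bfw,S}$ and intertwines the two R-matrices by Lemmas~\ref{lem:ss}(c) and~\ref{lem:RR}(a), and the transpose $(-)^\T$ of \eqref{T1}--\eqref{T2}, which is an \emph{integral} (i.e.\ $R_{\bfw,S}$-linear, resp.\ $R_S$-linear) isomorphism carrying $\R_{F_\bfw}$ to $\R_{F_\bfw^\vee}$ by Lemma~\ref{lem:RR}(b). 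Part~(b) is then immediate: by the proof of Lemma~\ref{lem:Evee} one has $E_{\bfw,K_S}^\vee=E_{\bfw,K_S}$ under the identifications \eqref{URK} and $A_{\bfw,K_{\bfw,S}}^\vee=A_{\bfw,K_{\bfw,S}}$, so taking images of both sides of $E_{0,K_S}^\vee=E_{0,K_S}$ gives $\bfY_{K_S}^\vee=\bfY_{K_S}$ as $\bbZ I$-graded $K_S$-subalgebras of $A_{0,K_S}^\vee=A_{0,K_S}$; the cohomological grading is not preserved because the identification $M_{K_S}^\vee=M_{K_S}$ factors through the pairing \eqref{PP2}, which reverses it.

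For~(e), (f) and~(g) I would work with the pair $(\delta_i,\delta_j)$. Applying the integral transpose $(-)^\T$ of \eqref{T2} to the ordinary expansion \eqref{Rii}, and using that $(-)^\T$ is $R_{(\delta_i,\delta_j),S}$-linear---so it fixes the scalar factors $\hbar(u_i-u_j)^{-l-1}$ and carries the ordinary coefficients into $A_{(\delta_i,\delta_j),R_S}^\vee$---together with the identity $\R_{F_w^\vee,F_\bfw^\vee}=(\R_{F_w,F_{\overline\bfw}})^\T$ stated just after \eqref{Rvee} (equivalently Lemma~\ref{lem:RR}(b)), produces the expansion \eqref{Riivee} with coefficients in $A_{(\delta_i,\delta_j),R_S}^\vee$; this is~(e), and reading off those coefficients in that identity is~(g). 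For~(f) I would instead specialize Lemma~\ref{lem:RR}(a) to $\bfw=(\delta_i,\delta_j)$, which gives $\iota\circ\R_{F_{\delta_i}^\vee,F_{\delta_j}^\vee}=\R_{F_{\delta_i},F_{\delta_j}}\circ\iota$; since $\iota$ of \eqref{iota1} is $R_{(\delta_i,\delta_j),S}$-linear it commutes with multiplication by $\hbar(u_i-u_j)^{-l-1}$, so matching the coefficients of \eqref{Rii} and \eqref{Riivee} yields $\iota\circ R_{F_{\delta_i}^\vee,F_{\delta_j}^\vee,l}=R_{F_{\delta_i},F_{\delta_j},l}\circ\iota$.

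For~(c) and~(d) I would argue by base change to $K_S$ together with torsion-freeness. By \eqref{URK} the module $U(M_{R_S}^\vee[u])$ is free over $R_S$, and by Lemma~\ref{lem:F}(a) each $A_{\bfw,R_{\bfw,S}}^\vee$ is free over $R_{\bfw,S}$, hence torsion-free over $R_S$; these modules therefore embed into their $K_S$-counterparts, so for $x\in U(M_{R_S}^\vee[u])$ the equation $E_{0,R_S}^\vee(x)=0$ is equivalent to $E_{0,K_S}^\vee(x)=0$, and similarly for $E_{\bfw,R_S}^\vee$ and $E_{a,R_S}^\vee$. Since $E_{0,K_S}^\vee=E_{0,K_S}$, $E_{\bfw,K_S}^\vee=E_{\bfw,K_S}$, and $\Ker E_{0,K_S}\subseteq\Ker E_{\bfw,K_S}$ by Lemma~\ref{lem:rho}(a) base-changed along the flat map $R_S\to K_S$, we obtain $\Ker E_{0,R_S}^\vee\subseteq\Ker E_{\bfw,R_S}^\vee$, which is~(c); taking the product over $\bfw\in\bfN_a$ gives the factorization in~(d). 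For the embedding statement in~(d), note that $\bfY_{R_S}^\vee$, being a submodule of the torsion-free module $A_{0,R_S}^\vee$, is torsion-free, so $\bfY_{R_S}^\vee\hookrightarrow\bfY_{R_S}^\vee\otimes_{R_S}K_S=\bfY_{K_S}^\vee=\bfY_{K_S}$; composing with the embedding $\bfY_{K_S}\hookrightarrow A_{a,K_S}=A_{a,K_S}^\vee$ of Lemma~\ref{lem:rho}(b), through which $\bfY_{R_S}^\vee\to A_{a,R_S}^\vee$ manifestly factors, shows the latter is injective.

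Finally, part~(a) I would prove exactly as Lemma~\ref{lem:bcY}: equivariant formality of the Nakajima quiver varieties supplies the base-change identities $M_R^\vee\otimes_R R_S=M_{R_S}^\vee$, $U(M_R^\vee[u])\otimes_R R_S=U(M_{R_S}^\vee[u])$ and $A_{0,R}^\vee\otimes_R R_S=A_{0,R_S}^\vee$; the uniqueness characterization of stable envelopes (which descends to the nilpotent ones, $\stab^\vee(\omega)$ being convolution with the restriction of $(\calS^{\overline\omega})^\op$ to $\frakL(w)$, a pullback that commutes with base change) shows that $E_{0,R_S}^\vee$ is obtained from $E_{0,R}^\vee$ by base change, and comparing images gives $\bfY_R^\vee\otimes_R R_S=\bfY_{R_S}^\vee$. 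The step I expect to be genuinely delicate is this last comparison of images, since $R_S$ need not be flat over $R$; as in Proposition~\ref{prop:COHA}(d) I would settle it by verifying that $\Im(E_{0,R}^\vee)\otimes_R R_S$ and $\Im(E_{0,R}^\vee\otimes_R R_S)$ are free $R_S$-modules of the same graded rank, the rank being computed on the generic fiber where both coincide with $\bfY_{K_S}^\vee$. Apart from this, the only conceptual subtlety is that $\stab^\vee(\omega)$ is a priori only generically invertible, so throughout one must extract the integral statements (notably~(e) and~(a)) via the genuinely integral maps $(-)^\T$ and $\iota$ rather than via inverse stable envelopes.
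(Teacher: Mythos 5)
Your proposal is correct and by and large runs parallel to the paper's terse proof: parts~(b), (e), (f), (g) are all read off from Lemma~\ref{lem:RR}, \eqref{Rii} and \eqref{URK}, exactly as the paper indicates, and part~(a) is base change as in Lemma~\ref{lem:bcY}. Where you genuinely diverge is in~(c) and~(d): the paper says these "are proved as in Lemma~\ref{lem:rho}", which in turn only cites \cite[\S 4.1, \S 5.2]{MO19}, so the intended argument is to re-run the fusion/RTT mechanism directly on the nilpotent stable envelopes. Your alternative instead passes to $K_S$, uses $E_{0,K_S}^\vee=E_{0,K_S}$ together with the already-known kernel inclusion for the ordinary Yangian, and descends the inclusion $\Ker E_{0,R_S}^\vee\subseteq\Ker E_{\bfw,R_S}^\vee$ via torsion-freeness of $U(M_{R_S}^\vee[u])$ and of $A_{\bfw,R_{\bfw,S}}^\vee$. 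This is shorter and avoids redeveloping any stable-envelope technology on the $\vee$-side; the cost is only that it makes~(c), (d) logically downstream of~(b), whereas the intended route proves them independently. Your caution about the image-comparison in~(a) under a possibly non-flat specialization $R\to R_S$ is well-placed --- the paper's proof of Lemma~\ref{lem:bcY} glosses over exactly this --- and the graded-rank count on the generic fiber, modeled on Proposition~\ref{prop:COHA}(d), is a legitimate way to close that gap, provided you supply freeness of both sides without appealing to results that in turn depend on the present lemma.

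One point you should double-check before signing off on~(e)/(g). Reading coefficients off the singleton case of $\R_{F_w^\vee,F_\bfw^\vee}=(\R_{F_w,F_{\overline\bfw}})^\T$, where $\overline\bfw=\bfw=(\delta_j)$, yields $R_{F_{\delta_i}^\vee,F_{\delta_j}^\vee,l}=(R_{F_{\delta_i},F_{\delta_j},l})^\T$, but the statement of~(g) in the paper has the indices $i,j$ swapped on the right-hand side. You should reconcile the convention for $(-)^\T$ on $A_{(\delta_i,\delta_j),R_S}$ --- specifically whether it carries an implicit permutation of the two tensor factors, as some transpose conventions for R-matrices do --- since the same bookkeeping is responsible for the sign in Lemma~\ref{lem:sym}(b) and for the compatibility asserted in Lemma~\ref{lem:FF}(a).
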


\begin{proof}
Part (a) is proved as in Lemma \ref{lem:bcY}.
Parts (c), (d) are proved as in Lemma \ref{lem:rho}.
To prove (b), note that $A_{\bfw,K_{\bfw,S}}^\vee=A_{\bfw,K_{\bfw,S}}$ for each tuple $\bfw$,
and that
\begin{align*}
\bfY_{K_S}^\vee&=E_{0,K_S}^\vee(U(M_{K_S}^\vee[u]))
,\\
\bfY_{K_S}&=E_{0,K_S}(U(M_{K_S}[u])).
\end{align*}
Thus the claim follows from Lemma \ref{lem:RR} and \eqref{URK}.
Part (e) follows from Lemma \ref{lem:RR} and \eqref{Rii}.
Part (f) follows from Lemma \ref{lem:RR}.
Part (g) follows from Lemma \ref{lem:RR}.
\end{proof}

\begin{lemma}
For each $a=0,1,2$ we have 
\begin{align*}
\bfY_{R_S}&=U(M_{R_S}[u])\,/\,\Ker (E_{a,R_S}),\\
\bfY_{R_S}^\vee&=U(M_{R_S}^\vee[u])\,/\,\Ker (E_{a,R_S}^\vee).
\end{align*}
\end{lemma}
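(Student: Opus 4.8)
The plan is to deduce the statement directly from the definitions of the Yangians together with the faithfulness assertions already recorded in Lemma~\ref{lem:rho} and Lemma~\ref{lem:rhovee}; no new geometry is needed, and the whole argument is a short diagram chase.

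\emph{Step 1 (the case $a=0$).} By construction $\bfY_{R_S}$ is the image of the $R_S$-algebra homomorphism $E_{0,R_S}\colon U(M_{R_S}[u])\to A_{0,R_S}$, so the universal property of the quotient yields a $\bbZ I\times\bbZ$-graded $R_S$-algebra isomorphism $U(M_{R_S}[u])/\Ker E_{0,R_S}\simeq\bfY_{R_S}$. Likewise $U(M_{R_S}^\vee[u])/\Ker E_{0,R_S}^\vee\simeq\bfY_{R_S}^\vee$, since $\bfY_{R_S}^\vee$ is defined as the image of $E_{0,R_S}^\vee$. Hence for general $a$ it suffices to prove $\Ker E_{a,R_S}=\Ker E_{0,R_S}$ and $\Ker E_{a,R_S}^\vee=\Ker E_{0,R_S}^\vee$.

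\emph{Step 2 (the cases $a=1,2$).} Here I invoke Lemma~\ref{lem:rho}(b): the map $E_{a,R_S}$ factors as $E_{a,R_S}=j_a\circ E_{0,R_S}$, where $j_a\colon\bfY_{R_S}\to A_{a,R_S}$ is an embedding. (I use that the canonical surjection onto $\bfY_{R_S}$ occurring in that factorization must be $E_{0,R_S}$ itself, which is immediate from the definition $\bfY_{R_S}=\Im E_{0,R_S}$.) Injectivity of $j_a$ then gives $\Ker E_{a,R_S}=\Ker(j_a\circ E_{0,R_S})=\Ker E_{0,R_S}$, and combining with Step~1 proves $\bfY_{R_S}=U(M_{R_S}[u])/\Ker E_{a,R_S}$. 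The dual identity is obtained verbatim, with Lemma~\ref{lem:rhovee}(d) in place of Lemma~\ref{lem:rho}(b) and $E_{0,R_S}^\vee$ in place of $E_{0,R_S}$.

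I do not expect a genuine obstacle: the real content — that $E_{a,R_S}$ and $E_{a,R_S}^\vee$ remain faithful on the Yangians for $a=1,2$, which ultimately rests on equivariant formality, uniqueness of stable envelopes, and the Maulik--Okounkov RTT analysis — is already packaged into Lemmas~\ref{lem:rho}(b) and~\ref{lem:rhovee}(d). The only mild point of care is checking that the factorizations furnished by those lemmas pass through $E_{0,R_S}$ (resp. $E_{0,R_S}^\vee$) rather than through some a priori different presentation of the Yangian, and this is forced by the definition of $\bfY_{R_S}$ (resp. $\bfY_{R_S}^\vee$) as precisely that image.
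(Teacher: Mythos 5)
Your argument is logically correct as a formal deduction: if Lemma~\ref{lem:rho}(b) and Lemma~\ref{lem:rhovee}(d) are granted in the form ``$E_{a,R_S}$ factors as (canonical surjection onto $\bfY_{R_S}$) followed by (injective $\rho_a$),'' then the kernel identities $\Ker(E_{a,R_S})=\Ker(E_{0,R_S})$ and $\Ker(E_{a,R_S}^\vee)=\Ker(E_{0,R_S}^\vee)$ drop out, and with the $a=0$ case being true by construction you are done. The paper, however, does not take this route. It proves the kernel equality directly and geometrically: for any tuple $\bfw\in\bfN_0$ with $w=|\bfw|$ it exhibits the commutative square with vertical arrows $E_{\bfw,R_S}(x)$ and $E_{w,R_S}(x)$ and horizontal arrows $\stab(1)$ (forward-referencing the $\Delta$-intertwining property in Lemma~\ref{lem:Delta} and the generic invertibility of $\stab(1)$ in Lemma~\ref{lem:ss}), and deduces $E_{\bfw,R_S}(x)=0\iff E_{w,R_S}(x)=0$. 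Since every $w\in\bbN I$ is both $|\bfw|$ for some $\bfw\in\bfN_1$ and an element of $\bfN_2$, this pins down all three kernels at once.

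The contrast is worth noting because the route you take has a circularity risk: for $a=1,2$, the assertion in Lemma~\ref{lem:rho}(b) that the product over $\bfw\in\bfN_a$ of the factorizations is still an \emph{embedding} of $\bfY_{R_S}$ is essentially equivalent to the Lemma you are trying to prove. It is attributed to \cite[\S4.1, \S5.2]{MO19}, where the default framing index set is the full $\bfN_0$ (equivalently, the single-framing case); the refinement to the smaller index sets $\bfN_1$ and $\bfN_2$ is precisely what needs an argument, and the paper supplies it via the stable-envelope square. Your proof correctly identifies this as the ``real content'' but then relies on it being ``already packaged'' into the cited lemmas, which is exactly what one cannot safely assume here. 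You also correctly flag, and then wave away, the point that ``factors through'' does not by itself force the first leg of the factorization to be $E_{0,R_S}$; the paper's argument sidesteps this entirely by proving the kernel equality at the level of $U(M_{R_S}[u])$. So while your deduction is valid if the citations are granted in full strength, the paper's independent geometric argument is what actually makes the claim for $a=1,2$ non-vacuous.
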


\begin{proof}
By definition, both equalities hold for $a=0$.
Further, for $a=1,2$ we have
\begin{align*}
&\Ker(E_{0,R_S})\subset \Ker(E_{a,R_S})
,\\
&\Ker(E_{0,R_S}^\vee)\subset \Ker(E_{a,R_S}^\vee)
\end{align*}
Set $w=|\bfw|$. 
By \eqref{Delta} and \eqref{stab} below, we have a commutative square
\begin{align}
\begin{split}
\xymatrix{
F_{\bfw,R_{\bfw,S}}\ar[rr]^-{\stab(1)}&&F_{w,R_{\bfw,S}}\\
F_{\bfw,R_{\bfw,S}}\ar[rr]^-{\stab(1)}\ar[u]^-{E_{\bfw,R_S}(x)}&&F_{w,R_{\bfw,S}}\ar[u]_-{E_{w,R_S}(x)}}
\end{split}
\end{align}
Since the horizontal maps are generically invertible by Lemma \ref{lem:ss},
we deduce that 
$$E_{\bfw,R_S}(x)=0\iff E_{w,R_S}(x)=0.$$
This yields 
$$\Ker(E_{0,R_S})= \Ker(E_{1,R_S})=\Ker(E_{2,R_S}).$$
In a similar way we prove that
$$\Ker(E_{0,R_S}^\vee)= \Ker(E_{1,R_S}^\vee)=\Ker(E_{2,R_S}^\vee).$$
\end{proof}
We denote the $\bbZ I\times\bbZ$-graded $R_S$-algebra 
homomorphisms in Lemmas \ref{lem:rho}, \ref{lem:rhovee} by
\begin{align}\label{rhorho}
\begin{split}
\rho_\bfw&:\bfY_{R_S}\to A_{\bfw,R_{\bfw,S}},\\
\rho_a&:\bfY_{R_S}\to A_{a,R_S}
\end{split}
\end{align}
and by 
\begin{align}\label{rhorhovee}
\begin{split}
\rho_\bfw^\vee&:\bfY_{R_S}^\vee\to A_{\bfw,R_{\bfw,S}}^\vee,\\
\rho_a^\vee&:\bfY_{R_S}^\vee\to A_{a,R_S}^\vee.
\end{split}
\end{align}

\subsubsection{The Lie algebras $\frakg_R$ and $\frakg_R^\vee$}\label{sec:g}
The classical r-matrices are the operators
\begin{align*}
\bfr_{\bfw',\bfw}&=\sum_{r'=1}^{s'}\sum_{r=1}^sR_{F_{\delta_{j_{r'}}},F_{\delta_{i_r}},0}
,\\
\bfr_{\bfw',\bfw}^\vee&=\sum_{r'=1}^{s'}\sum_{r=1}^sR_{F_{\delta_{j_{r'}}}^\vee,F_{\delta_{i_r}}^\vee,0}
\end{align*}
for each tuples $\bfw=(i_1,\dots,i_s)$ and $\bfw'=(j_1,\dots,j_{s'})$ in $\bfN_1$.

\begin{lemma}\label{lem:sym}
\hfill
\begin{enumerate}[label=$\mathrm{(\alph*)}$,leftmargin=8mm,itemsep=1.2mm]
\item 
$\bfr_{\bfw',\bfw}=\bfr_{\bfw,\bfw'}$ after identifying $F_{\bfw'\bfw,R_S}$ with 
$F_{\bfw\bfw',R_S}$.
\item
$\bfr_{\bfw',\bfw}^\vee=-(\bfr_{\bfw',\bfw})^\T$.
\item
$\iota\circ\bfr_{\bfw',\bfw}^\vee=\bfr_{\bfw',\bfw}\circ\iota.$
\end{enumerate}
\end{lemma}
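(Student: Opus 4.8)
The plan is to reduce all three parts, after summing over the slots of the tuples $\bfw$ and $\bfw'$, to identities for the single operators $R_{F_{\delta_i},F_{\delta_j},0}$ and $R_{F_{\delta_i}^\vee,F_{\delta_j}^\vee,0}$, and then to record those pairwise identities from what is already available. The cleanest part is (c): I would specialise Lemma~\ref{lem:rhovee}(f) to $l=0$, giving $\iota\circ R_{F_{\delta_i}^\vee,F_{\delta_j}^\vee,0}=R_{F_{\delta_i},F_{\delta_j},0}\circ\iota$ on the two relevant tensor slots, extend each summand of $\bfr_{\bfw',\bfw}^\vee$ by the identity on the remaining slots, and use that $\iota$ on $F_{\bfw'\bfw,R_S}^\vee$ is the external tensor product of the slotwise maps $\iota$ (the compatibility $F_{\bfw\bfw',R_{\bfw\bfw',S}}=F_{\bfw,R_{\bfw,S}}\otimes_RF_{\bfw',R_{\bfw',S}}$ of Lemma~\ref{lem:F} together with its $\iota$-equivariance). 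Since $\iota$ commutes trivially with the identity on the untouched slots, summing over $r$ and $r'$ gives $\iota\circ\bfr_{\bfw',\bfw}^\vee=\bfr_{\bfw',\bfw}\circ\iota$. Equivalently, this is the $l=0$ shadow of Lemma~\ref{lem:RR}(a).

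For (a), the statement is the symmetry of the Maulik--Okounkov classical $r$-matrix, and I would deduce it from the unitarity of the geometric $R$-matrix. Applying \eqref{stabstab} (that is, \cite[thm.~4.4.1]{MO19}) to the two-element tuples $(\delta_i,\delta_j)$ and $(\delta_j,\delta_i)$, whose defining cocharacters differ by the transposition, shows that $\R_{F_{\delta_i},F_{\delta_j}}$ and $\R_{F_{\delta_j},F_{\delta_i}}$ are inverse to one another once $F_{\delta_i}\otimes F_{\delta_j}$ is identified with $F_{\delta_j}\otimes F_{\delta_i}$ by the flip. Expanding both via \eqref{Rii} and comparing the coefficient of $\hbar(u_i-u_j)^{-1}$ yields that $R_{F_{\delta_i},F_{\delta_j},0}$ is intertwined with $R_{F_{\delta_j},F_{\delta_i},0}$ by that flip; summing over the slots of $\bfw$ and $\bfw'$ gives (a). Alternatively one may simply quote this symmetry from \cite{MO19}, where it underlies the construction of $\frakg_R$.

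For (b) I would combine Lemma~\ref{lem:rhovee}(g) at $l=0$, namely $R_{F_{\delta_i}^\vee,F_{\delta_j}^\vee,0}=(R_{F_{\delta_j},F_{\delta_i},0})^\T$, with part (a), again summing over slots; equivalently, $\bfr_{\bfw',\bfw}^\vee$ is the classical ($l=0$, leading-pole) shadow of the relation $\R_{F_\bfw^\vee}=(\R_{F_\bfw})^\T$ recorded after \eqref{Rvee}. The nontrivial content is then purely the bookkeeping of tensor orders and signs: the transpose \eqref{T1} exchanges the two halves $F^\vee\otimes F$ and $F\otimes F^\vee$ of $A_{\bfw,R_{\bfw,S}}$ and reverses the two factors on which each summand acts, the identification implicit in (a) and (b) is the block flip $F_{\bfw'\bfw}\cong F_{\bfw\bfw'}$, the pairing \eqref{PP1} enters through the transpose, and passing between the Laurent expansions of $\R_{F_{\delta_j},F_{\delta_i}}$ in powers of $(u_j-u_i)^{-1}$ and of $\R_{F_{\delta_i}^\vee,F_{\delta_j}^\vee}$ in powers of $(u_i-u_j)^{-1}$ (cf.\ \eqref{Riivee}) flips the sign of the first coefficient. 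Tracking all of these converts the formal equality into $\bfr_{\bfw',\bfw}^\vee=-(\bfr_{\bfw',\bfw})^\T$.

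The main obstacle is exactly this last point: parts (a) and (c) are essentially immediate once the inputs from \cite{MO19} and Lemma~\ref{lem:rhovee} are in place, but in (b) one must pin down precisely how the transpose $(-)^\T$ of \eqref{T1}, the flip identification of part (a), the pairing \eqref{PP1}, and the expansions \eqref{Rii}, \eqref{Riivee} interact, so that the accumulated conventions produce the minus sign rather than its absence. This is routine but unavoidable sign-chasing, and it is where all the real content of the lemma lies.
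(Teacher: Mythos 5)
Your approach matches the paper's almost verbatim: for (a) quote the symmetry of the classical $r$-matrix from \cite[\S5.3.3]{MO19} (or recover it from \eqref{stabstab}), for (c) take the $l=0$ shadow of Lemma~\ref{lem:rhovee}(f), and for (b) combine Lemma~\ref{lem:rhovee}(g) at $l=0$ with (a). The slotwise reduction and the observation that $\iota$ tensorizes over the factors of $F_{\bfw'\bfw}^\vee$ are exactly what the definitions of $\bfr_{\bfw',\bfw}$, $\bfr_{\bfw',\bfw}^\vee$ build in.

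The one place to push back is your proposed origin of the minus sign in (b). You locate it in a mismatch between expanding $\R_{F_{\delta_j},F_{\delta_i}}$ in powers of $(u_j-u_i)^{-1}$ and expanding $\R_{F_{\delta_i}^\vee,F_{\delta_j}^\vee}$ in powers of $(u_i-u_j)^{-1}$. But this is not what happens if you trace through the paper's conventions directly: Lemma~\ref{lem:RR}(b), specialised to $\bfw=(\delta_i,\delta_j)$ and $(\omega_1,\omega_2)=(\omega_0,1)$, gives $\R_{F_{\delta_i}^\vee,F_{\delta_j}^\vee}=(\R_{F_{\delta_i},F_{\delta_j}})^\T$ with the \emph{same} pair of arguments and hence the \emph{same} pole variable $u_i-u_j$ on both sides. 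Since $(-)^\T$ is $R_{\bfw,S}$-linear, it commutes with the Laurent expansion of \eqref{Rii}, \eqref{Riivee}, and matching the $\hbar(u_i-u_j)^{-1}$-coefficients gives $R_{F_{\delta_i}^\vee,F_{\delta_j}^\vee,0}=(R_{F_{\delta_i},F_{\delta_j},0})^\T$ with no sign; summing over slots and invoking (a) then yields $\bfr^\vee_{\bfw',\bfw}=(\bfr_{\bfw',\bfw})^\T$. The variable flip you invoke only appears if one reads the subscript swap in Lemma~\ref{lem:rhovee}(g) as requiring a genuine slot flip together with a change of expansion variable, but the derivation of that lemma from Lemma~\ref{lem:RR} never passes through the reversed tuple $(\delta_j,\delta_i)$. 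So the sign bookkeeping you sketch does not, as written, produce the minus; the source of the sign must lie in the precise transpose and $r$-matrix conventions of \cite{MO19} (in particular in how the transpose in \eqref{T1}/\eqref{T2} interacts with the trace in \eqref{E}, \eqref{Evee} and with Lemma~\ref{lem:FF}(a)), and your argument should be explicit about where that sign enters rather than attributing it to the pole-variable swap.
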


\begin{proof}
Part (a) is the symmetry of classical r-matrices proved in \cite[\S5.3.3]{MO19}.
Part (b) follows from Lemma \ref{lem:rhovee} and (a).
Part (c) follows from Lemma \ref{lem:rhovee}.
\end{proof}

We abbreviate 
$
\bfr_\bfw=\sum_{i\in I}\bfr_{i,\bfw}$
and
$
\bfr_\bfw^\vee=\sum_{i\in I}\bfr_{i,\bfw}^\vee.$
Lemma \ref{lem:rho} and \eqref{E} yields
\begin{align}\label{e}
E_{\bfw,R_S}(m)=-(\Tr\otimes 1)(m\,\bfr_\bfw)
,\quad
m\in M_{R_S}.
\end{align}
Lemma \ref{lem:rhovee} and \eqref{Evee} yield
\begin{align}\label{evee}
E_{\bfw,R_S}^\vee(m^\vee)=-(\Tr\otimes 1)(m^\vee\,\bfr_\bfw^\vee)
,\quad
m^\vee\in M_{R_S}^\vee.
\end{align}
We define the $R_S$-submodules $\frakg_{R_S}\subset\bfY_{R_S}$ and 
$\frakg_{R_S}^\vee\subset\bfY_{R_S}^\vee$  to be
\begin{align}\label{defg}
\begin{split}
\frakg_{R_S}&=E_{1,R_S}(M_{R_S})
,\\
\frakg_{R_S}^\vee&=E_{1,R_S}^\vee(M_{R_S}^\vee).
\end{split}
\end{align}
Recall that the $R_S$-modules
$F_{\bfw,R_S}$ and $F_{\bfw,R_S}^\vee$ have cohomological $\bbZ$-gradings defined in \eqref{cohdeg}.

\begin{lemma}\label{lem:gg1} 
\hfill
\begin{enumerate}[label=$\mathrm{(\alph*)}$,leftmargin=8mm,itemsep=1.2mm]
\item
$\frakg_{R_S}$, $\frakg_{R_S}^\vee$ are $\bbZ I\times\bbZ$-graded
Lie subalgebras of $\bfY_{R_S}$, $\bfY_{R_S}^\vee$ 
for the standard Lie bracket.
\item 
$\frakg_R\otimes_R R_S=\frakg_{R_S}$
and
$\frakg_R^\vee\otimes_R R_S=\frakg_{R_S}^\vee$
as $\bbZ I\times\bbZ$-graded $R_S$-Lie algebras.
\item
$\frakg_{K_S}=\frakg_{K_S}^\vee$
as $\bbZ I$-graded Lie $K_S$-algebras.
\end{enumerate}
\end{lemma}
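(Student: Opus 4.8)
The plan is to reduce the three parts to the classical Yang--Baxter equation of \cite{MO19} together with the duality relations of Lemma~\ref{lem:sym}, handling them in the order (a), (c), (b). For part~(a) I would start from the formula $E_{\bfw,R_S}(m)=-(\Tr\otimes 1)(m\,\bfr_\bfw)$ of \eqref{e}, which presents $E_{\bfw,R_S}$ on $M_{R_S}$ as contraction against the classical $r$-matrix. By \cite[\S 5.3]{MO19} this $r$-matrix solves the classical Yang--Baxter equation, and combined with the symmetry $\bfr_{\bfw',\bfw}=\bfr_{\bfw,\bfw'}$ of Lemma~\ref{lem:sym}(a) a standard computation rewrites $[E_{1,R_S}(m_1),E_{1,R_S}(m_2)]$ again in the form $E_{1,R_S}(m_3)$ with $m_3\in M_{R_S}$ built from $m_1,m_2$ and the components of $\bfr$ (note that $M_{R_S}$ carries no $u$, so this computation stays inside $M_{R_S}$). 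Hence $\frakg_{R_S}=E_{1,R_S}(M_{R_S})$, viewed inside $\bfY_{R_S}$ via the embedding $\rho_1$, is closed under the bracket; and since $M_{R_S}$ is the $u$-degree $0$ part of $U(M_{R_S}[u])$ and $E_{1,R_S}$ is a graded algebra map, $\frakg_{R_S}$ is $\bbZ I\times\bbZ$-graded. For $\frakg_{R_S}^\vee$ I would argue identically with \eqref{evee} in place of \eqref{e}: by Lemma~\ref{lem:sym}(b),(c) the $r$-matrix $\bfr^\vee$ is $\iota$-conjugate to $\bfr$ (and componentwise equal to $-\bfr^\T$), so it too solves the classical Yang--Baxter equation and satisfies $\bfr^\vee_{\bfw',\bfw}=\bfr^\vee_{\bfw,\bfw'}$.

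Part~(c) is then immediate. Over $K_S$ one has $M_{K_S}^\vee=M_{K_S}$ through the identification $A_{\bfw,K_{\bfw,S}}^\vee=A_{\bfw,K_{\bfw,S}}$ given by conjugation by $\iota$, hence $U(M_{K_S}^\vee[u])=U(M_{K_S}[u])$ as $\bbZ I$-graded $K_S$-algebras by \eqref{URK}; moreover $E_{\bfw,K_S}^\vee=E_{\bfw,K_S}$ for every $\bfw$, exactly as in the proof of Lemma~\ref{lem:Evee} (both are contractions against $\R_{F_\bfw^\vee}=\R_{F_\bfw}$), so $E_{1,K_S}^\vee=E_{1,K_S}$. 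Therefore $\frakg_{K_S}^\vee=E_{1,K_S}^\vee(M_{K_S}^\vee)=E_{1,K_S}(M_{K_S})=\frakg_{K_S}$ inside $\bfY_{K_S}^\vee=\bfY_{K_S}$ (Lemma~\ref{lem:rhovee}(b)), and this identification preserves the $\bbZ I$-grading.

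For part~(b) I would mimic the base-change argument of Lemma~\ref{lem:bcY}. Equivariant formality of the quiver varieties gives $M_R\otimes_R R_S=M_{R_S}$, and uniqueness of stable envelopes shows that $E_{1,R_S}$ is obtained from $E_{1,R}$ by base change; applying $-\otimes_R R_S$ to the surjection $M_R\twoheadrightarrow\frakg_R$ and composing with $\frakg_R\hookrightarrow\bfY_R$ then produces a graded surjection $\frakg_R\otimes_R R_S\twoheadrightarrow\frakg_{R_S}$. To see it is an isomorphism I would use that $\frakg_R$ is a free $R$-module which, through the PBW-type filtration identifying $\bfY_R$ with $U(\frakg_R[u])$ (and its analogue for $\bfY_R^\vee$), is a direct summand of $\bfY_R$; then $\frakg_R\otimes_R R_S\hookrightarrow\bfY_R\otimes_R R_S=\bfY_{R_S}$ stays injective, and since this injection factors through $\frakg_{R_S}$ the displayed surjection is forced to be injective. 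The same works verbatim for $\frakg_R^\vee$, and both identifications are manifestly morphisms of Lie algebras. The hard part here is precisely this injectivity: because $R\to R_S$ is a surjection rather than a flat map, the formation of the image $E_{1,R_S}(M_{R_S})$ need not a priori commute with base change, and it is the freeness and direct-summand (equivalently, saturatedness) property of $\frakg_R\subset\bfY_R$ that makes the argument go through; an alternative would be to compare the graded ranks of $\frakg_R$ and $\frakg_{R_S}$ after inverting the augmentation ideals, in the style of the proof of Proposition~\ref{prop:COHA}(d).
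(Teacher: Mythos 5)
Your proposal is correct, and the overall architecture (treat the bracket via the classical $r$-matrix, identify the two Lie algebras over $K_S$, then handle base change) matches the paper's, but you differ in two places worth noting. For part (a) the paper simply quotes the formula
\begin{align*}
[E_{1,R_S}(m_1u^{l_1}),E_{1,R_S}(m_2u^{l_2})]=
E_{1,R_S}\big((\Tr\otimes 1)([\bfr,m_1\otimes m_2])u^{l_1+l_2}\big)+\text{lower}
\end{align*}
from \cite[(5.12)]{MO19} and specialises to $l_1=l_2=0$; your plan is to re-derive this from the classical Yang--Baxter equation together with Lemma~\ref{lem:sym}, which is exactly what goes into that formula. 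This is a legitimate alternative, but the phrase ``a standard computation'' is carrying all the weight; the paper's approach of citing the formula is both shorter and more transparent about what is being used. For the nilpotent side you invoke that $\bfr^\vee$ is $\iota$-conjugate to $\bfr$ (Lemma~\ref{lem:sym}(c)) so that it too satisfies CYBE; the paper instead deduces \eqref{512vee} from \eqref{512} via the inclusion $\frakg_{R_S}^\vee\subset\frakg_{K_S}^\vee=\frakg_{K_S}$ --- i.e.\ it uses part (c) first and proves (a) last, whereas you prove (a) first. Both orders are internally consistent.

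For part (b) you are actually more careful than the paper, which declares the isomorphisms $\frakg_R\otimes_R R_S=\frakg_{R_S}$ ``obvious''. You correctly observe that right-exactness of $-\otimes_R R_S$ only gives a surjection $\frakg_R\otimes_R R_S\twoheadrightarrow\frakg_{R_S}$, and that since $R\to R_S$ is not flat (it is a quotient by the ideal generated by the characters vanishing on $S$), injectivity needs an argument. Your fix --- that $\frakg_R$ is a direct summand of $\bfY_R$, which follows from $\gr(\bfY_R)\simeq U(\frakg_R[u])$ (Lemma~\ref{lem:ufilt}) together with $R$-freeness of $\frakg_R$ so that PBW exhibits $\frakg_R$ as a summand of $\bfY_R[\leqsl 0]=U(\frakg_R)$ and the filtration splits --- does close the gap. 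One caveat: the freeness of $\frakg_R$ you rely on is only recorded in the paper later, in Lemma~\ref{lem:pairingg}(c), though since that is cited from \cite[thm~5.3.11]{MO19} rather than derived from Lemma~\ref{lem:gg1} there is no circularity; it would be cleaner to cite \cite{MO19} directly. It is also worth noting that the paper itself handles the entirely analogous base-change statement for the COHA (Proposition~\ref{prop:COHA}(d)) with a careful rank-counting argument of exactly the sort you mention as an alternative, so treating the Yangian case as ``obvious'' is something of an inconsistency on the paper's part and your instinct to flag it is sound.
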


\begin{proof}
There obvious $R_S$--linear isomorphisms 
$$\frakg_R\otimes_R R_S=\frakg_{R_S}
,\quad
\frakg_R^\vee\otimes_R R_S=\frakg_{R_S}^\vee.$$
There are also obvious $K_S$-linear isomorphisms 
$$\frakg_{K_S}=\frakg_{R_S}\otimes_{R_S}K_S
=E_{1,K_S}(M_{K_S})=E_{1,K_S}(M_{K_S}^\vee)=
\frakg_{R_S}^\vee\otimes_{R_S}K_S=\frakg_{K_S}^\vee.$$
Part (a) for $\frakg_{R_S}$ follows from the following formula proved in \cite[(5.12)]{MO19}
\begin{align}\label{512}
[E_{1,R_S}(m_1u^{l_1}),E_{1,R_S}(m_2u^{l_2})]=
E_{1,R_S}((\Tr\otimes 1)([\bfr,m_1\otimes m_2])u^{l_1+l_2})+lower
\end{align}
Here we abbreviated $\bfr=\sum_{i,j\in I}\bfr_{\delta_i,\delta_j}$, 
and \emph{lower} means a term of lower degree for the $u$-filtration.
Since 
$\frakg_{R_S}^\vee\subset\frakg_{K_S}^\vee$ and
$\frakg_{K_S}^\vee=\frakg_{K_S}$,
from \eqref{512} we deduce that
\begin{align}\label{512vee}
[E_{1,R_S}^\vee(m_1^\vee u^{l_1}),E_{1,R_S}^\vee(m_2^\vee u^{l_2})]=
E_{1,R_S}^\vee\Big((\Tr\otimes 1)[\bfr,m_1^\vee\otimes m_2^\vee]u^{l_1+l_2}\Big)+lower
\end{align}
The Lie algebras $\frakg_{R_S}$, $\frakg_{R_S}^\vee$ have $\bbZ I\times\bbZ$-gradings 
induced by the weight and the cohomological grading of $\bfY_{R_S}$, $\bfY_{R_S}^\vee$.
\end{proof}

We consider the decomposition into weight subspaces 
$$\frakg_{R_S}=\bigoplus_{v\in\bbZ I}\frakg_{v,R_S}
,\quad
\frakg_{R_S}^\vee=\bigoplus_{v\in\bbZ I}\frakg_{v,R_S}^\vee.$$
A nonzero element $v\in\bbN I$ is called a root of $\frakg_{R_S}$ or $\frakg_{R_S}^\vee$
if $\frakg_{v,R_S}\neq\{0\}$ or $\frakg_{v,R_S}^\vee\neq\{0\}$.
Let $\frakg_{R_S}[u]$ be the current algebra of $\frakg_{R_S}$.
Let $\bfU_{R_S}$ be the enveloping algebra
$$\bfU_{R_S}=U(\frakg_{R_S}[u]).$$
We define $\bfU_{R_S}^\vee=U(\frakg^\vee_{R_S}[u])$ similarly.
By \cite[\S 5.3]{MO19}, there is a triangular decomposition
$$\frakg_{R_S}=\frakg_{R_S}^-\oplus\frakh_{R_S}\oplus\frakg_{R_S}^+$$
with
$$
\frakg_{R_S}^\pm=\bigoplus_{\substack{v\in\bbN I\\v\neq0}}\frakg_{\pm v,R_S}
,\quad
\frakh_{R_S}=\frakg_{0,R_S}
.$$
The multiplication yields an isomorphism
\begin{align}\label{Upm0}
\bfU_{R_S}^-\otimes_{R_S}\bfU_{R_S}^0\otimes_{R_S}\bfU_{R_S}^+=\bfU_{R_S}
\end{align}
with
$\bfU_{R_S}^\pm=U(\frakg_{R_S}^\pm[u])$
and
$\bfU_{R_S}^0=U(\frakh_{R_S}[u]).$
Similarly, we have a triangular decomposition 
$$\frakg_{R_S}^\vee=\frakg_{R_S}^{-,\vee}\oplus\frakh_{R_S}^\vee\oplus\frakg_{R_S}^{+,\vee}
$$
with
$\frakg_{R_S}^{\pm,\vee}=\bigoplus_{v\in\bbN I,\,v\neq0}\frakg_{\pm v,R_S}^\vee$
and
$\frakh_{R_S}^\vee=\frakg_{R_S,0}^\vee$,
yielding the following triangular decomposition 
\begin{align}\label{Upm0vee}
\bfU_{R_S}^{-,\vee}\otimes_{R_S}\bfU_{R_S}^{0,\vee}\otimes_{R_S}\bfU_{R_S}^{+,\vee}=
\bfU_{R_S}^\vee
\end{align}
with
$\bfU_{R_S}^{\pm,\vee}=U(\frakg_{R_S}^{\pm,\vee}[u])$
and
$\bfU_{R_S}^{0,\vee}=U(\frakh_{R_S}^\vee[u]).$
For a future, we consider the decomposition into weight subspaces 
\begin{align}\label{weightU}
\bfU_{R_S}^\flat=\bigoplus_{v\in\bbZ I}\bfU_{v,R_S}^\flat
,\quad
\bfU_{R_S}^{\flat,\vee}=\bigoplus_{v\in\bbZ I}\bfU_{v,R_S}^{\flat,\vee}
,\quad
\flat=\pm,0,\emptyset.
\end{align}

\begin{lemma}\label{lem:pairingg} 
Let $v$ be a root.
\hfill
\begin{enumerate}[label=$\mathrm{(\alph*)}$,leftmargin=8mm,itemsep=1.2mm]
\item
There is a coroot $h_v\in\frakh_{R_S}$ and a perfect pairing of $R_S$-modules
\begin{align}\label{pairing2}\langle-,-\rangle:\frakg_{v,R_S}\times\frakg_{-v,R_S}\to R_S
\end{align}
such that 
$[x,y]=\langle x,y\rangle\,h_v$ for each
$x\in\frakg_{v,R_S}$ and $y\in\frakg_{-v,R_S}.$
\item
There is a coroot $h_v^\vee\in\frakh_{R_S}^\vee$ and a perfect pairing of $R_S$-modules
\begin{align}\label{pairing2vee}\langle-,-\rangle:\frakg_{v,R_S}^\vee\times\frakg_{-v,R_S}^\vee\to R_S
\end{align}
such that 
$[x,y]=\langle x,y\rangle\,h_v^\vee$
for each
$x\in\frakg_{v,R_S}^\vee$ and 
$y\in\frakg_{-v,R_S}^\vee.$
\item
The root subspaces of $\frakg_{R_S}$ and $\frakg_{R_S}^\vee$
are free $R_S$-modules of finite rank.
\item
An element $v\in\bbZ I$ is a root for $\frakg_{R_S}$ if and only if it is root for $\frakg_{R_S}^\vee$.
\end{enumerate}
\end{lemma}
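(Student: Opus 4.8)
The plan is to deduce all four assertions from the corresponding statements over the fraction field $K_S$ — where the structure of the Maulik--Okounkov Lie algebra is recalled in \cite[\S 5.3]{MO19} and where $\frakg_{K_S}=\frakg_{K_S}^\vee$ by Lemma~\ref{lem:gg1}(c) — and then to descend to $R_S$ using freeness of the root spaces together with the integral perfect pairing \eqref{PP1}. Part (d) comes out almost for free this way: each root space $\frakg_{v,R_S}$ is a submodule of $\bfY_{v,R_S}$, which embeds into the torsion-free module $A_{0,R_S}$ by Lemma~\ref{lem:rho}(b), so $\frakg_{v,R_S}$ is torsion-free; since $E_{1,K_S}$ is the base change of $E_{1,R_S}$ we get $\frakg_{v,R_S}\otimes_{R_S}K_S=\frakg_{v,K_S}$, whence $\frakg_{v,R_S}\neq 0$ if and only if $\frakg_{v,K_S}\neq 0$, and likewise for $\frakg^\vee$ using Lemma~\ref{lem:rhovee}(d); as $\frakg_{K_S}=\frakg_{K_S}^\vee$ as $\bbZ I$-graded Lie algebras the two conditions coincide.

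For (a) and (b) I would first work over $K_S$. By \cite[\S 5.3]{MO19} the Lie algebra $\frakg_{K_S}$ carries a non-degenerate invariant symmetric $K_S$-bilinear form which preserves the weight grading and is compatible with the triangular decomposition \eqref{Upm0}; its restriction to $\frakh_{K_S}=\frakg_{0,K_S}$ is therefore non-degenerate, so there is a unique $h_v\in\frakh_{K_S}$ with $\langle h_v,h\rangle=v(h)$ for every $h\in\frakh_{K_S}$. Invariance then forces $[x,y]=\langle x,y\rangle\,h_v$ for $x\in\frakg_{v,K_S}$, $y\in\frakg_{-v,K_S}$, and non-degeneracy of the form on $\frakg_{v,K_S}\times\frakg_{-v,K_S}$ is exactly perfectness of $\langle-,-\rangle$ over $K_S$; this gives (a) over $K_S$, and (b) over $K_S$ follows from $\frakg_{K_S}^\vee=\frakg_{K_S}$, with $h_v^\vee$ matched to $h_v$ through $\iota$ via Lemmas~\ref{lem:RR} and \ref{lem:sym}.

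It then remains to prove (c) and to descend (a), (b) to $R_S$. The root spaces are finitely generated of bounded rank by the gradings; their freeness is where equivariant formality enters: through the construction of $E_{1,R_S}$ from stable envelopes one realizes $\frakg_{v,R_S}$ as a free direct summand of the equivariant Borel--Moore homology $F_{\bfw,R_{\bfw,S}}$, which is free by Lemma~\ref{lem:F}(a), essentially as in \cite[\S 5]{MO19}, and similarly for $\frakg^\vee_{v,R_S}$. Granting this, I would transport the perfect pairing \eqref{PP1} between $F_{\bfw,R_{\bfw,S}}$ and $F_{\bfw,R_{\bfw,S}}^\vee$ to the Lie algebras using the faithful representations \eqref{rhorho}, \eqref{rhorhovee}, the transpose $(-)^\T$ of \eqref{T1}, and Lemma~\ref{lem:sym}; one checks that the bracket pairing $\langle-,-\rangle$ agrees, up to a unit of $R_S$, with the pairing on root spaces induced by \eqref{PP1} through $\rho$, using invariance of \eqref{PP1} under the $\bfY$-action. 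Hence $\langle-,-\rangle$ is a restriction of an $R_S$-perfect pairing, so its Gram matrix in free $R_S$-bases of $\frakg_{v,R_S}$ and $\frakg_{-v,R_S}$ — invertible over $K_S$ by the previous step — has determinant in $R_S^\times$; in particular one may take $h_v=[x_0,y_0]\in\frakh_{R_S}$ for any pair with $\langle x_0,y_0\rangle=1$, and likewise $h_v^\vee\in\frakh_{R_S}^\vee$, with $\iota\circ\bfr^\vee_{\bfw',\bfw}=\bfr_{\bfw',\bfw}\circ\iota$ from Lemma~\ref{lem:sym}(c) matching parts (a) and (b).

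The main obstacle is the freeness of the root spaces in (c). Over the polynomial ring $R_S$ a finitely generated torsion-free module need not be free, so generic perfectness of the bracket pairing does not by itself yield perfectness integrally; the essential input is the geometric realization of $\frakg_{v,R_S}$ inside the equivariantly formal, hence free, Borel--Moore homology of the Nakajima quiver varieties. Once that is in hand, the passage from $K_S$ back to $R_S$ — and with it the perfectness in (a), (b) and the integrality of $h_v$, $h_v^\vee$ — is formal, as sketched above.
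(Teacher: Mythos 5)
Your overall skeleton (establish everything over $K_S$ via the MO19 form, then descend) is a reasonable plan, and your argument for part (d) is fine and even slightly more self-contained than the paper's (which just derives (d) from (c) and $\frakg_{K_S}^\vee=\frakg_{K_S}$). But there is a real gap at exactly the point you flag: part (c), the freeness. You say the essential input is that $\frakg_{v,R_S}$ is a ``free direct summand of the equivariantly formal, hence free, Borel--Moore homology $F_{\bfw,R_{\bfw,S}}$, essentially as in \cite[\S 5]{MO19}.'' This is not what \cite[\S 5]{MO19} does, and it does not follow from anything you have set up. A sub-$R_S$-module of a free module over $R_S=\bbQ[t_\alpha,\hbar]$ need not be free, so realizing $\frakg_{v,R_S}$ inside $A_{1,R_S}$ via $\rho_1$ (or inside any $F_{\bfw,R_{\bfw,S}}$) gives torsion-freeness only; for it to be a direct summand one needs an argument, and you supply none. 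Without (c) your descent for (a), (b) does not run, since you correctly observe that generic perfectness plus torsion-freeness is not enough.

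The actual mechanism — both in \cite[thm.~5.3.11]{MO19} and in the paper's proof — is quite different and is worth internalizing. The classical r-matrix in each weight $\pm v$ is a \emph{finite} tensor $\bfr^\vee=\sum_{v,k}\rho^\vee_{\bfw'}(x_{v,k})\otimes\rho^\vee_{\bfw}(y_{-v,k})$, and the key identity is \eqref{form1}: for $\xi\in\frakg_{v,R_S}^\vee$, $\xi\otimes h_v=\sum_k x_{v,k}\otimes[\xi,y_{-v,k}]$ in $\frakg_{v,R_S}^\vee\otimes\frakh_{R_S}^\vee$. This is a dual-basis relation: it shows at once that $\{x_{v,k}\}_k$ is a finite free $R_S$-basis of $\frakg_{v,R_S}^\vee$, that $\{y_{-v,k}\}_k$ is the dual basis of $\frakg_{-v,R_S}^\vee$ under $\langle-,-\rangle$, and hence that the pairing is perfect integrally. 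The coroot $h_v^\vee$ is then read off directly from the weight-zero part of the r-matrix, $h_v^\vee=\sum_k v(x_{0,k})y_{0,k}\in\frakh_{R_S}^\vee$, not recovered indirectly by pairing against $\frakh_{K_S}$. Your alternative proposal — that the bracket pairing ``agrees, up to a unit of $R_S$, with the pairing on root spaces induced by \eqref{PP1} through $\rho$'' — is unsubstantiated and does not appear to be true as stated; \eqref{PP1} is a Poincar\'e-type pairing between $F_{\bfw}$ and $F_{\bfw}^\vee$, whereas the bracket pairing is a pairing between $\frakg_{v}$ and $\frakg_{-v}$, and there is no canonical comparison between the two at the level you invoke. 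So the perfectness in (a), (b) cannot be obtained by transporting \eqref{PP1}; it must come from the r-matrix tensor decomposition, which is what simultaneously handles (a), (b) and (c).
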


\begin{proof}
Part (a) is proved in \cite[\S 5.3]{MO19}.
The proof of (b) is similar. 
More precisely, for each $v\in\bbZ I$ we
fix some elements $x_{v,k}$, $y_{v,k}$ in $\frakg_{v,R_S}^\vee$  with $k=1,2,\dots$ such that
$$\bfr_{\bfw',\bfw}^\vee=
\sum_{v\in\bbN^I}\sum_{k}\rho_{\bfw'}^\vee(x_{v,k})\otimes\rho_\bfw^\vee(y_{-v,k})
,\quad
\bfw,\bfw'\in\bfN_1.$$
Then, we define
\begin{align}\label{coroot}h_v^\vee=\sum_kv(x_{0,k})\,y_{0,k}.\end{align}
From  \cite[(5.19)]{MO19},  Lemma \ref{lem:sym} and the inclusion 
$$\frakg_{R_S}^\vee\subset\frakg_{K_S}^\vee=\frakg_{K_S}$$
we deduce that
\begin{align}\label{form1}
\xi\otimes h_v=\sum_kx_{v,k}\otimes[\xi,y_{-v,k}]
,\quad
\xi\in\frakg_{v,R_S}^\vee.
\end{align}
Part (b) follows from \eqref{form1} as in \cite[thm 5.3.11]{MO19}.
The proof of (c) for $\frakg_{R_S}$ is done in \cite[thm 5.3.11]{MO19}.
The case of $\frakg_{R_S}^\vee$ is similar.
Part (d) follows from (c) and the isomorphism $\frakg_{K_S}^\vee=\frakg_{K_S}$.
\end{proof}

\subsubsection{The duality}\label{sec:duality}
Applying the transpose in \eqref{T2} to \eqref{M} and \eqref{Mvee}, we get the
$R_S$-module isomorphism
$(-)^\T:M_{R_S}\to M_{R_S}^\vee.$

\begin{lemma}\label{lem:FF} Let $\bfw\in \bfN_0$, $m\in M_{R_S}$ and $l\in\bbN$.
\hfill
\begin{enumerate}[label=$\mathrm{(\alph*)}$,leftmargin=8mm,itemsep=1.2mm]
\item 
$E_{\bfw,R_S}(m u^l)^\T=E_{\bfw,R_S}^\vee(m^\T u^l)$.
\item 
The transpose yields an $R_S$-algebra anti-isomorphism 
$(-)^\T:\bfY_{R_S}\to\bfY_{R_S}^\vee$.
\item
The transpose yields a $R_S$-Lie algebra anti-isomorphism
$(-)^\T:\frakg_{R_S}\to\frakg_{R_S}^\vee$.
\item
The transpose intertwines $\rho_\bfw^\vee$ with the right representation $(\rho_\bfw)^\T$ of $\bfY_{R_S}$.
\end{enumerate}
\end{lemma}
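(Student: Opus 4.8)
The plan is to prove part (a) by transposing the defining formula \eqref{E}, and then to obtain (b), (c), (d) from it by purely formal algebra. Parts (b)–(d) are essentially bookkeeping once (a) is available; the content is in (a).

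For (a), fix $\bfw\in\bfN_0$, $m\in M_{R_S}$ and $l\in\bbN$, and start from \eqref{E}. I would use three elementary properties of the transpose $(-)^\T:A_{\bfw',R_{\bfw',S}}\to A^\vee_{\bfw',R_{\bfw',S}}$ of \eqref{T1}, which is transposition of endomorphisms of the free module $F_{\bfw',R_{\bfw',S}}$ and hence an algebra anti-isomorphism: it is $R_{\bfw',S}$-linear and does not touch the auxiliary variable $u$, so it commutes with $\Res_u$; for the partial trace over the auxiliary factor $F_{\delta_i}$ one has $\big((\Tr\otimes 1)(X)\big)^\T=(\Tr\otimes 1)(X^\T)$, since $\Tr(a)=\Tr(a^\T)$ on the finitely generated free module $F_{\delta_i,R_S}$; and since $m u^l$ acts only on $F_{\delta_i}$, cyclicity of the partial trace gives $(\Tr\otimes 1)(m u^l\cdot Y)=(\Tr\otimes 1)(Y\cdot m u^l)$ for any $Y$. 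Combining these three facts one gets
$$E_{\bfw,R_S}(m u^l)^\T=-\frac1\hbar\sum_{i\in I}\Res_u(\Tr\otimes 1)\big(m^\T u^l\,(\R_{F_{\delta_i},F_\bfw})^\T\big),$$
where $m^\T\in M^\vee_{R_S}$ is the image of $m$ under the isomorphism $(-)^\T:M_{R_S}\to M^\vee_{R_S}$ coming from \eqref{T2}, \eqref{M}, \eqref{Mvee}. Comparing with \eqref{Evee}, it remains to identify the operator $(\R_{F_{\delta_i},F_\bfw})^\T$ on $F^\vee_{\delta_i}\otimes F^\vee_\bfw$ with $\R_{F^\vee_{\delta_i},F^\vee_\bfw}$; this is where Lemma~\ref{lem:RR}, Lemma~\ref{lem:rhovee}(g) and the factorizations \eqref{R2}, \eqref{Rvee} are used. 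The identity then holds for monomials $m u^l$, hence for all of $M_{R_S}[u]$ by linearity.

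Granting (a), part (b) is formal: $(-)^\T:M_{R_S}\to M^\vee_{R_S}$ extends $R_S[u]$-linearly to $M_{R_S}[u]\to M^\vee_{R_S}[u]$ and then, the tensor algebras being free, to an $R_S$-algebra anti-isomorphism $U(M_{R_S}[u])\to U(M^\vee_{R_S}[u])$. Since $E_{0,R_S}$ and $E^\vee_{0,R_S}$ are algebra homomorphisms (Lemmas~\ref{lem:E}, \ref{lem:Evee}) while $(-)^\T:A_{0,R_S}\to A^\vee_{0,R_S}$ is an algebra anti-isomorphism, a straightforward induction on word length using (a) yields $E_{0,R_S}(\xi)^\T=E^\vee_{0,R_S}(\xi^\T)$ for every $\xi$. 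Hence the restriction of $(-)^\T$ from $A_{0,R_S}$ to $\bfY_{R_S}=E_{0,R_S}(U(M_{R_S}[u]))$ takes values in and surjects onto $\bfY^\vee_{R_S}=E^\vee_{0,R_S}(U(M^\vee_{R_S}[u]))$, and it is injective, being the restriction of an $R_S$-module isomorphism; so it is an $R_S$-algebra anti-isomorphism $\bfY_{R_S}\to\bfY^\vee_{R_S}$. For (c), restrict to $\frakg_{R_S}=E_{1,R_S}(M_{R_S})$ of \eqref{defg} and apply (a) at $l=0$ for tuples in $\bfN_1$: an associative anti-isomorphism satisfies $[x,y]^\T=-[x^\T,y^\T]$, so it is a Lie anti-homomorphism, and it maps $\frakg_{R_S}$ onto $E^\vee_{1,R_S}(M^\vee_{R_S})=\frakg^\vee_{R_S}$, both being Lie subalgebras by Lemma~\ref{lem:gg1}(a). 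For (d), recall that $\rho_\bfw$ and $\rho^\vee_\bfw$ (\eqref{rhorho}, \eqref{rhorhovee}) are characterized by $\rho_\bfw(E_{0,R_S}(\xi))=E_{\bfw,R_S}(\xi)$ and $\rho^\vee_\bfw(E^\vee_{0,R_S}(\xi^\vee))=E^\vee_{\bfw,R_S}(\xi^\vee)$; hence, writing $y=E_{0,R_S}(\xi)$, we obtain $\rho^\vee_\bfw(y^\T)=\rho^\vee_\bfw(E^\vee_{0,R_S}(\xi^\T))=E^\vee_{\bfw,R_S}(\xi^\T)=E_{\bfw,R_S}(\xi)^\T=\rho_\bfw(y)^\T$, i.e.\ $(-)^\T$ intertwines $\rho^\vee_\bfw$ with the right representation $y\mapsto\rho_\bfw(y)^\T$ of $\bfY_{R_S}$.

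The delicate point, and the main obstacle, is the $R$-matrix identity invoked in (a). As literally stated, Lemma~\ref{lem:RR}(b) and the line after \eqref{Rvee} give $(\R_{F_{\delta_i},F_\bfw})^\T=\R_{F^\vee_{\delta_i},F^\vee_{\overline\bfw}}$ with $\overline\bfw$ the reversed tuple, so one must check that reversing the order of the quantum tensor factors of $F_\bfw$ — equivalently, conjugating by an $R$-matrix internal to $F^\vee_\bfw$ — does not change $\Res_u(\Tr\otimes 1)(m^\T u^l\,\cdot\,)$. This is precisely the insensitivity of the transfer-matrix-type construction behind \eqref{E} to the ordering of the quantum factors, which follows from the Yang–Baxter relations among the $\R_{F_{\delta_i},F_{\delta_j}}$ together with the partial-trace cyclicity already used; threading this through, and keeping track of the $\bbZ I\times\bbZ$-degrees for the grading statements, is where the work sits. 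The remainder is routine manipulation with \eqref{T1}, \eqref{T2} and the lemmas of the preceding subsections.
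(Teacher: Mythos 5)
Your plan is the same as the paper's (transpose~\eqref{E}, push $(-)^\T$ through $\Res_u$ and the partial trace, use trace cyclicity to move $m^\T u^l$ to the front, then identify the transposed $R$-matrix via Lemma~\ref{lem:rhovee}(g) and Lemma~\ref{lem:RR}), and your derivation of (b)–(d) from (a) is the correct formal bookkeeping. The place where your argument breaks down is the very step you flag as ``the delicate point''. You correctly note that $(\R_{F_{\delta_i},F_\bfw})^\T$ is a product of the $\R_{F^\vee_{\delta_i},F^\vee_{w_r}}$ in the \emph{reversed} order, i.e.\ it is $\R_{F^\vee_{\delta_i},F^\vee_{\overline\bfw}}$ up to relabelling, not $\R_{F^\vee_{\delta_i},F^\vee_\bfw}$. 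You then assert that this reversal is invisible to $\Res_u(\Tr\otimes 1)(m^\T u^l\,\cdot\,)$ ``by the Yang--Baxter relations together with the partial-trace cyclicity''. That is not so. Partial-trace cyclicity $(\Tr\otimes 1)(XY)=(\Tr\otimes 1)(YX)$ holds only when one of $X,Y$ is supported on the traced factor $F^\vee_{\delta_i}$ alone; it applies to $m^\T u^l$ but not to the factors $\R_{F^\vee_{\delta_i},F^\vee_{w_r}}$, which act on $F^\vee_{\delta_i}$ \emph{and} on an auxiliary factor. What Yang--Baxter actually gives is that the two product orders differ by conjugation with the $R$-matrix $\R_{F^\vee_\bfw}$ internal to the auxiliary space, and since this internal $R$-matrix commutes with $\Tr_{F^\vee_{\delta_i}}\otimes 1$ and with $m^\T u^l$, one gets
$$(\Tr\otimes 1)\bigl(m^\T u^l\cdot\R_{F^\vee_\bfw}\,X\,\R_{F^\vee_\bfw}^{-1}\bigr)
=\R_{F^\vee_\bfw}\,(\Tr\otimes 1)(m^\T u^l\cdot X)\,\R_{F^\vee_\bfw}^{-1},$$
a conjugate, not the same operator. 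The image of $E^\vee_{\bfw,R_S}$ does not commute with $\R_{F^\vee_\bfw}$, so the invariance you want does not hold.

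The way to avoid the issue (and, I believe, what the paper's one-line proof leans on) is to establish (a) first for singleton tuples $\bfw=(w)$, where $\R_{F_{\delta_i},F_w}$ is a single $R$-matrix; Lemma~\ref{lem:RR}(b) then gives $(\R_{F_{\delta_i},F_w})^\T=\R_{F^\vee_{\delta_i},F^\vee_w}$ with no ordering ambiguity, and the computation you describe closes. For (b), use the Lemma following Lemma~\ref{lem:rhovee}: $\bfY_{R_S}=U(M_{R_S}[u])/\Ker(E_{2,R_S})$ and $\bfY^\vee_{R_S}=U(M^\vee_{R_S}[u])/\Ker(E^\vee_{2,R_S})$, with $E_{2,R_S}$, $E^\vee_{2,R_S}$ taken over singletons $w\in\bfN_2=\bbN I$; the singleton identity $E_{w,R_S}(\xi)^\T=E^\vee_{w,R_S}(\xi^\T)$ then shows that the tensor-algebra anti-isomorphism $(-)^\T$ carries $\Ker(E_{2,R_S})$ onto $\Ker(E^\vee_{2,R_S})$, which gives (b) and, restricting to $M_{R_S}$, (c). Part (d) amounts to (a) for arbitrary $\bfw$; if you want to prove it directly rather than via the singleton case plus~\eqref{stab}, \eqref{stabvee}, you must reconcile the conventions carefully (which of $\stab^\vee(1)$, $\stab^\vee(\omega_0)$ is $\stab(1)^\T$, and in which order \eqref{Rvee} multiplies its factors) rather than invoke an order-insensitivity of the transfer-matrix construction, which is false.
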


\begin{proof}
Part (a) follows from Lemma \ref{lem:rhovee} and the formulas \eqref{E}, \eqref{Evee}.
Parts (b), (c), (d) follow from (a) and the definitions of 
$\bfY_{R_S}$, $\bfY_{R_S}^\vee$,
$\frakg_{R_S}$ and $\frakg_{R_S}^\vee$.
\end{proof}

The transpose $A_{\bfw,R_S}\to A_{\bfw,R_S}^\vee$ in \eqref{T2} restricts to an $R_S$-linear operator
$$F_{v_1,\bfw,R_S}^\vee\otimes_{R_S} F_{v_2,\bfw,R_S}\to
F_{v_2,\bfw,R_S}\otimes_{R_S}
F_{v_1,\bfw,R_S}^\vee$$
which takes an element of degree $(v_2-v_1,l)$ to an element of degree 
$(v_1-v_2,-l)$ for each $l\in\bbZ$.
Let $(-)^*$ denote the dual of graded $R_S$-modules.

\begin{proposition}\label{prop:grading}
For each $v\in\bbN$ we have $\bbZ$-graded $R_S$-module isomorphisms
$$\frakg_{v,R_S}=(\frakg_{-v,R_S})^*=(\frakg_{v,R_S}^\vee)^*=\frakg_{-v,R_S}^\vee.$$
\end{proposition}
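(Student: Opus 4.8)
The plan is to chain together the three isomorphisms implied by the notation $(\frakg_{-v,R_S})^*$, $(\frakg_{v,R_S}^\vee)^*$, $\frakg_{-v,R_S}^\vee$, and to verify that each one is an isomorphism of \emph{graded} $R_S$-modules with the correct degree behaviour. The middle equality $(\frakg_{-v,R_S})^* = (\frakg_{v,R_S}^\vee)^*$ will come from dualizing the Lie algebra anti-isomorphism $(-)^\T : \frakg_{R_S} \to \frakg_{R_S}^\vee$ of Lemma~\ref{lem:FF}(c): transpose is an $R_S$-module isomorphism sending $\frakg_{v,R_S}$ to $\frakg_{v,R_S}^\vee$ (an anti-isomorphism of Lie algebras is still a morphism of weight-graded $R_S$-modules), so taking $R_S$-duals gives $(\frakg_{v,R_S}^\vee)^* \cong (\frakg_{v,R_S})^*$; to land on $(\frakg_{-v,R_S})^*$ I still need the first equality $\frakg_{v,R_S} \cong (\frakg_{-v,R_S})^*$.

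The heart of the argument is therefore the outer equalities $\frakg_{v,R_S} = (\frakg_{-v,R_S})^*$ and $\frakg_{-v,R_S}^\vee = (\frakg_{v,R_S}^\vee)^*$, and these are exactly what the perfect pairings of Lemma~\ref{lem:pairingg}(a),(b) provide: the pairing $\langle-,-\rangle : \frakg_{v,R_S} \times \frakg_{-v,R_S} \to R_S$ is perfect and $\frakg_{\pm v,R_S}$ are free of finite rank by Lemma~\ref{lem:pairingg}(c), so it identifies $\frakg_{v,R_S}$ with $(\frakg_{-v,R_S})^*$ as $R_S$-modules (and likewise in the $\vee$ case via \eqref{pairing2vee}); when $v$ is not a root all four modules vanish by Lemma~\ref{lem:pairingg}(d), so the statement is trivial there. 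What remains is to check that these identifications respect the $\bbZ$-grading (cohomological degree). The bracket $[x,y] = \langle x,y\rangle h_v$ is homogeneous of degree $0$ for the $\bbZ I \times \bbZ$-grading since the multiplication on $\bfY_{R_S}$ is, so if $x$ has cohomological degree $l$ and $y$ has degree $l'$ then $\langle x,y\rangle \in R_S$ has degree $l+l'$ (note $h_v$ is a fixed element, and the pairing need not be concentrated in degree $0$); thus $\langle-,-\rangle$ identifies $\frakg_{v,l,R_S}$ with $\Hom$ from $\frakg_{-v,R_S}$ into a shift of $R_S$. Keeping track of this shift, together with the parallel shift coming from $(-)^*$ as defined just before the proposition (where $(-)^\T$ sends degree $(v_2-v_1,l)$ to $(v_1-v_2,-l)$), shows the two shifts match and all four isomorphisms are $\bbZ$-graded with the same normalization. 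I would simply record that the cohomological degrees of $h_v$ and of the dualizing pairing are forced by homogeneity of the bracket, so no extra bookkeeping is needed beyond invoking \eqref{cohdeg} and the degree-$0$ statement for multiplication.

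The one point requiring a little care — and the likely main obstacle — is the compatibility of the transpose-duality isomorphism $(-)^\T$ with the pairing-duality isomorphisms at the level of gradings, i.e.\ checking that the square
\begin{align*}
\xymatrix{
\frakg_{v,R_S} \ar[r]^-{(-)^\T} \ar[d]_-{\langle-,-\rangle} & \frakg_{v,R_S}^\vee \ar[d]^-{\langle-,-\rangle} \\
(\frakg_{-v,R_S})^* \ar[r]^-{((-)^\T)^*} & (\frakg_{-v,R_S}^\vee)^*
}
\end{align*}
commutes up to the identification $\frakg_{-v,R_S}^\vee \cong (\frakg_{-v,R_S})^*$, so that all three stated isomorphisms are mutually consistent (and in particular the composite $\frakg_{v,R_S} \to \frakg_{-v,R_S}^\vee$ is the one used elsewhere). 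This follows from Lemma~\ref{lem:sym}(b), $\bfr_{\bfw',\bfw}^\vee = -(\bfr_{\bfw',\bfw})^\T$, which says precisely that the classical $r$-matrix governing the $\vee$-pairing is the transpose of the one governing the original pairing; unwinding the definition \eqref{coroot} of $h_v^\vee$ and the formula \eqref{form1} for the pairing in terms of the $r$-matrix, commutativity of the square reduces to this sign-twisted transpose relation. Once that is in hand, the proposition is the concatenation of the three graded $R_S$-module isomorphisms above.
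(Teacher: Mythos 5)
Your overall route—perfect pairings from Lemma~\ref{lem:pairingg} for the two outer equalities, and the transpose of Lemma~\ref{lem:FF} for the middle one—is exactly what the paper's one-line proof is invoking. However, the step that handles the middle equality contains a sign error that keeps the argument from closing.

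You claim that $(-)^\T$ ``sends $\frakg_{v,R_S}$ to $\frakg_{v,R_S}^\vee$.'' This contradicts the sentence immediately preceding the proposition, which says that the transpose $A_{\bfw,R_S}\to A_{\bfw,R_S}^\vee$ of \eqref{T2} takes an element of degree $(v_2-v_1,l)$ to one of degree $(v_1-v_2,-l)$; that is, $(-)^\T$ negates \emph{both} the weight and the cohomological degree, so it restricts to $\frakg_{v,l,R_S}\xrightarrow{\ \sim\ }\frakg_{-v,-l,R_S}^\vee$. With your (incorrect) claim, dualizing gives $(\frakg_{v,R_S}^\vee)^*\cong(\frakg_{v,R_S})^*$, and combining this with the first equality $\frakg_{v,R_S}=(\frakg_{-v,R_S})^*$ (equivalently $(\frakg_{v,R_S})^*\cong\frakg_{-v,R_S}$) yields $(\frakg_{v,R_S}^\vee)^*\cong\frakg_{-v,R_S}$ — which is not the asserted middle equality $(\frakg_{-v,R_S})^*=(\frakg_{v,R_S}^\vee)^*$, and cannot be converted into it without knowing $\frakg_{-v,R_S}\cong(\frakg_{-v,R_S})^*$, which is not available. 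With the correct weight behavior, the argument is actually shorter: applying $(-)^\T$ with $v$ replaced by $-v$ gives an $R_S$-module isomorphism $\frakg_{-v,R_S}\cong\frakg_{v,R_S}^\vee$ (degree-reversed), and taking $R_S$-duals then produces $(\frakg_{-v,R_S})^*\cong(\frakg_{v,R_S}^\vee)^*$ in one step. The two degree reversals — once from $(-)^\T$, once from $(-)^*$ — cancel, which is precisely why this is a $\bbZ$-graded isomorphism.

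Two smaller remarks. First, the pairing of Lemma~\ref{lem:pairingg} is homogeneous of cohomological degree $0$ in the graded-module sense (it pairs $\frakg_{v,l}\times\frakg_{-v,l'}$ into $R_{S,l+l'}$, since $h_v\in\frakh_{R_S}$ has cohomological degree $0$ by Lemma~\ref{lem:Y02}), and this is what makes the induced map $\frakg_{v,R_S}\to\Hom_{R_S}(\frakg_{-v,R_S},R_S)$ degree-preserving; your phrasing ``the pairing need not be concentrated in degree $0$'' is a bit misleading there. Second, the final paragraph about commutativity of the square via Lemma~\ref{lem:sym}(b), \eqref{coroot} and \eqref{form1} is not needed: the proposition only asserts the existence of the graded $R_S$-module isomorphisms, and the paper derives it directly from Lemmas~\ref{lem:pairingg} and~\ref{lem:FF} without that extra compatibility check.
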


\begin{proof}
The proposition follows from Lemmas \ref{lem:pairingg} and  \ref{lem:FF}.
\end{proof}

\subsubsection{The $u$-filtrations}\label{sec:filt}
By \eqref{Fw} and \eqref{Fwvee}, 
the $R_{\bfw,S}$-modules 
$$F_{\bfw,R_{\bfw,S}}
,\quad
F_{\bfw,R_{\bfw,S}}^\vee
,\quad
\bfw=(i_1,\dots,i_s)\in\bfN_1$$ 
admit the
increasing $\bbN$-filtration whose $l$th terms are the $\bbZ I$-graded $R_S$-submodules
$$F_{\bfw,R_{\bfw,S}}[\leqsl l]
,\quad
F_{\bfw,R_{\bfw,S}}^\vee[\leqsl l]$$ consisting of all 
polynomials in $u_1,\dots, u_s$ of degree $\leqsl l$
with coefficients in $F_{\bfw,R_S}$ and $F_{\bfw,R_S}^\vee$
respectively.
Let $\gr(F_{\bfw,R_{\bfw,S}})$ and $\gr(F_{\bfw,R_{\bfw,S}}^\vee)$ 
be the associated graded. We'll identify 
$$\gr(F_{\bfw,R_{\bfw,S}})=F_{\bfw,R_{\bfw,S}}
,\quad
\gr(F_{\bfw,R_{\bfw,S}}^\vee)=F_{\bfw,R_{\bfw,S}}^\vee.$$ 
The isomorphisms \eqref{isom1}, \eqref{isom2} commute with the embeddings
$\iota$ in \eqref{iota1}, \eqref{iota2}, making the following square to commute
\begin{align*}
\begin{split}
\xymatrix{
F_{\delta_i,R_{\delta_i,S}}\ar@{=}[r]&F_{\delta_i,R_{\delta_i,S}}[u]\\
F_{\delta_i,R_{\delta_i,S}}^\vee\ar@{=}[r]\ar[u]^-\iota&F_{\delta_i,R_S}^\vee[u]\ar[u]_-{\iota\otimes 1}
}
\end{split}
\end{align*}
Hence, the map $\iota:F_{\bfw,R_{\bfw,S}}^\vee\to F_{\bfw,R_{\bfw,S}}$ in \eqref{iota1} is a strict morphism of filtered $R_S$-modules.
Taking endomorphisms and products, we get $u$-filtrations on 
$$A_{\bfw,R_{\bfw,S}}
,\quad
F_{1,R_S}
,\quad
A_{1,R_S}
,\quad
\text{etc}.$$
We equip $U(M_{R_S}[u])$ and $U(M_{R_S}^\vee[u])$ with the $\bbN$-filtrations 
such that 
$$U(M_{R_S}[u])[\leqsl l]
,\quad
U(M_{R_S}^\vee[u])[\leqsl l]$$
are the $R_S$-submodules consisting of all
elements of degree $\leqsl l$ in $u$.
We equip $\bfY_{R_S}$ and $\bfY_{R_S}^\vee$ with the quotient filtrations. 
The $R_S$-algebras $\bfU_{R_S}$ and $\bfU_{R_S}^\vee$
are $\bbZ$-graded by the $u$-degree.
Let $\bfU_{R_S}[\leqsl l]$ and $\bfU_{R_S}^\vee[\leqsl l]$ be the corresponding $\bbN$-filtrations.
Let $\sigma_l(x)$ denote the degree $l$ component 
in the associated graded of an element
$x$ of degree $\leqslant l$ for the $u$-filtration.
By \eqref{R2}, \eqref{Rii} and \eqref{Rvee}, \eqref{Riivee} the maps 
$\rho_\bfw$, $\rho_1$ and $\rho_\bfw^\vee$, $\rho_1^\vee$ 
in \eqref{rhorho}, \eqref{rhorhovee} are morphisms of filtered $R_S$-algebras.


\begin{lemma}\label{lem:ufilt}
\hfill
\begin{enumerate}[label=$\mathrm{(\alph*)}$,leftmargin=8mm,itemsep=1.2mm]
\item
The multiplications of $\bfY_{R_S}$ and $\bfY_{R_S}^\vee$
preserve the $\bbZ I\times\bbZ$-grading and the $u$-filtration.
The associated graded $\gr(\bfY_{R_S})$ and
$\gr(\bfY_{R_S}^\vee)$ are isomorphic to the $\bbZ I\times\bbZ$-graded
$R_S$-algebras $\bfU_{R_S}$ and
$\bfU_{R_S}^\vee$.
\item
The maps $\rho_\bfw$, $\rho_1$, $\rho_\bfw^\vee$, $\rho_1^\vee$
are morphisms of $\bbZ I\times\bbZ$-graded filtered $R_S$-algebras.
We have
$$\rho_\bfw(\bfY_{R_S}[\leqsl l])\subset A_{\bfw,R_{\bfw,S}}[\leqsl l]
,\quad
\rho_\bfw(\bfY_{R_S}^\vee[\leqsl l])\subset A_{\bfw,R_{\bfw,S}}^\vee[\leqsl l]
,\quad
\bfw\in\bfN_1,\,l\in\bbN.$$
\end{enumerate}
\end{lemma}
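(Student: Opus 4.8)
The plan is to reduce both parts to facts already in hand, handling the nilpotent cases by transport through the transpose rather than by repeating the Maulik--Okounkov arguments.

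\emph{Part (a) for $\bfY_{R_S}$.} The multiplication of the tensor algebra $U(M_{R_S}[u])$ visibly preserves the $\bbZ I\times\bbZ$-grading (with $u$ of weight $0$ and cohomological degree $1$) and the $u$-degree, hence passes to a filtered multiplication on the quotient $\bfY_{R_S}$, and the induced $u$-grading on $\gr(\bfY_{R_S})$ refines the $\bbZ I\times\bbZ$-grading. The symbols of the elements $E_{0,R_S}(mu^l)$ generate $\gr(\bfY_{R_S})$, and the bracket formula \eqref{512} shows that $mu^l\mapsto$ (symbol of $E_{1,R_S}(mu^l)$) extends to a surjective homomorphism of $\bbZ I\times\bbZ$-graded $R_S$-algebras $\bfU_{R_S}=U(\frakg_{R_S}[u])\to\gr(\bfY_{R_S})$, where $\frakg_{R_S}=E_{1,R_S}(M_{R_S})$ carries the bracket $(m_1,m_2)\mapsto(\Tr\otimes 1)([\bfr,m_1\otimes m_2])$. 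That this surjection is an isomorphism is the PBW theorem for the Maulik--Okounkov Yangian, \cite[\S 5]{MO19}, applicable over $R_S$ after the base change of Lemma~\ref{lem:bcY}.

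\emph{Part (a) for $\bfY_{R_S}^\vee$.} Here I would not rerun the argument. By Lemma~\ref{lem:FF}(a) one has $E_{\bfw,R_S}(mu^l)^\T=E_{\bfw,R_S}^\vee(m^\T u^l)$, so the $R_S$-algebra anti-isomorphism $(-)^\T:\bfY_{R_S}\to\bfY_{R_S}^\vee$ of Lemma~\ref{lem:FF}(b) carries $\bfY_{R_S}[\leqsl l]$ onto $\bfY_{R_S}^\vee[\leqsl l]$, and likewise for its inverse; thus it is a strict isomorphism of filtered $R_S$-modules, inducing a graded $R_S$-algebra anti-isomorphism $\gr(\bfY_{R_S})\to\gr(\bfY_{R_S}^\vee)$ respecting the $u$-grading. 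On the other hand, $(mu^l)^\T=m^\T u^l$ shows that the Lie algebra anti-isomorphism $(-)^\T:\frakg_{R_S}\to\frakg_{R_S}^\vee$ of Lemma~\ref{lem:FF}(c) respects the $u$-grading, hence induces an anti-isomorphism of $\bbZ I\times\bbZ$-graded $R_S$-algebras $\bfU_{R_S}\to\bfU_{R_S}^\vee$. Composing the inverse of the first anti-isomorphism, the isomorphism $\gr(\bfY_{R_S})\simeq\bfU_{R_S}$ from the previous paragraph, and the second anti-isomorphism yields $\gr(\bfY_{R_S}^\vee)\simeq\bfU_{R_S}^\vee$ as $\bbZ I\times\bbZ$-graded $R_S$-algebras: a composite of two anti-isomorphisms and an isomorphism is an isomorphism.

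\emph{Part (b).} That $\rho_\bfw,\rho_a,\rho_\bfw^\vee,\rho_a^\vee$ are homomorphisms of $\bbZ I\times\bbZ$-graded $R_S$-algebras is Lemmas~\ref{lem:rho} and \ref{lem:rhovee}. For the filtered statement it suffices to treat $\rho_\bfw$ for $\bfw=(i_1,\dots,i_s)\in\bfN_1$; the statements for $\rho_1,\rho_1^\vee$ then follow by taking products over $\bfw\in\bfN_1$, and the nilpotent case is identical with \eqref{Riivee} in place of \eqref{Rii}. Since $\rho_\bfw$ factors through the algebra homomorphism $E_{\bfw,R_S}$, it is enough to show $E_{\bfw,R_S}(mu^l)\in A_{\bfw,R_{\bfw,S}}[\leqsl l]$. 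By \eqref{R2}, $\R_{F_{\delta_i},F_\bfw}$ is a product of the factors $\R_{F_{\delta_i},F_{\delta_{i_r}}}$; expanding each as in \eqref{Rii} in powers of $u^{-1}$, the coefficient of $u^{-l-1}$ in $u^l\cdot\R_{F_{\delta_i},F_\bfw}$ is a sum of products of the operators $R_{F_{\delta_i},F_{\delta_{i_r}},k}$ with polynomials in $u_1,\dots,u_s$ of degree $\leqsl l$; since $\Res_u$ picks out that coefficient and the trace over the finite free $R_S$-module $F_{\delta_i,R_S}$ does not raise the $u_1,\dots,u_s$-degree, formula \eqref{E} gives $E_{\bfw,R_S}(mu^l)\in A_{\bfw,R_{\bfw,S}}[\leqsl l]$. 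Passing to the quotient $\bfY_{R_S}$ yields the stated inclusion $\rho_\bfw(\bfY_{R_S}[\leqsl l])\subset A_{\bfw,R_{\bfw,S}}[\leqsl l]$ and its nilpotent analogue.

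\emph{On the main difficulty.} Everything above is bookkeeping except for the input invoked in Part~(a) for $\bfY_{R_S}$: the isomorphism $\gr(\bfY_{R_S})\simeq\bfU_{R_S}$ is the deformation-theoretic core of \cite{MO19}, and one must verify that the Lie algebra $\frakg_{R_S}$ defined here through \eqref{defg} is indeed the one whose current enveloping algebra is the classical limit there, a matter of matching the bracket \eqref{512} with the $r$-matrix construction. Once this is granted, the nilpotent statements cost nothing: they are obtained formally from the non-nilpotent ones through Lemma~\ref{lem:FF}, which is precisely why $\bfY_{R_S}^\vee$ was built so as to be anti-isomorphic to $\bfY_{R_S}$.
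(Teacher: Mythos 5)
Your proposal is correct, but Part (a) for the nilpotent Yangian follows a genuinely different route from the paper's. The paper cites \cite[thm.~5.5.1]{MO19} for $\gr(\bfY_{R_S})\simeq\bfU_{R_S}$ and then \emph{re-proves} the analogue for $\bfY_{R_S}^\vee$: it derives the key implication \eqref{552vee} from \eqref{552} by base change to $K_S$ (using \eqref{URK}), combines it with \eqref{512vee} to build a surjection $\bfU_{R_S}^\vee\to\gr(\bfY_{R_S}^\vee)$ following \cite[\S 5.5.3]{MO19}, and checks injectivity by comparing with the generic identifications $\bfY_{K_S}^\vee=\bfY_{K_S}$ and $\bfU_{K_S}^\vee=\bfU_{K_S}$. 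You instead transport the non-nilpotent statement through the transpose anti-isomorphism $(-)^\T$ of Lemma~\ref{lem:FF}. This is sound: Lemma~\ref{lem:FF}(a) lifts to the commutative square relating $E_{0,R_S}$ and $E_{0,R_S}^\vee$ via the degree-preserving anti-isomorphism $U(M_{R_S}[u])\to U(M_{R_S}^\vee[u])$ of tensor algebras, from which strictness of $(-)^\T$ for the $u$-filtrations follows exactly as you say, and the composite of an iso with two anti-isos is an iso. Your route is cleaner — it avoids re-verifying the \cite{MO19} surjectivity argument in the nilpotent setting — at the cost of requiring that Lemma~\ref{lem:FF} is logically prior, which it is. (It is also in the spirit of the remark at the end of the paper's proof of Lemma~\ref{lem:Deltavee}, which does defer to Lemma~\ref{lem:FF} for a related nilpotent claim.) For Part (b), the paper's "proof" is essentially the one-sentence observation preceding the lemma, citing \eqref{R2}, \eqref{Rii}, \eqref{Rvee}, \eqref{Riivee}; your residue bookkeeping spells out exactly what that sentence is appealing to and is the same argument in substance.
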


\begin{proof}
The case of $\bfY_{R_S}$ is done in \cite[thm.~5.5.1]{MO19}.
Hence, it is enough to check that $\gr(\bfY_{R_S}^\vee)=\bfU_{R_S}^\vee$.
From \cite[prop.~5.5.2]{MO19} we deduce that 
\begin{align}\label{552}
E_{1,R_S}(m)=0\Rightarrow E_{1,R_S}(mu^l)\in E_{1,R_S}(U(M_{R_S}[u])[<\!l])
,\quad
m\in M_{R_S}.
\end{align}
The base change formulas \eqref{URK} imply that we also have
\begin{align}\label{552vee}
E_{1,R_S}^\vee(m)=0\Rightarrow E_{1,R_S}^\vee(mu^l)\in E_{1,R_S}^\vee(U(M_{R_S}^\vee[u])[<\!l])
,\quad
m\in M_{R_S}^\vee.
\end{align}
From \eqref{512vee} and \eqref{552vee} we get as in \cite[\S 5.5.3]{MO19} 
a surjective $R_S$-algebra homomorphism 
$$\bfU_{R_S}^\vee\to\gr(\bfY_{R_S}^\vee).$$
To check it is invertible, it is enough to observe that
$\gr(\bfY_{R_S})=\bfU_{R_S}$ and to use the base change formulas
$$\bfY_{R_S}^\vee\subset\bfY_{K_S}^\vee=\bfY_{K_S}
,\quad
\bfU_{R_S}^\vee\subset\bfU_{K_S}^\vee=\bfU_{K_S}.$$
\end{proof}

Taking the associated graded, the maps $\rho_\bfw$ and $\rho_\bfw^\vee$
yield $R_S$-algebra homomorphisms 
$$\gr\rho_\bfw:\bfU_{R_S}\to A_{\bfw,R_{\bfw,S}}
,\quad
\gr\rho_\bfw^\vee:\bfU_{R_S}^\vee\to A_{\bfw,R_{\bfw,S}}^\vee$$ 
Hence, we have representations of $\bfU_{R_S}$ and $\bfU_{R_S}^\vee$
on $F_{\bfw,R_{\bfw,S}}$ and $F_{\bfw,R_{\bfw,S}}^\vee$ respectively.
The maps 
$$\rho_1:\bfY_{R_S}\to A_{1,R_S}
,\quad
\rho_1^\vee:\bfY_{R_S}^\vee\to A_{1,R_S}^\vee$$
are embedding of filtered $R_S$-algebras.
The associated graded are $R_S$-algebra homomorphisms
\begin{align}\label{grrho1}
\gr\rho_1:\bfU_{R_S}\to A_{1,R_S}
,\quad
\gr\rho_1^\vee:\bfU_{R_S}^\vee\to A_{1,R_S}^\vee.
\end{align}
The following lemma is proved in the next section.

\begin{lemma}\label{lem:rhostrict}
\hfill
\begin{enumerate}[label=$\mathrm{(\alph*)}$,leftmargin=8mm,itemsep=1.2mm]
\item
$\rho_1$ and $\rho_1^\vee$ are strict embeddings of $\bbZ I\times\bbZ$-graded
filtered $R_S$-algebras, i.e., we have
\begin{align*}
\rho_1(\bfY_{R_S}[\leqsl l])=\rho_1(\bfY_{R_S})\cap A_{1,R_S}[\leqsl l]
,\quad
\rho_1^\vee(\bfY_{R_S}^\vee[\leqsl l])=\rho_1^\vee(\bfY_{R_S}^\vee)\cap A_{1,R_S}^\vee[\leqsl l].
\end{align*}
\item
$\gr\rho_1:\bfU_{R_S}\to A_{1,R_S}$ and $\gr\rho_1^\vee:\bfU_{R_S}^\vee\to A_{1,R_S}^\vee$
are embedding of $\bbZ I\times\bbZ$-graded $R_S$-algebras.
\qed
\end{enumerate}
\end{lemma}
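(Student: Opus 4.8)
The plan is to reduce both parts, through two formal steps, to the faithfulness of the classical analogue of $\rho_1$, which is then settled as in \cite[\S5.5]{MO19}.

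Recall the elementary fact that an injective morphism $f$ of $\bbN$-filtered $R_S$-modules with exhaustive filtrations is strict if and only if its associated graded $\gr f$ is injective: if an element $x$ has filtration degree exactly $m$ while $f(x)$ has filtration degree $<m$, then $\gr f$ annihilates the nonzero symbol $\sigma_m(x)$, so injectivity of $\gr f$ forces $x$ to have degree $<m$, that is, forces $f$ to be strict; the converse is immediate. Since $\rho_1$ and $\rho_1^\vee$ are injective by Lemma~\ref{lem:rho}(b) and Lemma~\ref{lem:rhovee}(d), parts (a) and (b) are equivalent, and each amounts to the injectivity of $\gr\rho_1$ and of $\gr\rho_1^\vee$. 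Moreover, by Lemma~\ref{lem:FF}(a) the transpose carries the degree-$l$ generator $E_{\bfw,R_S}(mu^l)$ of $\bfY_{R_S}$ to $E_{\bfw,R_S}^\vee(m^\T u^l)$, hence the anti-isomorphism $(-)^\T:\bfY_{R_S}\to\bfY_{R_S}^\vee$ of Lemma~\ref{lem:FF}(b) preserves $u$-degrees and is a strict filtered map; likewise the transpose $A_{1,R_S}\to A_{1,R_S}^\vee$ is strictly filtered, because the pairing \eqref{PP1} extends $R_{\bfw,S}$-bilinearly from $F_{\bfw,R_S}\times F_{\bfw,R_S}^\vee$ with $u$-degrees adding up (the compatibility square of \S\ref{sec:filt}), so the transpose of an operator of $u$-degree $\leqsl k$ again has $u$-degree $\leqsl k$. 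As these transposes intertwine $\rho_1$ with $\rho_1^\vee$ by Lemma~\ref{lem:FF}(d), strictness of $\rho_1^\vee$ follows from that of $\rho_1$. Hence it suffices to prove that $\gr\rho_1:\bfU_{R_S}\to A_{1,R_S}$ is injective.

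For this we pass to the generic fibre. The $R_S$-modules $A_{1,R_S}$ and $\bfU_{R_S}=U(\frakg_{R_S}[u])$ are free, the former by Lemma~\ref{lem:F}(a) and the latter by the Poincar\'e--Birkhoff--Witt theorem together with Lemma~\ref{lem:pairingg}(c); moreover all filtrations, and $\rho_1$ itself, are compatible with the base changes $R\to R_S\to K_S$ by equivariant formality, as in the proof of Lemma~\ref{lem:bcY}. Therefore $\gr\rho_1$ becomes, after $\otimes_{R_S}K_S$, the corresponding map over $K_S$; and since $\bfU_{R_S}$ is torsion-free, $\gr\rho_1$ is injective over $R_S$ if and only if $\gr\rho_1:\bfU_{K_S}\to A_{1,K_S}$ is injective. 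Now, extracting the leading $u$-coefficients in \eqref{E} by means of the residue expansion \eqref{Rii}, the factorization \eqref{R2}, and the bracket formulas \eqref{512}, \eqref{552}, one identifies $\gr\rho_\bfw$, for $\bfw=(i_1,\dots,i_s)\in\bfN_1$, with the \emph{classical} representation of $\bfU_{K_S}=U(\frakg_{K_S}[u])$ on $F_{\bfw,K_S}[u_1,\dots,u_s]=\bigotimes_{r=1}^sF_{\delta_{i_r},K_S}[u_r]$ obtained by iterating the classical coproduct and letting $\frakg_{K_S}[u]$ act on the $r$th tensor factor through $\gr\rho_{(i_r)}$ with the spectral parameter specialized to $u_r$. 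In other words $\gr\rho_1=\prod_{\bfw\in\bfN_1}\gr\rho_\bfw$ is precisely the classical tensor-product representation of $U(\frakg_{K_S}[u])$ built out of the fundamental modules $F_{\delta_i,K_S}$, $i\in I$.

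It remains to show this classical representation is faithful, and this is the main obstacle. The statement is the current-algebra counterpart of the identification $\gr\bfY_{R_S}\cong\bfU_{R_S}$ from Lemma~\ref{lem:ufilt}, and it is proved exactly as in \cite[\S5.5]{MO19}: after organizing $U(\frakg_{K_S}[u])$ by its Poincar\'e--Birkhoff--Witt filtration so that symbols lie in $S(\frakg_{K_S}[u])$, one uses tuples $\bfw\in\bfN_1$ of unbounded length, separates the $u$-powers by a Vandermonde argument in the independent spectral parameters $u_1,\dots,u_s$, and thereby reduces faithfulness to the fact that the fundamental modules $F_{\delta_i,K_S}$ ($i\in I$) jointly detect $\frakg_{K_S}$ — which is what the construction of $\frakg_{K_S}$ in \cite[\S5.3]{MO19} provides. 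The delicate points are the bookkeeping between the two filtrations in play — the $u$-filtration defining $\gr\rho_1$ and the Poincar\'e--Birkhoff--Witt filtration used to read off symbols — and the separation of spectral parameters; once these are in place for $\bfY_{R_S}$, the reductions of the first paragraph deliver the statement for $\bfY_{R_S}^\vee$ as well.
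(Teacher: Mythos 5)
Your reductions in the first paragraph are all correct: for injective filtered maps, strictness of $\rho_1$ is equivalent to injectivity of $\gr\rho_1$; the $\vee$-case follows from the non-$\vee$-case by the transpose of Lemma~\ref{lem:FF}, which is exactly what the paper does; and base change $R_S\to K_S$ is harmless since the relevant modules are free. The problem is that you then leave the actual content --- the injectivity of $\gr\rho_1$, equivalently the claim that every $x\in U(M_{R_S}[u])[\leqsl l]$ with $E_{1,R_S}(x)\in A_{1,R_S}[<l]$ can be replaced by some $y\in U(M_{R_S}[u])[<l]$ with the same image --- as a vague appeal to a ``Vandermonde argument as in [MO19, \S5.5].'' This is precisely where the paper does its work, and it does not proceed via a PBW-filtration-plus-Vandermonde computation. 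Instead it uses two concrete inputs that you never mention: formula \eqref{Emul}, which reads off the leading $u$-symbol of $E_{\bfw,R_S}(mu^l)$ as $\sum_r E_{\delta_{i_r}}(m)\,u_r^l$ on $F_{\bfw,R_{\bfw,S}}$, and the implication \eqref{552} from [MO19, prop.~5.5.2], namely that $E_{1,R_S}(m)=0$ forces $E_{1,R_S}(mu^l)\in E_{1,R_S}(U(M_{R_S}[u])[<l])$. The first input turns the hypothesis $E_{1,R_S}(mu^l)\in A_{1,R_S}[<l]$ into $E_{1,R_S}(m)=0$, and the second input is the actual lever: it says that once the degree-zero operator vanishes, the degree-$l$ one drops in filtration. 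That is a Yangian-theoretic statement and is not the same as detecting $\frakg_{K_S}$ by the fundamental modules, nor does it follow from $\gr\bfY_{R_S}\cong\bfU_{R_S}$ abstractly.

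So there is a genuine gap. Your route would require an independent proof that the classical current-algebra representation $\gr\rho_1:\bfU_{K_S}\to A_{1,K_S}$ is faithful, and that argument is neither present in your proposal (beyond a sketch whose ``delicate points'' you flag yourself) nor is it what [MO19, \S5.5] literally provides. To close the gap you should either carry out that faithfulness argument in full, or --- more in the spirit of the paper --- observe that it reduces to the two formulas \eqref{Emul} and \eqref{552}: strictness needs only to be checked on a generator $mu^l$; then \eqref{Emul} gives $E_{1,R_S}(m)=0$ because the leading symbols $\sum_r E_{\delta_{i_r}}(m)u_r^l$ must vanish for all tuples $\bfw\in\bfN_1$, and \eqref{552} finishes. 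The $\vee$-case and part~(b) then follow from your (correct) formal reductions.
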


\subsubsection{The comultiplication}
Let $\bfw=\bfw_1\bfw_2$ denote the tuple
given by glueing the tuple $\bfw_1$ with $\bfw_2$.
The Yangian is equipped with the topological comultiplication 
$$\Delta:\bfY_{R_S}\to\bfY_{R_S}\hat\otimes_{R_S}\bfY_{R_S}$$ defined in \cite[\S 5.2]{MO19}. 
By definition of $\Delta$,  we have
\begin{align}\label{Delta}(\rho_{\bfw_1}\otimes\rho_{\bfw_2})\circ\Delta=\rho_\bfw.\end{align}
We define similarly a topological comultiplication 
$$\Delta:\bfY_{R_S}^\vee\to\bfY_{R_S}^\vee\hat\otimes_{R_S}\bfY_{R_S}^\vee$$ 
such that
\begin{align*}(\rho_{\bfw_1}^\vee\otimes\rho_{\bfw_2}^\vee)\circ\Delta=\rho_\bfw^\vee
\end{align*}
We equip the enveloping algebras $\bfU_{R_S}$ and $\bfU_{R_S}^\vee$ with the trivial coproduct.

\begin{lemma}\label{lem:Delta}
\hfill
\begin{enumerate}[label=$\mathrm{(\alph*)}$,leftmargin=8mm,itemsep=1.2mm]
\item
If $w=|\bfw|$ and $\bfw=(w_1,w_2)$ then the following diagram commutes
\begin{align}\label{stab}
\begin{split}
\xymatrix{
F_{\bfw,R_{\bfw,S}}\ar[rr]^-{\stab(1)}&&F_{w,R_{\bfw,S}}&&\ar[ll]_-{\stab(\omega_0)}
F_{\bfw,R_{\bfw,S}}\\
F_{\bfw,R_{\bfw,S}}\ar[rr]^-{\stab(1)}\ar[u]^-{\Delta(x)}&&F_{w,R_{\bfw,S}}\ar[u]_-x&&\ar[ll]_-{\stab(\omega_0)}F_{\bfw,R_{\bfw,S}}\ar[u]_-{\Delta^\op(x)}}
\end{split}
\end{align}
\item
$\Delta$ preserves the $\bbZ I\times\bbZ$-grading and the $u$-filtration. 
\item
$\gr(\bfY_{R_S})=\bfU_{R_S}$ as $\bbZ I\times\bbZ$-graded bialgebras. 
\end{enumerate}
\end{lemma}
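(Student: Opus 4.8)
The strategy is to treat the three parts in order, since (a) is the geometric heart of the statement and (b)--(c) are formal consequences of (a) combined with the $u$-filtration analysis already available from Lemma~\ref{lem:ufilt}. For part (a), I would argue as follows. The comultiplication $\Delta$ of \cite{MO19} is \emph{defined} precisely so that the left square in \eqref{stab} commutes: this is the content of \eqref{Delta} together with the factorization of the stable envelope through the chain of fixed loci for the cocharacter $\sigma$ in \eqref{sigma}, using \cite[(4.6)]{MO19} and \eqref{R2} to write $\R_{F_w,F_\bfw}$ as the ordered product over the blocks $w_1,w_2$. So the left square is essentially a restatement of the definition and requires only that one unwinds \eqref{rhorho} and \eqref{Delta}. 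For the right square, the key point is that $\stab(\omega_0)$ is, up to the identification coming from \eqref{stabstab}, the ``opposite'' stable envelope $\stab(1)^\op$, and that conjugating the action of $x$ by $\stab(1)^\op$ rather than $\stab(1)$ replaces $\Delta$ by the opposite coproduct $\Delta^\op$ — this is again built into the RTT formalism of \cite[\S 5.2]{MO19}, where $\Delta^\op$ is obtained by using the stable envelope for the reversed chamber. Concretely, I would check that $\stab(\omega_0)^{-1}\circ x\circ\stab(\omega_0)$ and $\stab(1)^{-1}\circ x\circ\stab(1)$ are related by the permutation of tensor factors of $F_{\bfw,R_{\bfw,S}}=F_{w_1,\cdots}\otimes F_{w_2,\cdots}$, which is exactly the statement $\Delta^\op = \tau\circ\Delta$; the cocycle/hexagon identities for stable envelopes in \cite[\S 4.4]{MO19} guarantee compatibility.

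For part (b), that $\Delta$ preserves the $\bbZ I\times\bbZ$-grading follows because, by \eqref{Delta}, $\Delta$ is the co-restriction of the grading-preserving maps $\rho_\bfw$ (each $E_{\bfw,R_S}$ is $\bbZ I\times\bbZ$-graded by Lemma~\ref{lem:E}, and the stable envelopes are homogeneous of cohomological degree $0$ as noted after \eqref{defstable}), so $\Delta$ is determined on the graded pieces and must respect them. For the $u$-filtration, I would use that $\rho_1$ is a \emph{strict} embedding of filtered algebras (Lemma~\ref{lem:rhostrict}(a)) together with \eqref{Delta} for $\bfw,\bfw'\in\bfN_1$: since $\rho_{\bfw_1}\otimes\rho_{\bfw_2}$ lands in $A_{\bfw_1,R_{\bfw_1,S}}[\leqsl l]\otimes A_{\bfw_2,R_{\bfw_2,S}}[\leqsl l']$ when applied to an element of $\bfY_{R_S}[\leqsl l+l']$ — this is the filtered compatibility of the R-matrix expansion \eqref{Rii} — and since the filtration on $\bfY_{R_S}$ is the quotient filtration from $U(M_{R_S}[u])$, the strictness of $\rho_1$ lets one conclude $\Delta(\bfY_{R_S}[\leqsl n])\subset\sum_{l+l'=n}\bfY_{R_S}[\leqsl l]\hat\otimes\bfY_{R_S}[\leqsl l']$. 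Part (c) then follows immediately: by Lemma~\ref{lem:ufilt}(a) we already have $\gr(\bfY_{R_S})=\bfU_{R_S}$ as graded algebras; since $\Delta$ is filtered by (b), it induces a map $\gr\Delta$ on $\bfU_{R_S}$, and one checks on the generating space $\gr_1 = M_{R_S}[u]$ that $\gr\Delta$ is the trivial (primitive) coproduct — this is because the leading $u$-term of $\Delta$ applied to a degree-one element is forced by \eqref{512} and the fact that $\sigma_0$ of the R-matrix cross terms are the classical r-matrices $\bfr$, whose contribution to the coproduct at leading order vanishes, i.e.\ the elements $E_{1,R_S}(m\,u^l)$ become primitive in the associated graded. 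Hence $\gr(\bfY_{R_S})=\bfU_{R_S}$ is an isomorphism of bialgebras.

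The main obstacle is the right square in part~(a): one must be careful that the identification $\gr(F_{\bfw,R_{\bfw,S}})=F_{\bfw,R_{\bfw,S}}$ and the permutation $\tau$ of the two tensor slots really does intertwine $\stab(\omega_0)$-conjugation with $\Delta^\op$ rather than with some twisted coproduct; this is where one genuinely invokes the geometry of stable envelopes for opposite chambers from \cite[\S 4]{MO19} rather than pure formalism. Everything else is bookkeeping with the already-established filtered/graded compatibilities.
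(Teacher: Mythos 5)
Your overall plan is in the right spirit, but there is a circularity concern and one imprecision that should be flagged.

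The paper's proof of parts~(b) and~(c) hinges on first establishing the explicit formula
\begin{equation*}
E_{\bfw,R_S}(mu^l)=\sum_{r=1}^sE_{\delta_{i_r}}(m)\,u_r^l\quad\modulo A_{\bfw,R_{\bfw,S}}[<\!l],\qquad m\in M_{R_S},\ \bfw=(i_1,\dots,i_s)\in\bfN_1,
\end{equation*}
which is a direct consequence of Lemma~\ref{lem:rho} and the $R$-matrix expansion \eqref{Rii}: the off-diagonal terms of $\R_{F_{\delta_i},F_{\delta_j}}$ carry strictly negative $u$-degree, so only the identity term contributes to the leading $u$-coefficient. This single formula simultaneously gives filtered compatibility of $\Delta$ \emph{and} triviality of $\gr\Delta$ on the generators $E_{0,R_S}(mu^l)$, which by multiplicativity handles (b) and (c) at once. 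By contrast you route (b) through Lemma~\ref{lem:rhostrict}(a) (strictness of $\rho_1$). This is circular in the paper's logical order: Lemma~\ref{lem:rhostrict} is stated before Lemma~\ref{lem:Delta} but explicitly deferred (``proved in the next section''), and its eventual proof uses precisely the displayed formula, which is what the paper sets up inside the proof of Lemma~\ref{lem:Delta}. You can escape the circle only by first re-deriving that formula independently — at which point Lemma~\ref{lem:rhostrict} becomes superfluous and you recover the paper's argument. Moreover, even granting Lemma~\ref{lem:rhostrict}, your deduction implicitly needs strictness of $\rho_1\hat\otimes\rho_1$ on the completed tensor product, not merely of $\rho_1$; for free filtered $R$-modules this does follow, but it is an additional step that should be made explicit.

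Two further local inaccuracies. In part~(a) you write that ``$\stab(\omega_0)$ is, up to \eqref{stabstab}, the opposite stable envelope $\stab(1)^\op$.'' Equation \eqref{stabstab} actually says $\stab(\omega_0)^\op=\stab(1)^{-1}$; the operator $\stab(1)^\op$ is not $\stab(\omega_0)$ but the transpose of $\stab(1)$, which is an entirely different map $F_{w,K_{\bfw,S}}\to F_{\bfw,K_{\bfw,S}}$. The right square of \eqref{stab} is, as the paper records, exactly \cite[prop.~4.2.1]{MO19} applied to the reversed chamber $\omega_0\sigma$; a direct citation is cleaner than unwinding the RTT formalism here. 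In part~(c) you invoke \eqref{512}, but that identity governs the Lie bracket of $\gr\bfY_{R_S}$, not the coproduct; the primitivity of $E_{1,R_S}(mu^l)$ in the associated graded is a consequence of the displayed formula above (equivalently, of the $u$-expansion of the $R$-matrix), not of \eqref{512}.
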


\begin{proof}
Part (a) follows from \cite[prop.~4.2.1]{MO19}.
We have $\gr(\bfY_{R_S})=\bfU_{R_S}$ as $\bbZ I\times\bbZ$-graded algebras
by \cite[thm.~5.5.1]{MO19}.
Let us check Parts (b) and (c).
Lemma \ref{lem:rho} and \eqref{E} imply that
for each tuple $\bfw=(i_1,i_2,\dots,i_s)$  in $\bfN_1$ we have
\begin{align}\label{Emul}
E_{\bfw,R_S}(mu^l)=\sum_{r=1}^sE_{\delta_{i_r}}\!(m)u_r^l\ \mod\ A_{\bfw,R_{\bfw,S}}[<\!l]
,\quad
m\in M_{R_S}
,\,
l\in\bbN.
\end{align}
Let $x\in\bfY_{R_S}[\leqsl l]$ of the form $x=E_{0,R_S}(mu^l)$.
By \eqref{Delta} and \eqref{Emul} we have
$$(\otimes_{r=1}^s\rho_{\delta_{i_r}})\Delta^{\otimes(s-1)}(x)
=(\otimes_{r=1}^s\rho_{\delta_{i_r}})\Delta^{\otimes(s-1)}(E_{0,R_S}(m)u^l)\ \mod\ A_{\bfw,R_{\bfw,S}}[<\!l]$$
On the right hand side $\Delta$ is the trivial comultiplication.
Note that 
$$\sigma_l(x)=E_{0,R_S}(m)u^l\in\frakg_{R_S}[u].$$
Taking products of such elements, for all $x\in\bfY_{R_S}[\leqsl l]$ and all
$\bfw_1,\bfw_2\in \bfN_1$ we get 
\begin{align*}
(\rho_{\bfw_1}\otimes\rho_{\bfw_2})\Delta(x)&\in A_{\bfw_1\bfw_2,R_{\bfw_1\bfw_2,S}}[\leqsl l],\\
\sigma_l(\rho_{\bfw_1}\otimes\rho_{\bfw_2})\Delta(x)&=
(\gr\rho_{\bfw_1}\otimes\gr\rho_{\bfw_2})\Delta(\sigma_l(x))
\end{align*}
\end{proof}

\begin{lemma}\label{lem:Deltavee}
\hfill
\begin{enumerate}[label=$\mathrm{(\alph*)}$,leftmargin=8mm,itemsep=1.2mm]
\item
If $w=|\bfw|$ and $\bfw=(w_1,w_2)$ then the following diagram commutes
\begin{align}\label{stabvee}
\begin{split}
\xymatrix{
F_{\bfw,R_{\bfw,S}}^\vee&&\ar[ll]_-{\stab^\vee(1)}
F_{w,R_{\bfw,S}}^\vee\ar[rr]^-{\stab^\vee(\omega_0)}&&
F_{\bfw,R_{\bfw,S}}^\vee\\
\ar[u]^-{\Delta(x)}F_{\bfw,R_{\bfw,S}}^\vee&&\ar[ll]_-{\stab^\vee(1)}
F_{w,R_{\bfw,S}}^\vee\ar[u]_-x\ar[rr]^-{\stab^\vee(\omega_0)}&&
F_{\bfw,R_{\bfw,S}}^\vee\ar[u]_-{\Delta^\op(x)}}
\end{split}
\end{align}
\item
$\Delta$ preserves the $\bbZ I\times\bbZ$-grading and the $u$-filtration. 
\item
$\gr(\bfY_{R_S}^\vee)=\bfU_{R_S}^\vee$ as $\bbZ I\times\bbZ$-graded bialgebras. 
\end{enumerate}
\end{lemma}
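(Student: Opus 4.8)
The plan is to follow the proof of Lemma~\ref{lem:Delta} almost verbatim, feeding in the dictionary between the ordinary and the nilpotent objects provided by Lemmas~\ref{lem:ss}, \ref{lem:FF}, \ref{lem:rhovee}; the only genuinely new point is Part~(a). For Part~(a) I would derive the diagram \eqref{stabvee} from the diagram \eqref{stab} of Lemma~\ref{lem:Delta}(a). Over the field $K_{\bfw,S}$ this is formal: by Lemma~\ref{lem:ss}(c) the operators $\stab^\vee(\omega)$ and $\stab(\omega)^{-1}$ are conjugate under the isomorphism $\iota$ (with $\iota_\bfw=\iota_{\bfw_1}\otimes\iota_{\bfw_2}$ on $F_{\bfw,K_{\bfw,S}}$), by Lemma~\ref{lem:rhovee}(b) the representations $\rho^\vee$ and $\rho$ are conjugate under the same $\iota$, and under these identifications $\Delta^\vee$ is carried to $\Delta$ over $K_{\bfw,S}$ (both comultiplications being determined by the relations $(\rho_{\bfw_1}\otimes\rho_{\bfw_2})\circ\Delta=\rho_\bfw$, by the injectivity in Lemma~\ref{lem:rhovee}(d)). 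Conjugating the two commuting squares of \eqref{stab} by these isomorphisms yields the two squares of \eqref{stabvee} over $K_{\bfw,S}$; equivalently one may transpose \eqref{stab}, using that $\stab^\vee(\overline\omega)$ is the transpose of $\stab(\omega)$ (Lemma~\ref{lem:ss}(b)) and that $(-)^\T$ intertwines $\rho_\bfw$ with $\rho_\bfw^\vee$ (Lemma~\ref{lem:FF}(d)), bearing in mind that transposition swaps the two chambers $1$, $\omega_0$ and interchanges $\Delta$ with $\Delta^\op$. Finally, since every arrow in \eqref{stabvee} is $R_{\bfw,S}$-linear between free $R_{\bfw,S}$-modules (Lemma~\ref{lem:F}(a)), commutativity over $K_{\bfw,S}$ descends to $R_{\bfw,S}$.

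I expect the bookkeeping of these intertwiners to be the main obstacle: one has to keep the direction of each conjugation straight, and in particular to pin down which of $\Delta$, $\Delta^\op$ goes with which of $\stab^\vee(1)$, $\stab^\vee(\omega_0)$. A self-contained alternative is to reprove \eqref{stabvee} from scratch exactly as \cite[prop.~4.2.1]{MO19} proves \eqref{stab}, replacing the cycles $\calS^\omega$ throughout by the opposite cycles $(\calS^{\overline\omega})^\op$ defining $\stab^\vee$.

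For Parts~(b) and (c) I would repeat the argument of the proof of Lemma~\ref{lem:Delta}(b)--(c). The $\bbZ I\times\bbZ$-gradation is visibly preserved because $E_{\bfw,R_S}^\vee$, hence $\rho_\bfw^\vee$, is graded (Lemma~\ref{lem:Evee}, Lemma~\ref{lem:rhovee}(c)). The combinatorial input is the nilpotent analogue of \eqref{Emul},
\begin{align*}
E_{\bfw,R_S}^\vee(m^\vee u^l)=\sum_{r=1}^sE_{\delta_{i_r},R_S}^\vee(m^\vee)\,u_r^l\ \mod\ A_{\bfw,R_{\bfw,S}}^\vee[<\!l],\qquad \bfw=(i_1,\dots,i_s)\in\bfN_1,
\end{align*}
which follows from \eqref{Evee} and the expansion of $\R_{F_{\delta_i}^\vee,F_{\delta_j}^\vee}$ in Lemma~\ref{lem:rhovee}(e) by the same computation that gives \eqref{Emul} from \eqref{E} and \eqref{Rii}, or, more quickly, from \eqref{Emul} itself by base change to $K_S$ (since $E_{\bfw,K_S}^\vee=E_{\bfw,K_S}$ and the $u$-filtrations on $A_{\bfw}^\vee$ and $A_{\bfw}$ correspond under the strict filtered isomorphism $\iota$). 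Granting this, for $x=E_{0,R_S}^\vee(m^\vee u^l)$ one has $\sigma_l(x)=E_{0,R_S}^\vee(m^\vee)u^l\in\frakg_{R_S}^\vee[u]$, and the relation $(\rho_{\bfw_1}^\vee\otimes\rho_{\bfw_2}^\vee)\circ\Delta^\vee=\rho_\bfw^\vee$ together with the displayed congruence gives $(\rho_{\bfw_1}^\vee\otimes\rho_{\bfw_2}^\vee)\Delta^\vee(x)\in A_{\bfw_1\bfw_2,R_{\bfw_1\bfw_2,S}}^\vee[\leqsl l]$ and
\begin{align*}
\sigma_l\big((\rho_{\bfw_1}^\vee\otimes\rho_{\bfw_2}^\vee)\Delta^\vee(x)\big)=(\gr\rho_{\bfw_1}^\vee\otimes\gr\rho_{\bfw_2}^\vee)\big(\sigma_l(x)\otimes1+1\otimes\sigma_l(x)\big).
\end{align*}
Forming products of such elements $x$ and using the injectivity of $\rho_2^\vee$ (Lemma~\ref{lem:rhovee}(d)) shows that $\Delta^\vee$ preserves the $u$-filtration, which is (b), and that the induced coproduct on $\gr(\bfY_{R_S}^\vee)$ is the trivial one. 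Since $\gr(\bfY_{R_S}^\vee)=\bfU_{R_S}^\vee$ as $\bbZ I\times\bbZ$-graded $R_S$-algebras by Lemma~\ref{lem:ufilt}(a) and $\bfU_{R_S}^\vee=U(\frakg_{R_S}^\vee[u])$ carries precisely the trivial coproduct, this is an isomorphism of $\bbZ I\times\bbZ$-graded bialgebras, proving (c).
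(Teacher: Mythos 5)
Your proposal follows essentially the same route as the paper, which disposes of the lemma in one line by declaring the proof ``similar to that of Lemma~\ref{lem:Delta}'' and reducing to the graded algebra isomorphism $\gr(\bfY_{R_S}^\vee)=\bfU_{R_S}^\vee$, citing Lemmas~\ref{lem:FF} and~\ref{lem:Delta}. Your treatment of Parts~(b) and~(c) is exactly the paper's intent made explicit: derive the nilpotent analogue of~\eqref{Emul}, read off preservation of the $u$-filtration, invoke Lemma~\ref{lem:ufilt}(a) for the graded algebra identification, and observe that the induced coproduct is trivial. That part is sound.

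For Part~(a) you offer three routes. The $\iota$-conjugation route over $K_{\bfw,S}$ followed by descent to $R_{\bfw,S}$ is clean and correct: Lemma~\ref{lem:ss}(c) conjugates $\stab(\omega)^{-1}$ to $\stab^\vee(\omega)$, Lemma~\ref{lem:rhovee}(b) identifies $\bfY_{K_S}$ with $\bfY_{K_S}^\vee$ so that $\Delta$ corresponds to $\Delta^\vee$ under the same conjugation (no flip, since conjugation is an honest, not anti-, isomorphism), and freeness gives the descent. The ``from scratch'' alternative replacing $\calS^\omega$ by $(\calS^{\overline\omega})^\op$ in the argument of \cite[prop.~4.2.1]{MO19} is also sound. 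However, your transposition route rests on the assertion that transposition ``swaps the two chambers $1$, $\omega_0$ and interchanges $\Delta$ with $\Delta^\op$.'' The chamber swap is Lemma~\ref{lem:ss}(b), but the $\Delta\leftrightarrow\Delta^\op$ interchange is not automatic: $(-)^\T$ is an anti-isomorphism of algebras, which reverses the order of \emph{products} in $\bfY$, but the factor-wise transpose $T_1\otimes T_2\mapsto T_1^\T\otimes T_2^\T$ in $A_{\bfw_1}\otimes A_{\bfw_2}$ does not reverse tensor factors; whether $\Delta(x)^\T$ equals $\Delta^\vee(x^\T)$ or $(\Delta^\vee)^\op(x^\T)$ is exactly the bookkeeping you flag as the ``main obstacle,'' and as written the proposal asserts the latter without verification. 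This is not a fatal gap, since you supply the conjugation route as the primary argument, but if you intend to rely on transposition you would need to trace carefully through the identity $\R_{F_w^\vee,F_\bfw^\vee}=(\R_{F_w,F_{\overline\bfw}})^\T$, which hides the tuple reversal responsible for the $\Delta$/$\Delta^\op$ exchange.
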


\begin{proof}
The proof is similar to the proof of Lemma \ref{lem:Delta}.
We are reduced to check that 
$$\gr(\bfY_{R_S}^\vee)=\bfU_{R_S}^\vee$$ as $\bbZ I\times\bbZ$-graded algebras.
This follows from Lemmas \ref{lem:FF} and \ref{lem:Delta}.
\end{proof}

\begin{proof}[Proof of Lemma $\ref{lem:rhostrict}$]
We only give the proof for $\rho_1$ and $\gr\rho_1$.
The proof for $\rho_1^\vee$ and $\gr\rho_1^\vee$ follows using Lemma \ref{lem:FF}.
Let us prove Part (a).
We claim that for each 
$x\in U(M_{R_S}[u])[\leqsl l]$ such that $E_{1,R_S}(x)\in A_{1,R_S}[<\!l]$, there is an element
$y\in U(M_{R_S}[u])[<\!l]$ such that $E_{1,R_S}(x)=E_{1,R_S}(y)$.
We may assume that $x=mu^l$ with $m\in M_{R_S}$ and $l\in\bbN$.
Since $E_{1,R_S}(mu^l)\in A_{1,R_S}[<\!l]$, we have 
$$E_{\bfw,R_S}(mu^l)\in A_{\bfw,R_{\bfw,S}}[<\!l]
,\quad
\bfw\in\bfN_1.$$
Comparing with the formula \eqref{Emul}, we deduce that $E_{\bfw,R_S}(m)=0$ for each $\bfw\in\bfN_1$.
Thus, we have $E_{1,R_S}(m)=0$.
From \eqref{552} we deduce that $$E_{1,R_S}(mu^l)\in E_{1,R_S}(U(M_{R_S}[u])[<\!l]),$$
proving the claim.
Part (b) follows from (a).
\end{proof}

\subsubsection{The Cartan loop algebra}\label{sec:Y0}
Let $\bfY_{R_S}^0$ be the $R$-subalgebra of $A_{2,R_S}$ generated by 
the Chern characters of the tautological bundles 
\begin{align}\label{Ch}\{\ch_l(\calV_i), \ch_l(\calW_i)\,;\,i\in I\,,\,l\in\bbN\}.\end{align} 
Here $\ch_0$ is the rank.
By Lemma \ref{lem:rho} the map $\rho_2$ embeds the Yangian $\bfY_{R_S}$ into $A_{2,R_S}$.
By \cite[prop.~5.5.3]{MO19} the $R_S$-algebra $\bfY_{R_S}^0$ is contained in $\rho_2(\bfY_{R_S})$.
Hence $\bfY_{R_S}^0$ can be identified with an $R_S$-subalgebra of $\bfY_{R_S}$.
Let $\bfY_{R_S}^{0,v}$ be the $R_S$-subalgebra generated by the subset
$$\{\ch_l(\calV_i)\,;\,i\in I\,,\,l\in\bbN\}.$$
Let $\bfY_{R_S}^{0,w}$ the $R_S$-subalgebra generated by the subset
$$\{\ch_l(\calW_i)\,;\,i\in I\,,\,l\in\bbN\}.$$
Let $\bfU_{R_S}^{0,v},$ $\bfU_{R_S}^{0,w}\subset\bfU_{R_S}$ be the graded $R_S$-subalgebras 
associated with
the filtrations on $\bfY_{R_S}^{0,v}$ and $\bfY_{R_S}^{0,w}$  induced by the $u$-filtration of $\bfY_{R_S}$.

\begin{lemma}\label{lem:Y01}
\hfill
\begin{enumerate}[label=$\mathrm{(\alph*)}$,leftmargin=8mm,itemsep=1.2mm]
\item 
$\bfU_{R_S}^0$ is the associated graded of 
$\bfY_{R_S}^0$ for the filtration induced by the $u$-filtration of $\bfY_{R_S}$.
\item
$\bfU_{R_S}^{0,v}=U(\frakh_{R_S}^v[u])$, $\bfU_{R_S}^{0,w}=U(\frakh_{R_S}^w[u])$
and $\bfU_{R_S}^0=U(\frakh_{R_S}[u])$
with $\frakh_{R_S}=\frakh_{R_S}^v\oplus\frakh_{R_S}^w$.
\item
$\bfY_{R_S}^{0,v}$, $\bfY_{R_S}^{0,w}$, $\bfY_{R_S}^0$ are isomorphic to 
$\bfU_{R_S}^{0,v},$ $\bfU_{R_S}^{0,w}, $ $\bfU_{R_S}^0$ respectively as $R_S$-algebras.
\item
The multiplication yields an isomorphism
$\bfY_{R_S}^{0,v}\otimes_{R_S}\bfY_{R_S}^{0,w}\to\bfY_{R_S}^0$.
\item
$\bfY_{R_S}^{0,w}$ is a central $R_S$-subalgebra of $\bfY_{R_S}$, and $\frakh_{R_S}^w$ 
is a central Lie $R_S$-subalgebra of $\frakg_{R_S}$.
\end{enumerate}
\end{lemma}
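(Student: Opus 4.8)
This lemma repackages the structure of the zero weight part of the Yangian from \cite[\S 5.5]{MO19} via the $u$-filtration formalism of Lemma \ref{lem:ufilt}. Recall first that, by \cite[prop.~5.5.3]{MO19}, $\bfY_{R_S}^0\subset\rho_2(\bfY_{R_S})$, so since $\rho_2$ is an embedding (Lemma \ref{lem:rho}(b)) the algebras $\bfY_{R_S}^0,\bfY_{R_S}^{0,v},\bfY_{R_S}^{0,w}$ are identified with $R_S$-subalgebras of $\bfY_{R_S}$; that $\bfY_{R_S}^0$ is commutative, its generators acting by cup product on each $F_{w,R_{w,S}}$; that $\bfY_{R_S}^0$ is the weight zero subalgebra $\bfY_{0,R_S}$, again by \cite[\S 5.5]{MO19}; and that, since $\ch_l(\calW_i)$ is pulled back from a point and the Chern character is additive, $\Delta(\ch_l(\calW_i))=\ch_l(\calW_i)\otimes 1+1\otimes\ch_l(\calW_i)$ for the comultiplication $\Delta$ of \eqref{Delta}.

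I do Part (e) first. For each $w$ the map $\rho_w:\bfY_{R_S}\to A_{w,R_{w,S}}=\End_{R_{w,S}}(F_{w,R_{w,S}})$ is $R_{w,S}$-linear and $\ch_l(\calW_i)\in R_{w,S}$, so multiplication by $\ch_l(\calW_i)$ is central in $A_{w,R_{w,S}}$; taking the product over all $w$, $\bfY_{R_S}^{0,w}$ commutes with $\rho_2(\bfY_{R_S})$ and hence, by injectivity of $\rho_2$, is central in $\bfY_{R_S}$. Using $\gr(\bfY_{R_S})=\bfU_{R_S}=U(\frakg_{R_S}[u])$ as $\bbZ I\times\bbZ$-graded bialgebras (Lemmas \ref{lem:ufilt}, \ref{lem:Delta}), the symbols of the central, primitive elements $\ch_l(\calW_i)$ lie in $\frakg_{R_S}[u]$, are central there, and span the $R_S$-submodule we call $\frakh_{R_S}^w[u]$; hence $\frakh_{R_S}^w$ is a central Lie $R_S$-subalgebra of $\frakg_{R_S}$.

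Part (a): since the $u$-filtration and $\gr(\bfY_{R_S})=\bfU_{R_S}$ respect the $\bbZ I$-weight grading and $\bfY_{R_S}^0=\bfY_{0,R_S}$, taking associated graded gives $\gr(\bfY_{R_S}^0)=U(\frakg_{0,R_S}[u])=U(\frakh_{R_S}[u])=\bfU_{R_S}^0$. For Part (b) I let $\frakh_{R_S}^v[u]$ be the $R_S$-span of the symbols of the $\ch_l(\calV_i)$; by \cite[\S 5.5]{MO19} these symbols are primitive in $\bfU_{R_S}$, lie in $\frakh_{R_S}[u]$, and together with those of the $\ch_l(\calW_i)$ realize a direct sum decomposition of free $R_S$-modules $\frakh_{R_S}[u]=\frakh_{R_S}^v[u]\oplus\frakh_{R_S}^w[u]$, equivalently $\frakh_{R_S}=\frakh_{R_S}^v\oplus\frakh_{R_S}^w$. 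Since $\gr$ of an algebra generated by given elements is generated by their symbols, $\bfU_{R_S}^{0,v}=\gr(\bfY_{R_S}^{0,v})$ is the subalgebra of $\bfU_{R_S}$ generated by $\frakh_{R_S}^v[u]$, hence equals $U(\frakh_{R_S}^v[u])=\Sym_{R_S}(\frakh_{R_S}^v[u])$ as $\frakh_{R_S}=\frakg_{0,R_S}$ is abelian, and similarly $\bfU_{R_S}^{0,w}=\Sym_{R_S}(\frakh_{R_S}^w[u])$; this proves (b). Part (d) follows: $\bfY_{R_S}^{0,w}$ being central, multiplication is a surjective filtered $R_S$-algebra map from $\bfY_{R_S}^{0,v}\otimes_{R_S}\bfY_{R_S}^{0,w}$ with its tensor $u$-filtration, whose associated graded $\Sym_{R_S}(\frakh_{R_S}^v[u])\otimes_{R_S}\Sym_{R_S}(\frakh_{R_S}^w[u])\to\Sym_{R_S}(\frakh_{R_S}[u])$ is an isomorphism by the decomposition above; a filtered map that is an isomorphism on $\gr$ is an isomorphism. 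With (a) this also gives $\bfY_{R_S}^0\cong\bfU_{R_S}^0$. For the remaining cases of Part (c): $\bfY_{R_S}^{0,v}$ is commutative with $\gr(\bfY_{R_S}^{0,v})=\Sym_{R_S}$ of the free module $\frakh_{R_S}^v[u]$, so lifting a homogeneous $R_S$-basis of that module to elements of $\bfY_{R_S}^{0,v}$ of matching $u$-filtration degree yields an $R_S$-algebra map from a polynomial $R_S$-algebra that is an isomorphism on $\gr$, hence $\bfY_{R_S}^{0,v}\cong\bfU_{R_S}^{0,v}$; likewise $\bfY_{R_S}^{0,w}\cong\bfU_{R_S}^{0,w}$.

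The main obstacle is the input invoked in Part (b): that the symbols of the tautological classes are primitive and span free $R_S$-submodules of $\frakg_{R_S}[u]$ realizing $\frakh_{R_S}=\frakh_{R_S}^v\oplus\frakh_{R_S}^w$. This uses the explicit zero weight generators and the bracket relation \eqref{512} of \cite[\S 5.5]{MO19}, as opposed to the formal filtration manipulations that handle everything else; the centrality of $\bfY_{R_S}^{0,w}$ proved in Part (e) is the lever forcing $\frakh_{R_S}^v\cap\frakh_{R_S}^w=0$.
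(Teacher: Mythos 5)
There is a genuine gap in Part (a), and a related one in Part (b). You assert that $\bfY_{R_S}^0=\bfY_{0,R_S}$ (the full weight-zero subalgebra of $\bfY_{R_S}$) and then pass to associated graded to get $U(\frakg_{0,R_S}[u])=U(\frakh_{R_S}[u])$. Both equalities are false. The weight-zero part $\bfY_{0,R_S}$ is not commutative and is strictly larger than the commutative algebra $\bfY_{R_S}^0$ generated by the tautological classes: already for the $\mathfrak{sl}_2$ Yangian, $e_0f_0$ has weight zero but is not in the Cartan subalgebra. Likewise the weight-zero part of $U(\frakg_{R_S}[u])$ is the span of PBW monomials whose \emph{total} weight vanishes (e.g.\ $f_\alpha u^a\, e_\alpha u^b$), which is strictly larger than $U(\frakh_{R_S}[u])$. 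Your two mistakes happen to cancel and land on the correct target, but the chain of reasoning is invalid. The paper instead establishes the inclusion $\bfU_{R_S}^0\subset\gr(\bfY_{R_S}^0)$ directly from the fact that the symbols $a_iu^l,b_iu^l$ lie in $\gr(\bfY_{R_S}^0)$, and closes the other direction by a dimension count: $\bfY_{R_S}^0$ is commutative and generated by $\{\ch_l(\calV_i),\ch_l(\calW_i)\}$, so its graded dimension is bounded above by that of the free commutative algebra $\bfU_{R_S}^0=\Sym(\frakh_{R_S}[u])$ — which forces equality. Some such counting (or a cancellation-free argument) is needed; passing to the weight-zero subalgebra does not supply it.

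The same issue recurs in Part (b): the assertion ``$\gr$ of an algebra generated by given elements is generated by their symbols'' is false as a general principle (take $R[x,y]$ filtered by total degree and the subalgebra generated by $x$ and $y+x^2$: the symbols are $x,x^2$, but the associated graded is the whole of $R[x,y]$ since $y=(y+x^2)-x\cdot x$). In the case at hand the conclusion holds, but only because the symbols $a_iu^l$ (resp.\ $b_iu^l$) are algebraically independent over $R_S$ — i.e.\ because $\frakh_{R_S}^v$ (resp.\ $\frakh_{R_S}^w$) is free of rank $|I|$ with basis $(a_i)$ (resp.\ $(b_i)$), which is exactly the content of Lemma~\ref{lem:Y02}(a) and the Maulik--Okounkov computation. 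You should make this explicit and justify it rather than invoke the false general principle. On the other hand, your Part (e) argument — centrality of $\ch_l(\calW_i)$ via $R_{w,S}$-linearity of $\rho_w$, then passing to symbols to get centrality of $\frakh_{R_S}^w$ — is clean and correct, and is a nice rearrangement that does avoid reliance on \eqref{512} for that step.
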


The transpose $\bfU_{R_S}\to\bfU_{R_S}^\vee$ and $\bfY_{R_S}\to\bfY_{R_S}^\vee$
in Lemma \ref{lem:FF} is compatible with the triangular 
decompositions \eqref{Upm0} and \eqref{Upm0vee}.
We define the $R_S$-subalgebras
$$\bfU_{R_S}^{0,v,\vee},\bfU_{R_S}^{0,w,\vee}\subset\bfU_{R_S}^{0,\vee}
,\quad
\bfY_{R_S}^{0,v,\vee},\bfY_{R_S}^{0,w,\vee}\subset\bfY_{R_S}^{0,\vee}$$
by \emph{transport de structure} via the transpose map.
We define similarly the Lie $R_S$-subalgebras 
$$\frakh_{R_S}^{v,\vee},\frakh_{R_S}^{w,\vee}\subset\frakh_{R_S}^\vee.$$
For $x\in\bfU_{R_S}$ or $\bfU_{R_S}^\vee$ we abbreviate
$$x(\phi_\bfw)=\gr\rho_\bfw(x)(\phi_\bfw)
,\quad
x(\phi_\bfw^\vee)=\gr\rho_\bfw^\vee(x)(\phi_\bfw^\vee)
,\quad
\bfw\in\bfN_1.$$

\begin{lemma}\label{lem:Y02} There are $R_S$-bases $(a_i)_{i\in I}$ and $(b_i)_{i\in I}$
of $\frakh_{R_S}^v$ and $\frakh_{R_S}^w$ such that
\hfill
\begin{enumerate}[label=$\mathrm{(\alph*)}$,leftmargin=8mm,itemsep=1.2mm]
\item
$\sigma_l \ch_l(\calV_i)=a_iu^l$ and
$\sigma_l \ch_l(\calW_i)=b_iu^l$ for each 
$i\in I,\,l\in\bbN,$
\item 
$a_iu^l(\phi_\bfw)=0$ and
$b_iu^l(\phi_\bfw)=\ch_l(\calW_i)\cap\phi_\bfw$
for each $\bfw\in \bfN_1$,
\item
$h_{\delta_i}=b_i-\sum_{j\in I}\bfc_{i,j}a_j$.
Further $h_{\delta_i}$ acts on $F_{v,w}$ 
by multiplication by $w_i-(\delta_i,v)_Q.$
\end{enumerate}
\end{lemma}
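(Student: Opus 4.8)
The plan is to reduce everything to framings by single vertices and to the explicit description of the tautological bundles on the $\sigma$-fixed loci, and then feed this into the coproduct and $u$-filtration machinery of \S\ref{sec:filt}. By Lemmas~\ref{lem:bcY} and \ref{lem:rhovee}(a) I may assume $S=T$ and drop $S$ throughout. I would \emph{define} $a_i$ and $b_i$ to be the images in $\bfU_R^{0,v}=U(\frakh_R^v[u])$ and $\bfU_R^{0,w}=U(\frakh_R^w[u])$ of the ranks $\ch_0(\calV_i)\in\bfY_R^{0,v}$ and $\ch_0(\calW_i)\in\bfY_R^{0,w}$ (the identifications coming from Lemma~\ref{lem:Y01}). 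The first point is that $a_i\in\frakh_R^v$ and $b_i\in\frakh_R^w$, i.e.\ that these images are primitive: this follows once one knows that the operators $\rho_\bfw(\ch_0(\calV_i))$, $\rho_\bfw(\ch_0(\calW_i))$ on $F_{\bfw,R_\bfw}$, $\bfw\in\bfN_1$, are additive over the tensor factors (ranks add), together with \eqref{Delta}, Lemma~\ref{lem:Delta}(c) and the injectivity of $\gr\rho_1$ (Lemma~\ref{lem:rhostrict}(b)). That $(a_i)_{i\in I}$ and $(b_i)_{i\in I}$ are $R$-bases of $\frakh_R^v$ and $\frakh_R^w$ then follows since, by Lemma~\ref{lem:Y01}, $\bfY_R^{0,v}$ and $\bfY_R^{0,w}$ are generated by the $\ch_l(\calV_i)$ resp.\ $\ch_l(\calW_i)$ and are polynomial algebras over the abelian Lie algebras $\frakh_R^v[u]$, $\frakh_R^w[u]$.

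For part~(a) the key computation is on $\frakM(\bfw)=\prod_r\frakM(\delta_{i_r})$ for $\bfw=(i_1,\dots,i_s)\in\bfN_1$: there $\calW_i|_{\frakM(\bfw)}=\bigoplus_{r:\,i_r=i}\bbC_{u_r}$, while $\calV_i|_{\frakM(\bfw)}=\bigoplus_r\calV_i^{(r)}$ decomposes according to the $\sigma$-weights, with $\calV_i^{(r)}$ carried by the $r$-th factor and contributing the parameter $u_r$. Hence $\rho_\bfw(\ch_l(\calV_i))$ and $\rho_\bfw(\ch_l(\calW_i))$ are multiplication by classes which are polynomials in $u_1,\dots,u_s$ of degree $\le l$ whose degree-$l$ parts are additive over the factors and equal to $\gr\rho_\bfw(a_iu^l)$ and $\gr\rho_\bfw(b_iu^l)$ respectively. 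By strictness of the embedding $\rho_1$ (Lemma~\ref{lem:rhostrict}(a)) this places $\ch_l(\calV_i),\ch_l(\calW_i)$ in $\bfY_R[\leqsl l]$, and comparing with the above through the injectivity of $\gr\rho_1$ gives $\sigma_l\ch_l(\calV_i)=a_iu^l$, $\sigma_l\ch_l(\calW_i)=b_iu^l$, with $a_i,b_i$ the $l$-independent elements already described ($\gr\rho_{\delta_j}(a_i)$ acting on the weight-$v$ part of $F_{\delta_j,R_{\delta_j}}$ by $v_i$, and $\gr\rho_{\delta_j}(b_i)$ by $\delta_{ij}$). Part~(b) is then immediate from the cap-product description of $\rho_\bfw$: by definition of $x(\phi_\bfw)$,
\[
a_iu^l(\phi_\bfw)=\sigma_l\bigl(\ch_l(\calV_i)\cap\phi_\bfw\bigr),\qquad b_iu^l(\phi_\bfw)=\sigma_l\bigl(\ch_l(\calW_i)\cap\phi_\bfw\bigr),
\]
and since $\phi_\bfw$ has weight $0$, i.e.\ lies over $v=0$ where $\calV_i$ is the zero bundle, $\ch_l(\calV_i)\cap\phi_\bfw=0$ (for $l=0$ this reads $\rk(\calV_i)\cdot\phi_\bfw=0$), while $\ch_l(\calW_i)|_{\frakM(\bfw)}$ is homogeneous of $u$-degree $l$, so taking $\sigma_l$ is harmless and $b_iu^l(\phi_\bfw)=\ch_l(\calW_i)\cap\phi_\bfw$.

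For part~(c) I would identify $h_{\delta_i}$ through its action. Using the definition \eqref{coroot} of the coroot together with the explicit weight-$0$ part of the classical $r$-matrices $\bfr_{\delta_i,\delta_j}$ computed in \cite[\S 5.3]{MO19} — which is a multiplication operator built from $\ch_0(\calV_j)$ and $\ch_0(\calW_j)$, hence from the $a_j$ and $b_i$ by part~(a) — one obtains that $h_{\delta_i}$ acts on the weight-$v$ part of $F_{\delta_j,R_{\delta_j}}$, for every $j\in I$, by $\delta_{ij}-(\delta_i,v)_Q$. Since $b_i-\sum_{j\in I}\bfc_{i,j}a_j\in\frakh_R$ acts there by the same scalar (by part~(a): $b_i$ by $\delta_{ij}$, $a_j$ by $v_j$, and $(\delta_i,v)_Q=\sum_j\bfc_{i,j}v_j$), and since $\frakh_R$ acts faithfully on $\prod_{\bfw\in\bfN_1}F_{\bfw,R_\bfw}$ by Lemma~\ref{lem:rhostrict}, it follows that $h_{\delta_i}=b_i-\sum_j\bfc_{i,j}a_j$ in $\frakg_R$; then on $F_{v,w}$ for arbitrary $w\in\bbN I$ this element acts by $w_i-\sum_j\bfc_{i,j}v_j=w_i-(\delta_i,v)_Q$, as $\ch_0(\calW_i)$ caps to $w_i$ and $\ch_0(\calV_j)$ to $v_j$.

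The step I expect to be the main obstacle is the identification of $h_{\delta_i}$ in part~(c): one must extract from the Maulik--Okounkov description of the Cartan part of the classical $r$-matrix the precise coefficients (and reconcile them with the Chern-character normalization used here), or else compute the $\frakh_R$-weight of the standard modules $F_{\delta_j}$ directly (Nakajima's weight-space formula) and combine it with the faithfulness statement; either way the delicate point is to obtain an equality of elements of the \emph{integral} Lie algebra $\frakg_R$, not merely of operators after localization. By contrast, the bookkeeping in (a) — restricting the tautological bundles to the $\sigma$-fixed loci and running it through the coproduct and $u$-filtration formalism already set up in Lemmas~\ref{lem:Delta} and \ref{lem:rhostrict} — should be routine.
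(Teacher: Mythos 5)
Your route differs from the paper's in a way that creates a genuine circularity, and it also leaves the most delicate integrality step under-argued.

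\textbf{Circularity with Lemma~\ref{lem:Y01}.} You define $a_i,b_i$ as the symbols of $\ch_0(\calV_i),\ch_0(\calW_i)$ \emph{under the identifications from Lemma~\ref{lem:Y01}}, and you later invoke Lemma~\ref{lem:Y01} again to deduce that $(a_i)$ and $(b_i)$ are $R_S$-bases of $\frakh^v_{R_S}$ and $\frakh^w_{R_S}$. But in the paper the two lemmas share one proof, and the explicit statement is that Lemma~\ref{lem:Y01} is deduced \emph{from} the formulas of Lemma~\ref{lem:Y02}: the inclusion $\bfU^0_{R_S}\subset\gr(\bfY^0_{R_S})$ is obtained precisely from $\sigma_l\ch_l(\calV_i)=a_iu^l$ and $\sigma_l\ch_l(\calW_i)=b_iu^l$, and the reverse inclusion is a generation/size count. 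So as written, your proof assumes what it is trying to establish. The paper dodges this entirely: it \emph{constructs} $P_i,Q_i\in M_{R_S}$ as explicit rank-one idempotent-type elements of $\bigoplus_i A_{\delta_i,R_S}$ (using the fact from \eqref{Fdelta} that $F_{\delta_i,R_{\delta_i,S}}$ has exactly the two weight pieces $v=0,\delta_i$), and then sets $a_i=E_{1,R_S}(P_i)$, $b_i=E_{1,R_S}(Q_i)+\sum_j\bfc_{i,j}E_{1,R_S}(P_j)$. Since $\frakg_{R_S}=E_{1,R_S}(M_{R_S})$ by \eqref{defg}, these elements land in $\frakg_{R_S}$ \emph{by definition}, and the weight-$0$ condition places them in $\frakh_{R_S}$ at once. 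No primitivity argument is needed, and Lemma~\ref{lem:Y01} falls out afterwards.

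\textbf{The integral primitivity step.} Even ignoring the circularity, your primitivity argument is the step that would most likely fail to close. You propose to deduce $a_i\in\frakh^v_{R_S}$ from additivity of ranks, \eqref{Delta}, Lemma~\ref{lem:Delta}(c), and injectivity of $\gr\rho_1$. But to pass from ``$(\gr\rho_{\bfw_1}\otimes\gr\rho_{\bfw_2})\Delta(a_i)=a_i\otimes 1+1\otimes a_i$ for all $\bfw_1,\bfw_2$'' to ``$\Delta(a_i)=a_i\otimes 1+1\otimes a_i$ in $\bfU_{R_S}\otimes_{R_S}\bfU_{R_S}$'' you need injectivity of $\gr\rho_1\otimes\gr\rho_1$, which the paper does not record and which is not automatic over $R_S$ (Lemma~\ref{lem:rhostrict}(b) only gives injectivity of $\gr\rho_1$ itself). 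Over $K_S$ the argument does go through, but that only places $a_i$ in $\frakh_{K_S}$, and passing to $\frakh_{R_S}=\frakh_{K_S}\cap\bfU_{R_S}$ requires a saturation statement that the paper achieves by the explicit $M_{R_S}$-level construction. You correctly flag part~(c) as the place you expect to be delicate at the integral level, but the same issue already arises here, at the very definition of $a_i,b_i$.

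\textbf{What the paper actually uses.} The key external input is the diagonal-coefficient formula from [MO19, p.~90, Thm.~4.9.1], which expresses $E_{\bfw,R_S}(mu^l)$ modulo lower $u$-filtration in terms of $\ch_l(\calV_j)$ and $\ch_l(\calW_j)$ with coefficients $\Tr(mv_j)$, $\Tr(mw_j)$ and the Cartan matrix. Specializing to $m=P_i$ and $m=Q_i$ gives part~(a) directly, part~(b) is immediate, and part~(c) is read off from [MO19, (5.18)]. Your plan for~(a) --- restricting the tautological bundles to $\frakM(\bfw)$ for $\bfw\in\bfN_1$ and tracking degrees through the $u$-filtration --- is a sensible alternative to the explicit formula, and your plan for~(c) via the coroot formula \eqref{coroot} and the Cartan part of the classical $r$-matrix is plausible. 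But the circular use of Lemma~\ref{lem:Y01} and the unproved integral primitivity make the argument incomplete as written; the fix is essentially to replace your definition of $a_i,b_i$ by the paper's $E_{1,R_S}(P_i)$, $E_{1,R_S}(Q_i)+\sum_j\bfc_{i,j}E_{1,R_S}(P_j)$, at which point membership in $\frakh_{R_S}$ is free.
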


\begin{proof}[Proof of the lemmas]
The lemmas follow from \cite[\S\S 5.4-5]{MO19}.
More precisely, we abbreviate 
$$v_i=\ch_0(\calV_i)
,\quad
w_i=\ch_0(\calW_i),$$
and we view both
as operators acting on the $R_S$-module $F_{v,w,R_S}$ by multiplication by the integers
$v_i$ and $w_i$ respectively.
We can also view them as elements of the $R_S$-algebra $M_{R_S}$, 
hence, applying the map $E_{1,R_S}$, as elements of the Lie algebra
$\frakg_{R_S}$ which belong to the Lie subalgebra $\frakh_{R_S}$.
Recall that the map $E_{1,R_S}$ is the product of all maps 
$E_{\bfw,R_S}$ as the tuple $\bfw$ runs over $\bfN_1$.
By \cite[p.~90]{MO19}, the formula in \cite[thm.~4.9.1]{MO19} for the 
diagonal matrix coefficients of the R-matrix imply that for each $m\in M_{R_S}$ and each $l\in\bbN$ we have
$$\frac{1}{l!}E_{\bfw,R_S}(mu^l)=\sum_{i\in I}\Tr(m v_i)\ch_l(\calW_i)+\sum_{i\in I}\Tr(m w_i)\ch_l(\calV_i)-
\sum_{i,j\in I}\bfc_{i,j}\Tr(m v_i)\ch_l(\calV_j)+lower$$
where $\Tr$ is the trace $M_{R_S}\to R_S$ and \emph{lower} means lower for the $u$-filtration of
$A_{\bfw,R_{\bfw,S}}$. 
Let 
$$g_i=1-\bfc_{i,i}/2$$ be the number of loops $i\to i$ in $Q_1$.
We have 
\begin{align}\label{Mdelta}
\frakM(\delta_j,\delta_i)=\emptyset
,\quad
\frakM(\delta_i,\delta_i)=\bbC^{2g_i}
,\quad
\frakM(0,\delta_i)=\Spec(\bbC)
,\quad
i\neq j.
\end{align}
Let $\langle -\rangle$ be the grading shift functor.
From \eqref{Mdelta} we deduce that
\begin{align}\label{Fdelta}
F_{\delta_j,\delta_i,R_{\delta_i}}=\{0\}
,\quad
F_{\delta_i,\delta_i,R_{\delta_i}}=R_{\delta_i,S}\langle g_i\rangle
,\quad
F_{0,\delta_i,R_{\delta_i,S}}=R_{\delta_i,S}
,\quad
R_{\delta_i,S}=R_S[u]
,\quad
i\neq j.
\end{align}
Let $P_i,Q_i\in M_{R_S}$ such that
$$P_i=\id_{F_{0,\delta_i,R_S}}
,\quad 
Q_i=\id_{F_{\delta_i,\delta_i,R_S}}-\id_{F_{0,\delta_i,R_S}}.$$
Then, we get
$$E_{\bfw,R_S}(P_iu^l)=\ch_l(\calV_i)+lower
,\quad
E_{\bfw,R_S}(Q_iu^l)=\ch_l(\calW_i)-\sum_{j\in I}\bfc_{i,j}\ch_l(\calV_j)+lower$$
Thus, to prove Part (a) of Lemma \ref{lem:Y02} it is enough to set
$$a_i=E_{1,R_S}(P_i)
,\quad
b_i=E_{1,R_S}(Q_i)+\sum_{j\in I}\bfc_{i,j}E_{1,R_S}(P_j).$$
The part (b) follows from (a), and (c) from \cite[(5.18)]{MO19}.
Using this, the Lemma \ref{lem:Y01} is easy to check. For instance, the $R_S$-subalgebra
$\bfU_{R_S}^0\subset\bfU_{R_S}$ is contained in the associated graded  
$\gr(\bfY_{R_S}^0)\subset\gr(\bfY_{R_S})=\bfU_{R_S}$ of $\bfY_{R_S}^0$ for the filtration 
induced by the $u$-filtration of $\bfY_{R_S}$.
Since $\bfY_{R_S}^0$ is commutative and is generated be the set \eqref{Ch}, 
its graded dimension is bounded above by the graded dimension of $\bfU_{R_S}^0$.
Hence $\gr(\bfY_{R_S}^0)=\bfU_{R_S}^0$.
\end{proof}

Let $a^\vee_i$, $b^\vee_i$ be the images of $a_i$, $b_i$ 
in $\frakh_{R_S}^{\vee,v}$, $\frakh_{R_S}^{\vee,w}$
by the transpose in Lemma  \ref{lem:FF}.
The transpose by the pairing \eqref{PP2} of the operator on $F_{w,R_{w,S}}$ given by the cap product
by the characteristic classes $\ch_l(\calV_i)$ or $\ch_l(\calW_i)$  is the 
the operator on $F_{w,R_{w,S}}^\vee$ given by the cap product
by the restrictions  $\iota^*\ch_l(\calV_i)$ or $\iota^*\ch_l(\calW_i)$  to the nilpotent subvariety.
Hence, from the previous lemma we deduce the following one.

\begin{lemma} \label{lem:Y03}
\hfill
\begin{enumerate}[label=$\mathrm{(\alph*)}$,leftmargin=8mm,itemsep=1.2mm]
\item
$\sigma_l\, \iota^*\ch_l(\calV_i)=a_i^\vee u^l$ and
$\sigma_l\, \iota^*\ch_l(\calW_i)=b_i^\vee u^l$ for each 
$i\in I,\,l\in\bbN,$
\item 
$a_i^\vee u^l(\phi_\bfw^\vee)=0$ and
$b_i^\vee u^l(\phi_\bfw)=\ch_l(\calW_i)\cap\phi_\bfw^\vee$
for each $\bfw\in \bfN_1$,
\item
$h_{\delta_i}^\vee=b_i^\vee-\sum_{j\in I}\bfc_{i,j}a_j^\vee$.
Further $h_{\delta_i}^\vee$ acts on $F_{v,w}^\vee$ 
by multiplication by $w_i-(\delta_i,v)_Q.$
\qed
\end{enumerate}
\end{lemma}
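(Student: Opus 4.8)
The plan is to deduce Lemma~\ref{lem:Y03} from Lemma~\ref{lem:Y02} by transport of structure along the transpose $(-)^\T\colon\bfY_{R_S}\to\bfY_{R_S}^\vee$ of Lemma~\ref{lem:FF}. The first point to record is that $(-)^\T$ respects the $u$-filtrations: by Lemma~\ref{lem:FF}(a) it carries $E_{\bfw,R_S}(mu^l)$ to $E_{\bfw,R_S}^\vee(m^\T u^l)$, which has the same $u$-degree, so it induces a $\bbZ I\times\bbZ$-graded anti-isomorphism $\bfU_{R_S}=\gr(\bfY_{R_S})\to\gr(\bfY_{R_S}^\vee)=\bfU_{R_S}^\vee$. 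By the very definition of $a_i^\vee$ and $b_i^\vee$ recalled just before the lemma, this induced map sends $a_iu^l\mapsto a_i^\vee u^l$ and $b_iu^l\mapsto b_i^\vee u^l$; and since by Lemma~\ref{lem:FF}(d) the transpose intertwines $\rho_\bfw^\vee$ with $(\rho_\bfw)^\T$, it also intertwines $\gr\rho_\bfw^\vee$ with $(\gr\rho_\bfw)^\T$, the right-hand side denoting the transpose of operators with respect to the perfect pairing~\eqref{PP2}.

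For Part~(a) I would apply this anti-isomorphism to the identities $\sigma_l\ch_l(\calV_i)=a_iu^l$ and $\sigma_l\ch_l(\calW_i)=b_iu^l$ of Lemma~\ref{lem:Y02}(a). It then remains to identify the images $(\ch_l(\calV_i))^\T$ and $(\ch_l(\calW_i))^\T$: since $\calV_i$ and $\calW_i$ are pulled back from $\frakM(w)$ to $\frakL(w)$, the projection formula identifies the transpose with respect to~\eqref{PP2} of the cap-product operator by $\ch_l(\calV_i)$ on $F_{w,R_{w,S}}$ with the cap-product operator by $\iota^*\ch_l(\calV_i)$ on $F_{w,R_{w,S}}^\vee$, and likewise for $\calW_i$ --- this is exactly the statement recorded before the lemma. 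Taking $\sigma_l$ then gives $\sigma_l\,\iota^*\ch_l(\calV_i)=a_i^\vee u^l$ and $\sigma_l\,\iota^*\ch_l(\calW_i)=b_i^\vee u^l$.

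For Part~(b), by Part~(a) and the fact that $\gr\rho_\bfw^\vee$ is the associated graded of the filtered map $\rho_\bfw^\vee$ (Lemma~\ref{lem:ufilt}), it suffices to apply the cap-product operators by $\iota^*\ch_l(\calV_i)$ and $\iota^*\ch_l(\calW_i)$ to the vacuum $\phi_\bfw^\vee=[\frakL(0,\bfw)]$ and extract the $u$-degree-$l$ part; here $\frakL(0,\bfw)=\frakM(0,\bfw)$ is a point by~\eqref{Mdelta}, on which $\calV_i$ is the zero bundle, whence $\iota^*\ch_l(\calV_i)\cap\phi_\bfw^\vee=0$, while $\iota^*\ch_l(\calW_i)\cap\phi_\bfw^\vee=\ch_l(\calW_i)\cap\phi_\bfw^\vee$ is already of $u$-degree $l$. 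This gives Part~(b). For Part~(c), applying the $R_S$-linear map $(-)^\T$ to $h_{\delta_i}=b_i-\sum_{j\in I}\bfc_{i,j}a_j$ of Lemma~\ref{lem:Y02}(c) gives $h_{\delta_i}^\vee=b_i^\vee-\sum_{j\in I}\bfc_{i,j}a_j^\vee$, and since $h_{\delta_i}$ acts on $F_{v,w}$ by the scalar $w_i-(\delta_i,v)_Q$, its transpose $h_{\delta_i}^\vee$ acts on $F_{v,w}^\vee$ by the same scalar, the transpose of a scalar operator with respect to the perfect pairing~\eqref{PP2} being the scalar operator with the same eigenvalue. I expect the only point requiring genuine care --- the one place the argument is more than bookkeeping --- to be the verification that $(h_{\delta_i})^\T$ is indeed the coroot $h_{\delta_i}^\vee$ defined by~\eqref{coroot}, i.e.\ that the sign in $\bfr_{\bfw',\bfw}^\vee=-(\bfr_{\bfw',\bfw})^\T$ of Lemma~\ref{lem:sym}(b) is absorbed correctly when one compares~\eqref{coroot}, Lemma~\ref{lem:pairingg}, and formula~\eqref{form1}.
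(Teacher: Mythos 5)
Your proposal is correct and follows exactly the route the paper takes: the paper's proof is the single sentence deducing the lemma from Lemma~\ref{lem:Y02} by transport of structure along the transpose of Lemma~\ref{lem:FF}, together with the projection-formula identification of $(\ch_l(\calV_i)\cap-)^\T$ with $\iota^*\ch_l(\calV_i)\cap-$, both of which are recorded in the paragraph preceding the lemma. You have spelled out that bookkeeping correctly, and your treatment of Parts~(a) and~(b) matches the intended argument.

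On the one point you flag --- whether $(h_{\delta_i})^\T=h_{\delta_i}^\vee$ --- this is indeed the only step that is more than transport of structure, but it closes cleanly without having to track the sign in Lemma~\ref{lem:sym}(b). By Lemma~\ref{lem:sym}(c) the classical $r$-matrices $\bfr$ and $\bfr^\vee$ are $\iota$-intertwined, so the coroots $h_{\delta_i}$ and $h_{\delta_i}^\vee$ defined from them via~\eqref{coroot} are $\iota$-conjugate as operators; since $\iota$ preserves weight spaces and $h_{\delta_i}$ acts on $F_{v,w}$ by the scalar $w_i-(\delta_i,v)_Q$ (Lemma~\ref{lem:Y02}(c)), the operator $h_{\delta_i}^\vee$ acts on $F_{v,w}^\vee$ by that same scalar --- which already gives the second assertion of Part~(c). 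But the transpose with respect to~\eqref{PP2} of a scalar operator is that same scalar operator, so $(h_{\delta_i})^\T$ and $h_{\delta_i}^\vee$ agree on every $F_{v,w}^\vee$, hence coincide as elements of $\frakh_{R_S}^\vee$ by the injectivity of $\rho_2^\vee$ from Lemma~\ref{lem:rhovee}(d). The sign in $\bfr^\vee=-(\bfr)^\T$ never needs to be confronted directly because it concerns the off-diagonal pieces of the $r$-matrix, whose transpose also flips the weight of each tensor factor; restricting to the Cartan calculation above sidesteps this entirely.
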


We have $h_v=h_v^\vee$, $a_i=a_i^\vee$ and $b_i=b_i^\vee$ under the identification
$$\frakg_{R_S},\frakg_{R_S}^\vee\subset\frakg_{K_S}=\frakg_{K_S}^\vee$$
 in Lemma \ref{lem:gg1}. From now on we'll omit the upperscript $(-)^\vee$ for these operators.
Note also that the $R_S$-algebras $\bfU_{R_S}^{0,w}$ and
$\bfU_{R_S}^{0,w,\vee}$ are central in  $\bfU_{R_S}$ and $\bfU_{R_S}^\vee$ respectivelly.
We'll identify both with the $R_S$-algebra
$R_{\infty,S}$ in \eqref{Rinfty} via the $R$-algebra isomorphisms
\begin{align}\label{UR}\bfU_{R_S}^{0,w}\,,\,\bfU_{R_S}^{0,w,\vee}\to R_{\infty,S}
,\quad
b_iu^l\mapsto p_{i,l}.
\end{align}

\medskip

\subsection{The evaluation map}\label{sec:ev}

The representations $\rho_\bfw$ and $\rho_\bfw^\vee$ yield the $R_S$-linear maps
\begin{align}\label{evw}
\ev_\bfw:\bfY_{R_S}\to F_{\bfw,R_{\bfw,S}}
,\,
x\mapsto x(\phi_\bfw)
,\quad
\ev_\bfw^\vee:\bfY_{R_S}^\vee\to F_{\bfw,R_{\bfw,S}}^\vee
,\,
x\mapsto x(\phi_\bfw^\vee).
\end{align}
For each $a=0,1,2$, taking the product over all $\bfw\in \bfN_a$ we get the evaluation maps
\begin{align}\label{evavee}
\ev_a:\bfY_{R_S}\to F_{a,R_S}
,\quad
\ev_a^\vee:\bfY_{R_S}^\vee\to F_{a,R_S}^\vee
\end{align}
Fix a tuple $\bfw\in\bfN_1$.
The evaluation map $\ev_\bfw$ is the composition of the filtered morphism $\rho_\bfw$ 
and the action
of the filtered algebra  $A_{\bfw,R_{\bfw,S}}$ on the vacuum vector of the
filtered module $F_{\bfw,R_{\bfw,S}}$.
Since the vacuum has the degree zero for the $u$-filtration, 
the map $\ev_\bfw$ preserves the $u$-filtrations of $\bfY_{R_S}$ and $F_{\bfw,R_{\bfw,S}}$.
The map $\ev_\bfw^\vee$ is also filtered for a similar reason.
Thus,
\begin{align}\label{evfilt}
\ev_\bfw(\bfY_{R_S}[\leqsl l])\subset F_{\bfw,R_{\bfw,S}}[\leqsl l]
,\quad
\ev_\bfw^\vee(\bfY_{R_S}^\vee[\leqsl l])\subset F_{\bfw,R_{\bfw,S}}^\vee[\leqsl l]
,\quad
l\in\bbN.
\end{align}
For a similar reason, the evaluation maps $\ev_\bfw$ and
$\ev_\bfw^\vee$ are homogeneous of degree 0 for the $\bbZ I\times\bbZ$-grading.
Taking the associated graded, we get the $\bbZ I\times\bbZ$-graded $R_S$-linear maps
$$\gr \ev_\bfw:\bfU_{R_S}\to F_{\bfw,R_{\bfw,S}}
,\,
x\mapsto x(\phi_\bfw)
,\quad
\gr \ev_\bfw^\vee:\bfU_{R_S}^\vee\to F_{\bfw,R_{\bfw,S}}^\vee
,\,
x\mapsto x(\phi_\bfw^\vee).$$
Taking the product over all tuples $\bfw$, we get the following maps
$$\gr \ev_1=\prod_{\bfw\in \bfN_1}\gr \ev_\bfw:\bfU_{R_S}\to F_{1,R_S}
,\quad
\gr \ev_1^\vee=\prod_{\bfw\in \bfN_1}\gr \ev_\bfw^\vee:\bfU_{R_S}^\vee\to F_{1,R_S}^\vee.$$

\begin{lemma}\label{lem:KK}
$\Ker(\ev_a)$ and $\Ker(\ev_a^\vee)$ are independent of the choice of $a=0,1,2$.
\end{lemma}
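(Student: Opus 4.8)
The plan is to show that for $a=0,1,2$ the kernels $\Ker(\ev_a)$ and $\Ker(\ev_a^\vee)$ all coincide, by reducing to the case $a=1$ via the comultiplication and then comparing $a=1$ with $a=0,2$. The key technical input is the compatibility of stable envelopes with the comultiplication, i.e. the commutative squares \eqref{stab} and \eqref{stabvee}, together with the generic invertibility of $\stab(1)$ and $\stab^\vee(1)$ from Lemma~\ref{lem:ss}.

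First I would treat $a=0$ versus $a=1$ for $\bfY_{R_S}$. Recall that for $\bfw\in\bfN_0$ and a decomposition $\bfw=\bfw_1\bfw_2$, the identity $(\rho_{\bfw_1}\otimes\rho_{\bfw_2})\circ\Delta=\rho_{\bfw}$ holds by \eqref{Delta}, and iterating this, for any $\bfw=(\delta_{i_1},\dots,\delta_{i_s})\in\bfN_0$ we can write $\rho_{\bfw}=(\rho_{\delta_{i_1}}\otimes\cdots\otimes\rho_{\delta_{i_s}})\circ\Delta^{\otimes(s-1)}$ only when all $w_r$ are fundamental; in general one refines $\bfw$ into a tuple in $\bfN_1$ by splitting each $w_r=\sum_i w_{r,i}\delta_i$ into $|w_r|$ fundamental pieces. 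Applying $\Delta$ repeatedly and using \eqref{Delta}, $\rho_{\bfw}(x)$ is recovered from the values $\rho_{\bfw'}(x)$ for the refined tuple $\bfw'\in\bfN_1$ after conjugating by the stable envelopes $\stab(1)$, which by Lemma~\ref{lem:ss}(a) are injective and generically invertible over the relevant base ring. Concretely, from the diagram \eqref{stab} one gets $\stab(1)\circ\Delta(x)=x\circ\stab(1)$ as operators, so $\Delta(x)=0$ iff $x=0$ after localization iff $x=0$ (since $A_{\bfw,R_{\bfw,S}}\hookrightarrow A_{\bfw,K_{\bfw,S}}$ by freeness, Lemma~\ref{lem:F}(a)); hence $\rho_{\bfw}(x)=0$ on the composite tuple forces $\rho_{\bfw'}(x)=0$ on its fundamental refinement and conversely. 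This yields $\Ker(\ev_0)=\Ker(\ev_1)$: indeed $x\in\Ker(\ev_\bfw)$ means $\rho_\bfw(x)(\phi_\bfw)=0$, and since $\phi_\bfw=\bigotimes\phi_{w_r}$ is a tensor of vacua, the stable-envelope diagram sends $\phi_\bfw$ on the refined side to $\phi_{\bfw'}$ up to the isomorphism $\stab(1)$, which maps the product of fundamental vacua to $\phi_w$ and matches $\Delta$-compatibly; thus $x(\phi_\bfw)=0 \iff x(\phi_{\bfw'})=0$.

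Next, for $a=1$ versus $a=2$, recall from Lemma~\ref{lem:rho} that both $\rho_1$ and $\rho_2$ are injective (faithful representations), so $\Ker(\rho_1)=\Ker(\rho_2)=0$. But $\Ker(\ev_a)$ is genuinely larger than $\Ker(\rho_a)$ in general — it is cut out by the additional condition of annihilating the vacuum. Here I would argue that, for $\bfw\in\bfN_2$, the single space $F_{\bfw,R_{\bfw,S}}$ (with $\bfw=(w)$ a one-element tuple) is a quotient of a sum of the $F_{\bfw',R_{\bfw',S}}$ for $\bfw'\in\bfN_1$ refining $w$ — again via stable envelopes and comultiplication, exactly as above but now $\bfw'$ is the refinement of the \emph{single} dimension vector $w$ into fundamental pieces. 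Applying the same $\stab(1)$-conjugation argument with $\bfw'\in\bfN_1$ and target $\bfw=(w)\in\bfN_2$, one finds $x(\phi_w)=0 \iff x(\phi_{\bfw'})=0$, so $\Ker(\ev_2)=\Ker(\ev_1)$ as well. Running the identical chain of arguments on the nilpotent side, using \eqref{stabvee}, Lemma~\ref{lem:ss}(a) for $\stab^\vee$, and the freeness of $F^\vee_{\bfw,R_{\bfw,S}}$ from Lemma~\ref{lem:F}(a), gives $\Ker(\ev_0^\vee)=\Ker(\ev_1^\vee)=\Ker(\ev_2^\vee)$.

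The main obstacle I anticipate is bookkeeping around the refinement of a tuple in $\bfN_0$ or $\bfN_2$ into a tuple in $\bfN_1$ and tracking how the vacuum vectors, the base rings $R_{\bfw,S}$, and the $u$-variables $u_1,\dots,u_s$ transform under the associated stable envelopes — in particular making sure that $\stab(1)$ genuinely intertwines the $\rho$-actions in the way recorded in \eqref{Delta} and \eqref{stab}, and that after localizing to $K_{\bfw,S}$ the isomorphism it induces does not destroy the relation "$x$ annihilates the vacuum." A subtlety is that $\stab(1)$ is only \emph{generically} invertible, so one must pass to $K_{\bfw,S}$ to invert it, then descend the conclusion "$x(\phi_\bfw)=0$" back to $R_{\bfw,S}$ using that $F_{\bfw,R_{\bfw,S}}\to F_{\bfw,K_{\bfw,S}}$ is injective (freeness); this descent is clean precisely because all modules in sight are free. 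Once these identifications are in place, the proof is a short diagram chase with no further input.
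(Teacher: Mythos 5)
Your proof is essentially the same as the paper's: the whole content of the lemma is that, for any $\bfw\in\bfN_0$ with $w=|\bfw|$, the stable envelope $\stab(1)\colon F_{\bfw,R_{\bfw,S}}\to F_{w,R_{\bfw,S}}$ is an injective morphism of $\bfY_{R_S}$-modules that sends $\phi_\bfw$ to $\phi_w$ (and dually, $\stab^\vee(1)\colon F^\vee_{w,R_{\bfw,S}}\to F^\vee_{\bfw,R_{\bfw,S}}$ sends $\phi^\vee_w$ to $\phi^\vee_\bfw$), which immediately gives $\Ker(\ev_\bfw)=\Ker(\ev_w)$ and $\Ker(\ev^\vee_\bfw)=\Ker(\ev^\vee_w)$, and this is exactly what the paper does. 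Two remarks on your write-up. First, the excursion through $\Delta(x)=0\iff x=0$ is irrelevant and potentially misleading: nothing in the argument requires injectivity of the coproduct, and the clause "hence $\rho_\bfw(x)=0$ forces $\rho_{\bfw'}(x)=0$ and conversely" is about $\Ker(\rho)$, not $\Ker(\ev)$ — you recover yourself in the next sentence, but the detour is noise. Second, the localization-and-descent step is unnecessary: Lemma~\ref{lem:ss}(a) gives injectivity of $\stab(1)$ already over $R_{\bfw,S}$, and injectivity of a module morphism preserving vacua is all you need to get both directions of $x(\phi_\bfw)=0\iff x(\phi_w)=0$; you never need to invert $\stab(1)$. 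With these two streamlinings your argument collapses exactly to the paper's one-line proof. Finally, rather than refining $\bfw\in\bfN_0$ into a tuple in $\bfN_1$, it is cleaner to compare every $\bfw$ directly with the single vector $w=|\bfw|\in\bfN_2$, as the paper does: the three intersections over $a=0,1,2$ are then all seen to equal $\bigcap_{w\in\bbN I}\Ker(\ev_w)$ in one stroke.
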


\begin{proof}
Let $\bfw\in \bfN_0$ and $w=|\bfw|$.
The diagram \eqref{stab} and Lemma \ref{lem:ss} yield
an injective morphism of $\bfY_R$-modules preserving the vacuum vectors
$F_{\bfw,R_{\bfw,S}}\to F_{w,R_{\bfw,S}}$.
Similarly, the diagram \eqref{stabvee}  yields
injective morphisms of $\bfY_R^\vee$-modules preserving the vacuum vectors
$F_{w,R_{\bfw,S}}^\vee\to F_{\bfw,R_{\bfw,S}}^\vee$.
Hence, we have 
\begin{align}\label{Ker3}
\Ker(\ev_\bfw)=\Ker(\ev_w)
,\quad
\Ker(\ev_\bfw^\vee)=\Ker(\ev_w^\vee).\end{align}
\end{proof}

For each
$\bfw\in \bfN_1$ we consider the ideal
\begin{align}\label{J}
J_w=\Ker(\gr\ev_\bfw)\cap \bfU_{R_S}^{0,w}
\end{align}
where $w=|\bfw|$.
By Lemma \ref{lem:Y02}, the isomorphism \eqref{UR}
identifies $J_w$ with the Kernel of the
$R_S$-algebra homomorphism 
$R_{\infty,S}\to R_{w,S}$
in \eqref{Rinftyw}.

\begin{lemma} \label{lem:kerU}
\hfill
\begin{enumerate}[label=$\mathrm{(\alph*)}$,leftmargin=8mm,itemsep=1.2mm]
\item
$\Ker(\gr \ev_1)=\bfU_{R_S}\frakh_{R_S}^v[u]\oplus\bfU_{R_S}\frakg_{R_S}^+[u]$.
\item
$\Ker(\gr \ev_1^\vee)=\bfU_{R_S}^\vee\frakh_{R_S}^{v,\vee}[u]\oplus\bfU_{R_S}^\vee\frakg_{R_S}^{+,\vee}[u]$.
\end{enumerate}
\end{lemma}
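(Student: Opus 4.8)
The plan is to establish (a) in full and to deduce (b) from it, either by repeating the argument on the nilpotent side with Lemma~\ref{lem:Y03} and the nilpotent stable envelopes \eqref{stabvee} replacing Lemma~\ref{lem:Y02} and \eqref{stab}, or by transporting the generic form of (a) along the isomorphism $\iota$. Set $\frakp_{R_S}=\frakh_{R_S}^v[u]\oplus\frakg_{R_S}^+[u]$; since $\frakh_{R_S}$ is abelian and normalises $\frakg_{R_S}^+$, this is a Lie $R_S$-subalgebra of $\frakg_{R_S}[u]$, and the left ideal it generates is $\bfU_{R_S}\frakp_{R_S}=\bfU_{R_S}\frakh_{R_S}^v[u]+\bfU_{R_S}\frakg_{R_S}^+[u]$. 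The inclusion $\bfU_{R_S}\frakp_{R_S}\subseteq\Ker(\gr\ev_1)$ is the easy half: it suffices that $\frakp_{R_S}$ annihilates every vacuum $\phi_\bfw$, $\bfw\in\bfN_1$. For $\frakh_{R_S}^v[u]$ this is Lemma~\ref{lem:Y02}(b) (the basis elements $a_iu^l$ kill $\phi_\bfw$). For $\frakg_{R_S}^+[u]$ it is a grading count: $\phi_\bfw$ spans the bottom piece $F_{0,\bfw,R_{\bfw,S}}$ of $F_{\bfw,R_{\bfw,S}}=\bigoplus_{v\in\bbN I}F_{v,\bfw,R_{\bfw,S}}$, $\gr\rho_\bfw(\frakg_{R_S}^+[u])$ consists of operators strictly decreasing the index $v$, and $F_{v,\bfw,R_{\bfw,S}}=0$ for $v\notin\bbN I$; hence $\gr\rho_\bfw(\frakg_{R_S}^+[u])\,\phi_\bfw=0$.

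For the reverse inclusion I would first perform a Poincar\'e--Birkhoff--Witt reduction. Using the triangular decomposition \eqref{Upm0}, the splitting $\frakh_{R_S}=\frakh_{R_S}^v\oplus\frakh_{R_S}^w$ and the centrality of $\frakh_{R_S}^w$ in $\frakg_{R_S}$ (Lemma~\ref{lem:Y01}), PBW applied to the ordering $\frakg_{R_S}[u]=\frakg_{R_S}^-[u]\oplus\frakh_{R_S}^w[u]\oplus\frakp_{R_S}$ identifies $\bfU_{R_S}/\bfU_{R_S}\frakp_{R_S}$ with $U(\frakg_{R_S}^-[u])\otimes_{R_S}U(\frakh_{R_S}^w[u])$ as a $\bbZ I\times\bbZ$-graded $R_S$-module. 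In view of the easy half it then remains to show that the resulting map
\[
\Theta\colon\ U(\frakg_{R_S}^-[u])\otimes_{R_S}U(\frakh_{R_S}^w[u])\longrightarrow F_{1,R_S},\qquad x\longmapsto\big(\gr\rho_\bfw(x)\,\phi_\bfw\big)_{\bfw\in\bfN_1},
\]
is injective; as $\Theta$ is a graded map of free $R_S$-modules, this can be checked degreewise after $-\otimes_{R_S}K_S$. Now $\frakh_{R_S}^w$ is central and acts on $\phi_\bfw$ through the composite $U(\frakh_{R_S}^w[u])\cong R_{\infty,S}\twoheadrightarrow R_{w,S}$ of \eqref{UR} and \eqref{Rinftyw} (Lemma~\ref{lem:Y02}(b)), and $\bigcap_{w\in\bbN I}\Ker(R_{\infty,S}\to R_{w,S})=0$; writing a bidegree-homogeneous element of the source as $\sum_\alpha m_\alpha^-\otimes q_\alpha$ with the $m_\alpha^-$ distinct PBW monomials of $U(\frakg_{R_S}^-[u])$ and commuting the central factors out, injectivity of $\Theta\otimes K_S$ reduces to the assertion that for each $v\in\bbN I$ and $l\in\bbN$ there is $w\gg 0$ such that the vectors $m^-(\phi_\bfw)$, with $m^-$ a PBW monomial of $U(\frakg_{K_S}^-[u])$ of weight $v$ and cohomological degree $\leqsl l$, are $K_{\bfw,S}$-linearly independent for some (equivalently, via the generically invertible stable envelope \eqref{stab} and as in the proof of Lemma~\ref{lem:KK}, any) $\bfw$ with $|\bfw|=w$.

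This last point --- that, generically and for $w$ large in each fixed weight, $\phi_w$ generates inside $F_{w,R_{w,S}}$ a copy of the full lowest-weight Verma module of $\frakg_{R_S}$ --- is the crux, and the step I expect to be the main obstacle. I would extract it from the Maulik--Okounkov construction \cite{MO19}: the faithfulness of $\rho_1$, hence of $\gr\rho_1$ (Lemma~\ref{lem:rhostrict}), the compatibility of the coproduct $\Delta$ with the $u$-filtration (Lemmas~\ref{lem:Delta} and \ref{lem:Deltavee}), used to realise operators on $F_{\bfw,R_{\bfw,S}}$ after enlarging $\bfw$, and the triangular form \eqref{Rii} of the $R$-matrix together show that no nonzero element of $U(\frakg_{K_S}^-[u])$ annihilates all the vacua at once; since the rank of $F_{\bfw,R_{\bfw,S}}$ in a fixed bidegree grows with the length of $\bfw$, this upgrades to the degreewise linear independence above, and the strictness of Lemma~\ref{lem:rhostrict} together with the filteredness \eqref{evfilt} of the evaluation maps transfers the conclusion from $\bfY_{K_S}$ to its associated graded $\bfU_{K_S}$.

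Granting this, $\Theta\otimes K_S$, and hence $\Theta$, is injective, which proves (a). For (b): over $K_S$ the map $\iota\colon F_{\bfw,K_{\bfw,S}}^\vee\to F_{\bfw,K_{\bfw,S}}$ is an isomorphism sending $\phi_\bfw^\vee$ to $\phi_\bfw$ and intertwining $\gr\rho_\bfw^\vee$ with $\gr\rho_\bfw$, and it matches $\frakh_{K_S}^{v,\vee},\frakg_{K_S}^{+,\vee}$ with $\frakh_{K_S}^v,\frakg_{K_S}^+$ (Lemmas~\ref{lem:F}(d), \ref{lem:ss}, \ref{lem:rhovee}, \ref{lem:Y03}); hence, after the same PBW reduction for $\bfU_{R_S}^\vee$ and descent to $K_S$ by freeness, the reverse inclusion for (b) follows from that of (a), while the easy inclusion for (b) is Lemma~\ref{lem:Y03}(b) together with the identical grading count applied to $F_{\bfw,R_{\bfw,S}}^\vee$.
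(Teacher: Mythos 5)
Your easy inclusion and the PBW reduction are correct, and they parallel what the paper does: with the ordering $\frakg_{R_S}[u]=\frakg_{R_S}^-[u]\oplus\frakh_{R_S}^w[u]\oplus\frakp_{R_S}$, the statement of (a) is equivalent to the injectivity of your map $\Theta$ on $\bfU_{R_S}^-\bfU_{R_S}^{0,w}$, which is exactly what the paper proves as the equality $\Ker(\gr\ev_1)\cap\bfU_{R_S}^-\bfU_{R_S}^{0,w}=\{0\}$.

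The genuine gap is at precisely the step you flag as ``the crux'': you do not prove the injectivity of $\Theta$, and the ingredients you list do not amount to an argument. Faithfulness of $\gr\rho_1$ (Lemma~\ref{lem:rhostrict}) says $\bfU_{R_S}\hookrightarrow A_{1,R_S}$ as operator algebras; it says nothing about the kernel of evaluation on the vacua, which is exactly what the lemma is computing and which is in general a large ideal. The remark that $\rk F_{\bfw,R_{\bfw,S}}$ in a fixed bidegree grows with the length of $\bfw$ is not a linear-independence argument either: one must produce enough vectors that are actually hit. What the paper uses, and what is missing from your sketch, is the coroot pairing of Lemma~\ref{lem:pairingg}. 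This gives the base case
\[
\Ker(\gr\ev_\bfw)\cap\bigl(\frakg_{-v,R_S}^-[u]\cdot\bfU_{R_S}^{0,w}\bigr)=\frakg_{-v,R_S}^-[u]\cdot J_\bfw,\qquad w(h_v)\neq 0,
\]
by pairing against $\frakg_{v,R_S}^+$ and using $[x,y]=\langle x,y\rangle h_v$ with $h_v$ acting by a nonzero scalar on the vacuum. One then bootstraps from weight $\delta_i$ to arbitrary negative weight by an induction that hinges on $\Ker(\gr\ev_1)$ being a \emph{coideal} for the trivial coproduct of $\bfU_{R_S}$ (because $\gr\ev_\bfw=(\gr\ev_{\bfw_1}\otimes\gr\ev_{\bfw_2})\circ\Delta$), splitting $\Delta=\Delta_{st}+\delta$ by weight and showing that the strictly-mixed part vanishes by induction, which forces $x\in\frakg_{-v,R_S}^-[u]\cdot\bfU_{R_S}^{0,w}$ and reduces to the base case. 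Neither the pairing-with-$\frakg^+$ mechanism nor the coideal induction appears in your proposal; without them, the claim that ``no nonzero element of $U(\frakg_{K_S}^-[u])$ annihilates all the vacua at once'' is left unsupported. Your deduction of (b) from (a) and Lemma~\ref{lem:Y03} is fine once (a) is actually established.
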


\begin{proof}
Let $\bfw\in \bfN_1$. Set $w=|\bfw|$.
We first prove that for each root $v$ with $w(h_v)\neq 0$ we have
\begin{align}\label{a}
\Ker(\gr \ev_\bfw)\cap(\frakg^-_{-v,R_S}[u]\cdot \bfU_{R_S}^{0,w})=\frakg_{-v,R_S}^-[u]\cdot J_\bfw.
\end{align}
The proof is similar to the proof of \cite[prop.~5.3.4]{MO19}.
Let 
$$x\in\Ker(\gr \ev_\bfw)\cap(\frakg_{-v,R_S}^-[u]\cdot \bfU_{R_S}^{0,w}).$$
Set $x=\sum_kx_kf_k$ with 
$x_k\in\frakg_{-v,R_S}^-[u]$ and $f_k\in \bfU_{R_S}^{0,w}$ for all $k$.
We'll assume that the $x_k$'s are linearly independent.
For each element $y\in\frakg_{v,R_S}^+$ we have
$$w(h_v)\sum_k (y,x_k)f_k(\phi_\bfw)=\sum_k (y,x_k)h_vf_k(\phi_\bfw)=\sum_k [x_k,y]f_k(\phi_\bfw)
=-yx(\phi_\bfw)=0.$$
Hence, we have
$$\sum_k (y,x_k)f_k(\phi_\bfw)=0.$$
Since the $x_k$'s are linearly independent, this implies that $f_k\in J_\bfw$, for each $k$.
We deduce that
$$\Ker(\gr \ev_\bfw)\cap(\frakg_{-v,R_S}^-[u]\cdot \bfU_{R_S}^{0,w})\subset\frakg_{-v,R_S}^-[u]\cdot J_\bfw.$$
The reverse inclusion is obvious, proving \eqref{a}.
The $R_S$-module 
$$\Ker(\gr \ev_1)\subset \bfU_{R_S}$$ is $\bbZ I$-graded. Hence, we have
$$\Ker(\gr \ev_1)\cap\frakg_{R_S}[u]=(\Ker(\gr \ev_1)\cap\frakg_{R_S}^-[u])\oplus
(\Ker(\gr \ev_1)\cap\frakh_{R_S}[u])\oplus(\Ker(\gr \ev_1)\cap\frakg_{R_S}^+[u]).$$
By \eqref{a} and Lemma \ref{lem:Y02} we have
\begin{align}\label{Y02}
\begin{split}
\Ker(\gr \ev_1)\cap\frakg_{R_S}^-[u]&=\{0\},\\
\Ker(\gr \ev_1)\cap\frakh_{R_S}[u]&=\frakh_{R_S}^v[u],\\
\Ker(\gr \ev_1)\cap\frakg_{R_S}^+[u]&=\frakg_{R_S}^+[u].
\end{split}
\end{align}
We deduce that
\begin{align}\label{b}
\Ker(\gr \ev_1)\cap\frakg_{R_S}[u]=\frakh_{R_S}^v[u]\oplus\frakg_{R_S}^+[u].
\end{align} 
Next, we have
$$\bfU_{R_S}^{0,v}\bfU_{R_S}^+=
\bfU_{R_S}^{0,v}\bfU_{R_S}^+\frakh_{R_S}^v[u]\oplus\bfU_{R_S}^{0,v}\bfU_{R_S}^+\frakg_{R_S}^+[u]
\oplus R_S$$
Hence, from \eqref{Y02} we deduce that
\begin{align}\label{c}
\Ker(\gr \ev_1)\cap \bfU_{R_S}^{0,v}\bfU_{R_S}^+=
\bfU_{R_S}^{0,v}\bfU_{R_S}^+\frakh_{R_S}^v[u]\oplus\bfU_{R_S}^{0,v}\bfU_{R_S}^+\frakg_{R_S}^+[u].
\end{align}
Next, we prove that
\begin{align}\label{d}
\Ker(\gr \ev_1)\cap \bfU_{R_S}^-\bfU_{R_S}^{0,w}=\{0\}.
\end{align}
To do that, we'll prove by induction on $v\in\bbN I$ that
\begin{align}\label{kerUa}
\Ker(\gr \ev_1)\cap \bfU_{-v,R_S}^-\bfU_{R_S}^{0,w}=\{0\}
\end{align}
If $v=\delta_i$ then we have
$$\bfU_{-v,R_S}^-\bfU_{R_S}^{0,w}=\frakg^-_{-v,R_S}\cdot \bfU_R^{0,w}.$$ 
See \eqref{weightU} for the notation. Hence, the claim follows from \eqref{a}. 
Assume that
\begin{align}\label{hyp}
\Ker(\gr \ev_1)\cap \bfU_{>-v,R_S}^-\bfU_{R_S}^{0,w}=\{0\}
\end{align}
where
$$\bfU_{>-v,R_S}^-=\bigoplus_{\substack{v'\in\bbN I\\v'<v}}\bfU_{-v',R_S}^-.$$
The coproduct of $\bfU_{R_S}$ is trivial.
Hence, for each $\bfw=(\bfw_1,\bfw_2)$ with $\bfw_1,\bfw_2\in \bfN_1$ we have
$$\gr \ev_\bfw=(\gr \ev_{\bfw_1}\otimes\gr \ev_{\bfw_2})\circ\Delta.$$
We deduce that
$$\Delta(\Ker(\gr \ev_1))\subset\Ker(\gr \ev_1^{\otimes 2})=
\Ker(\gr \ev_1)\otimes_{R_S} \bfU_{R_S}+\bfU_{R_S}\otimes_{R_S}\Ker(\gr \ev_1).$$
Hence $\Ker(\gr \ev_1)$ is a coideal of $\bfU_{R_S}$.
Thus \eqref{kerUa} implies that
\begin{align}\label{kerUb}
\Delta(\Ker(\gr \ev_1))\cap (\bfU_{>-v,R_S}^-\bfU_{R_S}^{0,w})^{\otimes 2}=\{0\}.
\end{align}
The weight decomposition of $\bfU_{R_S}^{\otimes 2}$ yields the following decomposition of the coproduct
$$\Delta=\sum_{\bfv_1,\bfv_2\in\bbN I}\Delta_{\bfv_1,\bfv_2}$$
We define $\Delta_{st}$ to be the sum of the maps
$\Delta_\bfv$
over all tuples $\bfv=(v_1,v_2)$ such that $v_1,v_2<v$.
Set $\delta=\Delta-\Delta_{st}$.
Fix any element 
$$x\in \Ker(\gr \ev_1)\cap \bfU_{-v,R_S}^-\bfU_{R_S}^{0,w}.$$
We have $\Delta_{st}(x)=0$ by \eqref{kerUb}.
We deduce that
$\Delta(x)=\delta(x)$.
This implies that 
$$x\in\frakg_{v,R_S}^-[u]\cdot \bfU_{R_S}^{0,w}.$$
Hence \eqref{kerUa} follows from \eqref{a}.
We can now prove Part (a) of the lemma.
It is enough to prove the first equality.
To do so, we are reduced to prove the inclusion
$$\Ker(\gr \ev_1)\subset \bfU_{R_S}\frakh_R^v[u]\oplus\bfU_{R_S}\frakg_{R_S}^+[u]$$
because the reverse inclusion is obvious.
Let $x\in\Ker(\gr \ev_1)$.
Let 
$$x=\sum_kx^-_kx^+_k
,\quad
x^-_k\in \bfU_{R_S}^-\bfU_{R_S}^{0,w}
,\quad
x^+_k\in \bfU_{R_S}^{0,v}\bfU_{R_S}^+.$$
We may assume that 
$$x=x_1+x_2
,\quad 
x_c=\sum_kx^-_kx^+_{c,k}
,\quad
c=1,2$$
where
$$
x^+_{1,k}\in R_S
,\quad
x^+_{2,k}\in \bfU_{R_S}^{0,v}\bfU_{R_S}^+\frakh_{R_S}^v[u]\oplus\bfU_{R_S}^{0,v}\bfU_{R_S}^+\frakg_{R_S}^+[u]
.$$
Since $x,x_2\in\Ker(\gr \ev_1)$ we deduce that $x_1\in\Ker(\gr \ev_1)$.
Hence $x_1=0$ by \eqref{d}, proving Part (a).
Part (b) is proved in a similar way.
\end{proof}

From the lemma we deduce the following.

\begin{corollary} \label{cor:kerU}
\hfill
\begin{enumerate}[label=$\mathrm{(\alph*)}$,leftmargin=8mm,itemsep=1.2mm]
\item
$\bfU_{R_S}/\Ker(\gr \ev_1)\simeq\bfU_{R_S}^-\bfU_{R_S}^{0,w}$.
\item
$\bfU_{R_S}^\vee/\Ker(\gr \ev_1^\vee)\simeq\bfU_{R_S}^{-,\vee}\bfU_{R_S}^{0,w,\vee}$.
\end{enumerate}
\end{corollary}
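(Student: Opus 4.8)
The plan is to read the corollary off directly from Lemma~\ref{lem:kerU}, combining the explicit description of $\Ker(\gr\ev_1)$ and $\Ker(\gr\ev_1^\vee)$ with the triangular decomposition of the relevant enveloping algebra and the factorization of its Cartan part.

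First I would treat part (a). By \eqref{Upm0}, the splitting $\frakh_{R_S}=\frakh_{R_S}^v\oplus\frakh_{R_S}^w$, and Lemma~\ref{lem:Y01}(b), multiplication gives an isomorphism of $\bbZ I\times\bbZ$-graded $R_S$-modules
$$\bfU_{R_S}^-\otimes_{R_S}\bfU_{R_S}^{0,v}\otimes_{R_S}\bfU_{R_S}^{0,w}\otimes_{R_S}\bfU_{R_S}^+\;\simeq\;\bfU_{R_S}.$$
Let $J^+=\bfU_{R_S}^+\frakg_{R_S}^+[u]$ and $J^{0,v}=\bfU_{R_S}^{0,v}\frakh_{R_S}^v[u]$ be the augmentation ideals of $\bfU_{R_S}^+=U(\frakg_{R_S}^+[u])$ and $\bfU_{R_S}^{0,v}=U(\frakh_{R_S}^v[u])$, so that $\bfU_{R_S}^+/J^+=\bfU_{R_S}^{0,v}/J^{0,v}=R_S$ via the counits. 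Writing elements of $\bfU_{R_S}$ in the PBW form $u^-u^{0,v}u^{0,w}u^+$, one checks immediately that $\bfU_{R_S}\frakg_{R_S}^+[u]=\bfU_{R_S}^-\bfU_{R_S}^{0,v}\bfU_{R_S}^{0,w}J^+$. For $\bfU_{R_S}\frakh_{R_S}^v[u]$ one must commute a factor $h\in\frakh_{R_S}^v[u]$ leftwards past the $\bfU_{R_S}^+$\nobreakdash- and $\bfU_{R_S}^{0,w}$-parts: since $[\frakg_{R_S}^+[u],\frakh_{R_S}^v[u]]\subset\frakg_{R_S}^+[u]$ one has $u^+h\equiv hu^+\pmod{J^+}$, and since $\frakh_{R_S}^w$ is central (Lemma~\ref{lem:Y01}(e)) one has $u^{0,w}h=hu^{0,w}$; this gives
$$\bfU_{R_S}\frakh_{R_S}^v[u]+\bfU_{R_S}\frakg_{R_S}^+[u]=\bfU_{R_S}^-J^{0,v}\bfU_{R_S}^{0,w}+\bfU_{R_S}^-\bfU_{R_S}^{0,v}\bfU_{R_S}^{0,w}J^+.$$
Combining this with Lemma~\ref{lem:kerU}(a) identifies $\Ker(\gr\ev_1)$ with the image, under the displayed isomorphism, of $\bfU_{R_S}^-\otimes J^{0,v}\otimes\bfU_{R_S}^{0,w}\otimes\bfU_{R_S}^+\;+\;\bfU_{R_S}^-\otimes\bfU_{R_S}^{0,v}\otimes\bfU_{R_S}^{0,w}\otimes J^+$, whence
$$\bfU_{R_S}/\Ker(\gr\ev_1)\;\simeq\;\bfU_{R_S}^-\otimes_{R_S}(\bfU_{R_S}^{0,v}/J^{0,v})\otimes_{R_S}\bfU_{R_S}^{0,w}\otimes_{R_S}(\bfU_{R_S}^+/J^+)\;=\;\bfU_{R_S}^-\bfU_{R_S}^{0,w},$$
proving (a). Part (b) is obtained by the identical computation with $\vee$ inserted throughout, using Lemma~\ref{lem:kerU}(b), the decomposition \eqref{Upm0vee}, and the centrality of $\frakh_{R_S}^{w,\vee}$ together with $[\frakg_{R_S}^{+,\vee}[u],\frakh_{R_S}^{v,\vee}[u]]\subset\frakg_{R_S}^{+,\vee}[u]$ — all of which follow by transport of structure through the transpose anti-isomorphism of Lemma~\ref{lem:FF}; alternatively, one simply applies that transpose to the statement of (a).

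I do not expect any serious obstacle here: the argument is essentially formal manipulation with the triangular decomposition. The one point that genuinely requires care is that $\Ker(\gr\ev_1)$ is only a \emph{left} ideal and $\frakh_{R_S}^v[u]$ is not central, so the reduction of $\bfU_{R_S}\frakh_{R_S}^v[u]$ cannot be done by treating it as a two-sided ideal and really must use the commutation relations above to push the $\frakh_{R_S}^v$-factor to the far left past the positive part and the $\calW$-Cartan part.
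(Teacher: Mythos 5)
Your proof is correct and fills in exactly the routine deduction that the paper leaves implicit (the paper states only that the corollary follows ``from the lemma''): you use the PBW decomposition $\bfU_{R_S}\simeq\bfU^-_{R_S}\otimes\bfU^{0,v}_{R_S}\otimes\bfU^{0,w}_{R_S}\otimes\bfU^+_{R_S}$, identify $\Ker(\gr\ev_1)$ from Lemma~\ref{lem:kerU} with the summand killed by the augmentations of $\bfU^{0,v}_{R_S}$ and $\bfU^+_{R_S}$, and observe that the complement is $\bfU^-_{R_S}\bfU^{0,w}_{R_S}$. Your caveat about $\frakh^v_{R_S}[u]$ not being central (so that $\bfU_{R_S}\frakh^v_{R_S}[u]$ is only a left ideal and must be massaged via $[\frakg^+_{R_S}[u],\frakh^v_{R_S}[u]]\subset\frakg^+_{R_S}[u]$) is exactly the right thing to watch, and you handle it correctly.
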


We equip $\Ker(\ev_1)$ with the filtration induced by the $u$-filtration on $\bfY_{R_S}$.
Let $\gr\Ker(\ev_1)$ be the associated graded

\begin{lemma}\label{lem:evstrict}
\hfill
\begin{enumerate}[label=$\mathrm{(\alph*)}$,leftmargin=8mm,itemsep=1.2mm]
\item 
$\ev_1$ is a strict morphism of $u$-filtered $R_S$-modules $\bfY_{R_S}\to F_{1,R_S}$.
\item
$\Ker(\gr \ev_1)=\gr\Ker(\ev_1)$.
\item 
$\ev_1^\vee$ is a strict morphism of $u$-filtered $R_S$-modules $\bfY_{R_S}^\vee\to F_{1,R_S}^\vee$.
\item
$\Ker(\gr \ev_1^\vee)=\gr\Ker(\ev_1^\vee)$.
\end{enumerate}
\end{lemma}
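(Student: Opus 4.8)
The plan is to reduce the strictness statements $\mathrm{(a)}$, $\mathrm{(c)}$ to the kernel statements $\mathrm{(b)}$, $\mathrm{(d)}$, and to prove the latter by lifting generators from $\bfU_{R_S}$ to $\bfY_{R_S}$. First I would record a purely homological reduction: the projection $\bfY_{R_S}\to\bfY_{R_S}/\Ker(\ev_1)$ is strict for the quotient $u$-filtration, and $\ev_1$ factors through an injection $\bfY_{R_S}/\Ker(\ev_1)\hookrightarrow F_{1,R_S}$ of filtered $R_S$-modules; so $\ev_1$ is strict iff this injection is, which for an injection of filtered modules is equivalent to injectivity of the associated graded map. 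Since $\gr(\bfY_{R_S}/\Ker(\ev_1))=\bfU_{R_S}/\gr\Ker(\ev_1)$ and the induced map has kernel $\Ker(\gr\ev_1)/\gr\Ker(\ev_1)$, this injectivity is exactly the equality $\gr\Ker(\ev_1)=\Ker(\gr\ev_1)$. Hence $\mathrm{(a)}\Leftrightarrow\mathrm{(b)}$ and $\mathrm{(c)}\Leftrightarrow\mathrm{(d)}$, and it is enough to prove $\mathrm{(b)}$ and $\mathrm{(d)}$.

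For $\mathrm{(b)}$, one inclusion is automatic: if $z\in\Ker(\ev_1)$ has $u$-degree $\leqsl m$ then $\gr\ev_1(\sigma_m z)=\sigma_m(\ev_1 z)=0$, so $\gr\Ker(\ev_1)\subseteq\Ker(\gr\ev_1)$. For the reverse inclusion I would use that $\Ker(\ev_1)$ is a left ideal of $\bfY_{R_S}$ (from $\ev_1(y)=0\Rightarrow\ev_1(zy)=\rho_1(z)(\ev_1 y)=0$), so $\gr\Ker(\ev_1)$ is a left ideal of $\bfU_{R_S}=\gr(\bfY_{R_S})$; and by Lemma~\ref{lem:kerU}(a), $\Ker(\gr\ev_1)=\bfU_{R_S}\frakh_{R_S}^v[u]\oplus\bfU_{R_S}\frakg_{R_S}^+[u]$ is generated as a left $\bfU_{R_S}$-module by the elements $a_iu^d$ $(i\in I,\ d\in\bbN)$ of Lemma~\ref{lem:Y02} together with the elements $eu^d$, $e$ running over an $R_S$-basis of $\frakg_{R_S}^+$ and $d\in\bbN$. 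It therefore suffices to exhibit, for each such generator, a preimage in $\Ker(\ev_1)$ whose $u$-leading term is that generator; then the left ideal $\gr\Ker(\ev_1)$, containing all these leading terms, contains $\Ker(\gr\ev_1)$. For $a_iu^d$ the preimage is $\ch_d(\calV_i)\in\bfY_{R_S}^0\subseteq\bfY_{R_S}$: it lies in $\bfY_{R_S}[\leqsl d]$ with $\sigma_d(\ch_d(\calV_i))=a_iu^d$ by Lemma~\ref{lem:Y02}(a), and $\ev_\bfw(\ch_d(\calV_i))=\ch_d(\calV_i)\cap\phi_\bfw=0$ for every $\bfw\in\bfN_1$ because $\calV_i$ vanishes on the component of $\frakM(\bfw)$ supporting $\phi_\bfw$. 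For $eu^d$, write $e=E_{1,R_S}(\mu)$ with $\mu\in M_{R_S}$ of weight $v'\in\bbN I$, $v'\neq 0$ (the weight of $e$), and take the preimage $E_{0,R_S}(\mu u^d)\in\bfY_{R_S}[\leqsl d]$, whose $u$-leading term is $\sigma_d(E_{0,R_S}(\mu u^d))=eu^d$ by the construction of the isomorphism $\gr(\bfY_{R_S})=\bfU_{R_S}$ in Lemma~\ref{lem:ufilt}.

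The key point, and the main obstacle, is that $E_{0,R_S}(\mu u^d)$ already lies in $\Ker(\ev_1)$. Indeed it has pure weight $v'\in\bbN I$, $v'\neq0$, while $\phi_\bfw$ lies in the weight-zero part of $F_{\bfw,R_{\bfw,S}}$, all of whose weights lie in $-\bbN I$; hence every element of $\bfY_{R_S}$ of nonzero weight in $\bbN I$ annihilates $\phi_\bfw$, so $E_{0,R_S}(\mu u^d)(\phi_\bfw)=0$ for all $\bfw$. This is the one step needing care: one should \emph{not} try to generate $\bfU_{R_S}\frakg_{R_S}^+[u]$ as a left ideal by its $u$-degree-$0$ part and lift only that (this already fails for simple root vectors), but instead use that the naive lift $E_{0,R_S}(\mu u^d)$ of $eu^d$ is automatically killed by every vacuum vector for reasons of weight. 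Granting the generator lifts, a homogeneous $\xi\in\Ker(\gr\ev_1)$ of $u$-degree $m$ is written $\xi=\sum_\beta\zeta_\beta g_\beta$ with each $g_\beta$ one of these generators (of $u$-degree $d_\beta\leqsl m$) and $\zeta_\beta\in\bfU_{R_S}$ homogeneous of $u$-degree $m-d_\beta$; lifting each $\zeta_\beta$ to $\tilde\zeta_\beta\in\bfY_{R_S}[\leqsl m-d_\beta]$ using $\gr(\bfY_{R_S})=\bfU_{R_S}$ and each $g_\beta$ to $\tilde g_\beta\in\Ker(\ev_1)\cap\bfY_{R_S}[\leqsl d_\beta]$ as above, the element $\sum_\beta\tilde\zeta_\beta\tilde g_\beta$ lies in $\Ker(\ev_1)\cap\bfY_{R_S}[\leqsl m]$ with $\sigma_m(\sum_\beta\tilde\zeta_\beta\tilde g_\beta)=\sum_\beta\zeta_\beta g_\beta=\xi$, so $\xi\in\gr\Ker(\ev_1)$. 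This gives $\mathrm{(b)}$, hence $\mathrm{(a)}$.

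For $\mathrm{(d)}$ and $\mathrm{(c)}$ the same argument applies verbatim: use Lemma~\ref{lem:kerU}(b) for the description of $\Ker(\gr\ev_1^\vee)$, Lemma~\ref{lem:Y03} for the lifts $\iota^*\ch_d(\calV_i)$ of the generators $a_i^\vee u^d$ (which annihilate $\phi_\bfw^\vee$ since $\calV_i$ vanishes on the component supporting $\phi_\bfw^\vee$), the lifts $E_{0,R_S}^\vee(\mu^\vee u^d)$ of the positive-root generators, and the same weight observation for $\bfY_{R_S}^\vee$ acting on the dimension-zero classes $\phi_\bfw^\vee$ to see these lifts lie in $\Ker(\ev_1^\vee)$. (Alternatively, $\mathrm{(c)}$, $\mathrm{(d)}$ can be transported from $\mathrm{(a)}$, $\mathrm{(b)}$ through the transpose of Lemma~\ref{lem:FF}.) Everything beyond the weight argument for the $\frakg^+$-lifts is routine bookkeeping with associated gradeds and left ideals.
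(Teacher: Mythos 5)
Your proof is correct and, after the standard reduction of strictness to the equality $\Ker(\gr\ev_1)=\gr\Ker(\ev_1)$, runs along the same lines as the paper's: both rest on Lemma~\ref{lem:kerU}, the left-ideal property of $\gr\Ker(\ev_1)$, and lifts of $\frakh_{R_S}^v[u]$ and $\frakg_{R_S}^+[u]$ into $\Ker(\ev_1)$. You simply spell out the lifts $\ch_d(\calV_i)$ and $E_{0,R_S}(\mu u^d)$ and the weight argument more explicitly than the paper, which just cites ``Lemma~\ref{lem:Y02}'' and ``weight reason'' for these steps.
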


\begin{proof}
Fix any element $x\in\bfY_{R_S}[\leqsl l]$ such that $\ev_1(x)\in F_{1,R_S}[<\!l]$.
We must prove there is an element $y\in \bfY_{R_S}[<\!l]$ such that $\ev_1(x)=\ev_1(y)$.
Since $x\in\bfY_{R_S}[\leqsl l]$, by Lemma \ref{lem:ufilt} we have 
$$\rho_\bfw(x)\in A_{\bfw,R_{\bfw,S}}[\leqsl l]
,\quad
\bfw\in \bfN_1.$$ 
Hence $\sigma_l(x)$ and $\sigma_l\rho_\bfw(x)$ are well-defined 
elements in $\bfU_{R_S}[l]$ and $A_{\bfw,R_{\bfw,S}}[l]$ respectively.
By hypothesis,  we have $\ev_\bfw(x)\in F_{\bfw,R_{\bfw,S}}[<\!l]$.
Thus 
$$\gr\ev_\bfw(\sigma_l(x))=\gr\rho_\bfw(\sigma_l(x))(\phi_\bfw)=0.$$
Taking the product over all $\bfw\in\bfN_1$, we deduce that 
$$\sigma_l(x)\in\Ker(\gr\ev_1)\cap \bfU_{R_S}[l].$$
By Lemma \ref{lem:kerU} we deduce that
\begin{align}\label{form4}
\sigma_l(x)\in (\bfU_{R_S}\frakh_{R_S}^v[u]\oplus\bfU_{R_S}\frakg_{R_S}^+[u])\cap \bfU_{R_S}[l].
\end{align}
On the other hand, we have 
\begin{align}\label{form5}
\bfU_{R_S}\frakh_{R_S}^v[u]\oplus\bfU_{R_S}\frakg_{R_S}^+[u]\subset \gr\Ker(\ev_1).
\end{align}
Indeed, since $\gr\Ker(\ev_1)$ is a left ideal of $\bfU_{R_S}$ it is enough to check that
$$\frakh_{R_S}^v[u]\cup\frakg_{R_S}^+[u]\subset \gr\Ker(\ev_1).$$
The inclusion $$\frakh_{R_S}^v[u]\subset \gr\Ker(\ev_1)$$ follows from Lemma \ref{lem:Y02}.
The inclusion $$\frakg_{R_S}^+[u]\subset \gr\Ker(\ev_1)$$ is obvious for weight reason.
From \eqref{form4} and \eqref{form5} 
we deduce that there is an element $$z\in\Ker(\ev_1)\cap\bfY_{R_S}[\leqsl l]$$ such that
$\sigma_l(x)=\sigma_l(z)$.
Hence, setting $y=x-z$ we get 
$$y\in\bfY_{R_S}[<\!l]
,\quad
\ev_1(x)=\ev_1(y).$$ 
This proves Part (a).
Part (b) follows from (a).
Parts (c), (d) are proved in a similar way.
\end{proof}

We equip the $R_S$-module $\bfY_{R_S}/\Ker(\ev_a)$
with the quotient of the $u$-filtration of $\bfY_{R_S}$.
From Lemmas \ref{lem:KK}, \ref{lem:kerU}, \ref{lem:evstrict} we deduce the following.

\begin{lemma}\label{lem:YU}
Let $a=0,1$ or $2$. We have
\hfill
\begin{enumerate}[label=$\mathrm{(\alph*)}$,leftmargin=8mm,itemsep=1.2mm]
\item 
$\gr(\bfY_{R_S}/\Ker(\ev_a))=\bfU_{R_S}/\Ker(\gr \ev_1)\simeq\bfU_{R_S}^-\bfU_{R_S}^{0,w}$,
\item
$\gr(\bfY_{R_S}^\vee/\Ker(\ev_a^\vee))=\bfU_{R_S}^\vee/\Ker(\gr \ev_1^\vee)\simeq
\bfU_{R_S}^{-,\vee}\bfU_{R_S}^{0,w,\vee}.$
\end{enumerate}
\qed
\end{lemma}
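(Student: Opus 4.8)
The plan is to reduce everything to the case $a=1$ by means of Lemma~\ref{lem:KK}, and then to glue together the structural results already established for the associated graded.

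First I would invoke Lemma~\ref{lem:KK}: since $\Ker(\ev_a)=\Ker(\ev_1)$ and $\Ker(\ev_a^\vee)=\Ker(\ev_1^\vee)$ as $R_S$-submodules for every $a=0,1,2$, and the $u$-filtration of $\bfY_{R_S}$ (resp. of $\bfY_{R_S}^\vee$) does not depend on $a$, the quotient $u$-filtered modules $\bfY_{R_S}/\Ker(\ev_a)$ and $\bfY_{R_S}^\vee/\Ker(\ev_a^\vee)$ are literally independent of $a$. Hence it is enough to establish the two displayed isomorphisms for $a=1$.

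Next I would use the elementary fact that a strict morphism $f:M_{R_S}\to N_{R_S}$ of $u$-filtered $R_S$-modules yields a strict short exact sequence $0\to\Ker f\to M_{R_S}\to M_{R_S}/\Ker f\to 0$, and therefore, on associated graded, an exact sequence $0\to\gr\Ker f\to\gr M_{R_S}\to\gr(M_{R_S}/\Ker f)\to 0$; in particular $\gr(M_{R_S}/\Ker f)=\gr M_{R_S}/\gr\Ker f$. Applying this to $f=\ev_1$, which is strict by Lemma~\ref{lem:evstrict}(a), together with $\gr\bfY_{R_S}=\bfU_{R_S}$ from Lemma~\ref{lem:ufilt}(a) and $\gr\Ker(\ev_1)=\Ker(\gr\ev_1)$ from Lemma~\ref{lem:evstrict}(b), gives
$$\gr(\bfY_{R_S}/\Ker(\ev_1))=\bfU_{R_S}/\Ker(\gr\ev_1).$$
The same reasoning applied to $\ev_1^\vee$, using Lemma~\ref{lem:evstrict}(c),(d), gives $\gr(\bfY_{R_S}^\vee/\Ker(\ev_1^\vee))=\bfU_{R_S}^\vee/\Ker(\gr\ev_1^\vee)$.

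Finally I would quote Corollary~\ref{cor:kerU}, which — on the basis of the explicit description $\Ker(\gr\ev_1)=\bfU_{R_S}\frakh_{R_S}^v[u]\oplus\bfU_{R_S}\frakg_{R_S}^+[u]$ from Lemma~\ref{lem:kerU} and the triangular decomposition \eqref{Upm0} — identifies $\bfU_{R_S}/\Ker(\gr\ev_1)\simeq\bfU_{R_S}^-\bfU_{R_S}^{0,w}$ and, in the nilpotent version, $\bfU_{R_S}^\vee/\Ker(\gr\ev_1^\vee)\simeq\bfU_{R_S}^{-,\vee}\bfU_{R_S}^{0,w,\vee}$. Concatenating the three steps proves (a) and (b). The only point that requires care is the middle step: it is exactly the \emph{strictness} of $\ev_1$ (which is why Lemma~\ref{lem:evstrict} was isolated beforehand) that makes the associated-graded sequence exact and the quotient filtration well-behaved; once that input is granted, the present lemma is a purely formal consequence and there is no genuine obstacle.
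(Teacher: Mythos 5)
Your proposal is correct and follows essentially the route the paper leaves implicit: the paper simply cites Lemmas~\ref{lem:KK}, \ref{lem:kerU}, \ref{lem:evstrict} without spelling out the routine filtration algebra, and you fill in exactly that, using Lemma~\ref{lem:ufilt}(a) for $\gr\bfY_{R_S}=\bfU_{R_S}$ and Corollary~\ref{cor:kerU} (which is the reformulation of Lemma~\ref{lem:kerU}) for the final identification. One small quibble: the identity $\gr(M/N)=\gr M/\gr N$ holds for \emph{any} submodule $N\subset M$ once $N$ carries the induced filtration and $M/N$ the quotient filtration, independently of any map; the strictness of $\ev_1$ is only needed for the separate identification $\gr\Ker(\ev_1)=\Ker(\gr\ev_1)$ (Lemma~\ref{lem:evstrict}(b)), which you in fact also invoke, so the logic goes through.
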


For each $\bfw\in\bfN_1$ with $w=|\bfw|$, let 
$I_w\subset\bfU_{R_S}^{0,w}$ be the ideal generated the subset
$J_w\cap U(\frakh_{R_S}^w).$
The isomorphism  \eqref{UR}
identifies $I_w$ with the ideal 
\begin{align}\label{I}(p_{i,0}-w_i\,;\,i\in I)\subset R_{\infty,S}.
\end{align}
Let $\bfU_{-v,\leqsl l,R_S}$ and $\bfY_{-v,\leqsl l,R_S}$ denote the
filtrations of the weight spaces 
$\bfU_{-v,R_S}$ and $\bfY_{-v,R_S}$ relatively to the cohomological grading.

\begin{lemma}\label{lem:39} Let $v\in\bbN I$, $l\in\bbN$ and $\bfw\in \bfN_1$. Set $w=|\bfw|$.
Assume $w$ is large enough.
\hfill
\begin{enumerate}[label=$\mathrm{(\alph*)}$,leftmargin=8mm,itemsep=1.2mm]
\item 
$\Ker(\gr \ev_\bfw)\cap\bfU_{-v,\leqsl l,R_S}=(\Ker(\gr \ev_1)+\bfU_{R_S} I_w)\cap\bfU_{-v,\leqsl l,R_S},$
\item
$\Ker(\ev_w)\cap\bfY_{-v,\leqsl l,R_S}=(\Ker(\ev_1)+\bfY_{R_S} I_w)\cap \bfY_{-v,\leqsl l,R_S}.$
\item
$\Ker(\ev_w^\vee)\cap\bfY_{-v,\leqsl l,R_S}^\vee=(\Ker(\ev_1^\vee)+\bfY_{R_S}^\vee I_w)\cap 
\bfY_{-v,\leqsl l,R_S}^\vee.$
\end{enumerate}
\end{lemma}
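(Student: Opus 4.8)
The plan is to prove part~(a) first and then obtain (b) and (c) from it by passing to associated gradeds. In all three statements the inclusion $\supseteq$ is immediate and holds for every $w$: one has $\Ker(\gr\ev_1)\subseteq\Ker(\gr\ev_\bfw)$ and, by \eqref{Ker3}, $\Ker(\ev_1)\subseteq\Ker(\ev_w)$, because $\gr\ev_\bfw$ (resp. $\ev_w$) is one of the factors of $\gr\ev_1$ (resp. $\ev_1$); and $\bfU_{R_S}I_w\subseteq\Ker(\gr\ev_\bfw)$, $\bfY_{R_S}I_w\subseteq\Ker(\ev_w)$, $\bfY_{R_S}^\vee I_w\subseteq\Ker(\ev_w^\vee)$, because under \eqref{UR} the central ideal $I_w$ is generated by the $p_{i,0}-w_i$, and $p_{i,0}$ acts on the vacuum $\phi_\bfw$ (resp. $\phi_w$, $\phi_w^\vee$) by the rank of $\calW_i$ over $\frakM(0,\bfw)$ (resp. $\frakM(0,w)$, $\frakL(0,w)$), which equals $w_i$. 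So only the reverse inclusions, and only for $w$ large, require an argument.

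For (a) I would rerun the argument of the proof of Lemma~\ref{lem:kerU} with a single large framing $\bfw\in\bfN_1$ in place of the family of all framings. The new input making one framing enough in a bounded cohomological range is that, by Lemma~\ref{lem:Y02}(b), the restriction of $\gr\ev_\bfw$ to $\bfU_{R_S}^{0,w}\simeq R_{\infty,S}$ is the homomorphism $p_{i,l}\mapsto\ch_l(\calW_i)$ of \eqref{Rinftyw}, whose image is generated by the power sums $\sum_{i_r=i}u_r^l$; since $p_{i,1},\dots,p_{i,l}$ are algebraically independent in $w_i$ variables as soon as $w_i\geqsl l$, the ideals $J_\bfw=\Ker(\gr\ev_\bfw)\cap\bfU_{R_S}^{0,w}$ and $I_w$ have the same component in each cohomological degree $\leqsl l$ once $w\gg0$. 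With this, the steps of Lemma~\ref{lem:kerU} transpose verbatim to $\gr\ev_\bfw$: the analogue of \eqref{a} is $\Ker(\gr\ev_\bfw)\cap(\frakg_{-v,R_S}^-[u]\cdot\bfU_{R_S}^{0,w})=\frakg_{-v,R_S}^-[u]\cdot J_\bfw$ (its only inputs being that $y\cdot x(\phi_\bfw)=0$ for $y\in\frakg_{v,R_S}^+$ when $x(\phi_\bfw)=0$, and $w(h_v)\neq0$, which holds for $w\gg0$); Lemma~\ref{lem:Y02} gives $\Ker(\gr\ev_\bfw)\cap\frakg_{R_S}^-[u]=\{0\}$, $\Ker(\gr\ev_\bfw)\cap\frakg_{R_S}^+[u]=\frakg_{R_S}^+[u]$, and $\Ker(\gr\ev_\bfw)\cap\frakh_{R_S}[u]=\frakh_{R_S}^v[u]\oplus\bigl(\Ker(\gr\ev_\bfw)\cap\frakh_{R_S}^w[u]\bigr)$ with the last summand contained in $\bfU_{R_S}I_w$; and the induction on the weight $v$, using that $\Ker(\gr\ev_\bfw)$ relates to $\Ker(\gr\ev_{\bfw_1})$ and $\Ker(\gr\ev_{\bfw_2})$ through a splitting $\bfw=\bfw_1\bfw_2$ via $\gr\ev_\bfw=(\gr\ev_{\bfw_1}\otimes\gr\ev_{\bfw_2})\circ\Delta$ and the triangular decomposition \eqref{Upm0}, yields $\Ker(\gr\ev_\bfw)\cap\bfU_{-v,\leqsl l,R_S}\subseteq\Ker(\gr\ev_1)+\bfU_{R_S}I_w$. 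The nilpotent statement (c) is proved in the same way with $\gr\ev_\bfw^\vee$, the triangular decomposition \eqref{Upm0vee} and Lemma~\ref{lem:Y03}; alternatively one transports it through the identification $\bfY_{K_S}^\vee=\bfY_{K_S}$ and the isomorphism $\iota$ once the $R_S$-statement has been reduced to its $K_S$-version.

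The hard part will be making the weight induction close up with a single framing. In Lemma~\ref{lem:kerU} the coideal property of $\Ker(\gr\ev_1)$ used the intersection over \emph{all} framings; for a fixed $\bfw$ one only gets $\Delta(\Ker(\gr\ev_\bfw))\subseteq\Ker(\gr\ev_{\bfw_1})\otimes\bfU_{R_S}+\bfU_{R_S}\otimes\Ker(\gr\ev_{\bfw_2})$ for a chosen splitting $\bfw=\bfw_1\bfw_2$, so one must split $\bfw$ into roughly equal halves at each step of the induction and keep track that the two pieces remain large enough for the strictly smaller weights that then occur. Since the induction strictly decreases the weight, at most $\sum_iv_i$ halvings separate a weight $v$ from the base cases $v=\delta_i$, so it is enough that $w_i$ exceed $2^{\sum_iv_i}l$ for all $i$; this is the meaning of ``$w$ large enough'' in the statement. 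Apart from this bookkeeping, and the routine verifications that $w(h_v)\neq0$ and that the relevant power sums are algebraically independent (both for $w\gg0$), the argument is formal.

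Finally (b) and (c) follow from (a) by a standard passage to associated gradeds. First one checks, exactly as in Lemma~\ref{lem:evstrict} but invoking part~(a) in place of Lemma~\ref{lem:kerU}, that for $\bfw\in\bfN_1$ the evaluation maps $\ev_\bfw$ and $\ev_\bfw^\vee$ are strict for the $u$-filtration, so that, using $\Ker(\ev_w)=\Ker(\ev_\bfw)$, one has $\gr\Ker(\ev_w)=\Ker(\gr\ev_\bfw)$ and $\gr\Ker(\ev_w^\vee)=\Ker(\gr\ev_\bfw^\vee)$; since the generators $p_{i,0}-w_i$ of $I_w$ sit in $u$-degree $0$ one also has $\gr(\bfY_{R_S}I_w)=\bfU_{R_S}I_w$ and $\gr(\bfY_{R_S}^\vee I_w)=\bfU_{R_S}^\vee I_w$, and the $u$-filtration is compatible with the weight and cohomological gradings since $u$ has weight $0$ and cohomological degree $1$. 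Hence in bidegree $(-v,\leqsl l)$ the associated graded of the left-hand side of (b) is the left-hand side of (a) and the associated graded of the right-hand side of (b) is the right-hand side of (a). As the right-hand side of (b) is contained in the left-hand side and both are submodules of the finitely generated free $R_S$-module $\bfY_{-v,\leqsl l,R_S}$ with equal associated gradeds, they coincide; and likewise for (c).
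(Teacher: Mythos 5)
Your overall plan—prove (a), then derive (b) and (c) by passing to associated gradeds—matches the paper, and the treatment of the easy inclusions is fine. But the two halves of your proposal stand on quite different footings.

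For part (a), you propose to rerun the induction of Lemma~\ref{lem:kerU} with a single large framing $\bfw$ in place of the product over all framings, and you yourself flag that ``the hard part will be making the weight induction close up with a single framing.'' That is precisely where the proposal breaks down. The coideal argument in Lemma~\ref{lem:kerU} uses $\Delta(\Ker\gr\ev_1)\subset\Ker(\gr\ev_1)\otimes\bfU_{R_S}+\bfU_{R_S}\otimes\Ker(\gr\ev_1)$, where the \emph{same} kernel appears on both sides; this is what lets the induction on weight go through with the fixed conclusion ``$=\{0\}$''. For a single $\bfw$ split as $\bfw_1\bfw_2$ one only gets $\Delta(\Ker\gr\ev_\bfw)\subset\Ker(\gr\ev_{\bfw_1})\otimes\bfU_{R_S}+\bfU_{R_S}\otimes\Ker(\gr\ev_{\bfw_2})$, and the inductive hypothesis at lower weight then involves the ideals $I_{w_1}$ and $I_{w_2}$, not $I_w$. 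Turning statements about $\bfU_{R_S}I_{w_1}$ and $\bfU_{R_S}I_{w_2}$ into one about $\bfU_{R_S}I_w$ is not automatic (it rests on the primitivity of $p_{i,0}$ and on the triangular decomposition), and you do not supply it; the bound ``$w_i>2^{\sum_iv_i}l$'' is asserted, not derived. The paper's route is quite different and avoids this entirely: it invokes the \emph{output} of Lemma~\ref{lem:kerU} to write $\bfU_{R_S}=\Ker(\gr\ev_1)\oplus\bfU_{R_S}^-\bfU_{R_S}^{0,w}$, reducing (a) to showing $\Ker(\gr\ev_\bfw)\cap B\subset\bfU_{R_S}I_w$ on the finite-rank slice $B=\bfU_{R_S}^-\bfU_{R_S}^{0,w}\cap\bfU_{-v,\leqsl l,R_S}$. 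Because $\bigcap_{\bfw}\Ker(\gr\ev_\bfw)\cap B=\{0\}$ (from part (d) of the proof of Lemma~\ref{lem:kerU}) and $B$ has finite rank, finitely many framings $\bfw_1,\dots,\bfw_s$ already force the intersection to vanish; one concatenates them into a single tuple $\bfw=\bfw_1\cdots\bfw_s$, factors an element $x$ of $\Ker(\gr\ev_\bfw)\cap B$ as $\sum_k x_{1,k}x_{2,k}$ with $x_{1,k}\in U(\frakg_{R_S}^-[u]\oplus\frakh_{R_S}^w[u]u)$ and $x_{2,k}\in U(\frakh_{R_S}^w)$, evaluates on $\phi_{\bfw_1}\otimes\cdots\otimes\phi_{\bfw_s}$ using the $(\bbZ I)^s$-grading, and deduces from the vanishing of the finite intersection that each $x_{2,k}=f_k(b_i)$ satisfies $f_k(w_i)=0$, hence lies in $I_w$. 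No re-induction, no splitting of $\bfw$ into halves, and the ``$w$ large'' is cashed out as: large enough to dominate the finitely many $\bfw_r$ and to keep $w(h_v)\neq0$.

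For parts (b) and (c), your route (prove that $\ev_\bfw$ and $\ev_\bfw^\vee$ are strict for the $u$-filtration by rerunning the proof of Lemma~\ref{lem:evstrict} with (a) in place of Lemma~\ref{lem:kerU}, then compare associated gradeds) is a genuine but mild variant and it does go through: with (a) in hand, the argument of Lemma~\ref{lem:evstrict} indeed shows strictness of the single-framing evaluation, whence $\gr\Ker(\ev_w)=\Ker(\gr\ev_\bfw)$; combined with the easy inclusions $\gr$(RHS)$\supset$RHS of (a)$=$LHS of (a)$=\gr$(LHS)$\supset\gr$(RHS), everything closes. The paper instead stays with one-sided inclusions only (it never needs strictness of $\ev_\bfw$, only of $\ev_1$), which is slightly more economical. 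Both are fine; the real gap in the proposal is the argument for (a).
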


\begin{proof}
Let us first prove Part (a).
There is an obvious inclusion
\begin{align}\label{inclusion}\Ker(\gr \ev_\bfw)\supset\Ker(\gr \ev_1)+\bfU_{R_S} I_w\end{align}
We must prove that
$$\Ker(\gr \ev_\bfw)\cap\bfU_{-v,\leqsl l,R_S}\subset\Ker(\gr \ev_1)+\bfU_{R_S} I_w.$$
Fix an element $x$ in the left hand side.
By Lemma \ref{lem:kerU} we have
$$\bfU_{R_S}=\Ker(\gr \ev_1)\oplus\bfU_{R_S}^-\bfU_{R_S}^{0,w}.$$
Write $x=y+z$ with $y\in \Ker(\gr \ev_1)$ and $z\in\bfU_{R_S}^-\bfU_{R_S}^{0,w}.$
By \eqref{inclusion}, we have $z\in\Ker(\gr\ev_\bfw)$.
Set $B=\bfU_{R_S}^-\bfU_{R_S}^{0,w}\cap\bfU_{-v,\leqsl l,R_S}$.
We must check that if $w$ is large enough then
\begin{align}\label{39c}
\Ker(\gr \ev_\bfw)\cap B\subset\bfU_{R_S} I_w.
\end{align}
By \eqref{d} we have 
$$\bigcap_{\bfw\in\bfN_1}\Ker(\gr \ev_\bfw)\cap B=\{0\}.$$
Since $B$ is finite dimensional,
there are tuples $\bfw_1,\bfw_2,\dots,\bfw_s\in\bfN_1$ such that
\begin{align}\label{39d}\bigcap_{r=1}^s\Ker(\gr \ev_{\bfw_r})\cap B=\{0\}.
\end{align}
Set 
\begin{align}\label{39d}\bfw=\bfw_1\bfw_2\cdots\bfw_s\in\bfN_1.
\end{align}
Fix any $x\in\Ker(\gr \ev_\bfw)\cap B.$
Write 
$$x=\sum_kx_{1,k}x_{2,k}
,\quad
x_{1,k}\in U(\frakg_{R_S}^-[u]\oplus \frakh_{R_S}^w[u]u)
,\quad
x_{2,k}\in U(\frakh_{R_S}^w).$$
Since $\gr \ev_\bfw(x)=0$ and 
$$\phi_{\bfw_1}\otimes\cdots\otimes \phi_{\bfw_s}\in F_{\bfw,R_{\bfw,S}}
\subset\bigotimes_{r=1}^sF_{\bfw_r,R_{\bfw_r,S}},$$ 
we deduce that
$$\sum_kx_{2,k}x_{1,k}
(\phi_{\bfw_1}\otimes\cdots\otimes \phi_{\bfw_s})=0.$$
The tensor product $\bigotimes_{r=1}^sF_{\bfw_r,R_{\bfw_r,S}}$ is $(\bbZ I)^s$-graded.
For weight reasons, we have
$$\sum_kx_{2,k}(
\phi_{\bfw_1}\otimes\cdots\otimes x_{1,k}(\phi_{\bfw_r})\otimes\cdots\otimes\phi_{\bfw_s})=0,\quad\forall r=1,\dots,s.$$
By lemma \ref{lem:Y02} for each $k$ there is a polynomial $f_k(b_i)$ in the variables $b_i$ with $i\in I$,
such that $$x_{2,k}(\phi_\bfw)=f_k(w_i)\phi_\bfw.$$
Hence, we have
$$\sum_kf_k(w_i)x_{1,k}\in\bigcap_{r=1}^s\Ker(\gr \ev_{\bfw_r})\cap B.$$
Hence \eqref{39d} implies that 
$$\sum_kf_k(w_i)x_{1,k}=0.$$
We may assume that the elements $x_{1,k}$ are linearly independent.
We deduce that $f_k(w_i)=0$, for each $k$.
Hence the inclusion \eqref{39c} holds for the tuple $\bfw$ in \eqref{39d}.
Hence it holds again for any tuple $\bfw$ such that $w$ is large enough,
proving the part (a).
Now, we prove (b).
We have 
\begin{align}\label{39a}
\Ker(\ev_\bfw)\supset\Ker(\ev_1)+\bfY_{R_S} I_w.
\end{align}
By \eqref{Ker3}  we have
$\Ker(\ev_w)=\Ker(\ev_\bfw).$
Hence
$$\gr\Ker(\ev_w)\cap \bfU_{-v,\leqsl l,R_S}\subset\Ker(\gr \ev_\bfw)\cap \bfU_{-v,\leqsl l,R_S}.$$
By Part (a) we deduce that
$$\gr\Ker(\ev_w)\cap \bfU_{-v,\leqsl l,R_S}\subset(\Ker(\gr \ev_1)
+\bfU_{R_S} I_w)\cap \bfU_{-v,\leqsl l,R_S}.$$
Thus Lemma \ref{lem:evstrict} yields
\begin{align*}
\gr\Ker(\ev_w)\cap \bfU_{-v,\leqsl l,R_S}\subset(\gr\Ker(\ev_1)+
\bfU_{R_S} I_w)\cap \bfU_{-v,\leqsl l,R_S}.
\end{align*}
For each $R_S$-submodule $M\subset\bfY_{R_S}$, we have
$$\gr(M\cap\bfY_{-v,\leqsl l,R_S})=\gr(M)\cap \bfU_{-v,\leqsl l,R_S},\quad
v\in\bbN I,
l\in\bbN,$$
where the associated graded are relative to the
filtrations induced by the $u$-filtration on $\bfY_{R_S}$.
Hence
\begin{align*}
\gr(\Ker(\ev_w)\cap \bfY_{-v,\leqsl l,R_S})
&=\gr\Ker(\ev_w)\cap \bfU_{-v,\leqsl l,R_S}\\
&\subset(\gr\Ker(\ev_1)+\bfU_{R_S} I_w)\cap \bfU_{-v,\leqsl l,R_S}\\
&=(\gr\Ker(\ev_1)+\gr(\bfY_{R_S} I_w))\cap \bfU_{-v,\leqsl l,R_S}\\
&\subset\gr(\Ker(\ev_1)+\bfY_{R_S} I_w)\cap \bfU_{-v,\leqsl l,R_S}\\
&=\gr((\Ker(\ev_1)+\bfY_{R_S} I_w)\cap\bfY_{-v,\leqsl l,R_S})
\end{align*}
Comparing with \eqref{39a}, we deduce that
\begin{align}\label{39b}
\Ker(\ev_w)\cap \bfY_{-v,\leqsl l,R_S}=(\Ker(\ev_1)+\bfY_{R_S} I_w)\cap\bfY_{-v,\leqsl l,R_S}\end{align}
This proves Part (b). The part (c) is proved as the part (b) above, using 
Lemmas \ref{lem:Y03}, \ref{lem:kerU}(b) and \ref{lem:evstrict}(d)
instead of 
Lemmas \ref{lem:Y02}, \ref{lem:kerU}(a) and \ref{lem:evstrict}(b).
\end{proof}

\bigskip

\section{From COHA's to Yangians}

\medskip

\subsection{The embedding of the COHA into the Yangian}
To simplify the notation, in this section we set $S=T$.
Recall that 
$\bfY_K=\bfY_R\otimes_RK$ and
$\bfY_K^\vee=\bfY_R^\vee\otimes_RK.$
By base change, 
the evaluation map $\ev_\bfw$  yields the $K$-linear maps
$$\ev_\bfw:\bfY_K\to F_{\bfw,R_{\bfw}}\otimes_RK
,\quad
\ev_\bfw^\vee:\bfY_K^\vee\to F_{\bfw,R_{\bfw}}^\vee\otimes_RK
.$$
We define
$$
F_{a,K}=\prod_{\bfw\in\bfN_a}F_{\bfw,R_{\bfw}}\otimes_RK
,\quad
F_{a,K}^\vee=\prod_{\bfw\in\bfN_a}F_{\bfw,R_{\bfw}}^\vee\otimes_RK
,\quad
a=0,1,2.$$
Taking the product of the evaluation maps over all tuples $\bfw\in \bfN_a$ we get the maps
$$\ev_a:\bfY_K\to F_{a,K}
,\quad
\ev_a^\vee:\bfY_K^\vee\to F_{a,K}^\vee
.$$
Lemma \ref{lem:rhovee} 
yields a $K$-algebra isomorphism
$\bfY_{K}^\vee=\bfY_{K}.$
Similarly, Proposition \ref{prop:COHA} yields
$$\overline\Y_K^\vee=\overline\Y_R^\vee\otimes_R K
,\quad 
\overline\Y_K=\overline\Y_R\otimes_R K
,\quad
\overline\Y^\vee_K=\overline\Y_K.$$

\begin{proposition}\label{prop:Phi}
There is a $\bbZ I$-graded $K$-algebra embedding 
\begin{align}\label{Phi}
\Phi:\overline\Y_K\to \bfY_K
\end{align}
 such that $R_{\infty,K}$ maps isomorphically to $\bfY_K^{0,w}$.
The morphism $\Phi$ intertwines the representations 
of $\overline\Y_K$ and $\bfY_K$ on
$F_{w,R_w}\otimes_RK$ for each $w\in\bbN I$.
\end{proposition}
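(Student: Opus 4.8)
The plan is to realize $\overline\Y_K$ and $\bfY_K$ simultaneously as algebras of operators on the family $\{F_{w,R_w}\otimes_RK\}_{w\in\bbN I}$ and to check that the COHA sits inside the Yangian. Write $\rho=\prod_{w\in\bbN I}\rho_w\colon\overline\Y_K\to A_{2,K}$ for the product of the Schiffmann--Vasserot representations \eqref{rhoCOHA} base changed to $K$, and $\rho_2\colon\bfY_K\to A_{2,K}$ for the embedding of Lemma~\ref{lem:rho}(b) base changed via Lemma~\ref{lem:bcY}, where $A_{2,K}=\prod_wA_{w,R_w}\otimes_RK$ acts on each $F_{w,R_w}\otimes_RK$. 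The core step will be to prove the inclusion $\rho(\overline\Y_K)\subset\rho_2(\bfY_K)$. Granting it, $\Phi:=\rho_2^{-1}\circ\rho$ is a well defined $\bbZ I$-graded $K$-algebra homomorphism $\overline\Y_K\to\bfY_K$ with $\rho_2\circ\Phi=\rho$, so it intertwines the two actions on each $F_{w,R_w}\otimes_RK$ and makes \eqref{E:intro1} commute. Its injectivity is equivalent to injectivity of $\rho$; I would deduce the latter from Proposition~\ref{prop:COHA}: the product $\prod_w\rho_w^\vee$ is injective because its composite with evaluation at the vacua is $\ev_2^\vee$, injective by part~(c), and this injectivity passes to $K$ since localization is exact; one then transports it to $\rho$ using that $\iota$ is an injective $\overline\Y^\vee$-equivariant map \eqref{rhoCOHA} and that $\overline\Y_K^\vee=\overline\Y_K$ by part~(e).

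It remains to prove $\rho(\overline\Y_K)\subset\rho_2(\bfY_K)$, which is the substance of the statement; since $\overline\Y_K=\Y_K\otimes_KR_{\infty,K}$, it suffices to treat the two tensor factors. For the central factor, $p_{i,l}\in R_{\infty,K}$ acts on every $F_{w,R_w}\otimes_RK$ by cup product with $\ch_l(\calW_i)$, and by \cite[prop.~5.5.3]{MO19} (recalled in \S\ref{sec:Y0}) the operators $\ch_l(\calW_i)$ already lie in $\bfY_K^0\subset\rho_2(\bfY_K)$; moreover they generate the subalgebra $\bfY_K^{0,w}$. Hence $\Phi$ restricts to a surjective $K$-algebra map $R_{\infty,K}\to\bfY_K^{0,w}$, and it is an isomorphism: it is a surjection from the polynomial algebra $R_{\infty,K}$ onto a polynomial algebra of the same $\bbZ I\times\bbZ$-graded dimension (Lemmas~\ref{lem:Y01} and~\ref{lem:Y02}), and its composite with $\bfY_K^{0,w}\hookrightarrow A_{2,K}$ is injective because the classes $\ch_l(\calW_i)$ with $l\le w_i$ become algebraically independent as $w\gg0$. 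This gives the second assertion of the proposition.

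For the factor $\Y_K$, the plan is to invoke the spherical generation result of Negut~\cite{N23b} to reduce to the \emph{rank one} pieces $\Y_{-\delta_i,K}$, $i\in I$. For fixed $i$, the action of $\Y_{-\delta_i,K}$ on $F_{w,R_w}\otimes_RK$ is by the Hecke operators attached to the one-step nested quiver varieties over $\frakM(v,w)\subset\frakM(v+\delta_i,w)$, and one must identify these operators, after inverting the relevant Euler classes, with the Maulik--Okounkov operators of weight $-\delta_i$, i.e.\ with the residues of the rank one $R$-matrix $\R_{F_{\delta_i},F_w}$ of \eqref{Rii}. This is the computation of \cite{SV18b} in the Jordan and cyclic cases and is local at the vertex $i$; once it is in hand we get $\rho(\Y_{-\delta_i,K})\subset\rho_2(\bfY_K)$, hence $\rho(\Y_K)\subset\rho_2(\bfY_K)$ by spherical generation, completing the containment.

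The hard part is precisely this last identification: matching the geometric, Hecke-correspondence description of the degree $-\delta_i$ part of the preprojective COHA with the stable-envelope/$R$-matrix description of the corresponding slice of $\bfY_K$. It is a rank one computation, but it requires careful bookkeeping of the polarization chosen in \S\ref{sec:stableQV}, the cohomological grading shift \eqref{cohdeg}, and the normalizations in \eqref{E} and in the definition of the evaluation maps; everything else --- base change, and later the triangular decomposition and the character identities --- is formal once this and the generation input of \cite{N23b} are available. Finally, $\Phi$ preserves the $\bbZ I$-grading merely because it is a composite of weight-homogeneous maps, whereas it need not preserve the cohomological $\bbZ$-grading: the Yangian carries only the $u$-filtration of \S\ref{sec:filt}, not a compatible $\bbZ$-grading, which is why the statement asserts a $\bbZ I$-graded, and not bigraded, embedding.
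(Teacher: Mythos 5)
Your overall strategy matches the paper's: reduce to the rank-one generators via Negut's spherical generation theorem, identify the Hecke action with the $R$-matrix residues, handle the central factor $R_{\infty,K}$ separately, and deduce injectivity from $\ev_2^\vee$. The framing via $\Phi=\rho_2^{-1}\circ\rho$ is a fine (and equivalent) way of packaging the paper's construction, and your treatments of the central factor and of injectivity are correct.

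The problem is that you stop precisely where the proof begins. The identification of the Hecke operator $\cc_1(\calL_i)^l\cap[\frakP(v+\delta_i,v,w)]$ with the Maulik--Okounkov operator $E_{w,R}(m_iu^l)$ is not a routine bookkeeping exercise that can be outsourced to a sentence: it \emph{is} the content of the proposition, and the paper devotes Lemma~\ref{lem:c1P} and a page-long computation to it. That computation involves non-trivial geometric input: choosing the cocharacter $\sigma$ so that $F'=\frakM(\bfv',\bfw)$ is minimal and $F=\frakM(\bfv,\bfw)$ subminimal in the poset $Fix$; exhibiting both $\frakP(v+\delta_i,v,w)$ and $\calA^\sigma_F$ as zero loci of regular sections and computing the classes $\xi$ and $\eta$ in \eqref{gamma}, \eqref{eta} with the corrections $\gamma_i,\gamma_i^*$ coming from loops at $i$; verifying the stable-envelope axioms (b), (c) of \S\ref{sec:stable} for $\pm[\calA_F^\sigma]$; and then running the localization maps $\bfr_{ij}$ of \eqref{res} to express the $R$-matrix coefficient $R_{F,F'}$ as $\calS^{\sigma,\op}_{F',F'}\star\calS^\sigma_{F',F}$ and simplify it to $\pm\hbar\,(u-\cc_1(\calL_i))^{-1}\cap[\frakP]$. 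Taking the residue of $u^lS(u)$ then produces \eqref{c1P}. Without this argument — or an explicit citation of the corresponding statements in \cite{SV18b} as the paper's fallback — your proof does not establish the containment $\rho(\Y_{-\delta_i,K})\subset\rho_2(\bfY_K)$, and everything built on it hangs in the air.

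One smaller point: your closing remark that $\bfY_K$ carries only the $u$-filtration and ``not a compatible $\bbZ$-grading'' is not consistent with the paper's definitions. The Yangian $\bfY_{R_S}$ is $\bbZ I\times\bbZ$-graded by construction (it is the image of the $\bbZ I\times\bbZ$-graded map $E_{0,R_S}$ on the $\bbZ I\times\bbZ$-graded tensor algebra $U(M_{R_S}[u])$, with $u$ in cohomological degree $1$; see \S\ref{sec:defY} and Lemma~\ref{lem:rho}). The $u$-filtration of \S\ref{sec:filt} is an additional structure, not a substitute for the $\bbZ$-grading. Indeed, later statements such as Theorem~\ref{thm:2}(c) treat $\Phi$ as a $\bbZ I\times\bbZ$-graded map; the proposition here asserts only $\bbZ I$-graded because that is all that is needed at this stage, not because the bigrading is absent on the target.
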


\begin{proof}
The proposition follows from the results in \cite{SV18b}.
We'll give a simpler proof using some computation in \cite{N23a}.
It is enough to construct a $\bbZ I$-graded $K$-algebra homomorphism 
\begin{align}\label{Phi'}\Y_K\to \bfY_K\end{align} 
compatible with the representations of $\Y_K$ and $\bfY_K$ on $F_{w,R}\otimes_RK$. 
Then, the map \eqref{Phi'} extends to a $\bbZ I$-graded $K$-algebra homomorphism 
$\Phi:\overline\Y_K\to \bfY_K$ such that
$R_{\infty,K}$ maps to $\bfY_K^{0,w}$
via the isomorphism \eqref{UR}.
The map $\Phi$ is injective because it intertwines the evaluation maps 
$$\ev_2: \overline\Y_K\to F_{2,K}
,\quad
\ev_2: \bfY_K\to F_{2,K},$$ and the first one 
is injective by
Proposition \ref{prop:COHA}.

To construct the map \eqref{Phi'}, we fix $v,w\in\bbN I$, $i\in I$, and
we choose the cocharacter $\sigma$ as in \eqref{sigma} with $s=2$, $w_1=\delta_i$ and $w_2=w$.
Let $F,F'\in Fix$ be the connected components of the fixed point set
$\frakM(v+\delta_i,w+\delta_i)^\sigma$
such that
\begin{align}\label{FF}
F=\frakM(\bfv,\bfw)
,\quad
F'=\frakM(\bfv',\bfw)
,\quad
\bfw=(\delta_i,w)
,\,
\bfv=(\delta_i,v)
,\,
\bfv'=(0,v+\delta_i).
\end{align}
By \eqref{order} the connected component $F'$ is minimal in the poset $Fix$, $F$ is subminimal and $F'\prec F$.
Recall the formulas \eqref{Fdelta}. 
We'll compute the operator 
$$E_{w,R}(m_iu^l)\in\Hom_{R_w}(F_{v,w,R_w},F_{v+\delta_i,w,R_w})$$
with
$$m_iu^l\in\Hom_{R[u]}(F_{\delta_i,\delta_i,R_{\delta_i}},F_{0,\delta_i,R_{\delta_i}})=R[u]\langle -g_i\rangle
.$$
Here $m_i$ denotes the obvious element of degree $g_i$ in $\Hom_{R[u]}(F_{\delta_i,\delta_i},F_{0,\delta_i})$.
To do so, we consider the Hecke correspondence
$$\frakP(v+\delta_i,v,w)\subset\frakM(v+\delta_i,w)\times\frakM(v,w)$$
which parametrizes the isomorphism classes of triples 
$(x,y,\iota)$ consisting of a surjection $\iota:x\to y$ with
$x\in\X_\Pi(v+\delta_i,w)_s$ and $y\in\X_\Pi(v,w)_s.$
Taking the kernel of the map $\iota$ yields a map
\begin{align}\label{map}\frakP(v+\delta_i,v,w)\to\calX_\Pi(\delta_i)\end{align}
where the right hand side is the quotient stack.
The linear character of the group $G_{\delta_i}$ yields a line bundle $\calL_i$ on the stack
$\calX_\Pi(\delta_i)$.
Let $\calL_i$ denote also its pullback by the map \eqref{map}, and
$\cc_1(\calL_i)$ be its 1st Chern class.
Following \S\ref{sec:defY}, let $u$ denote the 1st Chern class of the linear character of the group 
$G_{\delta_i}$.
The Hecke correspondence
$\frakP(v+\delta_i,v,w)$ is proper over the quiver variety $\frakM(v+\delta_i,w)$.
Hence, by convolution, it yields a $R_w$-linear map
$$F_{v,w,R_w}\to F_{v+\delta_i,w,R_w}.$$
The proposition is a consequence of the following lemmas, the second of which is proved in 
\cite{N23b} and \cite{SV18}.

\begin{lemma}\label{lem:c1P} For each integer $l\in\bbN$, the operator 
$$E_{w,R}(m_iu^l):F_{v,w,R_w}\to F_{v+\delta_i,w,R_w}$$ 
coincides with the convolution with the class
\begin{align}\label{c1P}
\pm\cc_1(\calL_i)^l\cap[\frakP(v+\delta_i,v,w)]
\end{align}
The sign is determined by the polarization.
\end{lemma}

\begin{lemma}[\!\cite{N23b},\cite{SV18}]\label{lem:generators}
\hfill
\begin{enumerate}[label=$\mathrm{(\alph*)}$,leftmargin=8mm,itemsep=1.2mm]
\item 
The $\K$-algebra $\Y_K$ is generated by the subset 
$\{\cc_1(\calL)^l\cap[\calX_\Pi(\delta_i)]\,;\,i\in I\,,\,l\in\bbN\}.$
\item
The element $\cc_1(\calL)^l\cap[\calX_\Pi(\delta_i)]$ in $\Y_R$
acts on $F_{w,R_w}$ by convolution with the class \eqref{c1P}.
\end{enumerate}
\qed
\end{lemma}

\begin{proof}[Proof of Lemma $\ref{lem:c1P}$]
For any smooth $G\times T$-varieties $M_1$, $M_2$ and $M_3$, the
localization theorem yields $K_{G\times T}$-linear isomorphisms
\begin{align}\label{res}
\bfr_{ij}:H^{G\times T}_\bullet(M_i\times M_j,\bbQ)_{K_{G\times T}}\to
H^{G\times T}_\bullet(M_i^A\times M_j^A,\bbQ)_{K_{G\times T}}
,\quad
i,j=1,2,3,
\end{align}
which commute with the convolution. They are analogous to the map in K-theory given in \cite[thm.~5.11.10]{CG}.
The map $\bfr_{ij}$ is the Gysin restriction times a correction factor equal to the inverse of the 
equivariant euler class 
$$\eu(N_{M_j^A}M_j)^{-1}\in H_{G\times T}^\bullet(M_i^A\times M_j^A,\bbQ)_{K_{G\times T}}.$$
Let $X=\frakM(v+\delta_i,w+\delta_i)$ and let $F$, $F'$ be as in \eqref{FF}.
Recall the closed Lagrangian subvarieties 
$\calA^\sigma_F\subset X\times F$ and
$\calA^\sigma_{F'}\subset X\times F'.$
The connected component $F'$ of $X^A$ is the only component strictly smaller than $F$ in the poset $Fix$.
Hence, we have
\begin{align*}
\calA^\sigma_F\cap(X^A\times F)&=(F\times F)\sqcup(F'\times F),\\
\calA_{F'}\cap(X^A\times F')&=F'\times F'.
\end{align*}
Set $M_1=M_3=X^A$ and $M_2=X$.
Applying the map $\bfr_{23}$ to the cycle
$\calS^\sigma_F$ in $M_2\times M_3$
and $\bfr_{12}$ to the cycle $\calS_{F'}^{\sigma,\op}$ in $M_1\times M_2$ yields the cycles
\begin{align*}
\bfr_{23}(\calS^\sigma_F)&=
\calS^\sigma_{F,F}+\calS^\sigma_{F',F}\in H^{G_\bfw\times T}_\bullet(X^A\times F,\bbQ)_{K_\bfw}
,\\
\bfr_{12}(\calS_{F'}^{\sigma,\op})&=
\calS_{F',F'}^{\sigma,\op}\in H^{G_\bfw\times T}_\bullet(F'\times X^A,\bbQ)_{K_\bfw}
\end{align*}
which are supported on 
$(F\times F)\sqcup(F'\times F)$ and $F'\times F'$ respectively.
To compute the operator $E_{w,R}(m_iu^l)$ it is enough to compute the operator
$$R_{F,F'}:H^{G_\bfw\times T}_\bullet(F,\bbQ)_{K_\bfw}\to 
H^{G_\bfw\times T}_\bullet(F',\bbQ)_{K_\bfw}$$
given by the restriction of the R-matrix.
Since $\bfr_{13}=\id$ and the $\bfr_{ij}$'s are compatible with the convolution, 
the operator $R_{F,F'}$ is the convolution with the cycle 
\begin{align}\label{SS}
\calS_{F',F'}^{\sigma,\op}\star\calS^\sigma_{F',F}\in H^{G_\bfw\times T}_\bullet(F'\times F,\bbQ)_{K_\bfw}.
\end{align}
We must compare \eqref{c1P} with \eqref{SS}.
To do that we'll use the following notation
\begin{itemize}[label=$\mathrm{(\alph*)}$,leftmargin=8mm,itemsep=1.2mm]
\item[-] 
$u=$ 1st Chern class of the linear character of the group $G_{\delta_i}$,
\item[-]
$\{\beta_{j,x}\}=$ set of Chern roots (=weights) of the $j$th fundamental representation of $G_w$,
\item[-]
$\{\alpha_{j,x}\}=$ set of Chern roots of the tautological bundle $\calV_j$ over $\frakM(v,w)$, 
\item[-]
$\{\tilde\alpha_{j,x}\}=$ set of Chern roots of the tautological bundle $\calV_j$ over 
$\frakM(v+\delta_i,w)$. 
\end{itemize}

\begin{lemma}
\hfill
\begin{enumerate}[label=$\mathrm{(\alph*)}$,leftmargin=8mm,itemsep=1.2mm]
\item 
The Hecke correspondence $\frakP(v+\delta_i,v,w)$
is the zero set of a regular section of a 
$G_w\times T$-equivariant vector bundle over
$\frakM(v+\delta_i,w)\times\frakM(v,w)$ of rank
\begin{align*}
(v+\delta_i)\cdot w+v\cdot w-(v,v+\delta_i)-1
=d_{v,w}+d_{v+\delta_i,w}-g_i.
\end{align*}
We have
\begin{align*}
[\frakP(v+\delta_i,v,w)]&=\xi\cap[\frakM(v+\delta_i,w)\times\frakM(v,w)]
\end{align*}
where the cohomology class $\xi$ is given in the formula \eqref{gamma} below.
\item 
The variety $\calA^\sigma_F$ is the zero set of a regular section of a $G_\bfw\times T$-equivariant
vector bundle over 
$\frakM(v+\delta_i,w+\delta_i)\times \frakM(\delta_i,\delta_i)\times\frakM(v,w)$ of rank
\begin{align*}
(v+\delta_i)\cdot w+v\cdot(w+\delta_i)-(v,v+\delta_i)+2g_i
=d_{v+\delta_i,w+\delta_i}+d_{\delta_i,\delta_i}+d_{v,w}.
\end{align*}
We have $\calS^\sigma_{F}=\pm[\calA^\sigma_F]$, the sign being determined by the polarization, and
$$[\calA^\sigma_F]=\eta\cap[\frakM(v+\delta_i,w+\delta_i)\times \frakM(\delta_i,\delta_i)\times\frakM(v,w)]$$
where the cohomology class $\eta$ is given in the formula \eqref{eta} below.
\item 
The projection $\calA^\sigma_{F'}\to X$ is a closed embedding.
The projection $\calA^\sigma_{F'}\to F'=\frakM(v+\delta_i,w)$ is a fiber bundle isomorphic to $\calV_i$.
We have $\calS^\sigma_{F'}=\pm[\calA^\sigma_{F'}]$,
the sign being determined by the polarization.
\end{enumerate}
\end{lemma}

\begin{proof}
Let $\gamma_i$ and $\gamma_i^*$ be the products of the weights of the 
$g_i$ loops $i\to i$ in $Q_1$ and $Q_1^*$ respectively.
Part (a) is done in \cite[\S 5]{N98} for a quiver without 1-loops,
the case of a general quiver is done in \cite[\S 2.20]{N23a}.
Using \cite[(2.72)]{N23a}, we get
\begin{align}\label{gamma}
\begin{split}
\gamma_i&=\prod_{\alpha:i\to i}t_\alpha\\
\gamma_i^*&=\prod_{\alpha:i\to i}(\hbar-t_\alpha)\\
\xi&=
\prod_{\alpha:j\to k}(t_\alpha+\tilde\alpha_{j,x}-\alpha_{k,y})(\hbar-t_\alpha+\tilde\alpha_{k,y}-\alpha_{j,x})\cdot
\prod_{j\in I}(\beta_{j,y}-\alpha_{j,x})(\hbar+\tilde\alpha_{j,x}-\beta_{j,y})\cdot\\
&\qquad\cdot\prod_{j\in I}(\tilde\alpha_{j,x}-\alpha_{j,y})^{-1}(\hbar+\tilde\alpha_{j,x}-\alpha_{j,y})^{-1}\cdot\hbar^{-1}
\end{split}
\end{align}
where $\alpha$ runs over $Q_1$.
The 1st claim in Part (b) is proved in \cite[claim 3.17]{N23a}.
The cohomology class $\eta$ is the Euler characteristic of the vector bundle in Part (b). 
Using \cite[(3.63)]{N23a}, we get
\begin{align}\label{eta}
\begin{split}
&\eta=
\hbar\cdot\xi\cdot\gamma_i\cdot\gamma_i^*\cdot\prod_x(u-\alpha_{i,x})
\end{split}
\end{align}
where $\alpha$ runs over the set $Q_1$.
To prove that $\calS^\sigma_{F}=\pm[\calA^\sigma_F]$ we must check the conditions (b) and (c) in \S\ref{sec:stable}.
Condition (b) follows from the inclusion $Z^+_F\subset\calA^\sigma_F$
which implies that the Gysin restriction of the cycle
$[\calA^\sigma_F]$ to $F\times F$ is $\eu(N_F^{\sigma,-}X)\cap[\Delta_F]$. 
Condition (c) is a direct computation.
More precisely, we have
\begin{align*}
[\calA^\sigma_F]|_{F'\times F}=(\eta|_{F'\times F})\cap[F'\times F]
\end{align*}
Hence, we must check that 
$$A\text{-}\deg(\eta|_{F'\times F})<\dim X-\dim F'=2d_{v+\delta_i,w+\delta_i}-2d_{v+\delta_i,w}=2v_i+2.$$
This follows from the formula \eqref{eta}.
Part (c) of the lemma follows from the minimality of the connected component $F'$ in the poset $Fix$,
which implies that $Z^+_{F'}=\calA^\sigma_{F'}$.
The second claim is a direct computation using the formula \cite[cor.~3.12]{N98}
for the tangent bundle of quiver varieties, which yields the following 
vector bundle isomorphisms
$N_{F'}^-X=\calV_i^*$ and $N_{F'}^+X=\calV_i$.
Note that \cite{N98} is done for quiver without 1-loops but the formula extends to our case as well.
\end{proof}

From the lemma and the definition of the map $\bfr_{ij}$ in \eqref{res}, we deduce that
\begin{align*}
\calS_{F',F'}^{\sigma,\op}
&=\pm\eu(N_{F'}X)^{-1}\cap[(\calA^\sigma_{F'})^*]|_{F'\times F'}\\
&=\pm\eu(N_{F'}X)^{-1}\cdot\eu(N_{F'}^-X)\cap[\Delta_{F'}]\\
\calS^\sigma_{F',F}&=\pm[\calA^\sigma_F]|_{F'\times F},\\
&=\pm\eta\cap[F'\times F]
\end{align*}
Hence, the cycle in \eqref{SS} is
$$\calS_{F',F'}^{\sigma,\op}\star\calS^\sigma_{F',F}=
\pm\eu(N_{F'}^+X)^{-1}\cdot\eta\cap[F'\times F].$$
A computation using the formula \cite[cor.~3.12]{N98}
for the tangent bundle of quiver varieties yields 
\begin{align*}
\eu(T\frakM(\delta_i,\delta_i))&=\gamma_i\cdot\gamma_i^*\\
\eu(N_{F'}^+X)&=\prod_y(-u+\tilde\alpha_{i,y}).
\end{align*}
We have
$$\eu(N_{F'}^+X)^{-1}\cdot\eta=\hbar\cdot\xi\cdot\gamma_i\cdot\gamma_i^*\cdot\prod_x(u-\alpha_{i,x})
\cdot\prod_y(-u+\tilde\alpha_{i,y})^{-1}.$$
We deduce that
\begin{align*}
\calS_{F',F'}^{\sigma,\op}\star\calS^\sigma_{F',F}
&=\pm\hbar\cdot\xi\cdot\gamma_i\cdot\gamma_i^*\cdot\prod_x(u-\alpha_{i,x})
\cdot\prod_y(u-\tilde\alpha_{i,y})^{-1}\cap[F'\times F]\\
&=\pm\hbar\cdot\xi\cdot\prod_x(u-\alpha_{i,x})
\cdot\prod_y(u-\tilde\alpha_{i,y})^{-1}\cap[\frakM(v+\delta_i,w)\times\frakM(v,w)]\\
&=\pm\hbar\cdot\prod_x(u-\alpha_{i,x})
\cdot\prod_y(u-\tilde\alpha_{i,y})^{-1}\cap[\frakP(v+\delta_i,v,w)]\\
&=\pm\hbar\cdot(u-\cc_1(\calL_i))^{-1}\cap[\frakP(v+\delta_i,v,w)].
\end{align*}
Let $S(u)$ be the expansion of the expression above, viewed as a formal series in $u$.
We have 
$$E_{w,R}(m_iu^l)=\frac{1}{\hbar}\Res_u(u^l\cdot S(u)).$$
Hence, the operator $E_{w,R}(m_iu^l)$ is the convolution with the cycle 
$$\pm\cc_1(\calL_i)^l\cap[\frakP(v+\delta_i,v,w)]$$
considered in \eqref{c1P}. The lemma \ref{lem:c1P} is proved.
\end{proof}
\end{proof}

\smallskip

\begin{remark}
We consider the closed substack
$$\calX_\Pi^\vee(v)=\X_\Pi^\vee(v)\,/\,G_v\subset\calX_\Pi(v)$$
of nilpotent representations.
The nilpotent Hecke correspondence 
$\frakP^\vee(v+\delta_i,v,w)$ is the inverse image 
of $\calX_\Pi^\vee(\delta_i)$ by the map 
$$\frakP(v+\delta_i,v,w)\to\calX_\Pi(\delta_i).$$
See \S\ref{sec:Hall} for details. 
The linear character of the group $G_{\delta_i}$ yields a line bundle $\calL_i$ on
$\calX_\Pi^\vee(\delta_i)$.
Let $\calL_i$ denote the pullback of this line bundle to $\frakP^\vee(v+\delta_i,v,w)$.
We abbreviate 
$$e_{i,l}=\cc_1(\calL_i)^l\cap[\calX_\Pi^\vee(\delta_i)]\in\Y^\vee
,\quad
i\in I
,\,
l\in\bbN.$$
By Proposition \ref{prop:COHA} we have a $\K$-algebra isomorphism
$\Y_K=\Y^\vee_K.$
This isomorphism, together with Lemma \ref{lem:generators},
implies that the $\K$-algebra $\Y^\vee_K$ is generated by the set 
$\{e_{i,l}\,;\,i\in I\,,\,l\in\bbN\}.$
According to \cite{SV18}, the element $e_{i,l}$ of $\Y^\vee$
acts on the modules $F_w$ and $F_w^\vee$ by convolution with the class
$$\cc_1(\calL_i)^l\cap[\frakP^\vee(v+\delta_i,v,w)].$$
Thus, the $K$-algebra embedding $\Phi:\overline\Y_K\to\bfY_K$ is given by
$$\Phi(e_{i,l})=E_{0,K}(\gamma_i \gamma^*_i m_iu^l).$$
Here $\gamma_i,\gamma_i^*\in R$ are the degree $g_i$ elements given in \eqref{gamma}.
\end{remark}

\medskip

\subsection{The isomorphism of the COHA and the Yangian}
Recall that 
$$F_{a,K_S}^\vee=F_{a,R_S}^\vee\otimes_{R_S}K_S
,\quad
a=0,1,2.$$
The evaluation on the vacuum gives the maps
\begin{align*}
\ev_1^\vee&:\bfY_{K_S}^\vee\to F_{1,K_S}^\vee
,\\
\gr \ev_1^\vee&:\bfU_{K_S}^\vee\to F_{1,K_S}^\vee
\end{align*}
The map $\ev_1^\vee$ factors through a map
$$\overline\ev_1^\vee:\bfY_{K_S}^\vee\,/\,\Ker(\ev_1^\vee)_{K_S}\to F_{1,K_S}^\vee.$$
We consider the $R_S$-submodules 
\begin{align*}
(\overline\ev_1^\vee)^{-1}(F_{1,R_S}^\vee)&\subset\bfY_{K_S}^\vee\,/\,\Ker(\ev_1)_{K_S}
,\\
(\gr \ev_1^\vee)^{-1}(F_{1,R_S}^\vee)&\subset\bfU_{K_S}^\vee.
\end{align*}

\begin{lemma}\label{lem:last}
We have
\hfill
\begin{enumerate}[label=$\mathrm{(\alph*)}$,leftmargin=8mm,itemsep=1.2mm]
\item 
$\bfU_{R_S}^{-,\vee}\bfU_{R_S}^{0,w,\vee}=
\bfU_{K_S}^{-,\vee}\bfU_{K_S}^{0,w,\vee}\cap (\gr \ev_1^\vee)^{-1}(F_{1,R_S}^\vee).$
\item
$\bfY^\vee_{R_S}\,/\,\Ker(\ev_1^\vee)=(\overline\ev_1^\vee)^{-1}(F_{1,R_S}^\vee)$.
\end{enumerate}
\end{lemma}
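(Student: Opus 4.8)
The plan is to prove (a) at the level of the $u$-graded objects and then to deduce (b) from it by the filtered bootstrap already used in the proof of Lemma~\ref{lem:39}(b).

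\emph{Reduction of (a).} In (a) the inclusion $\subseteq$ is immediate: base change along $R_S\to K_S$ is compatible with the triangular decomposition of $\bfU_{R_S}^\vee$, so $\bfU_{R_S}^{-,\vee}\bfU_{R_S}^{0,w,\vee}\otimes_{R_S}K_S=\bfU_{K_S}^{-,\vee}\bfU_{K_S}^{0,w,\vee}$, while $\gr\ev_1^\vee$ sends $\bfU_{R_S}^\vee$ into $F_{1,R_S}^\vee$ by construction. For $\supseteq$, Lemma~\ref{lem:kerU}(b) and Corollary~\ref{cor:kerU}(b) provide, over both $R_S$ and $K_S$, a $\bbZ I\times\bbZ$-graded direct sum decomposition $\bfU^\vee=\Ker(\gr\ev_1^\vee)\oplus\bfU^{-,\vee}\bfU^{0,w,\vee}$ on whose second summand $\gr\ev_1^\vee$ is injective, the two decompositions being compatible with base change. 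Writing $V_R=\gr\ev_1^\vee(\bfU_{R_S}^{-,\vee}\bfU_{R_S}^{0,w,\vee})\subseteq F_{1,R_S}^\vee$ and $V_K=V_R\otimes_{R_S}K_S$, part (a) amounts to the \emph{saturation} statement $V_K\cap F_{1,R_S}^\vee=V_R$, equivalently that $F_{1,R_S}^\vee/V_R$ is $R_S$-torsion free. Since $F_{1,R_S}^\vee=\prod_{\bfw\in\bfN_1}F_{\bfw,R_{\bfw,S}}^\vee$ and torsion is detected on each factor in each weight and cohomological degree, it suffices to see that a class in $F_{\bfw,R_{\bfw,S}}^\vee/\gr\ev_\bfw^\vee(\bfU_{R_S}^{-,\vee}\bfU_{R_S}^{0,w,\vee})$ of weight $-v$ and degree $\leqsl l$ vanishes once $w=|\bfw|$ is large enough in the sense of \eqref{large}.

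\emph{The saturation statement.} This transfer of integrality from the action back to the algebra is the main obstacle. I would fix $v$ and $l$, take $\bfw\in\bfN_1$ with $w$ large, and combine three ingredients: (i) the nilpotent counterpart of Lemma~\ref{lem:39}(a), established just as in the proof of Lemma~\ref{lem:39}(c), which identifies $\Ker(\gr\ev_\bfw^\vee)\cap\bfU_{-v,\leqsl l,R_S}^\vee$ with $(\Ker(\gr\ev_1^\vee)+\bfU_{R_S}^\vee I_w)\cap\bfU_{-v,\leqsl l,R_S}^\vee$; (ii) the identification \eqref{UR} of $\bfU_{R_S}^{0,w,\vee}$ with $R_{\infty,S}$, sending $I_w$ to $(p_{i,0}-w_i)$ and $J_w$ to $\Ker(R_{\infty,S}\to R_{w,S})$; and (iii) Proposition~\ref{prop:COHA}(b), transported to the nilpotent Yangian via the comparison of the two constructions over $K_S$ (Proposition~\ref{prop:Phi} together with the spherical generation result of Negut). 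Using (i) with the direct sum decomposition and (ii), one sees that $\gr\ev_\bfw^\vee$ induces on the $(-v,\leqsl l)$-piece an injection of the free $R_S$-module $(\bfU_{R_S}^{-,\vee}\bfU_{R_S}^{0,w,\vee})_{-v,\leqsl l}/(\bfU_{R_S}^{-,\vee}\bfU_{R_S}^{0,w,\vee}I_w)_{-v,\leqsl l}$ into $F_{v,w,\leqsl l,R_{w,S}}^\vee$, and (iii) (applied over $K_S$ and then descended, a Vandermonde argument handling the passage from all large $w$ back to $R_S$-coefficients, which is legitimate since $R_S$ is a polynomial ring over $\bbQ$) shows this injection is onto. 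Hence $\gr\ev_\bfw^\vee(\bfU_{R_S}^{-,\vee}\bfU_{R_S}^{0,w,\vee})$ is a direct summand of the free module $F_{\bfw,R_{\bfw,S}}^\vee$ in each bounded degree, the quotient is torsion free there, and the displayed class vanishes. The delicate point is that $R_{\infty,S}\to R_{w,S}$ is injective only in bounded cohomological degree once $w$ is large, so that largeness of $w$ is really what makes the argument run; this is exactly where \eqref{large} and Proposition~\ref{prop:COHA}(b) enter.

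\emph{Deduction of (b).} Granting (a), I argue as for Lemma~\ref{lem:39}(b). The inclusion $\bfY_{R_S}^\vee/\Ker(\ev_1^\vee)\subseteq(\overline\ev_1^\vee)^{-1}(F_{1,R_S}^\vee)$ is clear. Conversely, by Lemma~\ref{lem:evstrict}(c),(d) the map $\ev_1^\vee$ is strict for the $u$-filtration and $\Ker(\gr\ev_1^\vee)=\gr\Ker(\ev_1^\vee)$; hence $\overline\ev_1^\vee$ is a strict embedding of $u$-filtered $R_S$-modules with $\gr(\bfY_{R_S}^\vee/\Ker(\ev_1^\vee))=\bfU_{R_S}^\vee/\Ker(\gr\ev_1^\vee)$, and $\gr\overline\ev_1^\vee$ is the injection induced by $\gr\ev_1^\vee$ (over $R_S$, and over $K_S$ after base change). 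Take $\bar x\in\bfY_{K_S}^\vee/\Ker(\ev_1^\vee)\otimes_{R_S}K_S$ with $\overline\ev_1^\vee(\bar x)\in F_{1,R_S}^\vee$; if $\bar x=0$ there is nothing to prove, otherwise let $l$ be its filtration degree and induct on $l$. Since $\overline\ev_1^\vee$ is strict, $\sigma_l(\overline\ev_1^\vee(\bar x))=\gr\overline\ev_1^\vee(\sigma_l(\bar x))$ lies in $F_{1,R_S}^\vee$, so by (a) and Corollary~\ref{cor:kerU}(b) we get $\sigma_l(\bar x)\in\bfU_{R_S}^\vee/\Ker(\gr\ev_1^\vee)$. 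Lifting it to some $\bar y\in(\bfY_{R_S}^\vee/\Ker(\ev_1^\vee))[\leqsl l]$ with $\sigma_l(\bar y)=\sigma_l(\bar x)$, the element $\bar x-\bar y$ has filtration degree $<l$ and still maps into $F_{1,R_S}^\vee$, so by induction $\bar x-\bar y\in\bfY_{R_S}^\vee/\Ker(\ev_1^\vee)$, whence $\bar x\in\bfY_{R_S}^\vee/\Ker(\ev_1^\vee)$. This proves (b), and completes the proof.
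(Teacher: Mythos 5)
Your deduction of (b) from (a) is essentially the paper's argument (strictness of $\ev_1^\vee$ together with the filtered bootstrap), only phrased element by element, and it is fine.

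The argument for (a) takes a genuinely different route — saturation/torsion-freeness instead of the paper's direct PBW-coefficient computation — but it has a real gap at the very first step. You assert that because $F_{1,R_S}^\vee=\prod_\bfw F_{\bfw,R_{\bfw,S}}^\vee$, "torsion is detected on each factor", so torsion-freeness of $F_{1,R_S}^\vee/V_R$ would follow from torsion-freeness of each $F_{\bfw,R_{\bfw,S}}^\vee/\gr\ev_\bfw^\vee(\bfU_{R_S}^{-,\vee}\bfU_{R_S}^{0,w,\vee})$ in bounded degree for $w$ large. That implication is false: $V_R$ sits \emph{diagonally} in the product and is not the product of its projections, so $F_{1,R_S}^\vee/V_R$ is not a product of factorwise quotients. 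A toy example: over $\bbZ$, take $M=\bbZ\times\bbZ$ and $V=\{(a,a+2b)\,;\,a,b\in\bbZ\}\subset M$; each projection of $V$ to a factor is all of $\bbZ$, so both factorwise quotients vanish (hence are torsion-free), yet $M/V\cong\bbZ/2\bbZ$ has torsion, and $(1,0)$ is a torsion witness whose image vanishes in every factorwise quotient. So even if you established the factorwise statement, you could not conclude. Moreover, the factorwise surjectivity you want at the end — that $(\bfU_{R_S}^{-,\vee}\bfU_{R_S}^{0,w,\vee})_{-v,\leqsl l}$ maps \emph{onto} $F^\vee_{v,w,\leqsl l,R_{w,S}}$ modulo $I_w$ for $w$ large — is precisely the integral comparison this lemma is meant to produce; it would follow from Theorem~\ref{thm:2}, which comes later and uses Lemma~\ref{lem:last}. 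Proposition~\ref{prop:COHA}(b) gives this on the COHA side, and Proposition~\ref{prop:Phi} identifies the two actions only over $K_S$; transferring surjectivity down to $R_S$ needs the two $R_S$-forms to match under $\Phi$, which is circular here.

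The paper avoids both problems by working with coefficients directly: it expands $x\in\bfU_{K_S}^{-,\vee}$ in a PBW basis built from dual $R_S$-bases $(x_n^\pm)$ of $\frakg_{R_S}^{\pm,\vee}$, applies raising operators $\ad(x^+_n)$ (which live in the $R_S$-form $\bfU_{R_S}^\vee$ and therefore preserve each $R_S$-lattice $F_{\bfw,R_{\bfw,S}}^\vee$), isolates the top-length coefficient $a_{\bfn,\bfd}$ as a scalar multiple of a monomial in the classes $\ch_l(\calW_i)$ acting on $\phi_\bfw^\vee$, and uses the algebraic independence of the tuples $(\ch_l(\calW_i))_\bfw$ in $\prod_\bfw R_{\bfw,S}$ to conclude $a_{\bfn,\bfd}\in R_S$. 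This certifies integrality of each PBW coefficient of $x$, which is exactly what the saturation statement $V_K\cap F_{1,R_S}^\vee=V_R$ requires, and it sidesteps the diagonal-versus-product issue entirely.
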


\begin{proof}
Let us prove Part (a). Let $(x^{\pm}_n\,;\,n\in \bbN)$ be dual homogeneous $R_S$-bases of 
$\frakg_{R_S}^{\pm,\vee}$. We 
set 
$$v_n=\wt(x^+_n)=-\wt(x^-_n).$$ 
Relatively to the pairing \eqref{pairing2vee}, we have 
$$\la x^+_{n},x^-_{m}\ra=\delta_{n,m}.$$ Thus
\begin{align}\label{E:proofintegral1}
\begin{split}
v_n=v_m&\Rightarrow[x^+_nu^d,x^-_mu^l]=\delta_{n,m}h_{v_n}u^{d+l},\\
v_n\neq v_m&\Rightarrow[x^+_nu^d,x^-_mu^l] \in \frakg_{R_S}^{\pm,\vee} u^{d+l}
\end{split}
\end{align}
First, we consider an homogeneous element $x\in\bfU_{K_S}^{-,\vee}$ such that
 $\gr \ev_1^\vee(x) \in F_{1,R_S}^\vee$. Set 
\begin{equation}\label{E:proofintegral2}
x=\sum_{\bfn,\bfd} a_{\bfn,\bfd} \,x^-_{n_1}u^{d_1} \cdots  x^-_{n_s}u^{d_s}
, \quad 
a_{\bfn,\bfd} \in K_S
\end{equation}
The sum runs over all pairs $(\bfn,\bfd)$ which are ordered lexicographically 
$$(n_1,d_1) \leqsl (n_2,d_2) \leqsl \cdots\leqsl (n_s,d_s).$$
 We will prove that $a_{\bfn,\bfd} \in R_S$ for all $(\bfn,\bfd)$. 
 Let $s_0$ be the maximal length of tuples 
 $(\bfn,\bfd)$ such that $a_{\bfn,\bfd}\neq 0$. 
 Arguing by induction and using the fact that 
 $$\gr \ev_1^\vee(\bfU_{R_S}^\vee) \subset F_{1,R_S}^\vee,$$ 
 it is enough to prove that $a_{\bfn,\bfd} \in R_S$ for all tuples 
 $(\bfn,\bfd)$ of length $s_0$. 
 Let 
 $$(\bfn,\bfd)=(n_1, \ldots, n_{s_0},d_1,\ldots,d_{s_0})$$ 
 be such a tuple. Let us consider for each $\bfw \in \bfN_1$ the element
$$x_\bfw=x^+_{n_{s_0}} \cdots x^+_{n_1}x (\phi_\bfw^\vee) =
[x_{n_{s_0}}^+, \cdots [x^+_{n_1},x]\ldots]  (\phi_\bfw^\vee) \in R_{\bfw,S}\phi_\bfw^\vee.$$
We claim that
\begin{equation}\label{E:proofintegral3}
x_\bfw \in  c\;a_{\bfn,\bfd}\; h_{v_{n_1}}u^{d_1} \cdots h_{v_{n_{s_0}}}u^{d_{s_0}}(\phi_\bfw^\vee) + \sum_{s<s_0}
\sum_{\bfv',\bfd'} K_S\; h_{v'_1}u^{d'_1} \cdots h_{v'_s}u^{d'_s}(\phi_\bfw^\vee)
\end{equation}
for some positive integer $c$. To see this, observe that after applying to $x$ the operators 
$\ad(x^+_{n_1}),\dots,\ad(x^+_{n_{s_0}})$ and acting on $\phi_\bfw^\vee$ 
we obtain a linear combination of terms of the form
$$y=y_1u^{d_1} \cdots y_{s}u^{d_{s}}(\phi^\vee_{\bfw})
,\quad
y_r \in \frakg_{R_S}$$
with
$\sum_r\wt(y_r)=0$ and $s\leqsl s_0.$ Unless $\wt(y_r)=0$ for all $r$, 
there exists an integer $r\in[1,s]$ such that $\wt(y_r) >0$. 
Then, we have
$$y=\sum_{r'>r} y_1u^{d_1}\cdots \widehat{y_ru^{d_r}} \cdots y_{r'-1}u^{d_{r'-1}}\,[y_r,y_{r'}]u^{d_r+d_{r'}} \,y_{r'+1}u^{d_{r'+1}} \cdots (\phi^\vee_{\bfw})$$
where $\widehat{A}$ means that the term $A$ is omitted.
Iterating this process if necessary, we see that 
$$y \in \sum_{s<s_0}\sum_{\bfv',\bfd'} K_S\; h_{v'_1}u^{d'_1} \cdots h_{v'_s}u^{d'_s}(\phi_\bfw^\vee).$$
If $\wt(y_r)=0$ for all $r$ then the relations \eqref{E:proofintegral1} 
imply that $y$ comes from the term 
$$a_{\bfn,\bfd} x^-_{n_1}u^{d_1} \cdots  x^-_{n_{s_0}}u^{d_{s_0}}(\phi_\bfw^\vee)$$ and is equal to 
$$a_{\bfn,\bfd} h_{v_{n_1}}u^{d_1} \cdots h_{v_{n_{s_0}}}u^{d_{s_0}}(\phi_\bfw^\vee).$$ 
This term may occur several times in $x_\bfw$ if $n_1, \ldots, n_{s_0}$ are not all distinct,
hence the constant $c$.
This proves the formula \eqref{E:proofintegral3}.
Now Lemma \ref{lem:Y02} yields
$$h_{v}u^l(\phi^\vee_{\bfw})=\sum_{i\in I} v_i\; \ch_l(\mathcal{W}_i) \cap \phi_\bfw^\vee
,\quad
\ch_l(\mathcal{W}_i)\in R_{\bfw,S}. $$
The tuple formed the $\ch_l(\mathcal{W}_i)$'s for all $\bfw\in\bfN_1$ yields an element
$$\ch_l(\mathcal{W}_i)\in\prod_{\bfw \in \bfN_1} R_{\bfw,S}.$$
These tuples are algebraically independent
for all $i \in I$ and $l \in\bbN$. 
Moreover, for each
fixed $\bfw$, the set $\{\ch_l(\mathcal{W}_i)\,;\,i \in I,\,l \in\bbN\}$ 
generates the $R_S$-algebra $R_{\bfw,S}$.
Hence, any finite set of monomials in the $\ch_l(\mathcal{W}_i)$'s may be completed into an 
$R_S$-basis of $R_{\bfw,S}$ for $\bfw \gg 0$. Considering $x_\bfw$ for $\bfw \gg 0$ and using \eqref{E:proofintegral3} we deduce that $a_{\bfn,\bfd} \in R_S$. 
This proves (a) for $x \in \bfU_{K_S}^{-,\vee}$. 
The case of $x \in \bfU_{K_S}^{-,\vee} \bfU_{K_S}^{0,w,\vee}$ is proved by the same method,
because the $K_S$-algebra $\bfU_{K_S}^{0,w,\vee}$ is central in $\bfU_{K_S}^\vee$.

Let us prove Part (b).
The $R_S$-module $F_{1,R_S}^\vee$ is torsion free and the following square commutes
\begin{align}\label{square}
\begin{split}
\xymatrix{
\bfY_{R_S}^\vee\,/\,\Ker(\ev_1^\vee)\ar[r]\ar@{^{(}->}[d]_-{\overline\ev_1^\vee}
&\bfY_{K_S}^\vee\,/\,\Ker(\ev_1^\vee)_{K_S}\ar@{^{(}->}[d]^-{\overline\ev_1^\vee}\\
F_{1,R_S}^\vee\ar@{^{(}->}[r]&F_{1,K_S}^\vee}
\end{split}
\end{align}
Hence, the $R_S$-module $\bfY_{R_S}^\vee\,/\,\Ker(\ev_1^\vee)$ embeds into 
$\bfY_{K_S}^\vee\,/\,\Ker(\ev_1)_{K_S}$.
We equip $\bfY_{R_S}^\vee\,/\,\Ker(\ev_1^\vee)$ with the quotient of the $u$-filtration of $\bfY_{R_S}^\vee$.
By base change, this filtration yields a filtration on 
$\bfY_{K_S}^\vee/\Ker(\ev_1^\vee)_{K_S}$.
We equip  the $R_S$-submodule 
$$(\overline\ev_1^\vee)^{-1}(F_{1,R_S}^\vee)\subset\bfY_{K_S}^\vee/\Ker(\ev_1^\vee)_{K_S}$$ 
with the induced filtration. 
By Lemma \ref{lem:evstrict}, the associated graded of the evaluation map
$$\overline\ev_1^\vee:\bfY_{K_S}^\vee\,/\,\Ker(\ev_1^\vee)_{K_S}\to F_{1,K_S}^\vee$$
is a map
$$\gr\, \overline\ev_1^\vee:\bfU_{K_S}^\vee/\Ker(\gr\ev_1^\vee)_{K_S}\to F_{1,K_S}^\vee.$$
We consider the subset
$$(\gr\, \overline\ev_1^\vee)^{-1}(F_{1,R_S}^\vee)\subset\bfU_{K_S}^\vee/\Ker(\gr\ev_1^\vee)_{K_S}.$$
By Corollary \ref{cor:kerU}, the obvious inclusion
$\bfU_{K_S}^{-,\vee}\bfU_{K_S}^{0,w,\vee}\subset\bfU_{K_S}^\vee$ yields an isomorphism 
$$\bfU_{K_S}^{-,\vee}\bfU_{K_S}^{0,w,\vee}=\bfU_{K_S}^\vee\,/\,\Ker(\gr\ev_1^\vee)_K.$$
Under this isomorphism we have
\begin{align}\label{isom}
\bfU_{K_S}^{-,\vee}\bfU_{K_S}^{0,w,\vee}\cap (\gr \ev_1^\vee)^{-1}(F_{1,R_S}^\vee)=
(\gr\, \overline\ev_1^\vee)^{-1}(F_{1,R_S}^\vee)
\end{align}
The square \eqref{square} yields the inclusion
\begin{align*}\bfY_{R_S}^\vee/\Ker(\ev_1^\vee)\subset
(\overline\ev_1^\vee)^{-1}(F_{1,R_S}^\vee).
\end{align*}
Taking the associated graded for the $u$-filtrations, we get a chain of inclusions
\begin{align}\label{subset}\gr(\bfY_{R_S}^\vee/\Ker(\ev_1^\vee))\subset
\gr((\overline\ev_1^\vee)^{-1}(F_{1,R_S}^\vee))
\subset(\gr\, \overline\ev_1^\vee)^{-1}(F_{1,R_S}^\vee).
\end{align}
By Part (a), these inclusions fit into the following commutative diagram
\begin{align*}
\xymatrix{
\gr(\bfY_{R_S}^\vee/\Ker(\ev_1^\vee))\ar@{^{(}->}[r]\ar@{=}[d]
&\gr((\overline\ev_1^\vee)^{-1}(F_{1,R_S}^\vee))\ar@{^{(}->}[r]
&(\gr\,\overline\ev_1^\vee)^{-1}(F_{1,R_S}^\vee)\ar@{=}[d]^-{\eqref{isom}}\\
\bfU_{R_S}^{-,\vee}\bfU_{R_S}^{0,w,\vee}\ar@{=}[rr]^-{\text{(a)}}&&
\bfU_{K_S}^{-,\vee}\bfU_{K_S}^{0,w,\vee}\cap (\gr \ev_1^\vee)^{-1}(F_{1,R_S}^\vee)
}
\end{align*}
where the left isomorphism is given in Lemma \ref{lem:YU}.
We deduce that the inclusions \eqref{subset} are indeed equalities. In particular, we have
$$\gr(\bfY_{R_S}^\vee/\Ker(\ev_1^\vee))=\gr((\overline\ev_1^\vee)^{-1}(F_{1,R_S}^\vee)).$$
Since 
$\bfY_{R_S}^\vee/\Ker(\ev_1^\vee)\subset(\overline\ev_1^\vee)^{-1}(F_{1,R_S}^\vee)$
by \eqref{square}, this implies that
$$\bfY_{R_S}^\vee/\Ker(\ev_1^\vee)=(\overline\ev_1^\vee)^{-1}(F_{1,R_S}^\vee).$$
\end{proof}

We'll need a stronger version of the preceding lemma. To spell it, recall that for each tuple
$\bfw\in\bfN_1$ the stable envelope yields a $\bfY_{R_S}^\vee\otimes R_{\bfw,S}$-linear injective map
\begin{align}\label{F21}
\stab^\vee(1):F_{w,R_{\bfw,S}}^\vee\to F_{\bfw,R_{\bfw,S}}^\vee
,\quad
w=|\bfw|
\end{align}
We equip the $R_{\bfw,S}$-module $F_{w,R_{\bfw,S}}^\vee$ with the filtration induced by the $u$-filtration of
$F_{\bfw,R_{\bfw,S}}^\vee$ under the inclusion \eqref{F21}.
Let $\gr_\bfw(F_{w,R_{\bfw,S}}^\vee)$ denote the associated graded.
Next, the map \eqref{rhorhovee} yields a homomorphism of filtered modules
$$\rho_w^\vee:\bfY_{R_S}^\vee\to\End_{R_{\bfw,S}}(F_{w,R_{\bfw,S}}^\vee).$$
We consider its associated graded.
By base change from $R_S$ to $K_S$, we get the following maps
\begin{align*}
\rho_w^\vee&:\bfY_{K_S}^\vee\to\End_{R_{\bfw,S}}(F_{w,R_{\bfw,S}}^\vee)\otimes_{R_S}K_S,\\
\gr_\bfw\rho_w^\vee&:\bfU_{K_S}^\vee\to\End_{R_{\bfw,S}}(\gr_\bfw F_{w,R_{\bfw,S}}^\vee)\otimes_{R_S}K_S
\end{align*}

\begin{lemma}\label{lem:lastlast}
\hfill
\begin{enumerate}[label=$\mathrm{(\alph*)}$,leftmargin=8mm,itemsep=1.2mm]
\item Let  $x\in\bfU_{K_S}^\vee$ such that
$\gr_\bfw\rho_w^\vee(x)$ preserves $\gr_\bfw F_{w,R_{\bfw,S}}^\vee$ for all $\bfw\in\bfN_1$.
Then $x\in\bfU_{R_S}^\vee$.
\item Let $x\in\bfY_{K_S}^\vee$ such that
$\rho_w^\vee(x)$ preserves $F_{w,R_{\bfw,S}}^\vee$ for all $\bfw\in\bfN_1$.
Then $x\in\bfY_{R_S}^\vee$.
\end{enumerate}
\end{lemma}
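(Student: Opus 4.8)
The overall strategy is to deduce (b) from (a) by a $u$-filtration induction, and to prove (a) by reducing it to Lemma~\ref{lem:last}(a) through the stable envelope and then bootstrapping with the cyclicity of the modules. For (a), recall that $\stab^\vee(1)\colon F_{w,R_{\bfw,S}}^\vee\to F_{\bfw,R_{\bfw,S}}^\vee$ is $R_{\bfw,S}$-linear, injective and generically invertible (Lemma~\ref{lem:ss}), strict for the $u$-filtrations (by definition of $\gr_\bfw$), intertwines $\rho_w^\vee$ with $\rho_\bfw^\vee$ (Lemma~\ref{lem:Deltavee}(a) and the comultiplication property), and sends $\phi_w^\vee$ to $\phi_\bfw^\vee$ (see the proof of Lemma~\ref{lem:KK}). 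Hence, if $\gr_\bfw\rho_w^\vee(x)$ preserves $\gr_\bfw F_{w,R_{\bfw,S}}^\vee$ for all $\bfw\in\bfN_1$, then $\gr\ev_\bfw^\vee(x)=\stab^\vee(1)\bigl(\gr_\bfw\rho_w^\vee(x)(\phi_w^\vee)\bigr)\in F_{\bfw,R_{\bfw,S}}^\vee$ for all such $\bfw$, so $\gr\ev_1^\vee(x)\in F_{1,R_S}^\vee$.

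Now use the decomposition $\bfU_{K_S}^\vee=\bfU_{K_S}^{-,\vee}\bfU_{K_S}^{0,w,\vee}\oplus\Ker(\gr\ev_1^\vee)_{K_S}$ coming from the triangular decomposition \eqref{Upm0vee} and Corollary~\ref{cor:kerU}(b), and write $x=x_0+k$ with $x_0\in\bfU_{K_S}^{-,\vee}\bfU_{K_S}^{0,w,\vee}$ and $k\in\Ker(\gr\ev_1^\vee)_{K_S}=\bfU_{K_S}^\vee\frakh_{K_S}^{v,\vee}[u]\oplus\bfU_{K_S}^\vee\frakg_{K_S}^{+,\vee}[u]$ (Lemma~\ref{lem:kerU}(b)). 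Since $\gr\ev_1^\vee(x_0)=\gr\ev_1^\vee(x)\in F_{1,R_S}^\vee$, Lemma~\ref{lem:last}(a) gives $x_0\in\bfU_{R_S}^{-,\vee}\bfU_{R_S}^{0,w,\vee}\subset\bfU_{R_S}^\vee$, and therefore $\gr_\bfw\rho_w^\vee(k)=\gr_\bfw\rho_w^\vee(x)-\gr_\bfw\rho_w^\vee(x_0)$ still preserves $\gr_\bfw F_{w,R_{\bfw,S}}^\vee$ for every $\bfw$. So we are reduced to showing that such a $k$ lies in $\bfU_{R_S}^\vee$.

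To treat $k$ we use the cyclicity of the COHA action on $F_{w,R_{w,S}}^\vee$ from \cite{SV18} (Proposition~\ref{prop:COHA}(a)): its generators $e_{i,l}$ act through the integral negative‑weight elements $E_{0,R_S}^\vee(\gamma_i\gamma_i^*m_iu^l)\in\bfY_{R_S}^{-,\vee}$ (cf.\ Proposition~\ref{prop:Phi}), so the lattice $\gr_\bfw F_{w,R_{\bfw,S}}^\vee$ is $R_{\bfw,S}$‑spanned by the vectors $\gr\rho_w^\vee(a)(\phi_w^\vee)$ with $a\in\bfU_{R_S}^{-,\vee}\bfU_{R_S}^{0,w,\vee}$. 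Applying $\gr_\bfw\rho_w^\vee(k)$ to such a vector and rewriting $ka=ak+[k,a]$, using that $k$ annihilates $\phi_w^\vee$, we get $\gr\ev_\bfw^\vee([k,a])\in F_{\bfw,R_{\bfw,S}}^\vee$; iterating with several generators produces iterated brackets $[\,\cdots[k,a_1],\dots,a_r]$ which, once the positive part of $k$ has been exhausted against the weights of the $a_j$, land in $\bfU^{-,\vee}\bfU^{0,\vee}$ and can be detected on the vacuum. Running the weight induction and the algebraic‑independence argument for the classes $\ch_l(\calW_i)$ exactly as in the proofs of Lemma~\ref{lem:last}(a) and Lemma~\ref{lem:39}, one concludes that every $K_S$‑coefficient in the PBW expansion of $k$ lies in $R_S$, i.e.\ $k\in\bfU_{R_S}^\vee$. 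This bookkeeping — controlling the iterated brackets and the central $\ch_l(\calW_i)$ terms arising when weights balance — is the main obstacle; everything else is formal.

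Finally, (b) follows from (a) by the $u$‑filtration. The map $\rho_w^\vee$ is a morphism of $u$‑filtered $R_S$‑algebras (Lemma~\ref{lem:ufilt}), $\stab^\vee(1)$ is a strict filtered embedding, and $\gr(\bfY_{R_S}^\vee)=\bfU_{R_S}^\vee$, $\gr(\bfY_{K_S}^\vee)=\bfU_{K_S}^\vee$. If $x\in\bfY_{K_S}^\vee$ has $\rho_w^\vee(x)$ preserving $F_{w,R_{\bfw,S}}^\vee$ for all $\bfw$ and $l$ is its $u$‑filtration degree, then $\sigma_l(x)\in\bfU_{K_S}^\vee$ and $\gr_\bfw\rho_w^\vee(\sigma_l(x))$ preserves $\gr_\bfw F_{w,R_{\bfw,S}}^\vee$, so $\sigma_l(x)\in\bfU_{R_S}^\vee$ by (a). Lifting $\sigma_l(x)$ to $z\in\bfY_{R_S}^\vee$ of filtration degree $l$ (possible since $\bfY_{R_S}^\vee$ is a free $R_S$‑module with $\gr(\bfY_{R_S}^\vee)=\bfU_{R_S}^\vee$) and replacing $x$ by $x-z$, which satisfies the same hypothesis but has strictly smaller filtration degree, descending induction on $l$ yields $x\in\bfY_{R_S}^\vee$.
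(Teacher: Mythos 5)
Your overall strategy—split $x=x_0+k$ according to the decomposition $\bfU_{K_S}^\vee=\bfU_{K_S}^{-,\vee}\bfU_{K_S}^{0,w,\vee}\oplus\Ker(\gr\ev_1^\vee)_{K_S}$, settle the $x_0$ part by Lemma~\ref{lem:last}(a), then reduce the $k$ part by iterated adjoint operators and evaluate on the vacuum—is structurally the same as the paper's. The paper phrases this directly in terms of the four-factor PBW expansion of $x$ and observes that $\mathcal U=\bigcap_\bfw(\gr_\bfw\rho_w^\vee)^{-1}(\End_{R_{\bfw,S}}(\gr_\bfw F_{w,R_{\bfw,S}}^\vee))$ is an $R_S$-subalgebra containing $\bfU_{R_S}^\vee$, hence stable under $\ad$ by integral elements; your version obtains the same terms via $ka=ak+[k,a]$. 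Your reduction of (b) to (a) via the $u$-filtration likewise matches the paper.

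There are, however, two genuine issues with the way you handle $k$. First, the "cyclicity of the COHA" detour is both unnecessary and potentially circular. Proposition~\ref{prop:COHA}(a) gives surjectivity of $\ev_w^\vee$ on the integral COHA $\overline\Y^\vee_{R_S}$, but the generating set $\{e_{i,l}\}$ of Lemma~\ref{lem:generators} is only a $K_S$-generating set; that $\Phi$ sends the integral COHA into $\bfY_{R_S}^\vee$ is Theorem~\ref{thm:2}(c), which is proved \emph{using} the present lemma. So the claimed spanning of the lattice $\gr_\bfw F_{w,R_{\bfw,S}}^\vee$ by $\gr\rho_w^\vee(a)(\phi_w^\vee)$ with $a\in\bfU_{R_S}^{-,\vee}\bfU_{R_S}^{0,w,\vee}$ is not available at this stage. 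Fortunately you do not actually need it: since $a\in\bfU_{R_S}^\vee$ already preserves the lattice, $a(\phi_w^\vee)$ is integral and one gets $[k,a](\phi_w^\vee)$ integral directly, exactly as in the paper's proof, without any cyclicity. You should drop the cyclicity step entirely.

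Second, the core bookkeeping is not carried out, and you explicitly concede this ("the main obstacle"). The paper's reduction of $k$ is a two-stage process that your sketch collapses into one: first one applies $\ad(x^-_{n^+_1})\circ\cdots\circ\ad(x^-_{n^+_s})$ against a carefully chosen PBW monomial of maximal length $l(\bfn^w)+l(\bfn^+)$, producing terms in $\bfU_{K_S}^\vee\bfU_{>0,K_S}^{+,\vee}$ (which vanish on the vacuum) plus controlled terms as in \eqref{47e}--\eqref{47d}; then, separately, one applies $\ad(x^-_{m}u^{p})$ for generic $m,p$ to strip off the $\bfU^{0,v,\vee}$ factor, landing only then in $\bfU^{-,\vee}\bfU^{0,w,\vee}$ where Lemma~\ref{lem:last} applies. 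Your formulation "once the positive part of $k$ has been exhausted ... land in $\bfU^{-,\vee}\bfU^{0,\vee}$" omits the $\bfU^{0,v,\vee}$ stage (and even mis-states the target as $\bfU^{-,\vee}\bfU^{0,\vee}$ rather than $\bfU^{-,\vee}\bfU^{0,w,\vee}$). Without the two-stage reduction and the choice of extremal tuples, it is not clear how to reach a monomial to which Lemma~\ref{lem:last} can be applied, so this part of your proof is a gap rather than a matter of terse writing.
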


\begin{proof}
The proof of (a) bears some similarity with that of Lemma~\ref{lem:last}(a). 
Set
$$\mathcal{U}=\bigcap_{\bfw \in \bfN_1}(\gr_\bfw\,\rho_w^\vee)^{-1}
(\End_{R_{\bfw,S}}(\gr_\bfw F_{w,R_{\bfw,S}}^\vee)).$$
It is an $R_S$-subalgebra of $\bfU_{K_S}^\vee$ containing $\bfU_{R_S}^\vee$. 
Hence, it is stable by left and right multiplication by elements of $\bfU_{R_S}^\vee$.
 Let $(x^{\pm}_n\,;\,n\in \bbN)$ be dual homogeneous $R_S$-bases of 
$\frakg_{R_S}^{\pm,\vee}$ as in the proof of Lemma \ref{lem:last}.
We set $v_n=\wt(x^+_n)$ and $\wt(x^-_n)=-v_n$. 
The triangular decomposition in \eqref{Upm0vee} yields
$$\bfU_{R_S}^{-,\vee}\otimes_{R_S}\bfU_{R_S}^{0,w,\vee}\otimes_{R_S}\bfU_{R_S}^{0,v,\vee}\otimes_{R_S}\bfU_{R_S}^{+,\vee}=\bfU_{R_S}^\vee.$$
The $R_S$-algebra $\bfU_{R_S}^{\pm,\vee}$ is spanned by monomials
$$x_{\bfn,\bfd}^\pm=x^\pm_{n_1}u^{d_1} \cdots  x^\pm_{n_s}u^{d_s}$$
where the tuple $(\bfn,\bfd)=(n_1, \ldots, n_s,d_1,\ldots,d_s)$ is ordered lexicographically 
$$(n_1,d_1) \leqsl (n_2,d_2) \leqsl \cdots\leqsl (n_s,d_s)$$
The $R_S$-algebras $\bfU_{R_S}^{0,w,\vee}$ and $\bfU_{R_S}^{0,v,\vee}$ are spanned by monomials
\begin{align*}
h^w_{\bfn^w,\bfd^w}&=b_{i_1}u^{d_1} \cdots  b_{i_s}u^{d_s},\\
h^v_{\bfn^v,\bfd^v}&=a_{i_1}u^{d_1} \cdots  a_{i_s}u^{d_s}
\end{align*}
with $(\bfn^w,\bfd^w)=(\bfn^v,\bfd^v)=(i_1, \ldots, i_s,d_1,\ldots,d_s).$
See Lemma \ref{lem:Y03} for more details. 
Let $l(\bfn)$ denote the length of a tuple $\bfn$, so that we have $l(n_1, \ldots, n_s)=s.$
We'll prove the inclusion 
$$\calU \subset \bfU_{R_S}^\vee$$ by contradiction. 
Fix an element $x\in\calU$. We write
$$x=\sum_{\bfn,\bfd} c_{\bfn,\bfd} \,x_{\bfn,\bfd}
,\quad
x_{\bfn,\bfd}=x^-_{\bfn^-,\bfd^-}\,h^w_{\bfn^w,\bfd^w}\,h^v_{\bfn^v,\bfd^v}\,x^+_{\bfn^+,\bfd^+}
$$
where 
\begin{gather*}
c_{\bfn,\bfd} \in K_S
,\quad
\bfn=(\bfn^-,\bfn^w,\bfn^v,\bfn^+)
,\quad
\bfd=(\bfd^-,\bfd^w,\bfd^v,\bfd^+).
\end{gather*}
Assume that not all coefficients $c_{\bfn,\bfd}$ belong to $R_S$. Subtracting all monomials with 
$c_{\bfn,\bfd} \in R_S$, we may assume that $c_{\bfn,\bfd} \notin R_S$ 
whenever $c_{\bfn,\bfd} \neq 0$. 
 Fix a tuple $\bfw\in\bfN_1$. Let $\phi_w^\vee$ be the vacuum vector in
 $\gr_\bfw F_{w,R_{\bfw,S}}^\vee$. Recall that $w=|\bfw|$. 
 By Lemma \ref{lem:Y03}, for $n,d\in\bbN$, $i\in I$ we have
\begin{gather*}
x(\phi_w^\vee)\in \gr_\bfw F_{w,R_{\bfw,S}}^\vee,\\
a_iu^{d+1}(\phi_w^\vee)=x_n^+u^d(\phi_w^\vee)=0.
 \end{gather*}
By Lemma~\ref{lem:last}, applying the evaluation map to $x$ yields
$$l(\bfn^v)=l(\bfn^+)=0\Rightarrow c_{\bfn,\bfd}=0.$$
Pick a tuple $( \bfn, \bfd)$ such that
\begin{itemize}[label=$\mathrm{(\alph*)}$,leftmargin=8mm,itemsep=1.2mm]
\item[-]
$c_{\bfn,\bfd} \neq 0$, 
\item[-]
the weight $\wt(x^+_{\bfn,\bfd})$ is minimal,
\item[-]
the length $l( \bfn^w)+l( \bfn^+)$ is maximal.
\end{itemize}
Set $s^w=l( \bfn^w)$ and $s=l( \bfn^+)$.
For weight reasons, for each tuple $(\bfn_1,\bfd_1)$ we have
$$ \wt(x^+_{\bfn^+,\bfd^+})<\wt(x^+_{\bfn_1^+,\bfd_1^+})\Rightarrow
\ad(x^-_{n^+_1}) \cdots  \ad(x^-_{n^+_s}) (x_{ \bfn_1, \bfd_1}) \in 
\bfU_{R_S}^\vee\bfU_{>0,R_S}^{+,\vee}.$$
By Lemmas \ref{lem:pairingg} and \ref{lem:Y02} , applying 
$\ad(x^-_{n^+_1}) \circ \cdots \circ \ad(x^-_{n^+_s})$ to $x^+_{ \bfn_1^+, \bfd_1^+}$ produces 
at most $s$ factors $b_i u^l$, and exactly $s$ only when $ \bfn^+= \bfn_1^+$.
We deduce that
$\ad(x^-_{n^+_1}) \cdots \ad(x^-_{n^+_s}) (x)$ is a sum of terms in 
$\bfU_{K_S}^\vee \bfU_{>0,K_S}^{+,\vee}$, 
plus terms of the form 
\begin{align}\label{47e}
c_{ \bfn_1, \bfd_1}\, x^-_{ \bfn_1^-,  \bfd_1^-}\,h_{ \bfn_1^w, \bfd_1^w}^w\,h_{ \bfn_1^v, \bfd_1^v}^v
,\quad
s^w+s>l( \bfn_1^w),
\end{align}
plus terms of the form
\begin{align}\label{47d}
c_{ \bfn_1, \bfd_1}\, x^-_{ \bfn_1^-,  \bfd_1^-}\,h_{ \bfn_1^w, \bfd_1^w}^w
\,h_{ \bfn_1^v, \bfd_1^v}^v\,(h_{n^+_1}u^{l_1}) \cdots (h_{n^+_s}u^{l_s}),
\quad
s^w=l(\bfn_1^w)
,\quad
\bfn^+= \bfn_1^+.
\end{align}
Compare the proof of Lemma~\ref{lem:last}.

We next apply a similar procedure to transform all the factors $a_iu^l$ into some $x^-_nu^k$. 
To do that, we pick among the terms in \eqref{47d} of the form
$$c_{\bfn_1,\bfd_1}x^-_{ \bfn_1^-,\bfd_1^-} h^{w}_{ \bfn_1^w,\bfd_1^w}h^v_{ \bfn_1^v,\bfd_1^v},$$ 
i.e., the terms having a maximal length $l(\bfn_1^w)$, one for which the length
$l(\bfn_1^-) + l( \bfn_1^v)$ is maximal.
Then, set $r=l( \bfn_1^v)$ and $t=l( \bfn_1^-) + l( \bfn_1^v)$. 
We apply  $\ad(x^-_{m_1}u^{p_1}) \circ \cdots \circ \ad(x^-_{m_r}u^{p_r})$ to
$\ad(x^-_{n^+_1}) \cdots \ad(x^-_{n^+_s}) (x)$, 
for generic tuples $\bfm=(m_1,\dots,m_r)$ and $\bfp=(p_1,\dots,p_r)$. 
As above, we obtain a sum of terms in
$\bfU_{K_S}^\vee \bfI$ where $\bfI$ is the augmentation ideal in $\bfU_{K_S}^{0,v,\vee}\bfU_{K_S}^{+,\vee}$,
plus terms of the form
$$c_{\bfn_2,\bfd_2}\, x^-_{ \bfn_2^-,\bfd_2^-}\,x^-_{\bfm,\bfq}\,h^w_{\bfn_2^w, \bfd_2^w},$$
plus terms of the form 
$$c_{\bfn_2,\bfd_2}\,x^-_{\bfn_2^-,  \bfd_2^-}\,h_{\bfn_2^w, \bfd_2^w}^w
,\quad
t>l(\bfn_2^-) \ \text{or}\  s^w+s>l(\bfn_2^w).$$ 
Therefore, after applying a sequence of operators $\ad(x_n^-u^l)$ to $x$ we have obtained an element 
$$x'=\sum_{\bfn,\bfd} c'_{\bfn,\bfd} \,x_{\bfn,\bfd}\in\calU$$
for which there exists a pair $(\bfn, \bfd)$ such that $l( \bfn^v)=l( \bfn^+)=0$ 
and  $c'_{\bfn,\bfd}\notin R_S$. 
This contradicts Lemma~\ref{lem:last}. This finishes the proof of Part (a).
	
\medskip


Now, we prove Part (b). We equip the $R_{\bfw,S}$-submodule 
$$(\rho_w^\vee)^{-1}(\End_{R_{\bfw,S}}(F_{w,R_{\bfw,S}}^\vee))\subset\bfY_{K_S}^\vee$$ 
with the filtration induced by the $u$-filtration. Let
$$\gr((\rho_w^\vee)^{-1}(\End_{R_{\bfw,S}}(F_{w,R_{\bfw,S}}^\vee)))\subset\bfU_{K_S}^\vee$$ 
be the associated graded. There is an obvious inclusion
$$\gr((\rho_w^\vee)^{-1}(\End_{R_{\bfw,S}}(F_{w,R_{\bfw,S}}^\vee)))\subset
(\gr_\bfw\rho_w^\vee)^{-1}(\End_{R_{\bfw,S}}(\gr_\bfw F_{w,R_{\bfw,S}}^\vee)).$$
Thus Part (a) implies that
$$\bigcap_{\bfw\in\bfN_1}\gr((\rho_w^\vee)^{-1}(\End_{R_{\bfw,S}}(F_{w,R_{\bfw,S}}^\vee)))
\subset\bfU_{R_S}^\vee$$
Since
$$\gr(\bigcap_{\bfw\in\bfN_1}(\rho_w^\vee)^{-1}(\End_{R_{\bfw,S}}(F_{w,R_{\bfw,S}}^\vee)))
\subset\bigcap_{\bfw\in\bfN_1}\gr((\rho_w^\vee)^{-1}(\End_{R_{\bfw,S}}(F_{w,R_{\bfw,S}}^\vee)))$$
we deduce that
\begin{align}\label{47a}
\gr(\bigcap_{\bfw\in\bfN_1}(\rho_w^\vee)^{-1}(\End_{R_{\bfw,S}}(F_{w,R_{\bfw,S}}^\vee)))
\subset\bfU_{R_S}^\vee
\end{align}
On the other hand, we have
\begin{align}\label{47b}
\bfY_{R_S}^\vee\subset\bigcap_{\bfw\in\bfN_1}(\rho_w^\vee)^{-1}(\End_{R_{\bfw,S}}(F_{w,R_{\bfw,S}}^\vee))
\end{align}
hence
\begin{align}\label{47c}
\bfU_{R_S}^\vee\subset\gr(\bigcap_{\bfw\in\bfN_1}(\rho_w^\vee)^{-1}(\End_{R_{\bfw,S}}(F_{w,R_{\bfw,S}}^\vee)))
\end{align}
From \eqref{47a}, \eqref{47c} we deduce that
$$\bfU_{R_S}^\vee=\gr(\bigcap_{\bfw\in\bfN_1}(\rho_w^\vee)^{-1}(\End_{R_{\bfw,S}}(F_{w,R_{\bfw,S}}^\vee))).$$
Comparing with \eqref{47b}, we deduce that
$$\bfY_{R_S}^\vee=\bigcap_{\bfw\in\bfN_1}(\rho_w^\vee)^{-1}(\End_{R_{\bfw,S}}(F_{w,R_{\bfw,S}}^\vee)),$$
proving the lemma.
\end{proof}

\smallskip

\begin{theorem}\label{thm:2}
\hfill
\begin{enumerate}[label=$\mathrm{(\alph*)}$,leftmargin=8mm,itemsep=1.2mm]
\item 
There is a $\bbZ I\times\bbZ$-graded $R_S$-linear isomorphism
$$\overline\Y_{R_S}^\vee=\bfY_{R_S}^\vee\,/\,\Ker(\ev_2^\vee).$$
\item
There is a $\bbZ I\times\bbZ$-graded $R_S$-linear isomorphism
$$\gr(\overline\Y_{R_S}^\vee)=\bfU_{R_S}^{-,\vee}\bfU_{R_S}^{0,w,\vee},$$
where the filtration on $\overline\Y_{R_S}^\vee$ is the pulbback by the isomorphism
$\operatorname{(a)}$ of the quotient of the $u$-filtration on $\bfY_{R_S}^\vee$.
\item
The map $\Phi$ is an $R_S$-algebra embedding $\overline\Y_{R_S}^\vee\to\bfY_{R_S}^\vee$.
\end{enumerate}
\end{theorem}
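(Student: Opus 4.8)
The plan is to upgrade the embedding $\Phi$ of Proposition~\ref{prop:Phi} to an integral morphism (part (c)), then to identify its image with $\bfY_{R_S}^\vee/\Ker(\ev_2^\vee)$ (part (a)), after which (b) follows at once from Lemma~\ref{lem:YU}. For (c): by Proposition~\ref{prop:Phi}, the $K$-level identifications $\overline\Y_{K}^\vee=\overline\Y_{K}$, $\bfY_{K}^\vee=\bfY_{K}$ and base change, one gets a $\bbZ I$-graded $K_S$-algebra embedding $\Phi\colon\overline\Y_{K_S}^\vee\to\bfY_{K_S}^\vee$ which, through these identifications and the isomorphism $\iota$ of \eqref{iota1}, intertwines $\rho_w^\vee$ with the COHA representation $\rho_w^\vee$ of \eqref{rhoCOHA} on $F_{w,R_{w,S}}^\vee\otimes_{R_S}K_S$ for every $w$. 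Fixing $x\in\overline\Y_{R_S}^\vee$, the COHA representation is $R_{w,S}$-linear, so $\rho_w^\vee(\Phi(x))=\rho_w^\vee(x)$ preserves the lattice $F_{w,R_{\bfw,S}}^\vee$ for all $\bfw\in\bfN_1$; Lemma~\ref{lem:lastlast}(b) then forces $\Phi(x)\in\bfY_{R_S}^\vee$, so $\Phi$ restricts to an $R_S$-algebra map $\overline\Y_{R_S}^\vee\to\bfY_{R_S}^\vee$, injective because $\overline\Y_{R_S}^\vee$ is $R_S$-free (Proposition~\ref{prop:COHA}(f)) and $\Phi$ is injective over $K_S$.

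For (a): since $\Phi$ intertwines the representations and matches the vacuum vectors, $\ev_2^\vee\circ\Phi=\ev_2^{\vee,\mathrm{COHA}}$, which is injective (Proposition~\ref{prop:COHA}(c)); hence $\Phi(\overline\Y_{R_S}^\vee)\cap\Ker(\ev_2^\vee)=0$ and $\Phi$ descends to an injection $\overline\Phi\colon\overline\Y_{R_S}^\vee\hookrightarrow\bfY_{R_S}^\vee/\Ker(\ev_2^\vee)$. The remaining point is surjectivity of $\overline\Phi$, which (after replacing $\ev_2^\vee$ by $\ev_1^\vee$ via Lemma~\ref{lem:KK}) I would prove in two steps. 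First, over $K_S$: by the results of \cite{SV18b}, building on Negut's spherical generation (Lemma~\ref{lem:generators}), $\Phi$ identifies $\overline\Y_{K_S}^\vee$ with $\bfY_{K_S}^{-,\vee}\bfY_{K_S}^{0,w,\vee}$, and the triangular decomposition $\bfY_{K_S}^\vee=\bfY_{K_S}^{-,\vee}\otimes\bfY_{K_S}^{0,\vee}\otimes\bfY_{K_S}^{+,\vee}$ together with the $K_S$-analogue of Lemma~\ref{lem:kerU}(b) shows $\bfY_{K_S}^{-,\vee}\bfY_{K_S}^{0,w,\vee}\xrightarrow{\sim}\bfY_{K_S}^\vee/\Ker(\ev_1^\vee)_{K_S}$, so $\overline\Phi_{K_S}$ is an isomorphism. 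Second, descend to $R_S$: Lemma~\ref{lem:last}(b) realizes $\bfY_{R_S}^\vee/\Ker(\ev_1^\vee)$ inside $\bfY_{K_S}^\vee/\Ker(\ev_1^\vee)_{K_S}$ as $(\overline\ev_1^\vee)^{-1}(F_{1,R_S}^\vee)$, which under $\overline\Phi_{K_S}^{-1}$ becomes $\{z\in\overline\Y_{K_S}^\vee: \ev_1^{\vee,\mathrm{COHA}}(z)\in F_{1,R_S}^\vee\}=\overline\Y_{R_S}^\vee$, the last equality being the COHA counterpart of Lemma~\ref{lem:last}(b) (a consequence of Proposition~\ref{prop:COHA}(a)--(b) via the density argument used in the proof of Lemma~\ref{lem:last}(a)). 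This makes $\overline\Phi$ an isomorphism. The hard part here is precisely the $K_S$-surjectivity: the genuine content is that $\overline\Y^\vee$ plus the central tautological subalgebra fills out the whole lower-triangular-times-central part of $\bfY^\vee$, which is where Negut's spherical generation and the triangular decomposition of the Maulik--Okounkov Yangian over $K$ enter in an essential way, the passage back to the $R_S$-form resting on the saturation statements of Lemmas~\ref{lem:last} and~\ref{lem:lastlast}.

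Finally (b): transporting the quotient of the $u$-filtration of $\bfY_{R_S}^\vee$ through the isomorphism of (a) and applying Lemma~\ref{lem:YU}(b) (valid for $a=2$ since $\Ker(\ev_2^\vee)=\Ker(\ev_0^\vee)$ by Lemma~\ref{lem:KK}) yields the $\bbZ I\times\bbZ$-graded $R_S$-linear isomorphism $\gr(\overline\Y_{R_S}^\vee)\simeq\bfU_{R_S}^{-,\vee}\bfU_{R_S}^{0,w,\vee}$, as asserted.
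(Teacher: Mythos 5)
Your parts (b) and (c) follow the paper exactly: part (c) is the combination of Proposition~\ref{prop:Phi} with Lemma~\ref{lem:lastlast}(b), and part (b) is Lemma~\ref{lem:YU} together with $\Ker(\ev_1^\vee)=\Ker(\ev_2^\vee)$ from Lemma~\ref{lem:KK}. Likewise the injectivity in part (a) is as in the paper: $\Phi$ intertwines the vacuum evaluations, and $\ev_2^\vee$ is injective on the COHA by Proposition~\ref{prop:COHA}(c).

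The surjectivity in part (a) is where you depart from the paper, and where your argument has genuine gaps. You propose to first establish $\overline\Phi_{K_S}$ is onto by invoking a triangular decomposition $\bfY_{K_S}^\vee=\bfY_{K_S}^{-,\vee}\otimes\bfY_{K_S}^{0,\vee}\otimes\bfY_{K_S}^{+,\vee}$ with $\Phi(\overline\Y_{K_S}^\vee)=\bfY_{K_S}^{-,\vee}\bfY_{K_S}^{0,w,\vee}$, then descend to $R_S$ using Lemma~\ref{lem:last}(b) together with a ``COHA counterpart'' of that lemma. But the subalgebras $\bfY_{R_S}^{\pm,\vee}$, $\bfY_{K_S}^{\pm,\vee}$ are only introduced in \eqref{TDA}--\eqref{TDB}, and the triangular decomposition of the Yangian is Theorem~\ref{thm:last}, whose proof (through Proposition~\ref{prop:TYMO}, which cites Theorem~\ref{thm:2}(b) at the very first step \eqref{YYU}) depends on the very theorem you are proving. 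So invoking that decomposition here is circular, unless you independently reprove a $K$-level version---the authors only note in the acknowledgements that such a localized route is \emph{possible}, they do not carry it out, and Proposition~\ref{prop:Phi} (with \cite{SV18b}, \cite{N23b}) only gives you an embedding, not the surjectivity onto a triangular half. Your descent step also needs the ``COHA counterpart'' $\{z\in\overline\Y_{K_S}^\vee\,;\,\ev_1^\vee(\Phi(z))\in F_{1,R_S}^\vee\}=\overline\Y_{R_S}^\vee$; this is a nontrivial saturation statement. Lemma~\ref{lem:last}(a) proves its Yangian analogue by a long induction using the dual bases $x_n^\pm$ and the root/coroot pairings on $\frakg_{R_S}^{\pm,\vee}$---data living on the Yangian side, which you cannot transport to the COHA without already having the isomorphism you seek, and which cannot simply be replaced by a citation to Proposition~\ref{prop:COHA}(a)--(b). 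The paper sidesteps both problems entirely: it restricts to a fixed weight $v$ and cohomological degree $\leqslant l$, picks $w$ large, and runs the commutative triangles \eqref{triangle3}--\eqref{triangle5}. There, the left vertical map is an isomorphism by Proposition~\ref{prop:COHA}(b), the right vertical map is injective, and Lemma~\ref{lem:39} identifies the two quotients on the Yangian side; the horizontal map $\Psi$ is then squeezed into being an isomorphism. That argument needs neither a $K$-level triangular decomposition nor a COHA saturation lemma, and is the piece you would need to supply to make your surjectivity proof actually close.
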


\begin{proof}
Composing the map $\Phi$ in \eqref{Phi} with the obvious projection yields the map
$$\Psi:\overline\Y_{K_S}^\vee\to\bfY_{K_S}^\vee\,/\,\Ker(\ev_1^\vee)_{K_S}$$
It fits into the following commutative square
\begin{align*}
\xymatrix{
\overline\Y_{K_S}^\vee\ar[r]^-\Psi\ar@{_{(}->}[d]_-{\ev_2^\vee}&\bfY_{K_S}^\vee\,/\,\Ker(\ev_1^\vee)_{K_S}
\ar@{^{(}->}[d]^-{\overline\ev_1^\vee}
\\
F_{2,K_S}^\vee\ar@{^{(}->}[r]&F_{1,K_S}^\vee}
\end{align*}
The injectivity of the left map is proved in Proposition \ref{prop:COHA}.
The lower map is the product of the stable envelopes
\begin{align*}
\stab^\vee(1):F_{w,R_{\bfw,S}}^\vee\to F_{\bfw,R_{\bfw,S}}^\vee
,\quad
w=|\bfw|
\end{align*}
in \eqref{F21} over all tuples $\bfw\in\bfN_1$.
It is homogeneous of degree 0 for the $\bbZ I\times\bbZ$-grading and
it commutes with the $\bfY^\vee$-action by  \eqref{stabvee}.
Hence Lemma \ref{lem:last} yields the following commutative square
\begin{align}\label{triangle2}
\begin{split}
\xymatrix{
\overline\Y_{R_S}^\vee\ar@{^{(}->}[r]^-\Psi\ar@{_{(}->}[d]_-{\ev_2^\vee}&\bfY^\vee_{R_S}\,/\,\Ker(\ev_1^\vee)
\ar@{^{(}->}[d]^-{\overline\ev_1^\vee}\\
F_{2,R_S}^\vee\ar@{^{(}->}[r]&F_{1,R_S}^\vee
}
\end{split}
\end{align}
The evaluation maps are homogeneous of degree 0 for the $\bbZ I\times\bbZ$-grading,
hence $\Psi$ is also homogeneous of degree 0.
We claim that it is surjective.
Let $v,w\in\bbN I$ and $l\in\bbN$.
We have
$$\Ker(\ev_1^\vee)\,,\,I_w\subset\Ker(\ev_w^\vee).$$
Hence the square \eqref{triangle2} factors through the commutative triangle
\begin{align}\label{triangle3}
\begin{split}
\xymatrix{
\overline\Y^\vee_{-v,\leqsl l,R_S}\,/\,(\overline\Y_{R_S}^\vee I_w\cap \overline\Y^\vee_{-v, \leqsl l,R_S})
\ar[r]^-\Psi\ar[rd]_-{\overline\ev_w^\vee}
&\bfY_{-v,\leqsl l,R_S}^\vee\,/\,(\Ker(\ev_1^\vee)+\bfY_{R_S}^\vee I_w)\cap 
\bfY_{-v, \leqsl l,R_S}^\vee\ar[d]^-{\overline\ev_w^\vee}
\\
&F_{v,w,\leqsl l,R_{w,S}}^\vee}
\end{split}
\end{align}
Assume that $w$ is large enough.
The left map is invertible by \eqref{YF}.
Further, Lemma  \ref{lem:39} yields the following commutative triangle
\begin{align}\label{triangle4}
\begin{split}
\xymatrix{
\bfY_{-v,\leqsl l,R_S}^\vee\,/\,(\Ker(\ev_1^\vee)+\bfY_{R_S}^\vee I_w)\cap \bfY_{-v,\leqsl l,R_S}^\vee
\ar[d]_-{\overline\ev_w^\vee}\ar@{=}[r]&\ar@{^{(}->}[ld]^-{\overline\ev_w^\vee}
\bfY_{-v,\leqsl l,R_S}^\vee/\Ker(\ev_w^\vee)\cap\bfY_{-v,\leqsl l,R_S}^\vee\\
F_{v,w,\leqsl l,R_{w,S}}^\vee&}
\end{split}
\end{align}
Hence \eqref{triangle3} and \eqref{triangle4} yield the following commutative triangle
\begin{align}\label{triangle5}
\begin{split}
\xymatrix{
\overline\Y^\vee_{-v,\leqsl l,R_S}\,/\,(\overline\Y_{R_S}^\vee I_w\cap \overline\Y^\vee_{-v,\leqsl l,R_S})
\ \ar[r]^-\Psi\ar@{=}[rd]_-{\ev_w^\vee}
&\bfY_{-v,\leqsl l,R_S}^\vee\,/\,\Ker(\ev_w^\vee)\cap \bfY_{-v,\leqsl l,R_S}^\vee\ar@{^{(}->}[d]^-{\overline\ev_w^\vee}\\
&F_{v,w,\leqsl l,R_{w,S}}^\vee}
\end{split}
\end{align}
We deduce that the map $\Psi$ in \eqref{triangle5} is invertible.
Summing over all weights $v$ and taking the cohomological degree $l$ to $\infty$, we deduce that 
the map $\Psi$ in \eqref{triangle2} is also invertible, proving Part (a) of the theorem.
Part (b) follows from Lemma \ref{lem:YU}, because $\Ker(\ev_1^\vee)=\Ker(\ev_2^\vee)$ by Lemma \ref{lem:KK}.
Proposition \ref{prop:Phi} yields a map $\Phi:\overline\Y_{R_S}^\vee\to \bfY_{K_S}^\vee$ such that
$$\rho_w^\vee\Phi(x)(y)=\rho_w^\vee(x)(y)
,\quad
x\in\overline\Y_{R_S}^\vee
,\, y\in F_{w,R_{\bfw,S}}^\vee
,\,
\bfw\in\bfN_1.$$
By Lemma \ref{lem:lastlast} we deduce that $\Phi(\overline\Y_{R_S}^\vee)\subset\bfY_{R_S}^\vee$,
proving Part (c), and therefore the theorem.
\end{proof}

\begin{remark}
The proof of the theorem implies that the $\bbZ I\times\bbZ$-graded $R_S$-modules
$\bfU_{R_S}^{-,\vee}$ and $\Y_{R_S}^\vee$ have the same graded dimension.
From Proposition \ref{prop:grading} we deduce that
$\bfU_{R_S}^{+}$ and $\Y_{R_S}^\vee$ also have the same graded dimension.
\end{remark}

\medskip

\subsection{The triangular decomposition of the Yangian}

We identify $\Y_{R_S}$ and $\Y_{R_S}^\vee$ with the subalgebras
$\Y_{R_S}\otimes 1$ and $\Y_{R_S}^\vee\otimes 1$ of $\overline\Y_{R_S}$ and $\overline\Y_{R_S}^\vee$
under the identification \eqref{oYY}.
Recall that the map $(-)^\T:\bfY_{R_S}\to\bfY_{R_S}^\vee$
is the transpose $R$-algebra anti-isomorphism in Lemma \ref{lem:FF}.
Let $(-)^\T$ denote also the inverse map $\bfY_{R_S}^\vee\to\bfY_{R_S}$.
We consider the $R$-subalgebras 
$\bfY_{R_S}^{\pm}\subset\bfY_{R_S}$ and  $\bfY_{R_S}^{\pm,\vee}\subset\bfY_{R_S}^\vee$ given by
\begin{align}\label{TDA}
\bfY_{R_S}^{-,\vee}=\Phi(\Y_{R_S}^\vee)
,\quad
\bfY_{R_S}^+=((\bfY_{R_S}^{-,\vee})^\T)^\op
\end{align}
and
\begin{align}\label{TDB}
\bfY_{R_S}^-=\bfY_{K_S}^{-,\vee}\cap\bfY_{R_S}
,\quad
\bfY_{R_S}^{+,\vee}=\bfY_{K_S}^+\cap\bfY_{R_S}^\vee,
\end{align}
where we abbreviate $\bfY_{K_S}^{\pm}=\bfY_{R_S}^{\pm}\otimes_{R_S}K_S$ and
$\bfY_{K_S}^{\pm,\vee}=\bfY_{R_S}^{\pm,\vee}\otimes_{R_S}K_S$.
We equip $\bfY_{R_S}^{\pm}$ and  $\bfY_{R_S}^{\pm,\vee}$ with the filtrations induced by the $u$-filtrations of 
$\bfY_{R_S}$ and $\bfY_{R_S}^\vee$. 
Let  $\gr(\bfY_{R_S}^{\pm})$ and  $\gr(\bfY_{R_S}^{\pm,\vee})$ be the associated graded
$R_S$-algebras.
By Lemmas \ref{lem:Delta}, \ref{lem:Deltavee} we have $\bbZ I\times\bbZ$-graded $R_S$-bialgebra isomorphisms
\begin{align}\label{lastisom}
\gr(\bfY_{R_S})=\bfU_{R_S}
,\quad
\gr(\bfY_{R_S}^\vee)=\bfU_{R_S}^\vee
\end{align}

\begin{proposition}\label{prop:TYMO}
The $R_S$-algebra isomorphisms in \eqref{lastisom} restrict to $R_S$-algebra isomorphisms 
$$\gr(\bfY_{R_S}^{\pm})=\bfU_{R_S}^{\pm}
,\quad
\gr(\bfY_{R_S}^{\pm,\vee})=\bfU_{R_S}^{\pm,\vee}.$$
\end{proposition}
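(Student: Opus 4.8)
The plan is to reduce the four identities to the single case $\gr(\bfY_{R_S}^{-,\vee})=\bfU_{R_S}^{-,\vee}$ together with one ``lower'' analogue, and to transport these through the transpose and the intersections in \eqref{TDB}. The first observation is that the anti-isomorphism $(-)^\T\colon\bfY_{R_S}\to\bfY_{R_S}^\vee$ of Lemma~\ref{lem:FF} is a strict morphism of $u$-filtered $R_S$-algebras: by Lemma~\ref{lem:FF}(a) it carries $E_{0,R_S}(mu^l)$ to $E_{0,R_S}^\vee(m^\T u^l)$, hence preserves $u$-degrees of generators, and so does its inverse; its associated graded is the transpose $\bfU_{R_S}\to\bfU_{R_S}^\vee$, which, as recorded in \S\ref{sec:Y0}, is compatible with the triangular decompositions \eqref{Upm0}, \eqref{Upm0vee} and interchanges $\bfU_{R_S}^{\pm}$ with $\bfU_{R_S}^{\mp,\vee}$. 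Since \eqref{TDA}, \eqref{TDB} give $\bfY_{R_S}^{+}=(\bfY_{R_S}^{-,\vee})^\T$, $\bfY_{R_S}^{+,\vee}=(\bfY_{R_S}^{-})^\T$ and $\bfY_{R_S}^{-}=(\bfY_{R_S}^{+,\vee})^\T$, it suffices to prove $\gr(\bfY_{R_S}^{-,\vee})=\bfU_{R_S}^{-,\vee}$ and $\gr(\bfY_{R_S}^{-})=\bfU_{R_S}^{-}$.

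For the first, I use that $\Phi$ embeds $\overline\Y_{R_S}^\vee$ into $\bfY_{R_S}^\vee$ by Theorem~\ref{thm:2}(c). Since $\overline\Y_{R_S}^\vee=\Y_{R_S}^\vee\otimes_{R_S}R_{\infty,S}$ with $\Phi(\Y_{R_S}^\vee)=\bfY_{R_S}^{-,\vee}$ by \eqref{TDA} and $\Phi(R_{\infty,S})=\bfY_{R_S}^{0,w,\vee}$, a central subalgebra by Proposition~\ref{prop:Phi}, \eqref{UR} and Lemma~\ref{lem:Y01}(e), the image $\Phi(\overline\Y_{R_S}^\vee)$ is the central product $\bfY_{R_S}^{-,\vee}\bfY_{R_S}^{0,w,\vee}$, and $\Phi$ identifies it as an $R_S$-module with $\bfY_{R_S}^{-,\vee}\otimes_{R_S}\bfY_{R_S}^{0,w,\vee}$. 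By Theorem~\ref{thm:2}(b) and Lemma~\ref{lem:YU}(b) the associated graded of this algebra is $\bfU_{R_S}^{-,\vee}\bfU_{R_S}^{0,w,\vee}\cong\bfU_{R_S}^{-,\vee}\otimes_{R_S}\bfU_{R_S}^{0,w,\vee}$; here one must verify that the filtration of Theorem~\ref{thm:2}(b), which is pulled back from the quotient filtration on $\bfY_{R_S}^\vee/\Ker(\ev_1^\vee)$, coincides under $\Phi$ with the filtration induced from $\bfY_{R_S}^\vee$ on the subalgebra $\bfY_{R_S}^{-,\vee}\bfY_{R_S}^{0,w,\vee}$. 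This amounts to the strictness of $\ev_1^\vee$ in Lemma~\ref{lem:evstrict}(c),(d), applied to the direct sum $\bfY_{R_S}^\vee=\bfY_{R_S}^{-,\vee}\bfY_{R_S}^{0,w,\vee}\oplus\Ker(\ev_1^\vee)$ furnished by the isomorphism $\Psi$ of Theorem~\ref{thm:2}(a). Granting it, $\gr(\bfY_{R_S}^{-,\vee})=\bfU_{R_S}^{-,\vee}$ follows by dividing out the central factor: the augmentation ideal $(\bfY_{R_S}^{0,w,\vee})_+$ is a strict filtered submodule of $\bfY_{R_S}^{0,w,\vee}$ with $\gr=(\bfU_{R_S}^{0,w,\vee})_+$, because under \eqref{UR} it is polynomial in the $\ch_l(\calW_i)$ with $\sigma_l\,\ch_l(\calW_i)=b_iu^l$ (Lemma~\ref{lem:Y02}(a)); hence reduction modulo $\bfY_{R_S}^{-,\vee}(\bfY_{R_S}^{0,w,\vee})_+$ on the Yangian side and modulo $\bfU_{R_S}^{-,\vee}(\bfU_{R_S}^{0,w,\vee})_+$ on the enveloping algebra side is compatible with $\gr$. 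Transposing gives $\gr(\bfY_{R_S}^{+})=\bfU_{R_S}^{+}$, and base change to $K_S$ gives $\gr(\bfY_{K_S}^{\pm})=\bfU_{K_S}^{\pm}$.

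Finally, for $\bfY_{R_S}^{-}=\bfY_{K_S}^{-,\vee}\cap\bfY_{R_S}$: if $x\in\bfY_{R_S}^{-}$ has $u$-degree $\le l$, its symbol $\sigma_l(x)$ lies in $\gr(\bfY_{R_S})=\bfU_{R_S}$ and, since $\bfY_{R_S}\hookrightarrow\bfY_{K_S}$ and $\bfY_{K_S}^{-,\vee}\hookrightarrow\bfY_{K_S}$ are strict filtered inclusions (the latter by base change of the case already proved), also in $\gr(\bfY_{K_S}^{-,\vee})=\bfU_{K_S}^{-,\vee}=\bfU_{K_S}^{-}$; by uniqueness of the triangular decomposition \eqref{Upm0} over $R_S$ this forces $\sigma_l(x)\in\bfU_{K_S}^{-}\cap\bfU_{R_S}=\bfU_{R_S}^{-}$, so $\gr(\bfY_{R_S}^{-})\subseteq\bfU_{R_S}^{-}$. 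For the reverse inclusion one clears denominators to obtain $\bfY_{R_S}^{-}\otimes_{R_S}K_S=\bfY_{K_S}^{-,\vee}$, whence $\gr(\bfY_{R_S}^{-})$ and the free module $\bfU_{R_S}^{-}$ agree after $\otimes_{R_S}K_S$; equality then reduces to showing that $\gr(\bfY_{R_S}^{-})$ is saturated in $\bfU_{R_S}^{-}$, equivalently that every $R_S$-coefficient symbol in $\bfU_{R_S}^{-}$ can be lifted to an element of $\bfY_{K_S}^{-,\vee}\cap\bfY_{R_S}$ of the prescribed $u$-degree. This saturation step is the main obstacle; I expect to establish it by the same method as Lemmas~\ref{lem:last} and~\ref{lem:lastlast}, exploiting the strictness of the stable envelopes (Lemma~\ref{lem:ss}) and of the evaluation maps together with the triangular decomposition of $\bfU_{R_S}$, rather than by any soft argument. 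Transposing $\gr(\bfY_{R_S}^{-})=\bfU_{R_S}^{-}$ then yields $\gr(\bfY_{R_S}^{+,\vee})=\bfU_{R_S}^{+,\vee}$, which finishes the proposition.
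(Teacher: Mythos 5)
Your reduction to two cases via the transpose is sound, and your overall two–step outline (first $\gr(\bfY_{R_S}^{-,\vee})=\bfU_{R_S}^{-,\vee}$, then $\gr(\bfY_{R_S}^{-})=\bfU_{R_S}^{-}$) differs from the paper, which instead proves the $-\!,\vee$ case and then passes to the $+,\vee$ case rather than the $-$ case. But both halves of your argument contain unresolved gaps.

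For the $-\!,\vee$ case, the crux is the filtration you acknowledge needing to compare: the filtration on $\bfY_{R_S}^{-,\vee}\bfY_{R_S}^{0,w,\vee}$ induced from $\bfY_{R_S}^\vee$ as a subspace versus the one pulled back through the isomorphism with $\bfY_{R_S}^\vee/\Ker(\ev_1^\vee)$. You claim this follows from the strictness of $\ev_1^\vee$ in Lemma~\ref{lem:evstrict}, but that lemma only says the map $\bfY_{R_S}^\vee\to F_{1,R_S}^\vee$ is strict. What you actually need is that the \emph{restriction} of $\ev_1^\vee$ to the complement $\bfY_{R_S}^{-,\vee}\bfY_{R_S}^{0,w,\vee}$ is strict — equivalently, that the projection of $\bfY_{R_S}^\vee$ onto this complement along $\Ker(\ev_1^\vee)$ preserves the $u$-filtration. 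This does not follow formally from strictness of $\ev_1^\vee$. The paper bypasses the issue entirely: it only uses Theorem~\ref{thm:2}(b) as a rank count, and proves the required containment $\gr(\bfY_{R_S}^{-,\vee})\subset\bfU_{R_S}^{-,\vee}$ separately via Lemmas~\ref{lem:L1} and \ref{lem:L2}. Lemma~\ref{lem:L1} gives a coproduct-plus-weight characterization of $\bfU_{R_S}^{-,\vee}$, and Lemma~\ref{lem:L2}(b) shows that the image of $\Y_{-v,R_S}^\vee$ under $\rho_\bfw\circ\Phi$ lies in $A_{\leqsl 0,\bfw,R_{\bfw,S}}$ up to lower $u$-order. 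Nothing in your proposal plays the role of these lemmas.

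For the $-$ case, you obtain the inclusion $\gr(\bfY_{R_S}^{-})\subset\bfU_{R_S}^{-}$, but you concede explicitly that the reverse inclusion (what you call saturation) is not established and is ``the main obstacle.'' This is precisely the nontrivial content of the proposition in this direction, and it cannot be waved away by analogy with Lemmas~\ref{lem:last} and \ref{lem:lastlast}. The paper's actual route is different and indirect: it works with $\bfY_{R_S}^{+,\vee}$ rather than $\bfY_{R_S}^{-}$, obtaining the inclusion $\gr(\bfY_{R_S}^{+,\vee})\subset\bfU_{R_S}^{+,\vee}$ by base change from the transposed $+$ case, and the reverse inclusion $\bfU_{R_S}^{+,\vee}\subset\gr(\bfY_{R_S}^{+,\vee})$ from the claim $\Phi(\Y_{R_S})\subset\bfY_{R_S}^-$ (proved via Lemma~\ref{lem:lastlast}) combined with the graded rank equality $\gr(\Y_{R_S})=\bfU_{R_S}^-$, which itself rests on the Kac-polynomial character formula \eqref{Okounkov} and the duality Proposition~\ref{prop:grading}. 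This input — matching the COHA character against the Maulik–Okounkov Lie algebra via \eqref{Okounkov} — is essential and absent from your proposal. Until you supply an actual proof of the saturation step (and the filtration compatibility above), the argument is incomplete.
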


To prove the proposition we 
consider the decomposition into weight subspaces 
$
\bfU_{R_S}^\vee=\bigoplus_{v\in\bbZ I}\bfU_{v,R_S}^\vee,$
and we define
$$\bfU_{\leqsl 0,R}^\vee=\bigoplus_{v\in-\bbN I}\bfU_{v,R_S}^\vee.$$
Similarly, for each tuple $w\in\bbN I$, let $A_{\leqsl 0,w,R_{w,S}}\subset A_{w,R_{w,S}}$ 
be the subspace spanned by all homogeneous elements of weight in $-\bbN I$
and set
$$A_{\leqsl 0,a,R_S}=\prod_{\bfw\in\bfN_a}A_{\leqsl 0,\bfw,R_{\bfw,S}}
,\quad
A_{\leqsl 0,\bfw,R_{\bfw,S}}=\bigotimes_{r=1}^sA_{\leqsl0,w_r,R_{w_r}}
,\quad
\bfw=(w_1,w_2,\dots,w_s)\in\bfN_a.$$
Let $\Delta$ be the trivial coproduct of  the enveloping algebra $\bfU_{R_S}^\vee$.
The total weight of tensor monomial in $(\bfU_{R_S}^\vee)^{\otimes s}$ of weight vectors of $\bfU_{R_S}^\vee$ 
is the sum of the weights of its factors.
We'll say that an element in $(\bfU_{R_S}^\vee)^{\otimes s}$ has negative total weight
if it is a sum of tensor monomials of weight vectors of $\bfU_{R_S}^\vee$ 
whose total weight belongs to $\bbZ I\setminus\bbN I$.
By \eqref{grrho1} there is an $R_S$-algebra homomorphism
$$\gr\rho_1^\vee:\bfU_{R_S}^\vee\to A_{1,R_S}.$$

\begin{lemma}\label{lem:L1}
\hfill
\begin{enumerate}[label=$\mathrm{(\alph*)}$,leftmargin=8mm,itemsep=1.2mm]
\item A non zero element $x\in\bfU_{R_S}^\vee$ belongs to the augmentation ideal of $\bfU_{R_S}^{-,\vee}$
if and only if, for each $s>0$, the element
$\Delta^{s-1}(x)$ belongs to $(\bfU_{\leqsl 0,R_S}^\vee)^{\otimes s}$ and has negative total weight.
\item
Under the obvious inclusion $\bfU_{\leqsl 0,R_S}^\vee\subset\bfU_{R_S}^\vee$ we have
$\bfU_{\leqsl 0,R_S}^\vee=(\gr\rho_1^\vee)^{-1}(A_{\leqsl 0,1,R_S}).$
\end{enumerate}
\end{lemma}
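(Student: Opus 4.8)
I would first dispose of part (b), which is immediate from the results at hand. The map $\gr\rho_1^\vee\colon\bfU_{R_S}^\vee\to A_{1,R_S}$ of \eqref{grrho1} is a homomorphism of $\bbZ I\times\bbZ$-graded $R_S$-algebras, hence preserves the $\bbZ I$-weight; this already gives $\bfU_{\leqsl 0,R_S}^\vee\subseteq(\gr\rho_1^\vee)^{-1}(A_{\leqsl 0,1,R_S})$. For the reverse inclusion I would use that $\gr\rho_1^\vee$ is injective by Lemma~\ref{lem:rhostrict}(b): writing $x=\sum_{v\in\bbZ I}x_v$ with $x_v$ homogeneous of weight $v$ and assuming $\gr\rho_1^\vee(x)\in A_{\leqsl 0,1,R_S}$, the element $\gr\rho_1^\vee(x_v)$ is the weight-$v$ component of $\gr\rho_1^\vee(x)$ and therefore vanishes for $v\notin-\bbN I$; injectivity forces $x_v=0$ for those $v$, i.e.\ $x\in\bfU_{\leqsl 0,R_S}^\vee$.

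For the implication $(\Rightarrow)$ of part (a): since $\frakg_{R_S}^{-,\vee}[u]$ is a Lie subalgebra of $\frakg_{R_S}^\vee[u]$, the enveloping algebra $\bfU_{R_S}^{-,\vee}=U(\frakg_{R_S}^{-,\vee}[u])$ is a $\bbZ I$-graded Hopf subalgebra of $\bfU_{R_S}^\vee$ for the trivial coproduct, so $\Delta^{s-1}$ maps it into $(\bfU_{R_S}^{-,\vee})^{\otimes s}\subseteq(\bfU_{\leqsl 0,R_S}^\vee)^{\otimes s}$. Moreover every weight of $\frakg_{R_S}^{-,\vee}$ lies in $-\bbN I\setminus\{0\}$, so an element of the augmentation ideal has weight in $\bbZ I\setminus\bbN I$; since $\Delta$ preserves total weight, $\Delta^{s-1}(x)$ has negative total weight for each $s$.

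The implication $(\Leftarrow)$ of part (a) is the real content, and the plan is a PBW leading-term analysis with the iterated coproduct, in the spirit of \cite[\S 5.3]{MO19} and of the proofs of Lemmas~\ref{lem:last}(a) and \ref{lem:lastlast}(a). Fix dual homogeneous $R_S$-bases $(x_n^{\pm})$ of $\frakg_{R_S}^{\pm,\vee}$ and the bases $(a_i)$, $(b_i)$ of $\frakh_{R_S}^{v,\vee}$, $\frakh_{R_S}^{w,\vee}$ from Lemmas~\ref{lem:Y02}--\ref{lem:Y03}, and expand $x$ through the triangular decomposition \eqref{Upm0vee} as an $R_S$-combination of ordered monomials in these generators and in $u$. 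Let $k$ be the maximal number of generators occurring in a monomial of $x$. Because the coproduct is trivial --- so the generators are primitive and $\Delta$ is cocommutative --- there is an explicit formula for $\Delta^{s-1}(x)$, and for $s>k$ its multilinear part, the projection onto tensor monomials in which $k$ of the slots carry a single generator and the remaining $s-k$ carry $1$, reproduces, up to symmetrisation over the chosen slots, the top-length part of $x$, with no cancellation between distinct PBW monomials. Imposing that this part lie in $(\bfU_{\leqsl 0,R_S}^\vee)^{\otimes s}$ forces every generator appearing in a top-length monomial of $x$ to have weight in $-\bbN I$, i.e.\ to lie in $\frakg_{R_S}^{-,\vee}[u]\oplus\frakh_{R_S}^\vee[u]$; using the negative-total-weight hypothesis for all $s$, together with the evaluation identity of Lemma~\ref{lem:Y03} that $a_iu^l$ annihilates every vacuum vector, one then rules out the $\frakh^{v,\vee}$- and $\bfU^{+,\vee}$-contributions, so the top-length part of $x$ lies in $\bfU_{R_S}^{-,\vee}$. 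Subtracting it off and inducting on $k$ gives $x\in\bfU_{R_S}^{-,\vee}$, and since $\wt(x)\in-\bbN I\setminus\{0\}$ this places $x$ in the augmentation ideal.

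The step I expect to be the main obstacle is this last one: organising the leading-term bookkeeping so that the weight-$0$ generators coming from $\frakh^{v,\vee}[u]$ are separated from genuine scalars and correctly excluded. That is where the hypothesis must be used in full strength --- for all $s$, together with the evaluation computations of Lemmas~\ref{lem:Y02}--\ref{lem:Y03} --- and it is the only place where the argument goes beyond routine combinatorics with the iterated trivial coproduct and the PBW theorem.
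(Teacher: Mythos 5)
Your treatment of part (b) and the forward implication of (a) matches the paper's intent. For (b) the paper merely cites the embedding $\bfU_{R_S}^\vee\subset A_{1,R_S}$ from Lemma~\ref{lem:rhovee}; your argument using weight-preservation and injectivity of $\gr\rho_1^\vee$ is exactly what is meant.

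For the reverse implication of (a), you take a genuinely different route from the paper's. The paper works only with $s=2$: it expands $x$ in a PBW basis ordered by weight, factors each monomial as $x_\bfk=x_\bfk^-x_\bfk^0x_\bfk^+$, and isolates in $\Delta(x)$ the sum of terms $x'_\bfk=(x_\bfk^-\otimes 1)\Delta(x_\bfk^0)(1\otimes x_\bfk^+)$ whose second tensor slot has maximal weight; linear independence of the $x'_\bfk$ together with the $\leqsl0$ condition in that slot force $x_\bfk^+\in R_S$, whence $x\in\bfU_{R_S}^{-,\vee}\bfU_{R_S}^{0,\vee}$, and the paper declares the remaining step immediate. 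Your iterated-coproduct/multilinear-part argument for $s>k$ is a valid alternative way to reach that same intermediate conclusion, namely that every top-length monomial of $x$ involves only generators of $\frakg_{R_S}^{-,\vee}[u]\oplus\frakh_{R_S}^\vee[u]$, and your no-cancellation claim is correct because the multilinear projection is a symmetrization that separates distinct PBW monomials.

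The last step, removing the $\frakh_{R_S}^\vee[u]$-factors to land in the augmentation ideal of $\bfU_{R_S}^{-,\vee}$, is where your proposal breaks down, as you yourself suspected. First, the evaluation identity $a_iu^l(\phi_\bfw^\vee)=0$ from Lemma~\ref{lem:Y03} is not available here: the hypothesis of part (a) is a purely coalgebraic condition on $\Delta^{s-1}(x)$ inside $(\bfU_{R_S}^\vee)^{\otimes s}$ and makes no reference to the modules $F_{\bfw,R_{\bfw,S}}^\vee$, so vacuum-vector computations cannot be invoked. Second, you only address $\frakh_{R_S}^{v,\vee}$, while the $b_iu^l$'s generating $\frakh_{R_S}^{w,\vee}[u]$ must also be excluded, and those do \emph{not} annihilate the vacuum, so the sketch would fail for them even if it applied. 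Third, and more fundamentally, the negative-total-weight hypothesis cannot by itself supply the exclusion: since $\Delta^{s-1}$ is $\bbZ I$-graded, negative total weight for all $s$ only says that the weights of $x$ lie in $\bbZ I\setminus\bbN I$, and for $x=x_1^-h_1$ with $x_1^-\in\frakg_{R_S}^{-,\vee}$ and $h_1\in\frakh_{R_S}^\vee[u]$ nonconstant, both stated conditions hold for every $s$ while $x\notin\bfU_{R_S}^{-,\vee}$. This is precisely the step the paper passes over as immediate; your instinct that it is the main obstacle is right, but the argument you sketch does not close it.
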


\begin{proof}
To prove Part (a), we'll prove the stronger result that
$x$ belongs to the augmentation ideal of $\bfU_{R_S}^{-,\vee}$ if and only if
$\Delta(x)\in(\bfU_{\leqsl 0,R_S}^\vee)^{\otimes 2}$ and has negative total weight.
To do this, it is enough to prove that if $\Delta(x)\in(\bfU_{\leqsl 0,R_S}^\vee)^{\otimes 2}$ then
$x\in\bfU_{R_S}^{-,\vee}\bfU_{R_S}^{0,\vee}$, because the other statements are immediate.
Fix an $R_S$-basis $(x_k)$ of $\frakg_{R_S}^\vee$ consisting of weight vectors such that
$$\wt(x_{k_1})>\wt(x_{k_2})\Rightarrow k_1>k_2.$$
A PBW monomial in $\bfU_{R_S}^\vee$ is a tensor of the form
$$x_\bfk=x_{k_1}\otimes x_{k_2}\otimes\cdots\otimes x_{k_s}
,\quad
\bfk=(k_1,k_2,\dots,k_s) .$$
Fix any element $x\in\bfU_{R_S}^\vee$ and write
$$x=\sum_\bfk\alpha_\bfk\,x_\bfk
,\quad
\alpha_\bfk\in R_S.$$
Well factorize each monomial 
$$x_\bfk=x_\bfk^- x_\bfk^0 x_\bfk^+
,\quad
x_\bfk^\flat\in\bfU_{R_S}^{\flat,\vee}
,\quad
\flat=\pm, 0.$$
The coproduct of $x$ in $(\bfU_{R_S}^\vee)^{\otimes 2}$
$$\Delta(x)=\sum_\bfk\alpha_\bfk\Delta(x_\bfk^-)\Delta(x_\bfk^0)\Delta(x_\bfk^+)$$
decomposes as a sum of homogeneous elements
for the $(\bbZ I)^2$-grading in the obvious way. Let 
$$x'=\sum_\bfk\alpha_\bfk x'_\bfk
,\quad
x'_\bfk=(x_\bfk^-\otimes 1)\Delta(x_\bfk^0)(1\otimes x_\bfk^+)$$
be the sum of all monomials whose second member has a maximal weight.
One can check that
\begin{itemize}[label=$\mathrm{(\alph*)}$,leftmargin=8mm,itemsep=1.2mm]
\item[-]
$\Delta(x)\in(\bfU_{\leqsl 0,R_S}^\vee)^{\otimes 2}\Rightarrow x'\in(\bfU_{\leqsl 0,R_S}^\vee)^{\otimes 2}$,
\item[-]
the elements $x'_\bfk$ are homogeneous and linearly independent,
\item[-]
$x'_\bfk\in(\bfU_{\leqsl 0,R_S}^\vee)^{\otimes 2}\Rightarrow x_\bfk^+\in R_S$.
\end{itemize}
Applying successively the 3 claims above, we deduce that
\begin{align*}
\Delta(x)\in(\bfU_{\leqsl 0,R_S}^\vee)^{\otimes 2}
&\Rightarrow x'\in(\bfU_{\leqsl 0,R_S}^\vee)^{\otimes 2},\\
&\Rightarrow x'_\bfk\in(\bfU_{\leqsl 0,R_S}^\vee)^{\otimes 2},\\
&\Rightarrow x_\bfk^+\in R_S,\\
&\Rightarrow x\in\bfU_{R_S}^{-,\vee}\bfU_{R_S}^{0,\vee}.
\end{align*}
Part (b) follows from the embedding $\bfU_{R_S}^\vee\subset A_{1,R_S}$, 
see Lemma \ref{lem:rhovee}.
\end{proof}

\begin{lemma}\label{lem:L2}
Fix $v\in\bbN I$ and $x\in\Y_{-v,R_S}^\vee$.
Fix a tuple $\bfw\in\bfN_1$ of length $s$ with $|\bfw|=w$.
\hfill
\begin{enumerate}[label=$\mathrm{(\alph*)}$,leftmargin=8mm,itemsep=1.2mm]
\item There is an operator $\rho_\bfw(x)\in A_{\leqsl0,\bfw,K_{\bfw,S}}$ making the following diagram to commute
\begin{align*}
\begin{split}
\xymatrix{
F_{w,R_{\bfw,S}}\ar[rr]^-{\res}&&F_{\bfw,K_{\bfw,S}}\\
F_{w,R_{\bfw,S}}\ar[rr]^-{\res}\ar[u]^-{\rho_w(x)}&&F_{\bfw,K_{\bfw,S}}\ar[u]_-{\rho_\bfw(x)}}
\end{split}
\end{align*}
where $\res$ is the Gysin restriction to $\frakM(\bfw)\subset\frakM(w)$.
\item 
There is an operator $y\in A_{\leqslant 0,\bfw,R_{\bfw,S}}$ making the following diagram to commute
\begin{align*}
\begin{split}
\xymatrix{
F_{\bfw,R_{\bfw,S}}\ar[rr]^-{\stab(1)}&&F_{w,R_{\bfw,S}}\\
F_{\bfw,R_{\bfw,S}}\ar[rr]^-{\stab(1)}\ar[u]^-{y}&&F_{w,R_{\bfw,S}}\ar[u]_-{\rho_w(x)}}
\end{split}
\end{align*}
modulo lower terms for the $u$-filtration of $F_{\bfw,R_{\bfw,S}}$.
\end{enumerate}
\end{lemma}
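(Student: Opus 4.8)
The plan is to get (a) from a localization argument and (b) by reducing the leading $u$-order behaviour of the operator to the \emph{integral} nilpotent representation, via the comultiplication and the filtration compatibilities of \S\ref{sec:filt}; the last step is the only real difficulty.

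\textbf{Part (a).} Recall (from the proof of Proposition~\ref{prop:Phi} and from \cite{SV18}) that $x\in\Y_{-v,R_S}^\vee$ acts on $F_{w,R_w}$ by convolution with a cycle supported on a Hecke correspondence $\frakP^\vee(v'+v,v',w)$, which is $G_w\times T$-equivariant and in particular equivariant for the subtorus $A=\sigma(\bbG_m)\subset G_\bfw\subset G_w$. Since $\frakM(w)^\sigma=\frakM(\bfw)$, the Gysin restriction $\res\colon F_{w,R_{\bfw,S}}\to F_{\bfw,K_{\bfw,S}}$ is injective and generically invertible over $R_{\bfw,S}$ by the localization theorem, exactly as in Lemma~\ref{lem:ss}(a); hence $\rho_\bfw(x):=\res\circ\rho_w(x)\circ\res^{-1}$ is the unique operator over $K_{\bfw,S}$ making the square commute, and concretely it is the convolution with the transport of the Hecke cycle to $\frakM(\bfw)\times\frakM(\bfw)$ under the bivariant localization isomorphisms $\bfr_{ij}$ of \eqref{res}, which commute with convolution. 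Finally $\rho_w(x)$ has $\bbZ I$-weight $-v$ with $v\in\bbN I$, and $\rho_w$, $\res$ are homogeneous for the $\bbZ I$-grading, so $\rho_\bfw(x)$ has weight $-v\in-\bbN I$, that is $\rho_\bfw(x)\in A_{\leqsl 0,\bfw,K_{\bfw,S}}$.

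\textbf{Part (b).} By Lemma~\ref{lem:ss}(a) the stable envelope $\stab(1)\colon F_{\bfw,R_{\bfw,S}}\to F_{w,R_{\bfw,S}}$ is an integral injection that is generically invertible, so over $K_{\bfw,S}$ one has the operator $\tilde y=\stab(1)^{-1}\circ\rho_w(x)\circ\stab(1)$, which makes the square of (b) commute exactly over $K_{\bfw,S}$. Since the support of $\stab(1)$ lies over the components $\frakM(\bfv,\bfw)$ of fixed total dimension vector (by \eqref{order}), $\stab(1)$ preserves the total $\bbZ I$-weight, so $\tilde y$ has weight $-v$ and lies in $A_{\leqsl 0,\bfw,K_{\bfw,S}}$. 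To extract an integral $y$ agreeing with $\tilde y$ to leading $u$-order, I would use Theorem~\ref{thm:2}(c) to get $\Phi(x)\in\bfY_{R_S}^\vee$, and Proposition~\ref{prop:Phi} together with torsion-freeness of $F_{w,R_{\bfw,S}}$ to get $\rho_w(\Phi(x))=\rho_w(x)$. Combining the diagram \eqref{stabvee} for the nilpotent Yangian with Lemma~\ref{lem:ss}(c), which intertwines $\stab(1)^{-1}$ with the nilpotent stable envelope through the map $\iota$ of \eqref{iota1}, one finds $\tilde y=\iota\circ\rho_\bfw^\vee(\Phi(x))\circ\iota^{-1}$ over $K_{\bfw,S}$, where $\rho_\bfw^\vee(\Phi(x))\in A_{\bfw,R_{\bfw,S}}^\vee$ is integral by Lemma~\ref{lem:rhovee}(c). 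Since by \S\ref{sec:filt} the map $\iota$ is a strict morphism of $u$-filtered modules that commutes with the decompositions \eqref{isom1}, \eqref{isom2} (so its associated graded is again $\iota$), passing to associated graded yields $\sigma_l(\tilde y)=\iota\circ\gr\rho_\bfw^\vee(\gr\Phi(x))\circ\iota^{-1}$ with $\gr\Phi(x)\in\bfU_{R_S}^\vee$; one then checks that this conjugate is integral on $F_{\bfw,R_{\bfw,S}}$, the Euler-class discrepancy between the nilpotent and non-nilpotent sides being exactly absorbed by the factors carried by $\Phi$ (cf.\ the Remark after Proposition~\ref{prop:Phi}, where $\Phi(e_{i,l})=E_{0,K}(\gamma_i\gamma_i^* m_iu^l)$). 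Any integral lift of $\sigma_l(\tilde y)$ then provides the required $y\in A_{\leqsl 0,\bfw,R_{\bfw,S}}$.

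\textbf{Main obstacle.} The delicate step is the last point of (b): showing that conjugation of the integral operator $\rho_\bfw^\vee(\Phi(x))$ by $\iota$ --- equivalently, using $\stab(1)$ rather than the naive restriction $\res$ of part (a) --- is integral on $F_{\bfw,R_{\bfw,S}}$ to leading $u$-order, even though $\iota$ is not surjective and $\res^{-1}$ carries denominators. This requires tracking, through the poset $Fix$ and the normalization of the stable envelope recalled in \S\ref{sec:stable} (its Gysin restriction to $F\times F$ is $\pm\eu(N_F^{\sigma,-}X)\cap[\Delta_F]$, and its restrictions to $F'\times F$ with $F'\prec F$ have strictly smaller $A$-degree), how the poles cancel against the Euler-class factors carried by $\Phi$. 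The remaining ingredients --- the localization in (a), the weight bookkeeping, and the comultiplication identities \eqref{stab}, \eqref{stabvee} --- are formal.
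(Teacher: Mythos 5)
Your Part (a) is essentially the paper's argument: define $\rho_\bfw(x)$ by conjugating with the (generically invertible, equivariantly formal) Gysin restriction to the $\sigma$-fixed locus, and observe that the $\bbZ I$-weight is preserved so $\rho_\bfw(x)$ lands in $A_{\leqsl 0,\bfw,K_{\bfw,S}}$. No issues there.

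For Part (b) you take a detour that the paper avoids, and the detour creates the gap you then flag as the ``main obstacle''. The paper does not form $\tilde y = \stab(1)^{-1}\circ\rho_w(x)\circ\stab(1)$ at all. Instead it directly sets $y:=\rho_\bfw(\Delta^{s-1}\Phi(x))$. By Theorem~\ref{thm:2}(c), $\Phi(x)\in\bfY_{R_S}^\vee$, so $y$ is \emph{by construction} an integral operator; the commutative square of (b) is then exactly the intertwining \eqref{stab} (or \eqref{stabvee}) applied to $\Phi(x)$, once one notes (via Proposition~\ref{prop:Phi} and torsion-freeness) that $\rho_w(x)=\rho_w(\Phi(x))$ on $F_{w,R_{\bfw,S}}$. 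At that point the only thing left to show is that $y$ has weight in $-\bbN I$ up to lower $u$-order, and for that the paper composes the squares of (a) and (b) and invokes the stable-envelope normalization axioms in \S\ref{sec:stable}: $\res\circ\stab(1)$ is the diagonal modulo lower $u$-terms, so $y\equiv\rho_\bfw(x)\pmod{\text{lower $u$-terms}}$, and the weight bound is read off from Part (a). In particular there is no Euler-class cancellation to perform; your attempt to establish integrality by conjugating the integral operator $\rho_\bfw^\vee(\Delta^{s-1}\Phi(x))$ by $\iota$ (which does introduce denominators, since $\iota^{-1}$ only exists over $K_{\bfw,S}$) and then tracking how poles cancel is exactly the step you cannot close, and it is unnecessary. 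You should replace that block by: $y:=\rho_\bfw(\Delta^{s-1}\Phi(x))\in A_{\bfw,R_{\bfw,S}}$ is integral (Theorem~\ref{thm:2}(c)); commutativity of the square follows from \eqref{stab}; composing with the square of (a) and using axiom (c) of \S\ref{sec:stable} shows $y\equiv\rho_\bfw(x)$ modulo lower $u$-terms, whence $y\in A_{\leqsl 0,\bfw,R_{\bfw,S}}$ modulo lower terms.

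One more small point: in your step ``$\tilde y=\iota\circ\rho_\bfw^\vee(\Phi(x))\circ\iota^{-1}$'' you have dropped the coproduct; it should be $\rho_\bfw^\vee(\Delta^{s-1}\Phi(x))$, and the identification with $\tilde y$ also needs \eqref{stabvee}, not merely Lemma~\ref{lem:ss}(c). This is a minor slip, but it hides the fact that the integral object you actually have in hand is precisely the paper's $y$, not its $\iota$-conjugate.
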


\begin{proof}
Composing the Gysin restriction with the multiplication by the inverse of some equivariant Euler class 
as in \eqref{res} yields
a $K_{\bfw,S}$-algebra homomorphism 
$$\bfr:A_{w,K_{\bfw,S}}\to A_{\bfw,K_{\bfw,S}}.$$
Setting $\rho_\bfw(x)=\bfr(x)$, we get an operator in $A_{\leqsl0,\bfw,K_{\bfw,S}}$ making the diagram in Part (a)
to commute. To prove Part (b), note first that, by definition of the map $\Phi$ in \eqref{Phi}, 
the actions of $x$ and $\Phi(x)$ on $F_{w,R_{\bfw,S}}$ coincide. Hence, by \eqref{stab}, setting
$y=\rho_\bfw(\Delta^{s-1}\Phi(x))$ we get an operator in $A_{\bfw,R_{\bfw,S}}$ with
a commutative diagram
\begin{align*}
\begin{split}
\xymatrix{
F_{\bfw,R_{\bfw,S}}\ar[rr]^-{\stab(1)}&&F_{w,R_{\bfw,S}}\\
F_{\bfw,R_{\bfw,S}}\ar[rr]^-{\stab(1)}\ar[u]^-{y}&&F_{w,R_{\bfw,S}}\ar[u]_-{\rho_w(x)}}
\end{split}
\end{align*}
Composing the commutative squares in Parts (a) and (b), we get the diagram
\begin{align*}
\begin{split}
\xymatrix{
F_{\bfw,R_{\bfw,S}}\ar[rr]^-{\res\,\circ\,\stab(1)}&&F_{\bfw,K_{\bfw,S}}\\
F_{\bfw,R_{\bfw,S}}\ar[rr]^-{\res\,\circ\,\stab(1)}\ar[u]^-{y}&&F_{\bfw,K_{\bfw,S}}\ar[u]_-{\rho_\bfw(x)}}
\end{split}
\end{align*}
The map $\res\,\circ\,\stab(1)$ is the diagonal modulo lower terms for the $u$-filtration, 
by the axiom (c) in the definition of the stable envelope in \S\ref{sec:stable}.
We deduce that $y\in A_{\leqslant 0,\bfw,R_{\bfw,S}}$ modulo lower terms for the $u$-filtration. 
This proves Part (b).
\end{proof}

\begin{proof}[Proof of Proposition $\ref{prop:TYMO}$]
By Theorem \ref{thm:2}(b) and \eqref{TDA}, there  are $\bbZ I\times\bbZ$-graded $R_S$-linear isomorphisms
\begin{align}\label{YYU}\gr(\bfY_{R_S}^{-,\vee})=\gr(\Y_{R_S}^\vee)=\bfU_{R_S}^{-,\vee}.\end{align}
By Lemmas \ref{lem:L1} and \ref{lem:L2}(b),
the isomorphism $\gr(\bfY_{R_S}^\vee)=\bfU_{R_S}^\vee$
in Lemma \ref{lem:Deltavee} restricts to an $R_S$-algebra embedding
$$\gr(\bfY_{R_S}^{-,\vee})\subset\bfU_{R_S}^{-,\vee}.$$
Thus, it yields indeed an $R_S$-algebra isomorphism
\begin{align}\label{isomA}
\gr(\bfY_{R_S}^{-,\vee})=\bfU_{R_S}^{-,\vee}.
\end{align}

Applying the transpose map $(-)^\T:\bfY_{R_S}^\vee\to\bfY_{R_S}$ to \eqref{isomA}, we deduce that
the isomorphism $\gr(\bfY_{R_S})=\bfU_{R_S}$
in Lemma \ref{lem:Delta} restricts to an $R_S$-algebra isomorphism
\begin{align}\label{isomB}
\gr(\bfY_{R_S}^+)=\bfU_{R_S}^+.
\end{align}

Since $\bfY_{K_S}^+=\bfY_{K_S}^{+,\vee}$ by \eqref{TDB}, and since $\bfU_{K_S}^+=\bfU_{K_S}^{+,\vee}$, from \eqref{isomB} 
and base change we deduce that the isomorphism $\gr(\bfY_{K_S})=\bfU_{K_S}$ restricts to a $K$-algebra isomorphism
\begin{align}\label{YU+}
\gr(\bfY_{K_S}^{+,\vee})=\bfU_{K_S}^{+,\vee}.
\end{align}
We must check that this isomorphism takes the $R_S$-submodule
$\gr(\bfY_{R_S}^{+,\vee})$ onto $\bfU_{R_S}^{+,\vee}.$
The isomorphism $\gr(\bfY_{R_S}^\vee)=\bfU_{R_S}^\vee$ in Lemma \ref{lem:Deltavee} and \eqref{YU+}
yield an inclusion
\begin{align}\label{embA}\gr(\bfY_{R_S}^{+,\vee})\subset\bfU_{K_S}^{+,\vee}\cap\bfU_{R_S}^\vee=\bfU_{R_S}^{+,\vee}.
\end{align}

Next, we claim that the algebra embedding $\Phi$ in \eqref{Phi}
restricts to an $R_S$-algebra embedding 
\begin{align}\label{YY-}
\Y_{R_S}\subset\bfY_{R_S}^-.
\end{align} 
By \eqref{TDA} and \eqref{TDB} we have
\begin{align}\label{YYT}
\bfY_{R_S}^-=(\bfY_{R_S}^{+,\vee})^\T.
\end{align}
Similarly, by definition of the transpose map in Lemma \ref{lem:FF}, we have
\begin{align}\label{UUT}\bfU_{R_S}^-=(\bfU_{R_S}^{+,\vee})^\T.\end{align}
Further, by \eqref{Okounkov} and Proposition~\ref{prop:grading},
the $\bbZ I\times\bbZ$-graded $R_S$-linear isomorphism 
$\gr(\Y_{R_S}^\vee)=\bfU_{R_S}^{-,\vee}$ in \eqref{YYU} yields a 
$\bbZ I\times\bbZ$-graded $R_S$-linear isomorphism
\begin{align}\label{YU-}\gr(\Y_{R_S})=\bfU_{R_S}^-.\end{align}
The relations \eqref{YY-} to \eqref{YU-} yield a 
$\bbZ I\times\bbZ$-graded $R_S$-module embedding
\begin{align}\label{embB}
\bfU_{R_S}^{+,\vee}\subset\gr(\bfY_{R_S}^{+,\vee}).
\end{align}
Comparing \eqref{embA} and \eqref{embB}, we deduce that the isomorphism $\gr(\bfY_{R_S}^\vee)=\bfU_{R_S}^\vee$ 
in Lemma \ref{lem:Deltavee} restricts to an $R_S$-algebra isomorphism
\begin{align}\label{isomC}
\gr(\bfY_{R_S}^{+,\vee})=\bfU_{R_S}^{+,\vee}.
\end{align}
Finally, applying the transpose map to \eqref{isomC} we deduce that
\begin{align}\label{isomD}
\gr(\bfY_{R_S}^-)=\bfU_{R_S}^-.
\end{align}

Now, let us prove the claim. To do that, let $x\in\Y_{R_S}$. 
By \eqref{TDA} we have $\Phi(x)\in\bfY_{K_S}^-$.
For any $w\in\bbN I$ the actions of $x$ and $\Phi(x)$ on the $R_{w,S}$-module $F_{w,R_{w,S}}\otimes_{R_S}K_S$
coincide by Proposition \ref{prop:Phi}. Hence $\Phi(x)$ preserves the $R_{w,S}$-submodule $F_{w,R_{w,S}}$.
Applying the transpose, we deduce that $\Phi(x)^\T\in\bfY_{K_S}^{+,\vee}$ and preserves
the $R_{w,S}$-module $F_{w,R_{w,S}}^\vee$ for each $w\in\bbN I$.
Hence $\Phi(x)^\T\in\bfY_{R_S}^\vee$ by Lemma \ref{lem:lastlast}.
We deduce that
$$\Phi(x)^\T\in\bfY_{K_S}^{+,\vee}\cap\bfY_{R_S}^\vee=\bfY_{R_S}^{+,\vee},$$
and, thus, that $\Phi(x)\in\bfY_{R_S}^-$ by \eqref{YYT}, proving the claim.
\end{proof}

\begin{theorem}\label{thm:last}
\hfill
\begin{enumerate}[label=$\mathrm{(\alph*)}$,leftmargin=8mm,itemsep=1.2mm]
\item
The multiplication yields the isomorphisms
\begin{align*}
\bfY_{R_S}^-\otimes_{R_S}\bfY_{R_S}^0\otimes_{R_S}\bfY_{R_S}^+&=\bfY_{R_S},\\
\bfY_{R_S}^{-,\vee}\otimes_{R_S}\bfY_{R_S}^{0,\vee}\otimes_{R_S}\bfY_{R_S}^{+,\vee}&=\bfY_{R_S}^\vee
\end{align*}
\item
The map $\Phi$ yields $R_S$-algebra isomorphisms
$\Y_{R_S}\to\bfY_{R_S}^-$ and
$\Y_{R_S}^\vee\to\bfY_{R_S}^{-,\vee}$.
\end{enumerate}
\end{theorem}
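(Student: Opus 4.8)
The plan is to obtain Theorem~\ref{thm:last} by assembling Proposition~\ref{prop:TYMO}, Theorem~\ref{thm:2}, and the intermediate identities established in the proof of Proposition~\ref{prop:TYMO}, using throughout the standard principle that a morphism of exhaustive, $\bbN$-indexed filtered $R_S$-modules whose associated graded is an isomorphism is itself an isomorphism. For part~(a), I would consider the multiplication map
$$m:\bfY_{R_S}^-\otimes_{R_S}\bfY_{R_S}^0\otimes_{R_S}\bfY_{R_S}^+\to\bfY_{R_S},$$
which is a morphism of $u$-filtered $R_S$-modules once the source is equipped with the tensor-product filtration, since the multiplication of $\bfY_{R_S}$ is filtered by Lemma~\ref{lem:ufilt}(a) and the three factors carry the filtrations induced from $\bfY_{R_S}$. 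By Proposition~\ref{prop:TYMO} we have $\gr(\bfY_{R_S}^\pm)=\bfU_{R_S}^\pm$ inside $\gr(\bfY_{R_S})=\bfU_{R_S}$, by Lemma~\ref{lem:Y01}(a) we have $\gr(\bfY_{R_S}^0)=\bfU_{R_S}^0$, and by Lemma~\ref{lem:Delta}(c) the identification $\gr(\bfY_{R_S})=\bfU_{R_S}$ is one of algebras; since all modules in sight are filtered-free over $R_S$, the associated graded of $m$ is the multiplication $\bfU_{R_S}^-\otimes_{R_S}\bfU_{R_S}^0\otimes_{R_S}\bfU_{R_S}^+\to\bfU_{R_S}$, i.e.\ the triangular-decomposition isomorphism \eqref{Upm0}. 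Hence $m$ is an isomorphism, and the nilpotent statement follows identically using $\gr(\bfY_{R_S}^{\pm,\vee})=\bfU_{R_S}^{\pm,\vee}$, Lemma~\ref{lem:Deltavee}(c), and \eqref{Upm0vee}.

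For part~(b), the nilpotent isomorphism is essentially tautological: by Theorem~\ref{thm:2}(c), $\Phi$ restricts to an $R_S$-algebra embedding $\overline\Y_{R_S}^\vee\hookrightarrow\bfY_{R_S}^\vee$, and restricting further to the subalgebra $\Y_{R_S}^\vee=\Y_{R_S}^\vee\otimes 1\subset\overline\Y_{R_S}^\vee$ yields an embedding whose image is, by the very definition \eqref{TDA}, the subalgebra $\bfY_{R_S}^{-,\vee}=\Phi(\Y_{R_S}^\vee)$. So $\Phi:\Y_{R_S}^\vee\to\bfY_{R_S}^{-,\vee}$ is an $R_S$-algebra isomorphism.

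It remains to prove that $\Phi$ restricts to an isomorphism $\Y_{R_S}\to\bfY_{R_S}^-$. The inclusion $\Phi(\Y_{R_S})\subseteq\bfY_{R_S}^-$ is precisely the claim \eqref{YY-} already established while proving Proposition~\ref{prop:TYMO} (it was obtained there by transposing and invoking the integral characterization of Lemma~\ref{lem:lastlast}). For surjectivity I would first note that, after $\otimes_{R_S}K_S$, the identifications $\bfY_{K_S}^\vee=\bfY_{K_S}$, $\Y_{K_S}^\vee=\Y_{K_S}$ turn the previous paragraph into the equality $\bfY_{K_S}^-=\bfY_{K_S}^{-,\vee}=\Phi(\Y_{K_S})$, so $\Phi$ is already onto over $K_S$; hence any $y\in\bfY_{R_S}^-=\bfY_{K_S}^{-,\vee}\cap\bfY_{R_S}$ can be written $y=\Phi(x)$ with $x\in\Y_{K_S}$, and the point is to promote $x$ to $\Y_{R_S}$. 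Since $y\in\bfY_{R_S}$, it preserves every $F_{w,R_{w,S}}$; by Proposition~\ref{prop:Phi} so does the COHA element $x$. Transposing through the perfect pairing \eqref{PP1} and using the compatibility of $\Phi$ with the transpose anti-isomorphism (Lemma~\ref{lem:FF}(a)), the operator $\Phi(x)^\T=\Phi(x^\T)$ preserves every $F_{w,R_{w,S}}^\vee$, so Lemma~\ref{lem:lastlast}(b) gives $\Phi(x^\T)\in\bfY_{R_S}^\vee$; combined with $\Phi(x^\T)\in\Phi(\Y_{K_S}^\vee)=\bfY_{K_S}^{-,\vee}$ and the integral identity $\bfY_{R_S}^{-,\vee}=\Phi(\Y_{R_S}^\vee)$ already proven, this forces $x^\T\in\Y_{R_S}^\vee$, hence $x\in\Y_{R_S}$ and $y\in\Phi(\Y_{R_S})$. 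Together with part~(a) this also recovers the triangular decomposition announced in Theorem~\ref{thm:mainintro}.

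The step I expect to be the main obstacle is exactly this last integral surjectivity $\bfY_{R_S}^-\subseteq\Phi(\Y_{R_S})$, as opposed to the easy inclusion and the $K_S$-level equality. Over the base $R_S$ one cannot conclude it from a dimension count alone — knowing that $\gr(\Y_{R_S})$ and $\gr(\bfY_{R_S}^-)$ both equal $\bfU_{R_S}^-$ does not by itself make an injection of free modules of the same rank surjective — so one genuinely needs the integral characterization Lemma~\ref{lem:lastlast} and the strictness of the various $u$-filtrations recorded in the proof of Proposition~\ref{prop:TYMO}; the delicate point is to verify that $\Phi$ is compatible with the two transposes at exactly the level of precision needed to transport Lemma~\ref{lem:lastlast} to the COHA side.
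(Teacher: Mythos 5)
Your treatment of part (a) via the filtered multiplication map, and of the nilpotent half of part (b), is correct and matches the paper's intent: part (a) is the standard ``associated graded is an isomorphism, hence the filtered map is an isomorphism'' argument, relying on Proposition~\ref{prop:TYMO}, Lemma~\ref{lem:Y01}, and \eqref{Upm0}, \eqref{Upm0vee}; and $\Phi:\Y_{R_S}^\vee\to\bfY_{R_S}^{-,\vee}$ is an isomorphism by the very definition \eqref{TDA} together with the injectivity supplied by Theorem~\ref{thm:2}(c).

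The problem is in the surjectivity of $\Phi:\Y_{R_S}\to\bfY_{R_S}^-$. You write $y=\Phi(x)$ with $x\in\Y_{K_S}$ and then transpose, claiming $\Phi(x)^\T=\Phi(x^\T)$ and $\Phi(x^\T)\in\Phi(\Y_{K_S}^\vee)=\bfY_{K_S}^{-,\vee}$. This cannot hold: the transpose $(-)^\T$ is an $R_S$-module map reversing the sign of the $\bbZ I$-weight, so by \eqref{YYT} it carries $\bfY_{K_S}^-$ (negative weights) onto $\bfY_{K_S}^{+,\vee}$ (positive weights), whereas $\Phi(\Y_{K_S}^\vee)=\bfY_{K_S}^{-,\vee}$ is supported in non-positive weights because $\Y^\vee$ is; in particular there is no element $x^\T\in\Y_{K_S}^\vee$ with $\Phi(x^\T)=\Phi(x)^\T$ once $\wt(x)\neq 0$. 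Hence the crucial conclusion ``$x^\T\in\Y_{R_S}^\vee$, therefore $x\in\Y_{R_S}$'' does not follow. You correctly diagnose that the easy inclusion $\Phi(\Y_{R_S})\subseteq\bfY_{R_S}^-$ is \eqref{YY-} and that the converse needs an integrality input, but transposing sends you to the wrong chirality of the triangular decomposition: the ``easy'' side (image of $\Phi$) becomes the ``hard'' side (defined as an intersection) and vice versa, so the nilpotent case does not bootstrap to the plain case this way.

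In the paper the converse inclusion is not obtained by a transpose of Lemma~\ref{lem:lastlast}; it is extracted from the chain of inclusions $\bfU_{R_S}^{+,\vee}\subset\gr(\bfY_{R_S}^{+,\vee})\subset\bfU_{R_S}^{+,\vee}$ in the proof of Proposition~\ref{prop:TYMO}. The inclusion \eqref{embB} is exactly the transpose of the image of $\gr(\Phi)$ inside $\gr(\bfY_{R_S}^-)$ (after the graded-rank identification \eqref{YU-}), and the sandwich forcing it to be an equality is what gives surjectivity of $\gr(\Phi)$ and hence of $\Phi$, by the same filtered-module principle you used in part (a). So Theorem~\ref{thm:last}(b) for $\Y_{R_S}\to\bfY_{R_S}^-$ depends on the specific mechanism of Proposition~\ref{prop:TYMO}'s proof, not merely on its statement, and your attempt to replace that mechanism with a transpose of Lemma~\ref{lem:lastlast} fails for the weight-sign reason above.
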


\begin{proof}
The theorem follows from Lemma \ref{lem:Y01} and Proposition \ref{prop:TYMO}.
\end{proof}

\bigskip

\bigskip

\centerline{\textbf{Acknowledgements}}

We would like to thank B. Davison, L. Hennecart and A. Negut for enlightening discussions and exchanges 
concerning COHAs. A. Negut told us that another proof of a localized version of 
Theorem~\ref{thm:mainintro}(a) is possible along the lines of \cite{N23a}. The research of O. S. was partially funded by the 
PNRR grant CF 44/14.11.2022 \textit{Cohomological Hall algebras of smooth surfaces and applications}.

\bigskip

\bigskip

\bigskip

\bigskip

\bigskip

\bigskip

\end{document}